\numberwithin{equation}{section}
\numberwithin{figure}{section}
\theoremstyle{plain}
\newtheorem{thm}{\protect\theoremname}[section]
\theoremstyle{remark}
\newtheorem{rem}[thm]{\protect\remarkname}
\theoremstyle{plain}
\newtheorem{cor}[thm]{\protect\corollaryname}
\theoremstyle{plain}
\newtheorem{conjecture}[thm]{\protect\conjecturename}
\theoremstyle{plain}
\newtheorem{lem}[thm]{\protect\lemmaname}
\theoremstyle{definition}
\newtheorem{example}[thm]{\protect\examplename}
\theoremstyle{plain}
\newtheorem{prop}[thm]{\protect\propositionname}
\theoremstyle{definition}
\newtheorem{defn}[thm]{\protect\definitionname}
\def\makebbb#1{
    \expandafter\gdef\csname#1\endcsname{
        \ensuremath{\Bbb{#1}}}
}\makebbb{R}\makebbb{N}\makebbb{Z}\makebbb{C}\makebbb{H}\makebbb{E}\makebbb{H}\makebbb{P}\makebbb{B}\makebbb{Q}\makebbb{E}\makebbb{E}
\providecommand{\conjecturename}{Conjecture}
\providecommand{\corollaryname}{Corollary}
\providecommand{\definitionname}{Definition}
\providecommand{\examplename}{Example}
\providecommand{\lemmaname}{Lemma}
\providecommand{\propositionname}{Proposition}
\providecommand{\remarkname}{Remark}
\providecommand{\theoremname}{Theorem}
\begin{document}
\title{An invitation to Kähler-Einstein metrics and random point processes }
\author{Robert J. Berman}
\begin{abstract}
This is an invitation to the probabilistic approach for constructing
Kähler-Einstein metrics on complex projective algebraic manifolds
$X.$ The metrics in question emerge in the large $N-$limit from
a canonical way of sampling $N$ points on $X,$ i.e. from random
point processes on $X,$ defined in terms of algebro-geometric data.
The proof of the convergence towards Kähler-Einstein metrics with
negative Ricci curvature is explained. In the case of positive Ricci
curvature a variational approach is introduced to prove the conjectural
convergence, which can be viewed as a probabilistic constructive analog
of the Yau-Tian-Donaldson conjecture. The variational approach reveals,
in particular, that the convergence holds under the hypothesis that
there is no phase transition, which - from the algebro-geometric point
of view - amounts to an analytic property of a certain Archimedean
zeta function.
\end{abstract}

\address{Robert J. Berman, Mathematical Sciences, Chalmers University of Technology
and the University of Gothenburg, SE-412 96 Göteborg, Sweden}
\email{robertb@chalmers.se}
\dedicatory{Dedicated to Shing-Tung Yau on the occasion of his 70th birthday. }
\maketitle

\section{Introduction}

A central theme in current complex geometry is the interplay between
the differential geometry of Kähler-Einstein metrics on a compact
complex manifold $X$ and complex algebraic geometry. These connections
were emphasized by Yau in the 80s \cite{y2}, leading up to the formulation
of the Yau-Tian-Donaldson conjecture involving the notion of K-stability
\cite{ti,do1}. This paper is as an invitation to the \emph{probabilistic
}approach to Kähler-Einstein metrics, initiated in \cite{berm8,berm8 comma 5,berm6}.
From this probabilistic point of view the Kähler-Einstein metrics
emerge in the large $N-$limit from a canonical way of sampling $N$
points on $X,$ i.e. from canonical random point process on $X.$
The point processes are defined in terms of algebro-geometric data
and furnish canonical and explicit approximations to Kähler-Einstein
metrics, expressed as period integrals. The thrust of this approach
is thus that it provides a new explicit bridge between Kähler-Einstein
metrics and algebraic geometry. Moreover, it leads to a new notion
of stability, dubbed Gibbs stability, which naturally fits into the
logarithmic setup of the Minimal Model Progrom of current birational
algebraic geometry. The investigation of the large $N-$limit of the
point processes also naturally ties in with the variational approach
to Kähler-Einstein metrics in \cite{bbgz,bbegz,bbj} (via the notion
a large devition principle). 

The aim of the present work is to provide both a survey of the probabilistic
construction of Kähler-Einstein metrics with \emph{negative} Ricci
curvature in \cite{berm8}, as well as a variational approach towards
a proof of the remaining main open problem concerning\emph{ positive
}Ricci curvature. As demonstrated in \cite{berm11} this approach
settles the problem in the case of log Fano curves, but in the general
case it hinges an a conjectural energy bound. Interestingly, as explained
in \cite{berm11}, already the simplest case of a log Fano curve with
trivial automorphism group - the complex projective line decorated
with $3$ weighted points - exhibits some rather intruiging connections
between this probabilistic approach and the classical theory of hypergeometric
functions and integrals, as well as Conformal Field Theory and Integrable
Systems. 

There are also many other connections to other fields that will not
be covered here. Motivations from Quantum Gravity are discussed in
\cite{berm5}, where a heuristic argument for the large $N-$convergence
was first given. The connections to pluripotential theory and interpolation
theory in complex affine space $\C^{n}$ are covered in \cite{berm10b,du}
and relations to stochastic gradient flows and optimal transport (via
tropicalization) appear in \cite{berm10a}. Moreover, connections
to Arithmetic Geometry and Non-Archimidean Geometry will be developed
elsewhere, as well as relations to the AdS/CFT correspondence in theoretical
physics, connecting geometry to Conformal Field Theory \cite{b-c-p}. 

Finally, a caveat: ``The probabilistic approach to Kähler-Einstein
metrics'' will, in the following, refer to the approach, based on
random point processes with $N$ points, introduced in \cite{berm5,berm8,berm8 comma 5}.
It should, however, be pointed out that there is also a different
probabilistic approach to Kähler geometry, introduced in \cite{f-k-z},
using random elements in the symmetric spaces $SL(N,\C)/SU(N).$ In
the case of a a complex curve, there are also, as explained in \cite{berm11},
some relations between the canonical random point processes and the
theory of Gaussian Multiplicative Chaos, as used in the probabilistic
approach to Liouville Quantum Gravity in \cite{K-R-V}. It should
also be pointed out that a statistical mechanics approach has previously
been applied to conformal geometry in \cite{k2}, which is related
to the present complex-geometric setup in the case of complex curves.

\tableofcontents{}

\subsection{Organization}

Since this work is mainly aimed to be expository, most of the proofs
are merely sketched. However, there are also some new results (Theorems
\ref{thm:lsc intro}, \ref{thm:log curve intro} , \ref{thm:Fano setting text},
\ref{thm:phase} ) where more details are provided. 

After a brief recap of Kähler-Einstein metrics in Section \ref{sec:Recap-of-K=0000E4hler-Einstein}
and some motivation we give, in Section \ref{sec:A-bird's-eye-view},
a bird's-eye view of the probabilistic approach to Kähler-Einstein
metrics. First, the main results concerning the case when the sign
$\beta$ of the canonical line bundle $K_{X}$ is positive are explained.
Then, the conjectural picture concerning the Fano setting, where $\beta<0,$
is described. In Section \ref{sec:Background} background material
is provided on complex geometry, pluripotential theory, probability
and variational analysis. This material is needed for the more detailed
view of the probabilistic approach, which is the subject of the subsequent
sections, starting in Section \ref{sec:The-thermodynamical-formalism}
with a thermodynamical formalism for Kähler-Einstein metrics. The
focus in this section is on the analytical properties of the free
energy functional (which appears as the rate functional of a large
deviation principle for the canonical point processes). The connection
to the standard functionals in Kähler geometry, in particular the
Mabuchi functional is also explained. In the following Section \ref{sec:The-case-of pos beta}
we explain the proof of the convergence of the point processes and
the general large deviation principle in the case $\beta>0.$ The
Fano setting, where $\beta<0,$ is considered in Section \ref{sec:Towards-the-case of neg}
where a a variational approach is proposed for proving the conjectural
convergence of the point processes and relations to phase transitions
are explored. 

\subsection{Acknowledgements}

I am greatful to Shing-Tung Yau for the invitation to contribute to
the upcoming volume of Surveys in Differential Geometry, on the occasion
of his 70th birthday - his work has been a constant source of inspiration.
It is also a pleasure to thank Sébastien Boucksom, David Witt-Nyström,
Vincent Guedj and Ahmed Zeriahi for the stimulating collaborations
\cite{b-b,b-b-w,bbgz}, which paved the way for the current probabilistic
approach. Also thanks to Daniel Persson and the referee for comments
on the paper. This work was supported by grants from the KAW foundation,
the Göran Gustafsson foundation and the Swedish Research Council. 

\section{\label{sec:Recap-of-K=0000E4hler-Einstein}Recap of Kähler-Einstein
metrics and motivation}

Recall that a Kähler metric $\omega$ on a compact complex manifold
$X$ of dimension $n$ is said to be \emph{Kähler-Einstein} if it
has constant Ricci curvature:

\[
\text{Ric \ensuremath{\omega}=\ensuremath{-\beta}\ensuremath{\omega}}
\]
for some constant $\beta.$ The existence of such a metric implies
that the\emph{ canonical line bundle $K_{X}$} of $X$ (i.e. the top
exterior power of the cotangent bundle of $X)$ has a definite sign:
\begin{equation}
\text{sign}(K_{X})=\text{sign}(\beta)\label{eq:sing of K X}
\end{equation}
 We will be using the standard terminology of positivity in complex
geometry: a line bundle is said to be \emph{positive,} $L>0,$ if
it is ample and \emph{negative,} $L<0,$ if its dual. Moreover, we
shall adopt the standard additive notation for tensor products of
line bundles, so that the dual of $L$ is expressed as $-L$ (see
Section \ref{subsec:Metrics-on-line}). Here we will focus on the
case when $\beta\neq0$ (see \cite[Section 6.1]{berm8} for probabilistic
aspects of the case $\beta=0).$ Then $X$ is automatically a complex
projective algebraic manifold. After a rescaling of the metric we
may as well assume that $\beta=\pm1.$ In the case when $K_{X}>0$
and $K_{X}=0$ the existence of a Kähler-Einstein metric was established
in the seminal works \cite{y,au} and \cite{y}, respectively. However,
in the case when $K_{X}<0,$ i.e. when $X$ is a \emph{Fano manifold,}
there are obstructions to the existence of a Kähler-Einstein metric.
According to the \emph{Yau-Tian-Donaldson conjecture} a Fano manifold
$X$ admits a Kähler-Einstein metric iff $X$ is K-polystable (the
``only if'' statement holds for singular Fano varieties \cite{berman6ii}).
This an algebro-geometric notion of stability, modeled on the notion
of stability in Geometric Invariant Theory. Briefly, a Fano manifold
$X$ is K-polystable iff the Donaldson-Futaki invariant $DF(\mathcal{X})$
of any normal $\C^{*}-$equivariant deformation $\mathcal{X}\rightarrow\C$
of $(X,-K_{X})$ (called a test configuration) is non-negative and
vanishes only for the test configurations $\mathcal{X}$ whose central
fiber is biholomorphic to $X.$ The invariant $DF(\mathcal{X})$ may
be defined as a the normalized large $N-$limit of the Chow weight
of the orbit of $X$ in $\P^{N-1}$ under a one-parameter subgroup
of $SL(N,\C)$ associated to $\mathcal{X}.$ See the survey \cite{do2}
for the detailed definition and further background. 

When a Fano manifold $X$ admits a Kähler-Einstein metric it is uniquely
determined modulo the action of the group $\text{Aut \ensuremath{(X)_{0}}}$
of all biholomorphic automorphisms of $X$ in the connected component
of the identity \cite{b-m}. The dichotomy non-trivial vs. trivial
group $\text{Aut \ensuremath{(X)_{0}}}$ is reflected in the difference
between \emph{K-polystability }and the stronger notion of \emph{K-stability,
}which implies that $\text{Aut \ensuremath{(X)_{0}}}=\{I\}.$

The relation between Kähler-Einstein metrics and stability, in the
sense of GIT, was propounded by Yau in \cite{y2,y3} and further developed
by Tian \cite{ti} and then Donaldson \cite{do1}, who considered
the more general setting of constant scalar curvature metrics on polarized
complex compact manifolds. In the Fano case the Yau-Tian-Donaldson
conjecture was settled in \cite{c-d-s}, using a logarithmic version
of Aubin's continuity (and the Cheeger-Colding-Tian theory of Gromov-Hausdorff
limits of Kähler manifolds). There is also a stronger form of K-stability,
called\emph{ uniform K-stability}, introduced in \cite{sz1,sz2,de,bhj}.
In \cite{bbj} it was shown by a direct variational approach that
a Fano manifold admits a unique Kähler-Einstein -metric iff $X$ is
uniformly K-stable, using pluripotential theory (including some Non-Archimidean
aspects). Very recently the variational approach in \cite{bbj} has
also been extended to general singular Fano varieties in \cite{l-t-w}
and \cite{li2}, using a notion of equivariant uniform K-stability.

\subsection{The lack of explicit formulas}

Already in the case when the canonical line bundle $K_{X}$ is positive
and thus $X$ admits a Kähler-Einstein metric $\omega_{KE}$ with
negative Ricci curvature, there are very few cases where $\omega_{KE}$
can be written down explicitly. For example, let $X$ be the projective
algebraic manifold defined by the zero-locus of a homogeneous polynomial
of degree $d$ in $(n+1)-$dimensional complex projective space $\P_{\C}^{n+1}.$
Then $K_{X}>0$ iff $d>(n+2)$ (by the adjunction formula). However,
even in the case of a complex algebraic curve $X$ in $\P_{\C}^{2}$
the problem of explicitly describing the Kähler-Einstein metric on
$X$ is, in general, intractable. By the classical uniformization
theory this problem is equivalent to finding an explicit biholomorphic
map from $X$ to the quotient $\H/G$ of the upper half-plane by a
discrete subgroup $G\subset SL(2,\R).$ This has only been achieved
for very special curves, using techniques originating in the classical
works by Weierstrass, Riemann, Fuchs, Schwartz, Klein, Poincaré,...\cite{s-g}.
See, for example, \cite{b-g} for the case when $X$ is a Fermat curve
of degree $d\geq4,$ \cite{el} for the case when $X$ is the Klein
quartic, including arithmetic aspects and \cite{do} for connections
to the mirror-moonshine conjecture of Lian-Yau. A recurrent theme
in all these cases is that the uniformizing map from $X$ to $\H/G$
may be expressed as a quotient of hypergeometric functions. 

Thus one motivation for the probabilistic approach to Kähler-Einstein
metrics is that for any given projective manifold $X$ with $K_{X}>0$
it leads to canonical approximations $\omega_{k}$ of the Kähler-Einstein
metric $\omega_{KE}$ on $X,$ which are explicitly expressed in terms
of algebro-geometric data on $X.$ This will be explained in detail
in the following sections (see Corollary \ref{cor:gen type conv of correl measu intro}).
For the moment we just point out that, realizing $X$ as an algebraic
subvariety of $\P_{\C}^{m},$ the Kähler potential $\varphi_{k}$
on $X$ may be explicitly expressed as follows, in terms of homogeneous
coordinates $z\in\C^{m+1}$ on $\P_{\C}^{m}:$ 
\begin{equation}
e^{\varphi_{k}(z)}=\frac{1}{Z_{N_{k}}}\int_{X^{N_{k}-1}}\frac{\left|P_{N_{k}}(z,z_{2}...,z_{N_{k}})\right|^{2/k}}{\left|z\right|^{2}\left|z_{2}\right|^{2}\cdots\left|z_{N_{k}}\right|^{2}}dV^{\otimes(N_{k}-1)}\label{eq:formula for phi k}
\end{equation}
 where $N_{k}$ is a sequence tending to infinity (the plurigenera
of $X$), $P_{N_{k}}$ is a homogeneous polynomial on the $N_{k}-$fold
product $X^{N_{k}}$ of $X,$ naturally attached to the degree $k$
component of graded homogeneous coordinate ring of $X$ and $dV$
is an algebraic volume form on $X,$ induced by the embedding in $\P^{m}.$
The normalizing constant $Z_{N_{k}}$ is given by $Z_{N,\beta}$ for
$\beta=1$ where $Z_{N,\beta}$ is the following Archimedean zeta
function\footnote{Integrals of complex powers of algebraic functions are Archimedean
analogs of local Igusa zeta functions, defined in a p-adic setting
\cite{ig}. } 
\begin{equation}
Z_{N,\beta}:=\int_{X^{N_{k}}}\left(\frac{\left|P_{N_{k}}(z_{1},z_{2}...,z_{N_{k}})\right|^{2/k}}{\left|z_{1}\right|^{2}\left|z_{2}\right|^{2}\cdots\left|z_{N_{k}}\right|^{2}}\right)^{\beta}dV^{\otimes N_{k}}.\label{eq:Z N beta in motivation}
\end{equation}
In the case when $X$ is Fano the exponent $2/k$ in formula \ref{eq:formula for phi k}
is replaced by $2\beta/k$ for $\beta=-1.$ The conjectural convergence
towards the potential of a Kähler-Einstein metric, when $N_{k}\rightarrow\infty,$
then turns out to be related to analytic properties of the corresponding
Archimedean zeta function $\beta\mapsto Z_{N,\beta}.$ 

Incidentally, the integral in formula \ref{eq:Z N beta in motivation}
is reminiscent of the Euler integral (period) formulas for the hypergeometric
functions that appear in the classical uniformization theory for complex
curves and integrable systems, alluded to a above. Indeed, the ordinary
(Gauss) hypergeometric function with real parameters $(a,b,c)$ may
be expressed as follows when $z$ is in the upper half-plane of $\C:$
\begin{equation}
F(z)=\frac{1}{B(b,c-b)}\int_{[0,1]}(1-zx)^{-a}x^{b-1}(1-x)^{c-b-1}dx,\label{eq:hypergeom}
\end{equation}
 where $B(p,q)$ is the Beta function: 
\begin{equation}
B(p,q)=\int_{[0,1]}x^{p-1}(1-x)^{q-1}dx,\,\,\,\,\text{Re \ensuremath{p>0,\,\,\text{Re }q>0}}\label{eq:def of Beta f}
\end{equation}
These connections can be made more precise in the case of complex
curves, as explained in \cite{berm11}.
\begin{rem}
When $X$ is defined over $\Q$ the integrals in \ref{eq:Z N beta in motivation}
and \ref{eq:hypergeom} are examples of\emph{ periods}, as defined
in \cite[Chapter 4]{k-z}, i.e. integrals of the form $\int_{\gamma}\eta$
for an algebraic form $\eta$ of maximal degree on a projective variety
$Y$ defined over $\Q$ and homology class $\gamma\in H(Y(\C),\Q)$
with boundary on a normal crossing divisor in $Y(\C).$ Indeed, $Y(\C)$
can be taken as the complexification of $X,$ when $X$ is viewed
as a real manifold of real dimension $2n$ and $\gamma$ as $X.$
As stressed in \cite{k-z}, in many classical cases period integrals
satisfy differential equations with respect to variations of the parameters
of the integrand (notably Picard-Fuch equations, such as the hypergeometric
equation satisfied by $F(z)$ above). In the present setup the role
of the differential equation is thus - loosely speaking - played by
the Kähler-Einstein equation, but it only arises in the limit when
$N\rightarrow\infty.$ 
\end{rem}

The canonical Kähler potentials $\varphi_{k}$ in formula \ref{eq:formula for phi k}
are also somewhat reminiscent of the sequence of balanced metrics
introduced in \cite{do1c} (when either $K_{X}>0$ or $K_{X}<0),$
defined as fixed point of an algebraic iteration. But one virtue of
the present setup is that $\varphi_{k}$ is given by an explicitly
formula.

\section{\label{sec:A-bird's-eye-view}A bird's-eye view of the probabilistic
approach to Kähler-Einstein metrics}

We continue with the notation introduced in Section \ref{sec:Recap-of-K=0000E4hler-Einstein},
with $X$ denoting a compact complex manifold with the property that
its canonical line bundle $K_{X}$ has a definite sign $\beta.$ First
recall that it follows directly from the basic formula for the Ricci
curvature of a Kähler metric, that, in the case when $\beta\neq0,$
a Kähler-Einstein metric $\omega_{KE}$ on $X$ can be recovered from
its (normalized) volume form $dV_{KE}$
\[
\omega_{KE}=\frac{1}{\beta}dd^{c}\log dV_{KE},
\]
 using the notation $dd^{c}:=\frac{i}{2\pi}\partial\bar{\partial}$
(see Section \ref{subsec:Complex-geometry}). Accordingly, the strategy
of the probabilistic approach is to first construct the canonical
normalized volume form $dV_{KE}$ by a canonical sampling procedure
on $X.$ 

We will denote by $\mathcal{P}(X)$ the space of all normalized measures
$\mu$ on $X,$ i.e. the space of all probability measures on $X$
(see Section \ref{subsec:Probabilistic-setup} for a recap of the
basic probabilistic background). 

\subsection{The case $\beta>0$}

Let $X$ be a compact complex manifold with positive canonical line
bundle $K_{X}.$ The starting point of the probabilistic approach
to Kähler-Einstein metrics is the observation that, when $K_{X}>0,$
there is\emph{ canonical} way of sampling configurations of $N$ random
points on $X,$ i.e. there is a canonical \emph{random point process}
on $X$ with $N$ points. This means that there is a canonical sequence
of symmetric probability measures $\mu^{(N)}$ on the $N-$fold products
$X^{N}$ and, as shown in \cite{berm8}, the corresponding \emph{empirical
measure }
\[
\delta_{N}:=\frac{1}{N}\sum_{i=1}^{N}\delta_{x_{i}}:\,\,X^{N}\rightarrow\mathcal{P}(X)
\]
 viewed as a random measure on the probability space $(X^{N},\mu^{(N)}),$
converges in probability as $N\rightarrow\infty,$ to the volume form
$dV_{KE}$ of the unique Kähler-Einstein metric $\omega_{KE}.$ In
fact, the canonical sequence of probability measures on $\mu^{(N)}$
on $X^{N_{k}}$ is defined for a specific subsequence of integers
$N$ tending to infinity, the \emph{plurigenera }of $X:$ 
\[
N_{k}:=\dim H^{0}(X,kK_{X}),
\]
 where $H^{0}(X,kK_{X})$ denotes the complex vector space of all
global holomorphic sections $s^{(k)}$ of the $k$ th tensor power
of the canonical line bundle $K_{X}\rightarrow X$ (aka pluricanonical
forms). We recall that, in terms of local holomorphic coordinates
$z\in\C^{n}$ on $X,$ this simply means that a section $s^{(k)}$
may be represented by local holomorphic functions $s^{(k)}$ on $X,$
such that $|s^{(k)}|^{2/k}$ transforms as a density on $X,$ i.e.
defines a measure on $X.$ The canonical probability measure $\mu^{(N_{k})}$
on $X^{N_{k}}$ is now defined by 
\begin{equation}
\mu^{(N_{k})}:=\frac{1}{Z_{N_{k}}}\left|\det S^{(k)}\right|^{2/k},\,\,\,\,Z_{N}:=\int_{X^{N_{k}}}\left|\det S^{(k)}\right|^{2/k}\label{eq:canon prob measure intro}
\end{equation}
 where $\det S^{(k)}$ is the holomorphic section of the canonical
line bundle $(kK_{X^{N_{k}}})$ over $X^{N_{k}}$, defined by the
Slater determinant
\[
(\det S^{(k)})(x_{1},x_{2},...,x_{N}):=\det(s_{i}^{(k)}(x_{j})),
\]
 in terms of a given basis $s_{i}^{(k)}$ in $H^{0}(X,kK_{X}).$ Note
that under a change of bases the section $\det S^{(k)}$ only just
changes by a multiplicative complex constant $c$ (the determinant
of the change of bases matrix). Hence, by homogeneity, the probability
measure $\mu^{(N_{k})}$ is independent of the choice of bases and
thus defines a canonical symmetric probability measure on $X^{N_{k}},$
as desired. Moreover, the probability measure is $\mu^{(N_{k})}$
is encoded by algebro-geometric data in the following sense: using
the Kodaira embedding theorem to realize $X$ as projective algebraic
subvariety the density $\det S^{(k)}$ can be identified with a homogeneous
polynomial (see Section \ref{subsec:Projectiv-algebraic-embeddings}). 

The following convergence result was shown in \cite{berm8}:
\begin{thm}
\label{thm:ke intro}Let $X$ be a compact complex manifold with positive
canonical line bundle $K_{X}.$ Then the empirical measures $\delta_{N_{k}}$
of the corresponding canonical random point processes on $X$ converge
in probability, as $N_{k}\rightarrow\infty,$ towards the normalized
volume form $dV_{KE}$ of the unique Kähler-Einstein metric $\omega_{KE}$
on $X.$ 
\end{thm}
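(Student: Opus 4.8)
The plan is to deduce the convergence in probability of the empirical measures from a large deviation principle (LDP) for the sequence $\delta_{N_k}$ on $\mathcal{P}(X)$ whose rate functional $F$ has a unique minimizer, namely $dV_{KE}$. The starting point is to rewrite the canonical probability measure $\mu^{(N_k)}$ in Gibbsian form. Fixing an auxiliary smooth metric $\phi_0$ on $K_X$ with positive curvature form $\omega_0 = dd^c\phi_0$ and the associated volume form $dV_0 = e^{-n\phi_0}\omega_0^n/n!$-type normalization, one can write $|\det S^{(k)}|^{2/k} = e^{-k E^{(N_k)}(x_1,\dots,x_{N_k})}\,dV_0^{\otimes N_k}$ where the ``energy'' $E^{(N_k)}$ is, up to lower order, the discrete Dirichlet-type (pluricomplex) energy of the empirical measure plus a linear term coming from the reference volume form. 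Concretely, the key analytic input — which I would invoke from the quantization theory of the Monge--Ampère energy — is that $\tfrac{1}{k N_k}\log|\det S^{(k)}|^{2/k}$, expressed through $\delta_{N_k}$, converges to the functional $\mu \mapsto -E^*(\mu)$, the Legendre-type energy appearing in pluripotential theory, uniformly enough to control the asymptotics of the partition function $Z_{N_k}$.

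The second step is to establish the LDP proper: the sequence of random measures $\delta_{N_k}$ on $X^{N_k}$, pushed forward to $\mathcal{P}(X)$, satisfies a large deviation principle with speed $k N_k$ (equivalently $N_k^2$ up to the volume-of-$K_X$ factor) and good rate functional
\[
F(\mu) \;=\; E_{\mathrm{eq}}(\mu) \;-\; F_{\mathrm{free}},
\]
where $E_{\mathrm{eq}}$ is the free energy functional (the pluricomplex energy regularized by the entropy-like term from $dV_0$) and $F_{\mathrm{free}} = \inf_{\mathcal{P}(X)} E_{\mathrm{eq}}$. The upper bound comes from the standard Laplace/Varadhan argument once one has the asymptotics of $Z_{N_k}$ together with a suitable upper semicontinuity and coercivity of the energy; the lower bound requires a density argument showing that measures with bounded energy (e.g. those with smooth positive density) can be approached by ``typical'' configurations, which is where one uses the Bergman-kernel asymptotics to produce near-optimal point configurations sampling a given target measure. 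The convergence in probability of $\delta_{N_k}$ towards the unique minimizer of $F$ is then a formal consequence of the LDP, provided the minimizer is unique.

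The final step — and the heart of the matter — is to identify the minimizer of $F$ and prove its uniqueness. The Euler--Lagrange equation for $F$ is precisely a Monge--Ampère equation of the form $dd^c\varphi)^n = e^{\beta\varphi}dV$ with $\beta = 1$ (the sign of $K_X$), i.e. the K\"ahler--Einstein equation written at the level of potentials; by the Aubin--Yau theorem cited in the excerpt this equation has a unique solution, whose associated volume form is $dV_{KE}$. Uniqueness of the minimizer follows from strict convexity of the relevant energy functional along geodesics in the space of K\"ahler potentials (Berndtsson-type convexity), which in the $K_X>0$ case is unobstructed — there is no automorphism group to quotient by.

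\textbf{Main obstacle.} I expect the hardest part to be the large-$N$ asymptotics of the partition function $Z_{N_k}$ and, intertwined with it, the lower bound in the LDP: one must show that the discrete energies $E^{(N_k)}(\delta_{N_k})$ $\Gamma$-converge to the continuum energy $E_{\mathrm{eq}}$, which requires both a sharp upper bound (subadditivity / submean value properties of $|\det S^{(k)}|^{2/k}$ together with Bergman kernel expansions) and a matching lower bound via an explicit construction of asymptotically optimal configurations. Controlling the entropy-type contribution of the fixed volume form $dV$ uniformly in $N_k$, so that the rate functional is genuinely lower semicontinuous and coercive on $\mathcal{P}(X)$ in the weak topology, is the delicate technical point; everything else is either classical pluripotential theory or a formal consequence of the LDP machinery.
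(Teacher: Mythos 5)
Your overall strategy --- rewriting $\mu^{(N_k)}$ as a Gibbs measure, proving an LDP whose rate functional is a free energy, and then identifying the unique minimizer via Aubin--Yau plus convexity --- is the strategy of the paper (Theorems \ref{thm:LDP intro}, \ref{thm:free energy beta pos} and Lemma \ref{lem:LDP plus unique implies conv in law}). But there is a genuine error in the scaling, and it is not cosmetic. The density of $\mu^{(N_k)}$ against $dV^{\otimes N_k}$ is $Z_{N_k}^{-1}\exp\bigl(\tfrac{1}{k}\log\|\det S^{(k)}\|^{2}\bigr)$, and $\log\|\det S^{(k)}\|^{2}$ is of order $kN_k$, so the exponent is of order $N_k$; the correct LDP speed is therefore $N_k$, not $kN_k$ (which is $\sim N_k^{1+1/n}$, and equals $N_k^{2}$ only when $n=1$). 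It is precisely because the speed is $N_k$ that the Sanov-type entropy $D_{dV}$, which enters at exponential order $N_k$, appears in the rate functional on equal footing with the pluricomplex energy, giving $F_{1}=E_{\omega_0}+D_{dV}$ whose Euler--Lagrange equation is $MA(\varphi)=e^{\varphi}dV$, i.e. the K\"ahler--Einstein equation. At your claimed speed $kN_k$ both the energy and the entropy contributions are subexponential, so the LDP you state is either trivial (rate identically zero where finite) or, if you keep the entropy term, false as stated; the regime where speed $kN_k$ is correct is the zero-temperature/determinantal scaling $\beta_N=k\rightarrow\infty$ of Theorem \ref{thm:LDP subharmon}, whose rate is pure energy and whose limit is the equilibrium measure of the reference data, not $dV_{KE}$. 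As written, your argument therefore cannot single out the K\"ahler--Einstein volume form; your Gibbsian rewriting ($e^{-kE^{(N_k)}}$ with $E^{(N_k)}$ of order one) carries the same inconsistency.

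A secondary point: the delicate step is not the construction of near-optimal configurations --- the Gamma-convergence $E^{(N)}\rightarrow E_{\omega_0}$, including recovery sequences, follows rather directly from the Legendre--Fenchel criterion of Prop \ref{prop:crit for gamma conv} combined with the asymptotics \ref{eq:asympt of log det L infty} of weighted transfinite diameters (Bergman kernel asymptotics), much as you anticipate. The genuinely new ingredient is the justification of ``taking out'' $e^{-NE^{(N)}}$ from integrals over balls in $\mathcal{P}(X)$: in the paper this is the asymptotic submean inequality \ref{eq:as submean ineq} on Wasserstein balls, which exploits that $NE^{(N)}$ is uniformly quasi-superharmonic on $X^{N}$ and is proved by geometric analysis (Li--Schoen, Cheng--Yau) on the quotient $X^{N}/S_{N}$ with the rescaled product metric. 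Your passing reference to ``submean value properties'' points in the right direction, but this ingredient, and the quasi-superharmonicity that makes it available when $\beta>0$, must be made explicit: it is exactly what fails for $\beta<0$ and hence is the crux of why the theorem can be proved unconditionally in the $K_X>0$ case.
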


In particular, the convergence in probability in the previous theorem
implies that the measure on $X$ defined by the expectations $\E(\delta_{N_{k}})$
of the empirical measure $\delta_{N_{k}}$ converge towards $dV_{KE}$
in the weak topology of measures on $X:$ 
\[
\E(\delta_{N_{k}})=\int_{X^{N_{k}-1}}\mu^{(N_{k})}\rightarrow dV_{KE},\,\,\,k\rightarrow\infty
\]
Noting that 
\begin{equation}
\omega_{k}:=dd^{c}\log\E(\delta_{N_{k}})=dd^{c}\log\int_{X^{N_{k}-1}}\left|\det S^{(k)}\right|^{2/k}\label{eq:def of omega k KX pos intro}
\end{equation}
defines a canonical sequence of Kähler metrics on $X$ (for $k$ sufficiently
large) we thus arrive at the following 
\begin{cor}
\label{cor:gen type conv of correl measu intro}Let $X$ be a compact
complex manifold with positive canonical line bundle $K_{X}.$ Then
the canonical sequence of Kähler metrics $\omega_{k}$ converges towards
the unique Kähler-Einstein metric $\omega_{KE}$ on $X,$ in the weak
topology.
\end{cor}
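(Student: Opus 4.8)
The plan is to deduce the Corollary from Theorem \ref{thm:ke intro} by a soft potential-theoretic compactness argument; the one delicate point is that weak convergence of the measures $\E(\delta_{N_{k}})$ does not by itself control their logarithmic potentials, so one must exploit that these potentials are quasi-plurisubharmonic.

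First I would note that the convergence in probability in Theorem \ref{thm:ke intro} forces the expectations to converge weakly: for $f\in C^{0}(X)$ the random variable $\int_{X}f\,\delta_{N_{k}}$ is bounded by $\|f\|_{\infty}$ and tends to $\int_{X}f\,dV_{KE}$ in probability, hence in $L^{1}$, so $\E(\delta_{N_{k}})\to dV_{KE}$ weakly. Next, since $K_{X}>0$ fix a smooth volume form $dV_{0}$ on $X$ of unit mass such that $\omega_{0}:=dd^{c}\log dV_{0}$ is a K\"ahler form in $c_{1}(K_{X})$, and write $\E(\delta_{N_{k}})=e^{f_{k}}\,dV_{0}$ with $f_{k}\in C^{\infty}(X)$; then $\omega_{k}=\omega_{0}+dd^{c}f_{k}$, and since $\omega_{k}\geq0$ (indeed K\"ahler, for $k$ large) one has $f_{k}\in\mathrm{PSH}(X,\omega_{0})$. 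From $\int_{X}e^{f_{k}}\,dV_{0}=1$ and Jensen's inequality, $\int_{X}f_{k}\,dV_{0}\leq0$; together with the standard sub-mean-value estimate $\sup_{X}f\leq\int_{X}f\,dV_{0}+C(X,\omega_{0})$ valid for all $f\in\mathrm{PSH}(X,\omega_{0})$ this gives $\sup_{X}f_{k}\leq C$, while $e^{\sup_{X}f_{k}}\geq\int_{X}e^{f_{k}}\,dV_{0}=1$ gives $\sup_{X}f_{k}\geq0$. Hence $0\leq\sup_{X}f_{k}\leq C$ for all $k$.

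By this uniform bound the family $\{f_{k}\}\subset\mathrm{PSH}(X,\omega_{0})$ has uniformly bounded supremum, hence is relatively compact in $L^{1}(X)$, every cluster value lying in $\mathrm{PSH}(X,\omega_{0})\cap L^{1}(X)$. From an arbitrary subsequence extract a further subsequence with $f_{k_{j}}\to f_{\infty}$ in $L^{1}(X)$ and a.e. Since $f_{k_{j}}\leq C$, dominated convergence gives $e^{f_{k_{j}}}\to e^{f_{\infty}}$ in $L^{1}(X)$, so $\E(\delta_{N_{k_{j}}})\to e^{f_{\infty}}\,dV_{0}$ weakly; comparing with the previous paragraph forces $e^{f_{\infty}}\,dV_{0}=dV_{KE}$. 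Applying the operator $dd^{c}$, which is continuous for $L^{1}$-convergence of potentials, we obtain $\omega_{k_{j}}=\omega_{0}+dd^{c}f_{k_{j}}\to\omega_{0}+dd^{c}f_{\infty}=dd^{c}\log dV_{KE}$, and since $\beta=1$ this limit equals $\omega_{KE}$ by the identity $\omega_{KE}=\beta^{-1}dd^{c}\log dV_{KE}$. Thus every subsequence of $(\omega_{k})$ has a further subsequence converging weakly to $\omega_{KE}$, and since this weak topology is metrizable, the full sequence converges weakly to $\omega_{KE}$.

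The heart of the matter is the last paragraph's passage from weak convergence of the one-point correlation measures $\E(\delta_{N_{k}})$ to convergence of their logarithmic potentials; as arranged above this needs no new analytic input beyond Theorem \ref{thm:ke intro} --- only the $L^{1}$-compactness of bounded subsets of $\mathrm{PSH}(X,\omega_{0})$ and the Jensen/sub-mean-value bound. The price is that it yields only the weak topology in the Corollary; upgrading to, say, $C^{\infty}$ convergence of $\omega_{k}$ would require genuine uniform estimates on the densities $\E(\delta_{N_{k}})$, a strictly harder problem.
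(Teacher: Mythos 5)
Your argument is correct and follows the same basic route as the paper: Theorem \ref{thm:ke intro} gives weak convergence of the expectations $\E(\delta_{N_{k}})$ towards $dV_{KE}$, and the corollary is then read off from $\omega_{k}=dd^{c}\log\E(\delta_{N_{k}})$. The paper, being expository, passes over the last step without comment; you correctly observe that $dd^{c}\log$ is not continuous under weak convergence of measures and close this gap with the standard quasi-psh compactness argument: since $\omega_{k}\geq0$, the potentials $f_{k}=\log\left(\E(\delta_{N_{k}})/dV_{0}\right)$ lie in $PSH(X,\omega_{0})$; the normalization $\int_{X}e^{f_{k}}dV_{0}=1$ together with Jensen and the sub-mean-value inequality pins $\sup_{X}f_{k}$ in a fixed interval; $L^{1}$-compactness of such families plus dominated convergence identifies every cluster point with $\log\left(dV_{KE}/dV_{0}\right)$; and continuity of $dd^{c}$ under $L^{1}$-convergence of potentials, together with the subsequence principle, finishes the proof. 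This is a genuine (if standard) addition to what the paper writes, and it is the natural way to make the deduction rigorous.

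Two cosmetic remarks. First, the density of $\E(\delta_{N_{k}})$ with respect to $dV_{0}$ need only be continuous and positive for your argument --- smoothness is not obvious because of the fractional power $2/k$ in the integrand, and is nowhere used, so the claim $f_{k}\in C^{\infty}(X)$ should be weakened. Second, the positivity $\omega_{k}\geq0$ that you take as input is exactly the nontrivial fact the paper invokes when asserting that $\omega_{k}$ is a K\"ahler metric for $k$ large (in the Fano case it is attributed to positivity of direct image bundles); it can also be seen directly from plurisubharmonicity of $\log\int e^{\psi(\cdot,t)}d\mu(t)$ when $\psi(\cdot,t)$ is psh for each $t$. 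Since the corollary's statement already presupposes that the $\omega_{k}$ are K\"ahler, quoting this is legitimate.
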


It does not seem clear how to directly prove the convergence of the
measures $\E(\delta_{N_{k}})$ on $X$ (and the Kähler metrics $\omega_{k})$
without first proving the stronger convergence in probability in Theorem
\ref{thm:ke intro}. Moreover, as explained in Section \ref{sec:The-case-of pos beta},
the convergence in probability is shown to hold in the stronger exponential
sense of a Large Deviation Principle (LDP). Endowing the space $\mathcal{P}(X)$
of all probability measures on $X$ by a metric compatible with the
weak topology and denoting by $B_{\epsilon}(\mu)$ the ball of radius
$\epsilon$ centered at a given element $\mu\in\mathcal{P}(X)$ this
essentially means that there exists a functional $I(\mu)$ on $\mathcal{P}(X),$
called the\emph{ rate functional,} such that
\begin{equation}
\text{Prob }\left(\frac{1}{N}\sum_{i=1}^{N}\delta_{x_{i}}\in B_{\epsilon}(\mu)\right)\sim e^{-NI(\mu)}\label{eq:LDP prob balls intro}
\end{equation}
when first $N\rightarrow\infty$ and then $\epsilon\rightarrow0$
(see Section \ref{subsec:LDP} for the precise meaning of a LDP).
Moreover, the\emph{ }rate functional $I(\mu)$ is non-negative and
vanishes iff $\mu=dV_{KE}$ In fact, if $\omega$ is a Kähler form
in the first Chern class $c_{1}(K_{X}),$ then the rate functional
$I$ may be identified with the the Mabuchi  functional on the space
of Kähler metrics: 
\[
I(\frac{\omega^{n}}{\int_{X}\omega^{n}})=\mathcal{M}(\omega),
\]
 where $\mathcal{M}(\omega)$ denotes Mabuchi  functional on the space
of Kähler metrics in $c_{1}(K_{X}).$ Once the LDP \ref{eq:LDP prob balls intro}
has been established the convergence in probability in Theorem \ref{thm:ke intro}
then follows from the well-known fact in Kähler geometry that $\omega_{KE}$
is unique minimizer of $\mathcal{M}.$ More precisely, since the rate
functional $I(\mu)$ is defined on the whole space of probability
measure $\mathcal{P}(X)$ and not only on the dense subspace of volume
forms, some additional arguments are required, using variational calculus
on $\mathcal{P}(X)$ (see Theorem \ref{thm:free energy beta pos}). 

\subsubsection{\label{subsec:The-proof-of LDP intro}The proof of the LDP for $\beta>0$
and statistical mechanics }

Fixing a volume form $dV$ on $X$ the canonical probability measure
\ref{eq:canon prob measure intro} may be expressed as 

\[
\mu^{(N)}=\frac{1}{Z_{N_{k}}}\left\Vert \det S^{(k)}\right\Vert ^{2/k}dV^{\otimes N},
\]
 where $\left\Vert \cdot\right\Vert $ denotes the metric on $K_{X}$
induced by the volume form $dV.$ The starting the point of the proof
of the LDP \ref{eq:LDP prob balls intro} is to rewrite this expression
 as a \emph{Gibbs measure:}
\begin{equation}
\mu_{\beta}^{(N_{k})}=\frac{e^{-\beta NE^{(N)}}}{Z_{N,\beta}}dV^{\otimes N},\,\,\,Z_{N,\beta}:=\int_{:X^{N}}e^{-\beta NE^{(N)}}dV^{\otimes N}\label{eq:def of Gibbs measure intro}
\end{equation}
 with 
\begin{equation}
E^{(N)}:=-\frac{1}{kN}\log\left\Vert \det S^{(k)}(x_{1},...,x_{N_{k}})\right\Vert ^{2},\,\,\,\beta=1\label{eq:def of E N intro}
\end{equation}
In the general terminology of statistical mechanics, if $X$ is a
Riemannian manifold (where we assume that the Riemannian volume form
$dV$ is normalized) and $E^{(N)}$ is a given symmetric function
on $X^{N},$ called the \emph{energy per particle}, the Gibbs measure
\ref{eq:def of Gibbs measure intro} represents the microscopic equilibrium
state of $N$ interacting identical particles on $X$  at inverse
temperature $\beta.$ The normalizing constant $Z_{N,\beta}$ is called
the \emph{partition function.}

The proof of the LDP is inspired by the notion of a\emph{ mean field
approximations} in physics. Briefly, the idea is to first show that
there exists a functional $E$ on $\mathcal{P}(X)$ such that the
energy per particle, may be approximated as
\[
E^{(N)}(x_{1},...x_{N})\approx E(\mu),\,\,\,\,\frac{1}{N}\sum_{i=1}^{N}\delta_{x_{i}}\approx\mu
\]
in an appropriate sense, as $N\rightarrow\infty.$ Formally, this
suggests that the rate functional is given by 
\[
I(\mu)=F_{\beta}(\mu)-\inf_{\mathcal{P}(X)}F_{\beta},\,\,\,F_{\beta}(\mu)=\beta E(\mu)+D_{dV}(\mu)\in]0,\infty],
\]
 where $D_{dV}(\mu)$ is the entropy of $\mu$ relative to $dV,$
which arises when integrating the volume $dV^{\otimes N}$ form over
small balls in the $N-$particle configuration space $X^{N}/S_{N}$
(see Section \ref{subsec:The-proof-of Thm LDP text}).

In the general statistical mechanical setup $E(\mu)$ represents the\emph{
energy }of the macroscopic state $\mu$ and $F_{\beta}(\mu)$ its
\emph{free energy,} at inverse temperature $\beta.$ In the present
setting the role of the macroscopic energy $E(\mu)$ is played by
the \emph{pluricomplex energy }of the measure $\mu$ (introduced in
\cite{bbgz}), defined with respect to the Kähler form $-\text{Ric \ensuremath{dV}}.$
More generally, the same proof yields the following general result,
where the role of $K_{X}$ is played by a given positive line bundle
$L$ over a compact complex manifold $X.$
\begin{thm}
\label{thm:LDP intro}Let $L$ be a positive line bundle over a compact
complex manifold $(X,L).$ Given a volume form $dV$ on $X$ and a
metric $\left\Vert \cdot\right\Vert $ on $L$ denote by $\mu_{\beta}^{(N)}$
the corresponding Gibbs measure \ref{eq:def of Gibbs measure intro},
at inverse temperature $\beta\in]0,\infty[.$ Then the laws of the
corresponding random measures $\delta_{N}$ on $(X^{N},\mu_{\beta}^{(N)})$
satisfy a Large Deviation Principle (LDP) with speed $N$ and rate
functional $F_{\beta}(\mu)-C_{\beta},$ where 
\[
F_{\beta}(\mu)=\beta E_{\omega_{0}}(\mu)+D_{dV}(\mu),\,\,\,C_{\beta}=\inf_{\mathcal{P}(X)}F_{\beta}.
\]
\end{thm}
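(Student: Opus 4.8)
The plan is to deduce the LDP from Sanov's theorem, by viewing the Gibbs measure $\mu_\beta^{(N)}$ (with $N=N_k:=\dim H^0(X,kL)$, so that $k\to\infty$ is understood) as the exponential tilt of the product measure $dV^{\otimes N}$ by the density $Z_{N,\beta}^{-1}e^{-\beta NE^{(N)}}$. By Sanov's theorem the push-forward of $dV^{\otimes N}$ under the empirical map $(x_1,\dots,x_N)\mapsto\delta_N$ satisfies an LDP on $\mathcal P(X)$ with speed $N$ and good rate functional the relative entropy $D_{dV}(\mu)$ --- concretely, this is the asymptotics of the $dV^{\otimes N}$-mass of small balls in $X^N/S_N$. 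If $E^{(N)}$ were a fixed continuous function of $\delta_N$, Varadhan's lemma would immediately yield the tilted LDP with rate functional $D_{dV}(\mu)+\beta E(\mu)-C_\beta$. The substance of the theorem is therefore twofold: first, to identify the macroscopic energy $E$ with the pluricomplex energy $E_{\omega_0}$ of \cite{bbgz}, taken with respect to the curvature form $\omega_0\in c_1(L)$ of the chosen metric on $L$; and second, to supply the control made necessary by the fact that $E^{(N)}$ is \emph{not} a continuous function of $\delta_N$ --- it equals $+\infty$ on every diagonal and is unbounded --- so that Varadhan's lemma does not apply as it stands.

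The technical core is an asymptotic comparison of the Hamiltonians $E^{(N)}$ with $E_{\omega_0}$, a $\Gamma$-type convergence. For the lower (semicontinuity) half, I would show that $\delta_{N_j}\to\mu$ forces $\liminf_j E^{(N_j)}\ge E_{\omega_0}(\mu)$: since $\log\|\det S^{(k)}\|^2$ is the logarithm of the modulus squared of a holomorphic section of $kK_{X^N}$, hence plurisubharmonic for the product metric, the sub-mean-value inequality allows point evaluations to be traded for averages against $\delta_N$ at an error that is negligible on the exponential scale, and feeding this into the Berman--Boucksom asymptotics for the volumes of unit balls of $H^0(X,kL)$ and the associated ``energy at equilibrium'' \cite{b-b} --- the Legendre-dual description of $E_{\omega_0}$ via the $\mathcal L$-functional on metrics on $L$ --- produces the bound. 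For the matching recovery half, I would show that every $\mu$ with $F_\beta(\mu)<\infty$ admits configurations with $\delta_N\to\mu$ and $\limsup_N E^{(N)}\le E_{\omega_0}(\mu)$; it is enough to treat $\mu=\mathrm{MA}(\phi)/(L^n)$ with $\phi$ a smooth $\omega_0$-plurisubharmonic potential, the general case following by the quasi-plurisubharmonic projection together with the density of such $\mu$ along which $E_{\omega_0}$ and $D_{dV}$ are continuous, and for such $\mu$ the configurations are furnished by near-maximizers of $\|\det S^{(k)}\|^2$ weighted by $k\phi$, whose empirical measures converge to $\mathrm{MA}(\phi)/(L^n)$ by \cite{b-b-w} with the expected energy by \cite{b-b}.

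With this comparison in hand, the LDP assembles in the usual way. One first proves, for the \emph{unnormalized} weighted measures, the bounds $\limsup_N\tfrac1N\log\int_{\{\delta_N\in\mathcal F\}}e^{-\beta NE^{(N)}}\,dV^{\otimes N}\le-\inf_{\mathcal F}F_\beta$ for every weakly closed $\mathcal F\subset\mathcal P(X)$, and $\liminf_N\tfrac1N\log\int_{\{\delta_N\in\mathcal U\}}e^{-\beta NE^{(N)}}\,dV^{\otimes N}\ge-\inf_{\mathcal U}F_\beta$ for every weakly open $\mathcal U$: the first by covering $\mathcal F$ with finitely many small balls, on each of which the $\Gamma$-lower bound on $E^{(N)}$ and the lower semicontinuity of $E_{\omega_0}$ bound the integrand from above while the Sanov estimate bounds the $dV^{\otimes N}$-mass, letting $N\to\infty$ and then shrinking the balls; the second from the recovery configurations, which fill a tube of $dV^{\otimes N}$-mass at least $e^{-N(D_{dV}(\mu)+o(1))}$ on which $e^{-\beta NE^{(N)}}\ge e^{-\beta N(E_{\omega_0}(\mu)+o(1))}$. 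Taking $\mathcal F=\mathcal U=\mathcal P(X)$ yields the partition-function asymptotics $\lim_N\tfrac1N\log Z_{N,\beta}=-\inf_{\mathcal P(X)}F_\beta=-C_\beta$; dividing by $Z_{N,\beta}$ then turns the two bounds into the asserted LDP for $\mu_\beta^{(N)}$ with rate functional $F_\beta-C_\beta$, which is a \emph{good} rate functional because $E_{\omega_0}$ and $D_{dV}$ are both lower semicontinuous and $\mathcal P(X)$ is weakly compact.

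The step I expect to be the genuine obstacle is the second point above. Because $E^{(N)}$ blows up on the diagonals and is unbounded, neither Varadhan's lemma nor a naive perturbation of Sanov is available, and one must prove that this blow-up is ``thin'' --- invisible on the exponential scale $e^{-N(\,\cdot\,)}$ --- so that $E^{(N)}$ is effectively governed by the continuous functional $E_{\omega_0}$; relatedly, one must push the recovery construction from smooth volume forms to an arbitrary $\mu$ of finite free energy. This is where the pluripotential theory underlying \cite{b-b,b-b-w,bbgz} --- growth of balls of holomorphic sections, energy at equilibrium, the $\mathcal L$-functional, and the Legendre duality relating $E_{\omega_0}$ to the ``thermodynamic'' functional on metrics whose Euler--Lagrange equation is the relevant complex Monge--Amp\`ere equation --- carries the weight. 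The arbitrariness of $dV$ and of $\|\cdot\|$ causes no extra difficulty, since $L>0$ already makes $c_1(L)$ a K\"ahler class with bounded equilibrium potentials. This is precisely the scheme carried out for $L=K_X$ in \cite{berm8}.
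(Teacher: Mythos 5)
Your overall architecture matches the paper's: rewrite $\mu_\beta^{(N)}$ as a Gibbs measure with Hamiltonian $NE^{(N)}$, use Sanov's theorem for the $dV^{\otimes N}$-mass of small balls, identify the macroscopic energy with the pluricomplex energy $E_{\omega_0}$ via the asymptotics of weighted determinants from \cite{b-b} (the paper does this through the dual criterion for Gamma-convergence, i.e. the Legendre--Fenchel description of $E_{\omega_0}$ together with the Gateaux differentiability of $\mathcal{E}\circ P$), and then prove ball-wise upper and lower bounds. The upper-bound half of your argument (Gamma-liminf of $E^{(N)}$ plus Sanov on closed sets) is sound in outline.

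The genuine gap is in your lower bound. You assert that the recovery configurations ``fill a tube of $dV^{\otimes N}$-mass at least $e^{-N(D_{dV}(\mu)+o(1))}$ on which $e^{-\beta NE^{(N)}}\ge e^{-\beta N(E_{\omega_0}(\mu)+o(1))}$,'' but a recovery sequence produces a \emph{single} configuration per $N$ (essentially a weighted Fekete-type array), and nothing in your construction shows that an exponentially large set of configurations near $\mu$ has energy close to $E_{\omega_0}(\mu)$: Sanov gives the right mass for the event $\{\delta_N\in B_\epsilon(\mu)\}$, but $E^{(N)}$ is unbounded above there, and controlling its typical size on that event is exactly the ``mean energy'' difficulty that blocks a naive tilted-Sanov/Varadhan argument. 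The paper's proof supplies precisely this missing step by a different, non-pluripotential tool: the asymptotic sub-mean inequality \ref{eq:as submean ineq}, which exploits that $NE^{(N)}$ is uniformly quasi-superharmonic on $X^{N}$ (equipped with $N^{-1}$ times the product metric, so that $\delta_N$ is an isometry onto the Wasserstein space) and is proved by generalizing the Li--Schoen sub-mean inequality to Riemannian orbifolds with a distortion constant of subexponential growth in the dimension, via the Cheng--Yau gradient estimate. This inequality converts the value of $e^{-\beta NE^{(N)}}$ at the one good recovery configuration into a lower bound for the integral over a small ball around it, which is what your ``tube'' claim would need. You do flag that this neighborhood of the argument is the obstacle, but you attribute its resolution to the pluripotential theory of \cite{b-b,b-b-w,bbgz}; that input settles the Gamma-convergence, not the lower bound, so as written the proof is incomplete at its crucial point. (A small inaccuracy besides: for general $L$ the determinant is a section of $(kL)^{\boxtimes N}$ over $X^N$, not of $kK_{X^N}$; the quasi-psh property of $-E^{(N)}$ is with respect to the curvature of the chosen metric on $L$.)
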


As a consequence, the empirical measure $\delta_{N}$ converges in
probability towards the unique minimizer $\mu_{\beta}$ of the free
energy functional $F_{\beta}$ on $\mathcal{P}(X),$ which is the
volume form characterized by the property that the Kähler metric 
\[
\omega_{\beta}:=\omega_{0}+\frac{1}{\beta}dd^{c}\log\frac{\mu_{\beta}}{dV}
\]
is the unique solution to the\emph{ twisted Kähler-Einstein equation
}
\[
\mbox{\ensuremath{\mbox{Ric}}\ensuremath{\omega}}=-\beta\omega+\theta,\,\,\,\theta:=\beta\omega_{0}+\ensuremath{\mbox{Ric}}\ensuremath{dV}.
\]
 In particular, specializing the previous theorem to the ``canonical
case'' $L=K_{X},$ $\omega_{0}:=-\ensuremath{\mbox{Ric}}\ensuremath{dV}$
and $\beta=1$ yields Theorem \ref{thm:ke intro}. The technical ingredients
in the proof of the LDP in Theorem \ref{thm:LDP intro} are discussed
in Section \ref{sec:The-case-of pos beta}. An important feature of
the case $\beta>0$ is that $\beta E^{(N)}$ is quasi-superharmonic.
However, allowing negative values of $\beta$ is crucial in the case
when $X$ is a Fano manifold, as discussed below.

\subsubsection{Varieties of positive Kodaira dimension and log pairs}

Before turning to Fano manifolds we recall that, as shown in \cite{berm8 comma 5},
Theorem \ref{thm:ke intro} holds in much greater generality. Indeed,
the minimal requirement for the previous setup to apply is that the
plurigenera $N_{k}$ of $X$ tend to infinity as, $k\rightarrow\infty.$
In the classical terminology of algebraic geometry this means that
$K_{X}$ has strictly positive Kodaira dimension $\kappa$ and is
shown in \cite{berm8 comma 5} the analog of Theorem \ref{thm:ke intro}
then holds if $dV_{KE}$ is replaced by the canonical measure on $X$
first introduced in \cite{ts,s-t} in different contexts. Then $\omega_{k}$
(defined by formula \ref{eq:def of omega k KX pos intro}) defines
a canonical sequence of positive currents in $c_{1}(K_{X})$ which
are Kähler on the complement in $X$ of the base locus defined by
$kK_{X}$ and $\omega_{k}$ converges in the weak sense of currents
to the pull-back to $X$ of the canonical twisted Kähler-Einstein
current on the $\kappa-$dimensional base $Y$ of the litaka fibration
$X\rightarrow Y,$ whose generic fibers are Calabi-Yau manifolds of
dimension $n-\kappa$. 

In fact, these results hold more generally when $X$ is a (normal)
projective algebraic variety (with klt singularities) and may be formulated
in a birationally invariant manner using the general the setting of
log pairs $(X,\Delta)$, in the usual sense of MMP \cite{ko}. Recall
that a \emph{log pair} $(X,\Delta)$ is a complex (normal) variety
$X$ endowed with a $\Q-$divisor $\Delta$ on $X,$ i.e. a sum of
irreducible subvarieties of $X$ of codimension one, with coeffients
$w_{i}$ in $\Q.$ Then the role of the canonical line bundle $K_{X}$
is placed by the \emph{log canonical line bundle} 
\[
K_{(X,\Delta)}:=K_{X}+\Delta
\]
(when defined as a $\Q-$line bundle) and the role of the Ricci curvature
$\mbox{Ric \ensuremath{\omega}}$ of a metric $\omega$ is played
by twisted Ricci curvature $\mbox{Ric \ensuremath{\omega}}-[\Delta],$
where $[\Delta]$ denotes the current of integration defined by $\Delta.$
The corresponding \emph{log Kähler-Einstein equation} reads
\begin{equation}
\mbox{Ric \ensuremath{\omega}}-[\Delta]=\beta\omega,\label{eq:log KE eq with beta intro}
\end{equation}
 where $[\Delta]$ denotes the current of integration along $\Delta.$
When $\beta$ is non-zero existence of a solution $\omega_{KE}$ forces
\[
\beta(K_{X}+\Delta)>0
\]
In general, the equation \ref{eq:log KE eq with beta intro} should
be interpreted in the weak sense of pluripotential theory \cite{egz,bbegz}.
However, in the log smooth case it follows from \cite{jmr,g-p} that
a positive current $\omega$ solves the equation \ref{eq:log KE eq with beta intro}
iff $\omega$ is a bona fide Kähler-Einstein metric on $X-\Delta$
and $\omega$ has edge-cone singularities along $\Delta,$ with cone-angle
$2\pi(1-w_{i}),$ prescribed by the coefficents $w_{i}$ of $\Delta.$ 
\begin{rem}
\label{rem:klt}Starting with a normal variety $Y$ such that $K_{Y}$
is defined as a $\Q-$line bundle and taking $X$ to be a non-singular
resolution of $Y,$ $X\rightarrow Y,$ the pull-back of $K_{Y}$ to
$X$ is of the form $K_{(X,\Delta)}$ for an exceptional divisor $\Delta$
on $X$ that may be assumed to have simple normal crossings. The divisor
$\Delta$ is said to be\emph{ klt} (Kawamata Log Terminal) if all
of its coefficients $w_{i}$ satisfy $w_{i}<1$ \cite{ko}. If this
is the case the variety $Y$ is also said to have klt singularities.
In another direction, if $Y$ is a projective algebraic manifold with
positive (but not maximal) Kodaira dimension, then $K_{Y}$ is the
pull-back of $K_{(X,\Delta)}$ for a klt divisor $\Delta$ supported
on the ramification locus of the litaka fibration $Y\rightarrow X.$ 
\end{rem}

As shown in \cite{berm8 comma 5} Theorem \ref{thm:ke intro} can
be generalized to any klt pair $(X,\Delta)$ such that $(X,\Delta)$
has positive Kodaira dimension. 

\subsection{The case $\beta<0$}

When $K_{X}$ is negative, that is, $X$ is a Fano manifold, we replace
the zero-dimensional spaces $H^{0}(X,kK_{X})$ with the spaces $H^{0}(X,-kK_{X})$
of dimension
\[
N_{k}:=\dim H^{0}(X,-kK_{X}),
\]
tending to infinity, as $k\rightarrow\infty.$ We are then forced
to replace the power $2/k$ in formula \ref{eq:canon prob measure intro}
with a negative power $-2/k$ in order to ensure that
\begin{equation}
\left|\det S^{(k)}\right|^{-2/k}\label{eq:density Fano}
\end{equation}
transforms as a density on $X^{N_{k}},$ i.e. defines a global measure
on $X^{N_{k}}.$ However, then the corresponding normalization constant
$Z_{N_{k}}$
\[
Z_{N_{k}}:=\int_{X^{N_{k}}}\left|\det S^{(k)}\right|^{-2/k}
\]
may diverge, since the integrand blows-up along the zero-locus in
$X^{N_{k}}$ of the section $\det S^{(k)}.$ Accordingly, we will
say that a Fano manifold $X$ is\emph{ Gibbs stable at level $k$}
if $Z_{N_{k}}<\infty$ and \emph{Gibbs stable} if it is Gibbs stable
at level $k$ for $k$ sufficiently large. For a Gibbs stable Fano
manifold $X$ we set 
\begin{equation}
\mu^{(N_{k})}:=\frac{1}{Z_{N_{k}}}\left|\det S^{(k)}\right|^{-2/k},\label{eq:canon prob measure Fano intro}
\end{equation}
 which defines a canonical symmetric probability measure on $X^{N_{k}},$
i.e. a canonical random point process on $X$ with $N_{k}$ points.
We thus arrive at the following probabilistic analog of the Yau-Tian-Donaldson
conjecture
\begin{conjecture}
\label{conj:Fano with triv autom intr}Let $X$ be Fano manifold.
Then 
\begin{itemize}
\item $X$ admits a unique Kähler-Einstein metric $\omega_{KE}$ if and
only if $X$ is  Gibbs stable. 
\item If $X$ is Gibbs stable, the empirical measures $\delta_{N}$ of the
corresponding canonical point processes converge in probability towards
the normalized volume form of $\omega_{KE}.$ 
\end{itemize}
\end{conjecture}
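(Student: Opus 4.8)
The plan is to prove Conjecture~\ref{conj:Fano with triv autom intr} by establishing a Large Deviation Principle analogous to Theorem~\ref{thm:LDP intro}, but now in the regime $\beta<0$ corresponding to the density \ref{eq:density Fano}. The first step is to rewrite the canonical measure \ref{eq:canon prob measure Fano intro} as a Gibbs measure of the form \ref{eq:def of Gibbs measure intro} with $\beta=-1$, where $E^{(N)}$ is the energy per particle \ref{eq:def of E N intro} with respect to a fixed reference volume form $dV$ (say $dV=\omega_0^n$ for a K\"ahler form $\omega_0\in c_1(-K_X)$). One then wants to show, using the theory of the pluricomplex energy $E_{\omega_0}$ from \cite{bbgz} together with asymptotics of Bergman kernels for $-kK_X$, that $E^{(N)}$ admits a mean-field approximation by the functional $E_{\omega_0}(\mu)$ in the sense sketched after Theorem~\ref{thm:LDP intro}. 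The formal candidate for the rate functional is then $F_{-1}(\mu)-\inf F_{-1}$, where $F_{-1}(\mu)=-E_{\omega_0}(\mu)+D_{dV}(\mu)$, and minimizers of $F_{-1}$ correspond to solutions of the K\"ahler-Einstein equation $\mathrm{Ric}\,\omega=\omega$ via $\omega_{KE}=\omega_0+dd^c\log(\mu/dV)$.

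The crucial difference from the $\beta>0$ case is that now $-E_{\omega_0}$ enters with the ``wrong'' sign, so $F_{-1}$ is no longer convex and the entropy term $D_{dV}$ does not dominate automatically; in fact $F_{-1}$ is, up to a constant, the Ding functional (or equivalently the Mabuchi functional, which agrees with it on K\"ahler-Einstein candidates), whose coercivity is precisely equivalent to K-stability by \cite{bbj}. Thus the second step is to identify $\inf_{\mathcal P(X)} F_{-1}$ as finite and attained exactly when $X$ is K-polystable, and to relate this to Gibbs stability: one direction should use that finiteness of $Z_{N_k,-1}$ for large $k$ (Gibbs stability) controls, via a log-canonical-threshold/multiplier-ideal estimate applied to $\det S^{(k)}$, a lower bound on $F_{-1}$ along the subspace of measures with density $|\det S^{(k)}|^{-2/k}$-type singularities, and hence eventually a uniform lower bound giving K-stability; the converse should use the Yau-Tian-Donaldson theorem \cite{c-d-s} (or the variational characterization \cite{bbj}) to produce $\omega_{KE}$, and then an integrability estimate for the Bergman-type densities to conclude $Z_{N_k}<\infty$.

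The third step is the genuine probabilistic one: upgrade the mean-field convergence and the variational picture to an actual LDP. Here the main technical obstacle is the lack of semicontinuity and compactness that one gets for free when $\beta>0$: the functional $\mu\mapsto -E_{\omega_0}(\mu)$ is upper semicontinuous rather than lower semicontinuous, so the usual Varadhan/G\"artner-Ellis machinery does not apply directly, and the partition functions $Z_{N_k,-1}$ may be dominated by configurations where points collide, i.e. by a ``phase transition.'' The strategy, following the variational approach alluded to in the introduction and carried out for log Fano curves in \cite{berm11}, is to prove the LDP \emph{conditionally} on the absence of such a phase transition --- concretely, on the convergence of the free energies $-\tfrac1{N_k}\log Z_{N_k,-1}\to \inf F_{-1}$, which is the analytic property of the Archimedean zeta function $\beta\mapsto Z_{N,\beta}$ referred to in the abstract. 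Granting this, a standard argument (exponential tightness of $\delta_{N_k}$ on $\mathcal P(X)$, a lower-bound estimate via the entropy $D_{dV}$ controlling volumes of small balls in $X^{N_k}/S_{N_k}$, and an upper-bound estimate via the mean-field approximation of $E^{(N)}$) yields the LDP with rate functional $F_{-1}-\inf F_{-1}$.

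Finally, once the LDP is in hand, the second bullet of the conjecture follows exactly as in the $\beta>0$ case: the rate functional vanishes precisely on the (unique, modulo $\mathrm{Aut}(X)_0$) minimizer $dV_{KE}$ of the Ding/Mabuchi functional, so $\delta_{N_k}$ concentrates there; some care with variational calculus on all of $\mathcal P(X)$, not just on volume forms, is needed as in Theorem~\ref{thm:free energy beta pos}. The first bullet then combines the equivalence Gibbs stable $\Leftrightarrow$ K-polystable from the second step with \cite{c-d-s}. I expect the hard part to be precisely the step that the excerpt itself flags as conjectural: controlling the large-$N$ behaviour of $Z_{N_k,-1}$ uniformly in $k$ --- i.e.\ ruling out, or hypothesizing away, the phase transition --- since without it the $\liminf$ and $\limsup$ of the free energies need not coincide and the LDP (hence the sharp concentration) can fail even when $\omega_{KE}$ exists.
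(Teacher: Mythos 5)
You should first be clear about the status of the target: the statement is a \emph{conjecture} in the paper (the probabilistic analog of the Yau--Tian--Donaldson conjecture), the paper does not prove it, and neither does your proposal. What you outline is, in substance, the paper's own variational route from Section \ref{sec:Towards-the-case of neg}: rewrite $\mu^{(N_k)}$ as a Gibbs measure at $\beta=-1$, take $F_{-1}=-E_{\omega_0}+D_{dV}$ as the candidate rate functional, identify its minimizer with $dV_{KE}$, and make the concentration conditional on the free-energy asymptotics $-\frac{1}{N}\log Z_{N,-1}\to\inf_{\mathcal P(X)}F_{-1}$, i.e. on the absence of a phase transition (this is exactly Theorems \ref{thm:Fano setting text} and \ref{thm:phase}, the latter via real-analyticity of the limiting free energy). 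You also correctly locate the obstruction: $-E_{\omega_0}$ is not lsc, so the $\beta>0$ machinery (and G\"artner--Ellis) breaks down.

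There are, however, two concrete gaps relative to what you claim. First, for the equivalence in the first bullet you assert ``Gibbs stable $\Leftrightarrow$ K-(poly)stable''. Only one implication is known, and only in the uniform regime: \emph{uniform} Gibbs stability implies uniform K-stability (Fujita--Odaka, Theorem \ref{thm:(Fujita-Odaka)-Uniform-Gibbs}), whence $\omega_{KE}$ exists by \cite{c-d-s} or \cite{bbj}; Gibbs stability alone (lct of $\mathcal D_k/k$ greater than $1$ at each level, without a uniform margin) is not known to suffice, and the converse direction --- that existence of $\omega_{KE}$ forces $Z_{N_k}<\infty$ --- is completely open; your one-line suggestion of ``an integrability estimate for the Bergman-type densities'' is precisely the missing uniform lower bound on $\mbox{lct}(X^{N_k},\mathcal D_k/k)$ and is not an argument (the paper defers this to the non-Archimedean approach of \cite{bbj2}). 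Note also that Gibbs stability forces $\text{Aut}(X)_0$ trivial, so the correct analogy is K-stability, not K-polystability. Second, granting $-\frac{1}{N}\log Z_{N,-1}\to\inf F_{-1}$, a ``standard argument'' does \emph{not} yield the full LDP: the paper derives only convergence in law, by the Gibbs variational principle together with Gamma-convergence of the mean free energies \emph{relative} to a subset containing the minimizers (Step 1 supplies the recovery sequence; the free-energy hypothesis replaces the missing lower bound, i.e. the conjectural mean-energy bound \ref{eq:upper bound property}), and the genuine LDP of Conjecture \ref{conj:LDP fano intro} is equivalent to bona fide Gamma-convergence (Remark \ref{rem:LDP vs Gamma}), which does not follow from the $\beta=-1$ asymptotics alone. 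Since convergence in law towards the deterministic $dV_{KE}$ is all the second bullet requires, this second point is a matter of overstatement rather than a fatal flaw, but as written your proposal proves strictly less than it announces, and even its conditional part coincides with what the paper already establishes.
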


If $X$ is  Gibbs stable then

\begin{equation}
\omega_{k}:=-dd^{c}\log\E(\delta_{N_{k}})=-dd^{c}\log\int_{X^{N_{k}-1}}\left|\det S^{(k)}\right|^{-2/k}\label{eq:def of omega k Fano}
\end{equation}
defines a sequence of canonical positive currents (as follows from
the positivity of direct image bundles in \cite{bern1}; see \cite[Prop 6.5]{berm8 comma 5}).
In analogy with Corollary \ref{cor:gen type conv of correl measu intro}
it seems natural to also conjecture that, if $X$ is  Gibbs stable,
then $\omega_{k}$ converges to a Kähler-Einstein metric on $X.$

It should be stressed that the Gibbs stability of $X$ implies that
the group $\text{Aut \ensuremath{(X)_{0}}}$ is trivial \cite[Prop 6.5]{berm8 comma 5}.
Accordingly, when comparing Conjecture \ref{conj:Fano with triv autom intr}
with the Yau-Tian-Donaldson conjecture one should view  Gibbs stability
as the analog of K-stability. 

In the light of Theorem \ref{thm:LDP beta pos text} it is natural
to pose the following stronger LDP form of the previous conjecture:
\begin{conjecture}
\label{conj:LDP fano intro}Let $X$ be a Fano manifold. Then $X$
admits a unique Kähler-Einstein metric iff the canonical measure $\mu^{(N)}$
is a probability measure for $N$ sufficiently large. Moreover, if
this is the case then the laws of the empirical measures $\delta_{N}$
on $(X^{N},\mu^{(N)})$ satisfy a Large Deviation Principle (LDP)
with speed $N$ and rate functional $F_{-1}(\mu)-C_{\beta},$ where
\[
F_{-1}(\mu)=-E_{\omega_{0}}(\mu)+D_{dV}(\mu),\,\,\,C_{-1}=\inf_{\mathcal{P}(X)}F_{-1}.
\]
\end{conjecture}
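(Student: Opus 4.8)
The plan is to mirror the structure of the proof of Theorem \ref{thm:LDP intro} (the $\beta>0$ case), but now track carefully where positivity of $\beta$ was used, and replace those steps with arguments valid at $\beta=-1$ under the Gibbs stability hypothesis. First I would rewrite the canonical measure \ref{eq:canon prob measure Fano intro} as a Gibbs measure $\mu_{-1}^{(N)} = e^{NE^{(N)}}dV^{\otimes N}/Z_{N,-1}$ with the same energy per particle $E^{(N)} = -\frac{1}{kN}\log\|\det S^{(k)}\|^2$ as in \ref{eq:def of E N intro}, now fixing a reference volume form $dV$ and a metric $\|\cdot\|$ on $-K_X$ with $\omega_0 := \mathrm{Ric}\, dV$ (the natural positive reference form for a Fano). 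The hypothesis that $\mu^{(N)}$ is a probability measure for $N$ large is precisely the statement $Z_{N,-1}<\infty$, i.e.\ Gibbs stability; this is what is needed even to make the LDP statement meaningful, and it is exactly the integrability condition that fails when $\beta>0$-type superharmonicity is no longer available.

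The core of the LDP is a Gibbs variational principle: one must show that $-\frac1N\log Z_{N,-1} \to \inf_{\mathcal P(X)} F_{-1}$ and, more precisely, the asymptotics \ref{eq:LDP prob balls intro} with rate $F_{-1}(\mu) - C_{-1}$. The upper bound and lower bound both hinge on the $\Gamma$-convergence of the functionals $E^{(N)}$ to the pluricomplex energy $E_{\omega_0}$, in the following sense: whenever $\delta_N(x_1,\dots,x_N)\to\mu$ weakly, one has $\liminf E^{(N)}(x_1,\dots,x_N) \ge E_{\omega_0}(\mu)$ (this is the key semicontinuity, and is where the new Theorem \ref{thm:lsc intro} on lower semicontinuity of the energy would be invoked), together with a matching recovery sequence. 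Combined with the standard large deviation estimate for the product measure $dV^{\otimes N}$ — whose rate functional is the relative entropy $D_{dV}$ (Sanov-type, via Stirling applied to the combinatorics of near-$\mu$ configurations in $X^N/S_N$) — the Laplace–Varadhan principle then yields the LDP for $\mu_{-1}^{(N)}$ with rate $F_{-1}(\mu) - C_{-1}$, provided $C_{-1} = \inf F_{-1} > -\infty$. The latter finiteness is itself a nontrivial coercivity statement equivalent, by the thermodynamical formalism discussed in Section \ref{sec:The-thermodynamical-formalism}, to the existence of a K\"ahler-Einstein metric, which gives the ``only if'' direction of the conjecture; the converse, that Gibbs stability implies $\inf F_{-1}>-\infty$, is the heart of the matter.

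The main obstacle — and the reason this is stated as a conjecture rather than a theorem — is precisely the lower bound on $E^{(N)}$ along weakly convergent configurations when $\beta<0$. For $\beta>0$, $\beta E^{(N)}$ is quasi-superharmonic in each variable, which forces an inequality $E^{(N)} \ge$ (mean-field value) by a submean-value argument and delivers the $\liminf$ bound essentially for free; at $\beta=-1$ this monotonicity is reversed and the blow-up of $\|\det S^{(k)}\|^{-2}$ along the zero divisor can in principle concentrate mass, so one cannot rule out a ``phase transition'' in which $-\frac1N\log Z_{N,-1}$ fails to converge to $\inf F_{-1}$. Making the argument work requires an energy bound — a uniform-in-$N$ estimate controlling $E^{(N)}$ from below in terms of $E_{\omega_0}(\delta_N)$ up to an $o(1)$ error — whose validity is equivalent to the analytic non-degeneracy (no phase transition) of the Archimedean zeta function $\beta\mapsto Z_{N,\beta}$ near $\beta=-1$. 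This is the conjectural energy bound referred to in the introduction; as noted there, it can be verified in the case of log Fano curves (so the conjecture is a theorem in dimension one), but in higher dimensions establishing it — equivalently, ruling out the phase transition — is the crux and is left open.
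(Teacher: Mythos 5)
You should first be clear that the statement you were asked to prove is Conjecture \ref{conj:LDP fano intro}: the paper contains no proof of it, only a variational program in Section \ref{sec:Towards-the-case of neg} (Gibbs variational principle, Gamma-convergence of the mean free energies $F_{-1}^{(N)}$ relative to a suitable subset of $\mathcal{P}(\mathcal{P}(X))$, recovery sequences built from product measures $\mu^{\otimes N}$, and one missing step), together with the conditional results Theorem \ref{thm:Fano setting text} and Theorem \ref{thm:phase}. Your text is likewise a program rather than a proof and explicitly leaves the decisive step open, so it cannot be judged complete; what can be assessed is whether you have located the right missing ingredient, and here there are two substantive inaccuracies.

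First, the missing bound goes in the opposite direction from what you state. The liminf inequality $\liminf_N E^{(N)}\geq E_{\omega_0}(\mu)$ along $\delta_N\rightarrow\mu$ (the Gamma-convergence \ref{eq:Gamma conv towards E text}) is insensitive to the sign of $\beta$, still holds at $\beta=-1$, and --- precisely because the sign is flipped --- is what makes the recovery-sequence half (Step 1, with $\Gamma_N=(\delta_N)_*\mu^{\otimes N}$) work. What is missing is the reverse half: the ``upper bound property of the mean energy'' \ref{eq:upper bound property}, i.e. $\limsup_N\int_{X^N}E^{(N)}\mu_{-1}^{(N)}\leq\int E_{\omega_0}\,\Gamma_{-1}$ for limit points $\Gamma_{-1}$ of the laws; your proposed ``estimate controlling $E^{(N)}$ from below in terms of $E_{\omega_0}(\delta_N)$'' is vacuous ($E_{\omega_0}$ is infinite on atomic measures) and would in any case enter with the wrong sign at $\beta=-1$. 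Relatedly, Theorem \ref{thm:lsc intro} is not a semicontinuity statement for configuration energies: it asserts lower semicontinuity of the free energy $F_{-1}$ on $\mathcal{P}(X)$ (equivalent to existence of a unique K\"ahler--Einstein metric), and is used via Lemma \ref{lem:concave F} to identify $\delta_{dV_{KE}}$ as the unique minimizer of the affine extension of $F_{-1}$. Second, invoking a Laplace--Varadhan argument is not legitimate here: $E^{(N)}$ is neither bounded nor continuous, which is exactly why the $\beta>0$ proof requires the asymptotic submean inequality \ref{eq:as submean ineq} resting on quasi-superharmonicity of $\beta E^{(N)}$, unavailable for $\beta<0$; and even granting the energy bound, the relative Gamma-convergence argument of Section \ref{sec:Towards-the-case of neg} yields only the convergence of $-N^{-1}\log Z_{N,-1}$ and convergence in law of $\delta_N$ toward $dV_{KE}$ (Theorem \ref{thm:Fano setting text}), not the full LDP, which by Remark \ref{rem:LDP vs Gamma} is equivalent to bona fide Gamma-convergence of the mean free energies. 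Finally, finiteness of $\inf_{\mathcal{P}(X)}F_{-1}$ is weaker than existence of a K\"ahler--Einstein metric (the relevant equivalences are with coercivity and lower semicontinuity, Theorems \ref{thm:free energy Fano} and \ref{thm:free energy beta minus one Fano}), and the equivalence ``K\"ahler--Einstein iff Gibbs stable'' is open in both directions; what is known is only that uniform Gibbs stability implies uniform K-stability and hence existence, by \cite{f-o} combined with \cite{c-d-s} or \cite{bbj}.
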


To highlight the connection to the LDP in Theorem \ref{thm:LDP beta pos text},
fix a volume form $dV$ on $X$ and denote by $\left\Vert \cdot\right\Vert $
the induced metric on the anti-canonical line bundle $-K_{X}.$ Then
the canonical probability measure \ref{eq:canon prob measure Fano intro}
may be expressed as 
\begin{equation}
\mu_{\beta}^{(N)}:=\frac{1}{Z_{N_{k},\beta}}\left\Vert \det S^{(k)}\right\Vert ^{2\beta/k}dV^{\otimes N},\,\,\,\,Z_{N_{k},\beta}:=\int_{X^{N_{k}}}\left\Vert \det S^{(k)}\right\Vert ^{2\beta/k}dV^{\otimes N},\label{eq:def of Gibbs measure Fano setting intro}
\end{equation}
 for $\beta=-1.$ Since the rate functional of a LDP is automatically
lower semi-continuous (lsc) the validity of the LDP in Conjecture
\ref{conj:LDP fano intro} would imply that the free energy functional
$F_{-1}$ is lsc on $\mathcal{P}(X).$ This is indeed the case. More
precisely, the following result holds, deduced from a combination
of results in \cite{ti,bbegz}: 
\begin{thm}
\label{thm:lsc intro}Let $X$ be a Fano manifold and set $\beta=-1.$
Then the free energy functional $F_{\beta}$ on the space $\mathcal{P}(X)$
of probability measures on $X$ is lower semi-continuous iff $X$
admits a unique Kähler-Einstein metric $\omega_{KE}.$ Moreover, if
$F_{\beta}$ is lsc, then the normalized volume form of $\omega_{KE}$
is the unique minimizer of $F_{\beta}.$
\end{thm}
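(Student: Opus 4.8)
\emph{The plan} is to read the functional $F_{-1}$ through the dictionary between probability measures and K\"ahler potentials. On the dense subset of $\mathcal{P}(X)$ consisting of the Monge--Amp\`ere measures $\mathrm{MA}(\varphi)$ of finite-energy potentials $\varphi$, the functional $F_{-1}$ agrees, up to an additive constant, with the Mabuchi functional $\mathcal{M}$ --- this is the expression of $\mathcal{M}$ through entropy and energy, i.e.\ the analog for $\beta=-1$ of the identification $I(\omega^{n}/\int_{X}\omega^{n})=\mathcal{M}(\omega)$ recalled above. I will freely use the following properties of the two ingredients of $F_{-1}$, all established in \cite{bbegz}: (i) the relative entropy $D_{dV}$ is lower semicontinuous on $\mathcal{P}(X)$, and the pluricomplex energy $E_{\omega_{0}}$ is lower semicontinuous and bounded from below; (ii) $\{D_{dV}<\infty\}\subset\{E_{\omega_{0}}<\infty\}$, so $F_{-1}$ is valued in $(-\infty,+\infty]$ and equals $+\infty$ off the finite-entropy locus; (iii) $E_{\omega_{0}}$ is continuous along weakly convergent sequences of uniformly bounded entropy; (iv) a finite-entropy measure $\mu$ minimizes $F_{-1}$ on $\mathcal{P}(X)$ if and only if it is the normalized volume form of a K\"ahler--Einstein metric, its Euler--Lagrange equation being the K\"ahler--Einstein equation.

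\emph{First I would prove} the implication ``unique K\"ahler--Einstein metric $\Rightarrow F_{-1}$ is lsc''. If $X$ carries a unique K\"ahler--Einstein metric then $\mathrm{Aut}(X)_{0}=\{I\}$ and, by Tian's properness theorem \cite{ti} (reworked in the pluripotential framework in \cite{bbegz}), the Mabuchi functional is coercive; through the identification above this makes $F_{-1}$ coercive on $\mathcal{P}(X)$, so that each sublevel set $\{F_{-1}\leq A\}$ has uniformly bounded entropy (coercivity together with an upper bound on $F_{-1}$ bounds both $E_{\omega_{0}}$ and $D_{dV}$). Now take $\mu_{j}\to\mu$ weakly with $\liminf_{j}F_{-1}(\mu_{j})=:L<\infty$ and pass to a subsequence realizing the liminf; the $\mu_{j}$ then eventually lie in $\{F_{-1}\leq L+1\}$, hence $\sup_{j}D_{dV}(\mu_{j})<\infty$. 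By (iii), $E_{\omega_{0}}(\mu_{j})\to E_{\omega_{0}}(\mu)$, and by (i), $\liminf_{j}D_{dV}(\mu_{j})\geq D_{dV}(\mu)$, so $L=-E_{\omega_{0}}(\mu)+\liminf_{j}D_{dV}(\mu_{j})\geq F_{-1}(\mu)$, which is lower semicontinuity at $\mu$.

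\emph{For the converse and the last assertion}, suppose $F_{-1}$ is lsc. Since $\mathcal{P}(X)$ is weakly compact and, by (ii), $F_{-1}>-\infty$ pointwise, $F_{-1}$ attains its infimum at some $\mu_{*}$; evaluating $F_{-1}$ at a smooth positive volume form shows $\inf F_{-1}<\infty$, so $F_{-1}(\mu_{*})<\infty$ and, by (ii), $\mu_{*}$ has finite entropy. By (iv), $\mu_{*}$ is the normalized volume form of a K\"ahler--Einstein metric $\omega_{*}$, which proves existence (and the ``moreover''). To exclude a nontrivial $\mathrm{Aut}(X)_{0}$: were it nontrivial, then (Matsushima reductivity, $\mathrm{Aut}(X)_{0}$ being the complexification of a compact group of isometries of $\omega_{*}$) it would contain a one-parameter subgroup $(g_{t})$ not acting by isometries of $\omega_{*}$; then $g_{t}^{*}\mu_{*}$ is again a minimizer of $F_{-1}$ for every $t$ (the Mabuchi functional, hence $F_{-1}$ on Monge--Amp\`ere measures, is $\mathrm{Aut}(X)_{0}$-invariant), while $g_{t}^{*}\mu_{*}$ converges weakly, as $t\to\infty$, to a measure $\mu_{\infty}$ carried by a proper analytic subset of $X$, hence of infinite entropy, so that $F_{-1}(\mu_{\infty})=+\infty$. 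Lower semicontinuity would then give $+\infty=F_{-1}(\mu_{\infty})\leq\liminf_{t\to\infty}F_{-1}(g_{t}^{*}\mu_{*})=\inf F_{-1}<\infty$, absurd. Hence $\mathrm{Aut}(X)_{0}=\{I\}$ and, by Bando--Mabuchi uniqueness \cite{b-m}, the K\"ahler--Einstein metric is unique.

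\emph{The main obstacle} is the ``$\Leftarrow$'' direction, more precisely the input that a unique K\"ahler--Einstein metric makes $F_{-1}$ coercive (equivalently, the Mabuchi functional proper): this is the hard half of Tian's properness picture, and it is here that the deep analysis of \cite{ti} and its pluripotential refinement in \cite{bbegz} is genuinely needed --- granting it, the semicontinuity argument above is a soft consequence of the entropy/energy continuity properties. The second delicate point is the precise continuity statement (iii) for $E_{\omega_{0}}$ along sequences of bounded entropy, which is exactly what stops the entropy from escaping to infinity along free-energy-bounded sequences; and, in the converse, the (classical) dynamical fact that a non-isometric one-parameter subgroup pushes the volume form $\mu_{*}$ to a measure supported on a proper analytic subset.
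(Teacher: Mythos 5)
Your proposal is correct and follows essentially the same route as the paper's own proof (Theorems \ref{thm:F lsc for beta bigger}, \ref{thm:free energy Fano} and \ref{thm:free energy beta minus one Fano}): Tian's coercivity of the Mabuchi functional combined with the energy/entropy compactness theorem of \cite{bbegz} for the ``unique K\"ahler--Einstein $\Rightarrow$ lsc'' direction, and weak compactness, the regularity theorem for minimizers, a degeneration along a one-parameter subgroup of $\mathrm{Aut}(X)_{0}$ contradicting lower semicontinuity, and Bando--Mabuchi uniqueness for the converse and the ``moreover'' part. The only cosmetic deviations are that the paper excludes the degenerate limit measure because it charges a pluripolar set and hence has infinite energy (rather than infinite entropy), and that it spells out the approximation step extending the coercivity inequality from volume forms to all of $\mathcal{P}(X)$, which you gloss over.
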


We note in passing that, just as in the case when $K_{X}>0$ and $\beta=1,$
the free energy functional $F_{\beta}$ at $\beta=-1,$ restricted
to the space of volume forms on $\mathcal{P}(X),$ may be identified
with the Mabuchi functional $\mathcal{M}$ on $c_{1}(-K_{X}).$ 

\subsubsection{Symmetry breaking}

As discussed above, Gibbs stability should be considered as the analog
of K-stability. One is thus naturally lead to ask whether there is
also a notion of ``Gibbs polystability'', taking the action of the
group $\text{Aut \ensuremath{(X)_{0}}}$ into account? This is an
intriguing question that we shall sidestep here, by breaking the $\text{Aut \ensuremath{(X)_{0}}}$
-symmetry as follows. Fixing a volume form $dV$ one can viewed the
probability measure $\mu_{\beta}^{(N)},$ defined by formula \ref{eq:def of Gibbs measure Fano setting intro},
as a deformation of the canonical measure $\mu^{(N)}$ to $\beta<-1.$
Since $-K_{X}>0$ we may pick a volume form $dV$ inducing a metric
on $-K_{X}$ with positive curvature: 
\[
\omega_{0}:=\text{Ric \ensuremath{dV>0}}
\]
One advantage of allowing $\beta>-1$ is that it attenuates the singularities
of the integrand, as further discussed below.
\begin{conjecture}
\label{conj:Fano beta greater than minus 1}Assume that $X$ is a
Fano manifold. If $X$ admits a Kähler-Einstein metric, then for any
given $\beta<1$ we have that $Z_{N_{k},\beta}<\infty$ for $k$ sufficiently
large. Moreover, the empirical measure $\delta_{N}$ on $(X^{N_{k}},\mu_{\beta}^{(N_{k})}),$
convergence in law towards a volume form $\mu_{\beta}$ and 
\[
\lim_{\beta\rightarrow}\mu_{\beta}=dV_{KE},
\]
 where $dV_{KE}$ is the volume form of a Kähler-Einstein metric on
$X.$ More precisely, for $\beta>-1$
\[
\omega_{\beta}:=\text{Ric}\ensuremath{dV}+\beta^{-1}dd^{c}\log\frac{\mu_{\beta}}{dV}
\]
is the unique Kähler metric solving 
\end{conjecture}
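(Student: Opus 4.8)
The plan is to split the conjecture into three layers and reduce all but the last to machinery that is already available: finiteness of the deformed partition function $Z_{N_{k},\beta}$ (\emph{$\beta$-twisted Gibbs stability at level $k$}), a large deviation principle for the deformed ensemble at each fixed $\beta\in(-1,1)$ with rate functional $F_{\beta}-\inf F_{\beta}$, and the limit $\beta\to-1^{+}$.

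\emph{Layer 1.} Rewrite $\left\Vert \det S^{(k)}\right\Vert^{2\beta/k}dV^{\otimes N_{k}}$ as the density of the $\Q$-divisor $|\beta|\,\frac{1}{k}\mathrm{div}(\det S^{(k)})$ on $X^{N_{k}}$; since $\det S^{(k)}\in H^{0}(X^{N_{k}},-kK_{X^{N_{k}}})$, the $\Q$-divisor $\frac{1}{k}\mathrm{div}(\det S^{(k)})$ is anticanonical, so $Z_{N_{k},\beta}<\infty$ is exactly the statement that the pair $\bigl(X^{N_{k}},|\beta|\,\frac{1}{k}\mathrm{div}(\det S^{(k)})\bigr)$ is klt, i.e. that the log-canonical threshold of $\frac{1}{k}\mathrm{div}(\det S^{(k)})$ exceeds $|\beta|$. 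The asymptotics of these thresholds on the products $X^{N_{k}}$ are governed by the stability threshold $\delta(X)$, and the existence of a K\"ahler--Einstein metric forces $X$ to be K-polystable, hence K-semistable, hence $\delta(X)\ge1$ (Fujita--Li and Blum--Jonsson, or the pluripotential route of \cite{bbj}). As $|\beta|<1$ is strictly subcritical, the gap $1-|\beta|>0$ absorbs the $o(1)$ in the threshold asymptotics and yields $Z_{N_{k},\beta}<\infty$ together with a $k$-uniform $L^{1+\eta}$ bound on the densities, for $k$ large.

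\emph{Layer 2.} At fixed $\beta\in(-1,1)$ I would run the mean-field/large-deviation argument behind Theorem \ref{thm:LDP intro}, now admitting a negative value of $\beta$. The $\Gamma$-convergence of the $N$-particle energy $E^{(N)}$ to the macroscopic pluricomplex energy $E_{\omega_{0}}$ is insensitive to the sign of $\beta$; what needs the extra input is the upper bound, where the Gibbs density concentrates near $\{\det S^{(k)}=0\}$, and the $k$-uniform $L^{1+\eta}$ bound from Layer 1 rules out concentration at the macroscopic scale. One then needs lower semicontinuity and coercivity of $F_{\beta}=\beta E_{\omega_{0}}+D_{dV}$ on $\mathcal{P}(X)$: since the entropy dominates the now only weakly attractive energy term (coefficient $|\beta|<1$), this is a strictly subcritical, twisted version of Theorem \ref{thm:lsc intro} and holds for every $\beta\in(-1,1)$ under the standing hypothesis that $X$ carries a K\"ahler--Einstein metric — the ``symmetry-breaking'' that bypasses the role of $\mathrm{Aut}(X)_{0}$. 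Granting this, the LDP follows, $\delta_{N}$ converges in law to the unique minimizer $\mu_{\beta}$ of $F_{\beta}$, and the Euler--Lagrange identity $\log(\mu_{\beta}/dV)=\beta\, u_{\mu_{\beta}}+\mathrm{const}$ — where $u_{\mu_{\beta}}$ is the finite-energy potential whose normalized Monge--Amp\`ere measure $(\omega_{0}+dd^{c}u_{\mu_{\beta}})^{n}/\int_{X}\omega_{0}^{n}$ equals $\mu_{\beta}$, and using that the differential of $E_{\omega_{0}}$ at $\mu$ is $-u_{\mu}$ — translates, after applying $dd^{c}\log$, into the twisted K\"ahler--Einstein equation
\[
\mathrm{Ric}\,\omega_{\beta}=-\beta\,\omega_{\beta}+(1+\beta)\,\omega_{0},
\]
with uniqueness of both the minimizer and the solution coming from geodesic convexity of $F_{\beta}$ along finite-energy geodesics (entropy convexity together with the affine/convex structure of the energy, as in \cite{bbegz}).

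\emph{Layer 3, the main obstacle.} For $\beta\to-1^{+}$ I would argue by $\Gamma$-convergence of $F_{\beta}$ to $F_{-1}$: the map $\beta\mapsto F_{\beta}(\mu)$ is affine for fixed $\mu$, $F_{-1}$ is lower semicontinuous with unique minimizer the normalized volume form of $\omega_{KE}$ by Theorem \ref{thm:lsc intro} (when $\omega_{KE}$ is not unique, the choice of $dV$ picks out one representative of its $\mathrm{Aut}(X)_{0}$-orbit), and the constant sequence at that volume form furnishes the $\Gamma$-limsup. What is missing is \emph{equi-coercivity}: one must show $F_{\beta}(\mu)\ge\epsilon J(\mu)-C$ with $\epsilon,C$ independent of $\beta$ near $-1$ — equivalently, that the minimizers $\mu_{\beta}$ remain in a fixed set of finite-energy measures and that $\liminf_{\beta\to-1}\inf_{\mathcal{P}(X)}F_{\beta}>-\infty$. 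This is precisely the conjectural energy bound of \cite{berm11}; in thermodynamic terms it is the statement that the free energy $\beta\mapsto\lim_{N}\bigl(-\frac{1}{N}\log Z_{N,\beta}\bigr)$ has no phase transition at $\beta=-1$, i.e. that the Archimedean zeta function $\beta\mapsto Z_{N,\beta}$ is suitably regular there. I expect it can be approached by sharpening the Moser--Trudinger-type inequality underlying the coercivity in Layer 2 so that its constant degenerates only at the optimal rate as $\beta\to-1$, matched against the energy lower bound supplied by $\delta(X)\ge1$; making the two estimates meet exactly at the endpoint, uniformly in $k$, is where the real difficulty lies.
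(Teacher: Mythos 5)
First, note that the statement you are proving is stated in the paper as a \emph{conjecture} (Conjecture \ref{conj:Fano beta greater than minus 1}); the paper offers no proof, only partial and conditional results (the $\alpha$-invariant lemma giving finiteness of $Z_{N,\beta}$ only for $\beta>-\alpha$, the conditional Theorem \ref{thm:Fano setting text} resting on the mean-energy bound \ref{eq:upper bound property}, and the no-phase-transition criterion of Theorem \ref{thm:phase}). Your proposal cannot therefore be compared with a proof in the paper, and as a proof attempt it has two genuine gaps, both at places you present as reductions to known machinery. In Layer 1, the passage from $\delta(X)\geq1$ to $Z_{N_{k},\beta}<\infty$ for every $|\beta|<1$ uses the stability threshold in the wrong direction: the Fujita--Odaka inequality is $\gamma_{N_{k}}(X)\leq\delta_{k}(X)$, so K-semistability (or even $\delta(X)>1$) gives \emph{no} lower bound on the integrability thresholds $\gamma_{N_{k}}(X)=\mbox{lct}(X^{N_{k}},\mathcal{D}_{k}/k)$ of the densities on $X^{N_{k}}$. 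The only unconditional lower bound available is Tian's $\alpha$-invariant, which need not exceed a given $|\beta|<1$; the statement $\liminf_{k}\gamma_{N_{k}}\geq1$ under the K\"ahler--Einstein hypothesis is essentially Conjecture \ref{conj:gamma is Gamma} together with the open converse of Theorem \ref{thm:(Fujita-Odaka)-Uniform-Gibbs}, i.e. part of what is to be proved, not an input. The claimed $k$-uniform $L^{1+\eta}$ bound on the densities is uniform Gibbs stability above level $|\beta|$ and is open for the same reason.

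In Layer 2 the key analytic step is also missing. The proof of the LDP for $\beta>0$ does not only need $\Gamma$-convergence of $E^{(N)}$; the upper bound rests on the asymptotic sub-mean inequality \ref{eq:as submean ineq}, which uses that $\beta E^{(N)}$ is quasi-\emph{super}harmonic. For $\beta<0$ the interaction becomes attractive and this mechanism fails; an $L^{1+\eta}$ bound on the Gibbs density does not rule out the relevant concentration, because the obstruction is the lack of weak continuity of $E_{\omega_{0}}$ on $\mathcal{P}(X)$, not the local integrability of the density. What is actually needed is precisely the ``upper bound property of the mean energy'' \ref{eq:upper bound property} for the specific sequence $\mu_{\beta}^{(N)}$ (Step 2 of the variational scheme in Section \ref{sec:Towards-the-case of neg}), which the paper emphasizes cannot hold for arbitrary sequences and is the conjectural core. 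Relatedly, your Layer 3 misplaces the difficulty: the macroscopic limit $\mu_{\beta}\rightarrow dV_{KE}$ as $\beta\rightarrow-1^{+}$ is already known by Bando--Mabuchi, and the conjectural energy bound of \cite{berm11} concerns the $N\rightarrow\infty$ limit at fixed $\beta$, not equi-coercivity of $F_{\beta}$ in $\beta$. The macroscopic facts you invoke (lsc, uniqueness and regularity of the minimizer, the Euler--Lagrange equation giving Aubin's equation) are indeed available under the K\"ahler--Einstein hypothesis, but they do not bridge the microscopic-to-macroscopic step, so the proposal leaves the conjecture open exactly where the paper does.
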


\begin{equation}
\text{Ric \ensuremath{\omega_{\beta}=-\beta\omega_{\beta}+(1}}+\beta)\text{Ric \ensuremath{dV}},\label{eq:aubin intro}
\end{equation}
One motivation for this conjecture is that it holds for $\beta>0.$
Indeed, according to Theorem \ref{thm:LDP intro} the result holds
in the stronger sense of large deviations. As a consequence, for $\beta>0,$the
LDP also implies that
\begin{equation}
-\lim_{N\rightarrow\infty}\frac{1}{N}\log Z_{N,\beta}=\inf_{\mathcal{P}(X)}F_{\beta},\label{eq:asympt of log Z intro}
\end{equation}
 if the fixed bases in $H^{0}(X,-kK_{X})$ is taken to be orthonormal
with respect to the scalar product induced by $dV.$
\begin{rem}
Incidentally, the equation \ref{eq:aubin intro} coincides with the
one introduced by Aubin's in his continuity method for solving the
Kähler-Einstein equation at $\beta=-1.$ \cite{au2} In Aubin's notation
the time-parameter corresponds to $-\beta\in[0,1]$. The uniqueness
of solutions for $\beta>-1$ was established in \cite{b-m}. Moreover,
it was also shown in \cite{b-m} that if $X$ admits a Kähler-Einstein
metric, then $\omega_{\beta}$ exists for any $\beta>-1$ and converges,
as $\beta\rightarrow-1,$ towards a particular Kähler-Einstein metric,
singled out by $dV.$ 
\end{rem}

From a statistical mechanics point of view it may, at a first glance,
seem rather odd to consider the case when $\beta<0,$ since it would
correspond to a negative (absolute!) temperature. But the notion of
negative temperature states does make sense physically (see the discussion
in \cite[Remark 8.1]{berm10b}). Moreover, from an equivalent point
of view we may as well consider the case of unit temperature and instead
replace the energy particle $E^{(N)}$ with the rescaled energy $\beta E^{(N)}$
(thus treating $\beta$ as a coupling constant). For $\beta>0$ this
energy is is \emph{repulsive,} since it tends to $\infty$ as any
two particle positions merge (due to the vanishing of the determinant
$\det S^{(k)}(x_{1},...,x_{N_{k}})$). However, when $\beta$ changes
sign, from positive to negative, the rescaled energy $\beta E^{(N)}$
becomes\emph{ attractiv}e; it tends to $-\infty$ as any two points
merge.\emph{ }Still, it turns out that the system is sufficiently
mildly attractive to allow $Z_{N,\beta}$ to be finite for $\beta>\beta_{0},$
for some negative $\beta_{0},$ ensuring that the Gibbs measure $\mu_{\beta}^{(N)}$
is well-defined. This may be interpreted as a statistical mechanical
type of stability, since it amounts to the existence of the microscopic
$N-$particle equilibrium state. More precisely, there exists $\beta_{0}\in]-\infty,0[$
such that for any $\beta>\beta_{0}$ 
\[
\frac{1}{N}\log Z_{N,\beta}\geq-C_{\beta},\,\,\,
\]
as $N\rightarrow\infty.$ 

\subsubsection{Stability thresholds and uniform Gibbs stability}

The previous discussion motivates the introduction of the following
``microscopic stability thresholds''

\begin{equation}
\gamma_{N_{k}}(X):=\sup\left\{ \gamma:\,Z_{N_{k},-\gamma}<\infty\right\} \label{eq:def of gamma N k}
\end{equation}
 and their limit 
\[
\gamma(X):=\liminf_{k\rightarrow\infty}\gamma_{N_{k}},
\]
 as well as the ``macroscopic stability threshold'' 

\[
\Gamma(X):=\sup_{\beta>0}\left\{ -\beta:\,\,\,\inf_{\mathcal{P}(X)}F_{\beta}>-\infty\right\} 
\]
In the ``thermodynamical limit'', $N\rightarrow\infty$, it is thus
natural to make the following
\begin{conjecture}
\label{conj:gamma is Gamma}Let $X$ be a Fano manifold. Then the
two invariants $\gamma(X)$ of $X$ and $\Gamma(X),$ defined above,
coincide: 
\[
\gamma(X)=\Gamma(X)
\]
\end{conjecture}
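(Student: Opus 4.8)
The plan is to route both invariants through the large-$k$ behaviour of the partition functions $Z_{N_k,\beta}$ and to recognise their common value as the stability threshold $\delta(X)$ of $X$. On the algebraic side, for $\beta<0$ one has $Z_{N_k,\beta}=\int_{X^{N_k}}\|\det S^{(k)}\|^{2\beta/k}\,dV^{\otimes N_k}<\infty$ exactly when $-\beta/k<\operatorname{lct}_{X^{N_k}}(\operatorname{div}\det S^{(k)})$, so that $\gamma_{N_k}(X)=k\cdot\operatorname{lct}_{X^{N_k}}(\operatorname{div}\det S^{(k)})$ and $\gamma(X)=\liminf_k\gamma_{N_k}(X)$ is an asymptotic log-canonical-threshold invariant. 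On the analytic side, since $F_\beta$ restricted to the volume forms is, for $\beta=-1$, the Mabuchi functional on $c_1(-K_X)$ and, more generally, the relevant twisted Ding/Mabuchi functional, I would invoke the variational and non-Archimedean theory behind \cite{bbj} to identify $\Gamma(X)$ --- the threshold below which $F_\beta$ is bounded from below on $\mathcal P(X)$ --- with $\delta(X)$. It then suffices to prove $\gamma(X)=\Gamma(X)$, which I would do by two separate inequalities. (The value $c=1$ is the one underlying Conjecture~\ref{conj:Fano with triv autom intr} and Theorem~\ref{thm:lsc intro}.)

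\textbf{The inequality $\gamma(X)\ge\Gamma(X)$.} Fix $c<\Gamma(X)=\delta(X)$, so that $\inf_{\mathcal P(X)}F_{-c}>-\infty$. A soft first input is the mean-field lower bound: applying Jensen's inequality to the product trial measures $\mu^{\otimes N_k}$, together with the Bergman-kernel asymptotics $\tfrac1{N_k}\int_{X^{N_k}}\bigl(-\tfrac1k\log\|\det S^{(k)}\|^2\bigr)\,d\mu^{\otimes N_k}\to E_{\omega_0}(\mu)$ (valid for any absolutely continuous $\mu$, as in \cite{berm8}), gives $\liminf_k\tfrac1{N_k}\log Z_{N_k,-c}\ge-\inf_{\mathcal P(X)}F_{-c}$; this is only a lower bound on $Z_{N_k,-c}$, however, and does \emph{not} by itself provide finiteness. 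The real content --- $Z_{N_k,-c}<\infty$ for $k\gg1$ --- I would instead extract algebraically: the defining valuative inequality for $\delta(X)$ holds with room to spare, and I would transport it to an $\operatorname{lct}$ estimate on the products $X^{N_k}$ by a positivity/sub-additivity argument of the kind used for direct image bundles in \cite{bern1} and \cite[Prop.\ 6.5]{berm8 comma 5}, obtaining $k\cdot\operatorname{lct}_{X^{N_k}}(\operatorname{div}\det S^{(k)})\ge c$ for $k$ large, hence $\gamma(X)\ge c$.

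\textbf{The inequality $\gamma(X)\le\Gamma(X)$.} Fix $c>\Gamma(X)=\delta(X)$. Then $F_{-c}$ is not bounded from below, and by \cite{bbj} this is detected by a near-optimal destabilising divisorial valuation (or test configuration) violating the $\delta$-inequality at level $c$. I would approximate this valuation by basis-type anticanonical divisors following Fujita--Odaka and Blum--Jonsson, and realise such a divisor as a slice of $\operatorname{div}\det S^{(k)}$ in $X^{N_k}$ for a suitable placement of the auxiliary $N_k-1$ points, so that $\operatorname{lct}_{X^{N_k}}(\operatorname{div}\det S^{(k)})\le c/k+o(1/k)$. This makes the integrand of $Z_{N_k,-c}$ non-integrable, whence $Z_{N_k,-c}=\infty$ for $k\gg1$ and $\gamma(X)\le c$. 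Combined with the previous step, this gives $\gamma(X)=\Gamma(X)$.

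\textbf{The main difficulty.} The crux, in both directions, is the passage between the macroscopic boundedness (coercivity) threshold of $F_\beta$ and the asymptotics of the log-canonical thresholds of the Slater divisors $\operatorname{div}\det S^{(k)}$ on the high-dimensional products $X^{N_k}$. Proving $\liminf_k k\cdot\operatorname{lct}_{X^{N_k}}(\operatorname{div}\det S^{(k)})=\delta(X)$ is a ``product version'' of the Fujita--Odaka/Blum--Jonsson description of $\delta$, complicated by the facts that $\det S^{(k)}$ does not split over the product and that a generic slice produces only a \emph{generic} basis divisor, so that witnessing the \emph{optimal} valuation requires carefully placing the auxiliary points. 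On the probabilistic side this is the problem of controlling the strongly attractive region where the $N_k$ points clump together and the integrand of $Z_{N_k,\beta}$ blows up --- exactly the mechanism that must upgrade a failure of boundedness of $F_\beta$ into an honest divergence of the partition function. I expect this to be the probabilistic analog of the deep equivalence ``uniform K-stability $\Leftrightarrow$ coercivity of the Mabuchi functional,'' and hence the main obstacle.
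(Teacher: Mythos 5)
This statement is posed in the paper as an open conjecture: the paper offers no proof of $\gamma(X)=\Gamma(X)$, only the partial algebraic input of Fujita--Odaka ($\gamma_k(X)\le\delta_k(X)$, hence uniform Gibbs stability implies uniform K-stability) and the remark that the converse implication is open. Your text is accordingly not a proof but a program, and its two decisive steps are precisely the open points. In the direction $\gamma(X)\ge\Gamma(X)$, everything rests on the claim that a coercivity bound for $F_{-c}$ (equivalently $c<\delta(X)$, granting the identification $\Gamma(X)=\delta(X)$, which you also only assert via ``the theory behind \cite{bbj}'') can be transported to the lower bound $k\cdot\mathrm{lct}(X^{N_k},\mathrm{div}\det S^{(k)})\ge c$ for $k\gg1$; you invoke ``a positivity/sub-additivity argument of the kind used for direct image bundles,'' but no such argument is given, and this is exactly the missing converse to Fujita--Odaka that the paper highlights. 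The Jensen/mean-field step you correctly note only bounds $Z_{N_k,-c}$ from below and says nothing about finiteness, so nothing in this direction is actually established.

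The direction $\gamma(X)\le\Gamma(X)$ is also not carried out correctly as written. Exhibiting one placement of the auxiliary $N_k-1$ points for which the restricted divisor (a basis-type divisor) has small lct does not bound $\mathrm{lct}(X^{N_k},\mathcal D_k/k)$ from above: restriction to a smooth slice can only decrease the lct ($\mathrm{lct}_Z(D|_Z)\le\mathrm{lct}_Y(D)$), so a bad slice does not force the ambient pair to be bad, and non-integrability along a single slice does not imply divergence of the Fubini integral defining $Z_{N_k,-c}$ unless it holds on a positive-measure set of slices. The inequality you are after here is in substance the known Fujita--Odaka bound $\gamma_k\le\delta_k$, but their argument proceeds by evaluating the incidence divisor against valuations pulled back from a factor (comparing log discrepancies), not by slicing; as stated your mechanism does not work. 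So the proposal identifies the right circle of ideas (lct of the incidence divisors, $\delta$-invariants, coercivity thresholds) but leaves the conjecture exactly as open as the paper does, with the hard half reduced to an unproved ``product version'' of the valuative theory that you yourself flag as the main obstacle.
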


\begin{rem}
The threshold $\gamma_{N}(X)$ may be interpreted as the threshold
where the self-attraction of the $N-$particle system can no longer
be compensated by the disorder resulting from the randomness. Similarly,
the threshold $\Gamma(X)$ is the threshold where the macroscopic
tendency to self-attract and form singular states can no longer be
balanced by the regularizing effect of the entropy. 
\end{rem}

Let us call a Fano manifold \emph{uniformly Gibbs stable} if $\gamma(X)>1$.
This should be thought of us an analog of the notion of uniform K-stability.
By the results in \cite{bbj} a Fano manifold is uniformly K-stable
iff $\Gamma(X)>1.$ Hence, the validity of the previous conjecture
would imply the validity of the following one:
\begin{conjecture}
\label{conj:unif Gibbs iff unif K}Let $X$ be a Fano manifold. Then
$X$ is uniformly Gibbs stable $X$ iff $X$ is uniformly K-stable. 
\end{conjecture}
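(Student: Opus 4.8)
The plan is to reduce the statement to Conjecture~\ref{conj:gamma is Gamma} together with the valuative characterization of uniform K-stability. Recall that, by the results of \cite{bbj}, a Fano manifold $X$ is uniformly K-stable precisely when $\Gamma(X)>1$, equivalently when the stability threshold $\delta(X)=\lim_k\delta_k(X)$ exceeds $1$, where $\delta_k(X)$ is the infimum of $\mathrm{lct}(X,D)$ over all anticanonical $k$-basis type $\Q$-divisors $D$ on $X$. Since, by definition, $X$ is uniformly Gibbs stable iff $\gamma(X)>1$, it suffices to show that $\gamma(X)>1$ and $\Gamma(X)>1$ are equivalent; this is immediate from the equality $\gamma(X)=\delta(X)=\Gamma(X)$ predicted by Conjecture~\ref{conj:gamma is Gamma}. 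So the entire content is that equality, and more precisely only the behaviour of the thresholds across the value $1$.

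The first step is an algebro-geometric reformulation of the left-hand side. By Skoda's integrability criterion, the partition function $Z_{N_k,-\gamma}=\int_{X^{N_k}}\|\det S^{(k)}\|^{-2\gamma/k}\,dV^{\otimes N_k}$ is finite if and only if $\gamma/k$ is strictly smaller than the log canonical threshold of the Slater-determinant divisor $D_k:=\{\det S^{(k)}=0\}$, an anticanonical-type divisor on the product $X^{N_k}$. Hence the microscopic threshold of \eqref{eq:def of gamma N k} is the purely algebraic quantity $\gamma_{N_k}(X)=k\cdot\mathrm{lct}_{X^{N_k}}(D_k)$, and the problem becomes one of estimating the singularities of the Vandermonde-type divisor $D_k$ along the partial diagonals and degeneracy loci of $X^{N_k}$, uniformly as $k\to\infty$.

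The second step is the comparison of these product log canonical thresholds with the stability thresholds $\delta_m(X)$ on $X$ itself. On generic slices of $X^{N_k}$ (fixing all but one factor) the restriction theorem for multiplier ideals gives $\mathrm{lct}_{X^{N_k}}(D_k)\le\mathrm{lct}_X(\{\tau=0\})$ for the member $\tau\in H^0(-kK_X)$ cutting out that slice, but single members only see the $\alpha$-invariant, which is too weak; the decisive information sits on the deep partial diagonals, where the vanishing order of $\det(s_i(x_j))$ as the points coalesce encodes the jets of the sections $s_i$, and minimizing over bases ties this to the valuative/filtration description of $\delta(X)$ used in \cite{bbj}. One then wants a two-sided estimate $\delta_m(X)-o(1)\le\gamma_{N_k}(X)\le\delta_{k}(X)$ obtained by combining this diagonal analysis with subadditivity of multiplier ideals on products and a Fekete-type argument for $k\mapsto\gamma_{N_k}$, together with the convergence $\delta_m(X)\to\delta(X)$; passing to the limit yields $\gamma(X)=\delta(X)=\Gamma(X)$. (Analytically, the same dichotomy appears: the Gibbs variational principle restricted to product states gives one inequality between $-\tfrac1N\log Z_{N,\beta}$ and $\inf_{\mathcal P(X)}F_\beta$, in the spirit of \eqref{eq:asympt of log Z intro}, while the reverse inequality is exactly the large-deviation upper bound in the attractive regime $\beta<0$.)

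The main obstacle is the lower bound $\gamma(X)\ge\delta(X)$, equivalently the large-deviation upper bound for $\beta<0$. In the repulsive case $\beta>0$ the proof of Theorem~\ref{thm:LDP intro} rests on the quasi-superharmonicity of $\beta E^{(N)}$; this is lost for $\beta<0$, so the mean-field machinery does not apply directly and one needs a genuinely new input controlling the concentration of the empirical measure near singular configurations, uniformly in $k$. This is precisely the conjectural energy bound referred to in the introduction, and it is the step I expect to be hard.
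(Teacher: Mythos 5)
Your reduction is exactly the paper's: the statement appears there only as a conjecture, deduced conditionally by combining the definition of uniform Gibbs stability ($\gamma(X)>1$) with the characterization of uniform K-stability as $\Gamma(X)>1$ from \cite{bbj}, so that everything rests on Conjecture \ref{conj:gamma is Gamma}, and your identification of the unconditional direction with Theorem \ref{thm:(Fujita-Odaka)-Uniform-Gibbs} (via $\gamma_k(X)=\mathrm{lct}(X^{N_k},\mathcal{D}_k/k)\le\delta_k(X)$) together with the missing converse inequality $\gamma(X)\ge\Gamma(X)$, i.e. the LDP upper bound/conjectural energy bound for $\beta<0$, matches the paper's discussion. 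So this is essentially the same approach, and you correctly flag that no complete proof is currently available.
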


\begin{rem}
Combining \cite{c-d-s} and \cite{bhj,de} reveals that a Fano manifold
is, in fact, uniformly K-stable iff it is K-stable. This leads one
to wonder whether Gibbs stability, is, in fact, equivalent to the
uniform Gibbs stability? Theorem \ref{thm:log curve intro} below
shows that for log Fano curves this is indeed the case.
\end{rem}

As observed in \cite{berm8 comma 5} the notion of Gibbs stability
introduced above can also be given the following purely\emph{ algebro-geometric
}formulation in the spirit of Minimal Model Program (MMP). Let $\mathcal{D}_{k}$
be the effective divisor in $X^{N_{k}}$ cut out by the section $\mbox{det \ensuremath{S^{(k)}.}}$
Geometrically, $\mathcal{D}_{k}$ may be represented as the following
incidence divisor in $X^{N_{k}}:$ 
\[
\mathcal{D}_{k}:=\{(x_{1},...x_{N})\in X^{N_{k}}:\,\exists s\in H^{0}(X,-kK_{X}):\,s(x_{i})=0,\,i=1,..,N_{k}\}
\]
 Gibbs stability at level $k$ amounts to the anti-canonical $\Q-$divisor
$\mathcal{D}_{k}/k$ on $X^{N_{k}}$ having klt singularities (see
Remark \ref{rem:klt}), which means that 
\begin{equation}
\mbox{lct\ensuremath{(X^{N_{k}},\mathcal{D}_{k}/k)>1}}\label{eq:bound on lct}
\end{equation}
for $k>>1,$ where $\mbox{lct\ensuremath{(X^{N_{k}},\mathcal{D}_{k}/k)}}$
denotes  the\emph{ log canonical threshold (lct)} of the $\Q-$divisor
$\mathcal{D}_{k}/k$ on $X^{N_{k}}$ \cite{ko}. Indeed, it follows
from the standard analytic interpretation of the lct as an integrability
threshold that 
\[
\gamma_{k}(X)=\mbox{lct\ensuremath{(X^{N_{k}},\mathcal{D}_{k}/k)}}
\]

Using properties of log canonical and techniques from the MMP one
direction of Conjecture was established in \cite{f-o} (see also \cite{fu}
where K-stability was first shown):
\begin{thm}
\label{thm:(Fujita-Odaka)-Uniform-Gibbs}(Fujita-Odaka) \cite{f-o}Uniform
Gibbs stability implies uniform K-stability
\end{thm}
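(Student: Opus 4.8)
The plan is to prove the implication by comparing the two stability thresholds $\gamma(X)$ and $\Gamma(X)$ appearing in the preceding discussion, since by the cited results in \cite{bbj} uniform K-stability is equivalent to $\Gamma(X)>1$, while uniform Gibbs stability is \emph{defined} by $\gamma(X)>1$. Thus it suffices to prove the inequality
\[
\Gamma(X)\geq\gamma(X),
\]
i.e. that for every $\gamma<\gamma(X)$ one has $\inf_{\mathcal{P}(X)}F_{-\gamma}>-\infty$. Equivalently, recalling the algebro-geometric reformulation $\gamma_{N_k}(X)=\mathrm{lct}(X^{N_k},\mathcal{D}_k/k)$, the hypothesis gives us, for $k\gg1$, a uniform lower bound on the log canonical thresholds of the incidence divisors $\mathcal{D}_k/k$, and we must convert this into a uniform lower bound on the macroscopic free energy. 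This is precisely the direction that avoids having to integrate out the $N$-particle system by hand: one works entirely on the algebraic side.

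First I would recall the analytic meaning of the lct bound: $\mathrm{lct}(X^{N_k},\mathcal{D}_k/k)>\gamma$ means $\left\Vert\det S^{(k)}\right\Vert^{-2\gamma/k}\in L^1(X^{N_k})$, which is exactly finiteness of the partition function $Z_{N_k,-\gamma}<\infty$. The key step is then to pass from the \emph{finiteness} of $Z_{N_k,-\gamma}$ for each fixed $k$ to a \emph{uniform lower bound} on $\tfrac1{N}\log Z_{N,-\gamma}$, and then to relate $\liminf_N \tfrac1N\log Z_{N,-\gamma}$ to $-\inf_{\mathcal{P}(X)}F_{-\gamma}$. For the latter, one uses the Gibbs variational principle / the mean-field asymptotics: by the general machinery behind Theorem \ref{thm:LDP intro} (or rather its analogue for $\beta<0$, which is exactly what is being developed in Section \ref{sec:Towards-the-case of neg}), an upper bound of the form $\tfrac1N\log Z_{N,-\gamma}\le -\inf_{\mathcal{P}(X)}F_{-\gamma}+o(1)$ holds whenever the energy functional $E_{\omega_0}$ is suitably regular; together with the submultiplicativity-type lower bound $\tfrac1N\log Z_{N,-\gamma}\ge -C$ coming from Jensen's inequality applied to $\det S^{(k)}$ and the fixed product structure, this pins down $\inf_{\mathcal{P}(X)}F_{-\gamma}>-\infty$, i.e. $\Gamma(X)\ge\gamma$. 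Letting $\gamma\uparrow\gamma(X)$ gives $\Gamma(X)\ge\gamma(X)$; since $\gamma(X)>1$ this yields $\Gamma(X)>1$, hence uniform K-stability by \cite{bbj}.

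The main obstacle I anticipate is the passage from fixed-$k$ integrability of $Z_{N_k,-\gamma}$ to the \emph{uniform in $k$} lower bound on $\tfrac1{N_k}\log Z_{N_k,-\gamma}$, and more precisely controlling how the lct's of the divisors $\mathcal{D}_k/k$ behave as $k\to\infty$. One cannot simply quote $\gamma(X)=\liminf_k\gamma_{N_k}(X)>1$ and be done, because finiteness for each $k$ does not a priori give a quantitative tail bound that survives the thermodynamic limit; this is where the MMP techniques of Fujita--Odaka \cite{f-o} enter, using (sub)additivity of log canonical thresholds under products and the comparison of $\mathcal{D}_k$ with the divisor coming from a single anticanonical section to reduce the $N_k$-fold product estimate to an estimate on $X$ itself — effectively relating $\gamma_{N_k}(X)$ to Fujita's $\delta$-invariant (the stability threshold). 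Once that algebraic input is in place, the analytic translation into a free-energy bound, and hence into $\Gamma(X)>1$, is comparatively routine given the variational framework already set up. I would present the details of the lct computation as the heart of the argument and treat the remaining thermodynamic bookkeeping as an application of the machinery of Section \ref{sec:The-case-of pos beta}.
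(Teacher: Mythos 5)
Your strategy---deduce uniform K-stability from $\Gamma(X)>1$ via \cite{bbj} after proving $\Gamma(X)\geq\gamma(X)$---is not the Fujita--Odaka argument, and as it stands it has a genuine gap: the inequality $\Gamma(X)\geq\gamma(X)$ is precisely (one half of) the open Conjecture \ref{conj:gamma is Gamma}, not something the variational framework currently delivers. Uniform Gibbs stability only gives, for each $k$, the qualitative statement $Z_{N_k,-\gamma}<\infty$ (equivalently $\mathrm{lct}(X^{N_k},\mathcal{D}_k/k)>\gamma$); to conclude $\inf_{\mathcal{P}(X)}F_{-\gamma}>-\infty$ you would need a bound $\frac{1}{N_k}\log Z_{N_k,-\gamma}\leq C$ uniform in $k$, combined with the recovery-sequence inequality $\inf_{\mathcal{P}(X)}F_{-\gamma}\geq-\liminf_{N}\frac{1}{N}\log Z_{N,-\gamma}$ (Step 1 of Section \ref{sec:Towards-the-case of neg}). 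No such uniform upper bound on the partition function follows from finiteness of the lct's; the only bound of this kind in the paper holds for $\gamma$ below Tian's $\alpha$-invariant, which can be $<1$. Moreover, your pairing of estimates is logically reversed: combining the (for $\beta<0$ conjectural) upper bound $\frac{1}{N}\log Z_{N,-\gamma}\leq-\inf_{\mathcal{P}(X)}F_{-\gamma}+o(1)$ with the lower bound $\frac{1}{N}\log Z_{N,-\gamma}\geq-C$ yields only $\inf_{\mathcal{P}(X)}F_{-\gamma}\leq C$, an upper bound on the infimum, which says nothing about boundedness from below. So the ``thermodynamic bookkeeping'' you treat as routine is exactly the missing, conjectural part of the theory (cf.\ the upper bound property \ref{eq:upper bound property}).

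The actual proof in \cite{f-o}, as recalled in the paper, bypasses the free energy entirely and stays on the algebro-geometric side: first $\gamma_{k}(X)\leq\delta_{k}(X)$, where $\delta_{k}(X)$ is the infimum of $\mathrm{lct}(X,D_{k})$ over anticanonical $\Q$-divisors $D_{k}$ of $k$-basis type (this is where the incidence divisor $\mathcal{D}_{k}/k$ on $X^{N_{k}}$ gets compared with divisors on $X$ itself); then $\delta(X):=\limsup_{k}\delta_{k}(X)>1$ implies uniform K-stability by the valuative criterion of Li and Fujita \cite{li,fu2}. Hence uniform Gibbs stability ($\gamma(X)>1$) gives $\delta(X)>1$ and therefore uniform K-stability, with no partition-function asymptotics, no appeal to \cite{bbj}, and no estimate on $\inf_{\mathcal{P}(X)}F_{-\gamma}$. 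Your instinct that the heart of the matter is an lct comparison reducing $X^{N_{k}}$ to $X$ is correct, but the reduction lands on the valuative ($\delta$-invariant) criterion for uniform K-stability, not on a macroscopic free-energy bound.
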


Let us briefly recall the elegant argument in \cite{f-o}. First,
by \cite[Thm 2.5]{f-o},
\[
\gamma_{k}(X)\leq\delta_{k}(X):=\inf_{D_{k}}\text{lct }(X,D_{k}),
\]
 where the inf is taken over all anti-canonical $\Q-$divisors $D_{k}$
on $X$ of $k-$basis type, i.e. $D_{k}$ is the normalized sum of
the $N_{k}$ zero-divisors on $X$ defined by the members of a given
basis in $H^{0}(X,-K_{X})$. Finally, by \cite[Thm 0.3]{f-o}, if
the invariant $\delta(X)$ defined as the limsup of $\delta_{k}(X)$
satisfies $\delta(X)>1,$ then $X$ is uniformly K-stable, as follows
from the valuative criterion in \cite{li,fu2} (see also \cite{bbj,bl-j,c-r-z}
for related results). 

Combining Theorem \ref{thm:(Fujita-Odaka)-Uniform-Gibbs} with \cite{c-d-s}
or \cite{bbj} shows that uniform Gibbs stability implies the existence
of a unique Kähler-Einstein metric $\omega_{KE}.$ This is in line
with Conjecture \ref{conj:Fano with triv autom intr}. However, the
converse implication is still open, as well as the problem of establishing
the convergence of the corresponding canonical random point processes
towards $dV_{KE},$ when it exists. Here we will focus on the convergence
problem, introducing a variational approach. As shown in \cite{bbj2}
a non-Archimedean analog of this variational approach also has bearings
om the converse of Theorem \ref{thm:(Fujita-Odaka)-Uniform-Gibbs}.

\subsubsection{A variational approach towards proving the convergence in Conjecture
\ref{conj:Fano with triv autom intr}}

As discussed in Section \ref{sec:Towards-the-case of neg} the proof
of the LDP in Theorem \ref{thm:LDP intro} brakes down when $\beta<0.$
To handle this case a variational approach is proposed in Section
\ref{sec:Towards-the-case of neg}. The approach, which is based on
Gibbs variational principle, reveals that it is enough to establish
the asymptotics \ref{eq:asympt of log Z intro} for $\beta=-1:$ 
\begin{equation}
-\lim_{N\rightarrow\infty}\frac{1}{N}\log Z_{N,-1}=\inf_{\mathcal{P}(X)}F_{-1}\label{eq:conv log Z N minus one intro}
\end{equation}
We show that the upper bound does hold, but the lower bound hinges
on a conjectural upper bound on the mean energy of the $N-$particle
Gibbs measures. By making contact with the theory of phase transitions
in statistical mechanics, we also observe that if there exists $\beta_{0}<-1$
and a function $f(\beta)$ on $]\beta_{0},0[$ such that
\[
-\lim_{N\rightarrow\infty}\frac{1}{N}\log Z_{N,\beta}=f(\beta)
\]
 then the convergence \ref{eq:conv log Z N minus one intro} holds
iff $f(\beta)$ is real-analytic. Hence, if this is the case then
the convergence in Conjecture \ref{conj:Fano with triv autom intr}
holds. Moreover, as explained in Section \ref{subsec:Analyticity-and-absence},
the real-analyticity in question can be related to the distribution
of the poles of the Archimedean zeta functions $Z_{N,\beta}.$

\subsubsection{\label{subsec:Log-Fano-manifolds}The case of log Fano curves}

There is only one-dimensional Fano manifold $X$ - the complex projective
line (the Riemann sphere) - and its Kähler-Einstein metrics are all
biholomorphically equivalent to the standard round metric on the two-sphere.
A geometrically more interesting situation appears when introducing
weighted points (conical singularities) on the Riemann sphere. From
the algebro-geometric point of view this fits into the general setting
of\emph{ log Fano manifolds.} A log pair $(X,\Delta),$ consisting
of a complex manifold $X$ and an effective $\Q-$divisor $\Delta,$
is said to be a \emph{log Fano manifold} if its anti-canonical line
bundle is positive, $-(K_{X}+\Delta)>0.$ The corresponding log Kähler-Einstein
equation \ref{eq:log KE eq with beta intro} for $\beta=-1$ thus
reads
\begin{equation}
\mbox{Ric \ensuremath{\omega}}=-\omega+[\Delta].\label{eq:log KE eq intro}
\end{equation}
 To any log Fano manifold $(X,\Delta)$ we may attach a sequence of
canonical probability measures $\mu_{\Delta}^{(N_{k})}$ on $X^{N_{k}}$
by simply replacing the anti-canonical line bundle $-K_{X}$ with
$-K_{(X,\Delta)}$ and setting
\[
\mu_{\Delta}^{(N_{k})}:=\frac{1}{Z_{N_{k}}}\left|\det S^{(k)}(z_{1},...,z_{N})\right|^{-2/k}|s_{\Delta}|^{-2}(z_{1})\cdots|s_{\Delta}|^{-2}(z_{N_{k}}),
\]
which is a well-defined probability measure, as long as the corresponding
normalizing constant is finite,
\[
Z_{N_{k}}:=\int_{X^{N_{k}}}\left|\det S^{(k)}(x_{1},...,x_{N_{k}})\right|^{-2/k}|s_{\Delta}|^{-2}(x_{1})\cdots|s_{\Delta}|^{-2}(x_{N})<\infty
\]
We then say that log Fano manifold $(X,\Delta)$ is  Gibbs stable.
The invariants $\gamma_{k}(X,\Delta)$ and uniform Gibbs stability
of $(X,\Delta)$ can also be defined as before, mutatis mutandis. 

Now, let $(X,\Delta)$ be a log Fano curve $(X,\Delta),$ i.e. $X$
is the complex projective line and 
\[
\Delta=\sum_{i=1}^{m}w_{i}p_{i}
\]
for positive weights $w_{i}$ satisfying $\sum_{i=1}^{m}w_{i}<2.$
In \cite{berm11} it is shown that the conjectures discussed above
hold for any log Fano curve:
\begin{thm}
\label{thm:log curve intro}Let $(X,\Delta)$ be a log Fano curve.
Then the following is equivalent
\begin{itemize}
\item $(X,\Delta)$ is  Gibbs stable
\item $(X,\Delta)$ is uniformly Gibbs stable
\item The following weight condition holds: 
\begin{equation}
w_{i}<\sum_{i\neq j}w_{j},\,\,\,\forall i\label{eq:weight condition intro}
\end{equation}
\item There exists a unique a unique Kähler-Einstein metric $\omega_{KE}$
for $(X,\Delta)$
\end{itemize}
Moreover, if any of the conditions above hold, then the laws of the
random measures $\delta_{N}$ on $(X^{N},\mu_{\Delta}^{(N)})$ satisfy
a Large Deviation Principle (LDP) with speed $N$ and a rate functional
$I$ with a unique minimizer $\omega_{KE}/\int_{X}\omega_{KE}.$ 
\end{thm}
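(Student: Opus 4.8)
The plan is to establish the chain of equivalences for a log Fano curve $(X,\Delta)=(\P^1,\sum w_i p_i)$ with $\sum w_i<2$ by a combination of explicit computation (made possible by dimension one) and the general variational machinery recalled above. The strategy is to close the loop through four implications, using the weight condition \eqref{eq:weight condition intro} as the combinatorial pivot.

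\emph{Gibbs stability $\Leftrightarrow$ weight condition, and explicit log canonical thresholds.} First I would compute the log canonical threshold $\mathrm{lct}(X^{N_k},\mathcal{D}_k/k)$ directly. On a curve the incidence divisor $\mathcal{D}_k$ and the divisor $\sum_i w_i p_i$ pulled back to each factor have only normal crossings (after possibly resolving), so the lct is governed by the worst point: a configuration where all $N_k$ points collide at a single $p_i$. Summing the local contributions — the order of vanishing of $\det S^{(k)}$ along the big diagonal, which is essentially the Vandermonde-type exponent, against the pole order coming from $|s_\Delta|^{-2}$ at $p_i$ — reduces the finiteness of $Z_{N_k}$ to an arithmetic inequality on the $w_i$ that, after dividing by $k$ and letting $k\to\infty$, becomes exactly $w_i<\sum_{j\ne i}w_j$ for all $i$ (equivalently $\gamma(X,\Delta)\ge 1$, and in fact $>1$, which simultaneously gives uniform Gibbs stability). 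This is where the one-dimensionality is essential: the relevant integrals factor and the geometry of collisions is completely explicit, so Gibbs stability, uniform Gibbs stability, and \eqref{eq:weight condition intro} all collapse to the same statement.

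\emph{Weight condition $\Leftrightarrow$ existence of $\omega_{KE}$.} For log Fano curves the log K\"ahler--Einstein equation \eqref{eq:log KE eq intro} is the classical problem of a constant-curvature metric on the sphere with prescribed conical angles $2\pi(1-w_i)$; by Troyanov's theorem such a metric exists (and is unique modulo automorphisms, but the weight condition rules out continuous automorphisms) precisely when the Troyanov inequalities hold, and these are exactly \eqref{eq:weight condition intro}. Alternatively one can invoke Theorem~\ref{thm:lsc intro} in its log form: the free energy $F_{-1}$ is lsc iff $\omega_{KE}$ exists, and on a curve one checks lsc of $-E+D$ by hand. Either route closes the equivalence with the first three bullets.

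\emph{The LDP under any of the equivalent conditions.} Once uniform Gibbs stability is known, the variational approach of Section~\ref{sec:Towards-the-case of neg} applies: it suffices to prove the partition-function asymptotics \eqref{eq:conv log Z N minus one intro}, and the upper bound always holds, so the whole matter reduces to the conjectural energy upper bound on the $N$-particle Gibbs measures. The hard part — and I expect this to be the main obstacle — is precisely verifying that energy bound for curves. Here I would exploit the fact that on $\P^1$ the mean energy is controlled by explicit Coulomb-gas/Selberg-type integrals: the partition function $Z_{N,\beta}$ can be recognized as a (generalized, higher-rank) Selberg integral whose poles in $\beta$ are explicitly locatable, so one can show $\beta\mapsto-\lim\frac1N\log Z_{N,\beta}$ is real-analytic on $(\beta_0,0)$ with $\beta_0<-1$, which by the phase-transition criterion of Section~\ref{subsec:Analyticity-and-absence} forces \eqref{eq:conv log Z N minus one intro}. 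With \eqref{eq:conv log Z N minus one intro} in hand, Gibbs variational principle upgrades convergence in law to the full LDP with rate functional $I=F_{-1}-\inf F_{-1}$, and Theorem~\ref{thm:lsc intro} (log version) identifies the unique minimizer as $\omega_{KE}/\int_X\omega_{KE}$, completing the proof.
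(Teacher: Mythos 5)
The serious gap is your closing step: the partition-function asymptotics \ref{eq:conv log Z N minus one intro} together with Gibbs' variational principle do \emph{not} upgrade convergence in law to the full LDP. What that argument yields (this is exactly the content of Theorems \ref{thm:Fano setting text} and \ref{thm:phase}) is convergence in law of $\delta_{N}$ towards the unique minimizer: the \emph{relative} Gamma-convergence of the mean free energies $F_{-1}^{(N)}$ only controls minimal values and minimizers. By Remark \ref{rem:LDP vs Gamma}, the LDP at speed $N$ with rate $F_{-1}-\inf F_{-1}$ is equivalent to bona fide Gamma-convergence of $F_{-1}^{(N)}$ on $\mathcal{P}(\mathcal{P}(X))$, and the missing lower-bound half is precisely the energy upper bound \ref{eq:upper bound property} along \emph{arbitrary} sequences of symmetric measures with bounded mean entropy, not just along product recovery sequences or the Gibbs sequence. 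Nor can you fall back on a G�rtner--Ellis argument from tilted partition functions: for $\beta<0$ the quasi-superharmonicity driving Theorem \ref{thm:LDP subharmon} (via the sub-mean inequality \ref{eq:as submean ineq}) is unavailable, and one would in any case need the asymptotics of $Z_{N,\beta}[u]$ for all continuous tilts $u$, not only $u=0$. So the analyticity detour cannot, by itself, deliver the LDP asserted in the theorem.

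The analyticity step is also incomplete as sketched: Theorem \ref{thm:phase} \emph{assumes} the existence of the limit $f(\beta)$ on an interval $]\beta_{0},0[$ with $\beta_{0}<-1$, and knowing where each meromorphic function $\beta\mapsto Z_{N,\beta}$ has its poles (for $m\geq3$ weighted points there is no closed Selberg-type product formula, and even granting explicit pole locations) gives neither the existence of $\lim N^{-1}\log Z_{N,\beta}$ nor its real-analyticity; one needs an $N$-independent zero-free neighbourhood of $[-1,0]$ plus a Vitali/Montel argument seeded by the convergence known for $\beta>0$ --- exactly the delicate Lee--Yang-type input mentioned at the end of Section \ref{subsec:Analyticity-and-absence}. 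The proof the paper points to goes differently: in complex dimension one $E^{(N)}$ is an explicit singular \emph{pair} interaction of vortex type, so the theorem is deduced directly from the LDP for singular pair interactions in \cite[Thm 1.5]{berm10}, whose key ingredient is an a priori estimate on the correlation measures of the Gibbs measures, building on \cite{clmp,k}; that estimate is what supplies the energy bound \ref{eq:upper bound property} your plan tries to bypass. Your first two blocks (the collision/lct analysis identifying Gibbs stability, uniform Gibbs stability and the weight condition, and Troyanov \cite{tr}, \cite{l-t} and \cite[Ex. 6.6]{fu2} on the K\"ahler--Einstein and K-stability side) are consistent with the references the paper invokes, although the assertion that the worst collision is all $N_{k}$ points merging at a single $p_{i}$ would still require justification.
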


In this logarithmic setting the rate functional $I$ has the property
that
\[
I(\frac{\omega}{\int_{X}\omega})=\mathcal{M}_{(X,\Delta)}(\omega),
\]
 where $\mathcal{M}_{(X,\Delta)}$ denote the Mabuchi functional for
$(X,\Delta),$ which in the general setting of log Fano varieties
was defined in \cite{bbegz} on the space of finite energy currents
in $\omega$ in $-c_{1}(K_{X}+\Delta).$ As explained in \cite{berm11},
the previous theorem is a direct consequence of the LDP in \cite[Thm 1.5]{berm10},
concerning singular pair interactions, which generalize the vortex
model in two-dimensional hydrodynamics in \cite{clmp,k}. A key ingredient
is an a priori estimate on the correlation measures of the processes,
which builds on \cite{clmp,k} and implies the conjectural energy
bound property \ref{eq:upper bound property} in this setting.

The problem of finding constant curvature metrics on Riemann surfaces
with conical singularities has a long history and was first posed
as a competition topic by the Göttingen Mathematical Society in 1890
\cite{s-g}. The weight condition in the previous theorem first appeared
in \cite{tr}, where the existence of $\omega_{KE}$ was established
and the uniqueness was settled in \cite{l-t}. By \cite[Ex. 6.6]{fu2}
the weight condition \ref{eq:weight condition intro} is equivalent
to the uniform K-stability of $(X,\Delta),$ which thus is equivalent
to uniform Gibbs stability in this setting. 

\section{\label{sec:Background}Background}

\subsection{\label{subsec:Complex-geometry}Complex geometry}

We start by recalling some basic complex geometry - for more background
see, for example, the exposition in \cite{berm10b} and the books
\cite{de,g-z}. Let $X$ be an $n-$dimensional compact complex manifold
and denote by $J$ the corresponding complex structure, viewed as
an endomorphism of the real tangent bundle satisfying $J^{2}=-I.$ 

\subsubsection{Kähler forms/metrics}

On a complex manifold $(X,J)$ anti-symmetric two forms $\omega$
and symmetric two tensors $g$ on $TX\otimes TX,$ which are $J-$invariant,
may be identified by setting 
\[
g:=\omega(\cdot,J\cdot)
\]
Such a real two form $\omega$ is said to be \emph{Kähler} if it is
closed, $d\omega=0,$ and the corresponding symmetric tensor $g$
is positive definite (i.e. defines a Riemannian metric)\footnote{A $J-$invariant two form $\omega$ is usually said to be of type
$(1,1)$ since $\omega=\sum_{i,j}\omega_{ij}dz_{i}\wedge d\bar{z}_{j}$
in local holomorphic coordinates.}. Conversely, a Riemannian metric $g$ is said to be \emph{Kähler
}if it arises in this way (in Riemannian terms this means that parallel
transport with respect to $g$ preserves $J).$ By the local $\partial\bar{\partial}-$
lemma a $J-$invariant two form $\omega$ is closed, i.e. $d\omega=0$
if and only if $\omega$ may be locally expressed as $\omega=\frac{i}{2\pi}\partial\bar{\partial}\phi,$
in terms of a local smooth function $\phi$ (called a local potential
for $\omega)$. In real notation this means that 
\[
\omega=dd^{c}\phi,\,\,d^{c}:=-\frac{1}{4\pi}J^{*}d
\]

\begin{rem}
The normalization above ensures that $dd^{c}\log|z|^{2}$ is a probability
measure on $\C.$
\end{rem}

We will denote by $[\omega]\in H^{2}(X,\R)$ the de Rham cohomology
class represented by $\omega.$ If $\omega_{0}$ is a fixed Kähler
form then, according to the global $\partial\bar{\partial}-$ lemma,
any other Kähler metric in $[\omega_{0}]$ may be globally expressed
as 
\[
\omega_{\varphi}:=\omega_{0}+dd^{c}\varphi,\,\,\,\,\varphi\in C^{\infty}(X),
\]
where $\varphi$ is determined by $\omega_{0}$ up to an additive
constant and called a Kähler potential for $\omega_{\varphi}.$ The
space of all Kähler potentials is denoted by
\[
\mathcal{H}(X,\omega):=\left\{ \varphi\in C^{\infty}(X):\,\omega_{\varphi}>0\right\} 
\]
The association $\varphi\mapsto\omega_{\varphi}$ thus allows one
to identify $\mathcal{H}(X,\omega)/\R$ with the space of all Kähler
forms in $[\omega_{0}].$ 

\subsubsection{\label{subsec:Metrics-on-line}Metrics on line bundles and curvature}

Let $L$ be a holomorphic line bundle on $X$ and $\left\Vert \cdot\right\Vert $
a Hermitian metric on $L.$ The normalized curvature two-form of $\left\Vert \cdot\right\Vert $
may be (locally) written as 
\begin{equation}
\omega:=-dd^{c}\phi\log\left\Vert e_{U}\right\Vert ^{2},\,\,\,\phi:=-\log\left\Vert e_{U}\right\Vert ^{2}\label{eq:def of phi as weight}
\end{equation}
 in terms of a given local trivialization of $L,$ i.e. a non-vanishing
holomorphic section $e_{U}$ of $L$ over $U\subset X.$ The local
function $\phi$ is called the \emph{weight} of the metric. The corresponding
cohomology class $[\omega]$ is independent of the metric $\left\Vert \cdot\right\Vert $
on $L$ and coincides with the first Chern class $c_{1}(L)$ in $H^{2}(X,\R)\cap H^{2}(X,\Z)$
(conversely, any such cohomology class is the first Chern class of
line bundle $L).$ A line bundle $L$ is said to be\emph{ positive}
if it admits a metric with positive curvature, i.e. such that the
curvature form $\omega$ is Kähler. Then the pair $(X,L)$ is called
a\emph{ polarized manifold.} Fixing a reference metric $\left\Vert \cdot\right\Vert _{0}$on
$L$ with curvature form $\omega_{0}$ any other metric on $L$ may
be expressed as 
\[
\left\Vert \cdot\right\Vert =\left\Vert \cdot\right\Vert _{0}e^{-\varphi/2},
\]
 for $\varphi\in C^{\infty}(X).$ The curvature form of $\left\Vert \cdot\right\Vert $
is thus given by $\omega_{\varphi}$ which is positive iff $\varphi\in\mathcal{H}(X,\omega_{0}).$
Note that the definitions are made so that 
\[
\varphi=\phi-\phi_{0}
\]
 in terms of the local weights $\phi$ and $\phi_{0}$ of the metrics
$\left\Vert \cdot\right\Vert $ and $\left\Vert \cdot\right\Vert _{0}.$
Any positive line bundle $L$ is\emph{ big}, i.e. there exists $V>0$
(called the volume of $L)$ such that
\[
N_{k}:=\dim H^{0}(X,kL)=Vk^{n}+o(k^{n}),\,\,\,k\rightarrow\infty
\]
This follows, for example, from the Kodaira embedding theorem (recalled
below) and the volume $V$ may be expressed as
\[
V:=\int_{X}\omega_{0}^{n}
\]
Given a metric $\left\Vert \cdot\right\Vert $ on $L$ we will use
the same notation $\left\Vert \cdot\right\Vert $ for the induced
metric on the tensor products of $L$ over $X,$ obtained by imposing
that $\left\Vert \cdot\right\Vert $ be multiplicative. In particular,
if $\phi$ is a local weight for $\left\Vert \cdot\right\Vert $ (defined
with respect to the local trivialization $e_{U})$ then $k\phi$ is
a local weight for the $k$ th tensor product of $L$ (defined with
respect to the local trivialization $e_{U}^{\otimes k}).$ This motivates
using the additive notation $kL$ for tensor products. More generally,
we will use the same notation $\left\Vert \cdot\right\Vert $ for
the induced metrics on the line bundles $(kL)^{\boxtimes N}$ over
the $N-$fold products $X^{N}.$ 

\subsubsection{\label{subsec:Projectiv-algebraic-embeddings}Algebraic embeddings
of polarized manifolds}

Recall that the the $m-$dimensional complex projective space $\P_{\C}^{m}$
is defined by 
\[
\P_{\C}^{m}(:=\C^{m+1}/\C^{*}
\]
Denote, as usual, by $\mathcal{O}(1)$ the hyperplane line bundle
over $\P_{\C}^{m},$ i.e. the dual of the tautological line bundle
$\C^{m+1}\rightarrow\P_{\C}^{m}.$ The space $H^{0}(X,k\mathcal{O}(1))$
may be identified with the space of all homogeneous polynomials on
$\C^{m+1}$ of degree $k.$ The line bundle $\mathcal{O}(1)\rightarrow\P_{\C}^{m}$
comes with a positively curved metric, namely the \emph{Fubini-Study
metric} induced by the Euclidean norm on $\C^{m+1}$ (see \cite[Section 3.7]{berm10b}
for more background). Hence, $(X,L)$ is a polarized manifold, in
the sense of the previous section. More generally, if $X$ is a complex
submanifold of $\P_{\C}^{m}$ (which, by Chow's theorem, equivalently
means that $X$ is a algebraic) then $(X,\mathcal{O}_{X}(1))$ is
a polarized manifold, where $\mathcal{O}(1)_{X}$ denotes the restriction
of $\mathcal{O}(1)\rightarrow\P^{m}$ to $X.$ Indeed, the restriction
to $\mathcal{O}_{X}(1)$ of the Fubini-Study metric on $\mathcal{O}(1)$
is positively curved. Conversely, by the Kodaira embedding theorem,
if $(X,L)$ is a polarized manifold, then after perhaps replacing
$L$ by a large tensor power, $X$ may be holomorphically embedded
in a projective space $\P^{m}$ in such a way that $L$ gets identified
with $\mathcal{O}(1)_{X}:$ 
\begin{equation}
X\rightarrow\P(H^{0}(X,L)^{*}),\,\,\,\,x\mapsto[s_{0}(x):...:s_{m}(x)]\in\P^{m}\label{eq:Kod emb}
\end{equation}
 where $x$ is mapped to the evaluation functional at $x$ and $s_{0},...,s_{m}$
denotes a fixed basis in $\P(H^{0}(X,L).$ Thus, for $k$ sufficently
large, $H^{0}(X,kL)$ identifies with the restriction to $X$ of the
space of all $k-$homogeneous polynomials over $\P^{m}.$ By Chow's
theorem the embedding of $X$ is an algebraic submanifold and hence
a line bundle is positive iff it is \emph{ample,} in the algebro-geometric
sense. 

\subsubsection{The canonical line bundle and Ricci curvature}

When $L$ is the canonical line bundle $K_{X},$ i.e. the top exterior
power of the holomorphic cotangent bundle of $X:$ 
\[
K_{X}:=\det(T^{*}X)
\]
any volume form $dV$ on $X$ induces a smooth metric $\left\Vert \cdot\right\Vert _{dV}$
on $K_{X},$ by locally setting 
\begin{equation}
\left\Vert dz\right\Vert _{dV}^{2}:=c_{n}dz\wedge d\bar{z}/dV,\label{eq:local def of metric on KX}
\end{equation}
 where $dz:=dz_{1}\wedge\cdots\wedge dz_{n}$ is the local holomorphic
trivialization of $K_{X}$ induced by a choice of local holomorphic
coordinates and $c_{n}dz\wedge d\bar{z}$ is a short hand for the
local Euclidean volume form $\frac{i}{2}dz_{1}\wedge d\bar{z}_{1}\wedge\cdots\wedge\frac{i}{2}dz_{n}\wedge d\bar{z}_{n}.$
When $dV$ is the volume form of a given Kähler metric $\omega$ on
$X,$ i.e. $dV=\omega^{n}/n!,$ then its curvature form may be identified
with minus the Ricci curvature of $\omega,$ i.e. 
\begin{equation}
\mbox{\ensuremath{\mbox{Ric}\,}\ensuremath{\omega=}}-dd^{c}\log\frac{dV}{c_{n}dz\wedge d\bar{z}}.\label{eq:def of ricci curv}
\end{equation}
By a slight abuse of notation we will also write $\mbox{Ric \ensuremath{(dV)} }$for
the right hand side in formula \ref{eq:def of ricci curv}. 

\subsubsection{\label{subsec:Twisted-K=0000E4hler-Einstein-metrics}Twisted Kähler-Einstein
metrics }

A Kähler metric $\omega_{\beta}$ is said to be a \emph{twisted Kähler-Einstein
metric} if it satisfies the twisted Kähler-Einstein equation 
\begin{equation}
\mbox{\ensuremath{\mbox{Ric}}\ensuremath{\omega}}=-\beta\omega+\theta,\label{eq:tw ke eq in text}
\end{equation}
where the form $\theta$ is called the\emph{ twisting form. }Since
$\omega_{\beta}$ is Kähler the form $\eta$ is necessarily closed
and $J-$invariant, i.e. of type $(1,1).$ The corresponding equation
at the level of cohomology classes is

\begin{equation}
[\omega]=\frac{1}{\beta}\left([\theta]+c_{1}(K_{X})\right)\label{eq:cohom eq}
\end{equation}

\begin{rem}
There is no loss of generality if one assumes that that $|\beta|=1$
(by replacing $\omega$ with $|\beta|\omega),$ but allow general
$\beta$ makes the connection to the statistical mechanical framework
more apparent. Moreover, allowing $\beta$ to vary continuously is
important for Aubin's method of continuity \cite{au2}.
\end{rem}

To a given pair $(dV,\omega_{0})$ consisting of a volume form $dV$
and a Kähler form $\omega_{0}$ on $X$ we associate, for any parameter
$\beta\in\R,$ the twisting form 

\begin{equation}
\theta:=\beta\omega_{0}+\ensuremath{\mbox{Ric}}\ensuremath{dV}.\label{eq:def of eta}
\end{equation}
This association is invariant under $(dV,\omega_{0})\longrightarrow(e^{\beta u}dV,\omega_{0}+ddu)$
for any $u\in C^{\infty}(X).$

The following lemma follows directly from the expression \ref{eq:def of ricci curv}
for the Ricci curvature of a Kähler metric:
\begin{lem}
\label{lem:twisted}Let $X$ be a compact complex manifold endowed
with a Kähler form $\omega_{0}$ and volume form $dV.$ Then a Kähler
form $\omega_{\beta}\in[\omega_{0}]$ solves the corresponding twisted
Kähler-Einstein equation \ref{eq:tw ke eq in text} iff $\omega_{\beta}:=\omega_{0}+dd^{c}\varphi_{\beta}$
for a unique $\varphi_{\beta}\in\mathcal{H}(X,\omega_{0})$ solving
the PDE
\begin{equation}
\omega_{\varphi}^{n}=e^{\beta\varphi}dV\label{eq:ma eq with beta in text}
\end{equation}
\end{lem}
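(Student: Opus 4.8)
The plan is to reduce the twisted Kähler--Einstein equation \ref{eq:tw ke eq in text} to the scalar Monge--Ampère equation \ref{eq:ma eq with beta in text} by a direct application of the global $\partial\bar\partial$-lemma together with the maximum principle on the compact manifold $X$, exactly as indicated in the statement.

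First I would write any Kähler form $\omega_{\beta}\in[\omega_{0}]$ as $\omega_{\varphi}=\omega_{0}+dd^{c}\varphi$ with $\varphi\in\mathcal{H}(X,\omega_{0})$, which is possible by the global $\partial\bar\partial$-lemma recalled in Section \ref{subsec:Complex-geometry}; here $\varphi$ is determined up to an additive constant. Using the formula \ref{eq:def of ricci curv} for the Ricci curvature and the definition \ref{eq:local def of metric on KX} of the metric $\|\cdot\|_{dV}$, one has the globally valid identities $\mathrm{Ric}\,\omega_{\varphi}=-dd^{c}\log\bigl(\omega_{\varphi}^{n}/(c_{n}dz\wedge d\bar{z})\bigr)$ and $\mathrm{Ric}\,dV=-dd^{c}\log\bigl(dV/(c_{n}dz\wedge d\bar{z})\bigr)$, where the right-hand sides are well-defined $(1,1)$-currents even though the individual local expressions depend on the chosen holomorphic coordinates (the transition factors are moduli of holomorphic Jacobians, hence pluriharmonic and so annihilated by $dd^{c}$). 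Substituting these, together with the definition \ref{eq:def of eta} of $\theta$, into \ref{eq:tw ke eq in text} and using $\omega_{\varphi}-\omega_{0}=dd^{c}\varphi$, the $\beta\omega_{0}$ terms cancel and the equation collapses to
\[
dd^{c}\log\frac{\omega_{\varphi}^{n}}{dV}=\beta\,dd^{c}\varphi .
\]

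Next I would invoke compactness: the difference $\log(\omega_{\varphi}^{n}/dV)-\beta\varphi$ is a global smooth function on $X$ with vanishing $dd^{c}$, hence pluriharmonic, hence constant by the maximum principle. Thus $\omega_{\varphi}^{n}=e^{\beta\varphi+c}dV$ for some constant $c$, and since $dd^{c}$ kills constants, replacing $\varphi$ by $\varphi+c/\beta$ does not change $\omega_{\varphi}$ and produces a potential solving \ref{eq:ma eq with beta in text} exactly; this normalization moreover pins down $\varphi_{\beta}$ uniquely among the potentials of $\omega_{\beta}$, since two solutions of \ref{eq:ma eq with beta in text} differing by a constant $t$ would force $e^{\beta t}=1$, i.e.\ $t=0$ (using $\beta\neq 0$). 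Conversely, if $\varphi_{\beta}\in\mathcal{H}(X,\omega_{0})$ solves \ref{eq:ma eq with beta in text}, applying $dd^{c}\log$ to both sides and running the above chain of identities in reverse shows that $\omega_{\varphi_{\beta}}$ satisfies \ref{eq:tw ke eq in text}.

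There is essentially no deep obstacle here; the argument is a bookkeeping exercise with the normalizations of $dd^{c}$ and the local weights. The two points that genuinely need care are (i) verifying that the local expressions $\log(\omega_{\varphi}^{n}/(c_{n}dz\wedge d\bar z))$ and $\log(dV/(c_{n}dz\wedge d\bar z))$ glue so that their $dd^{c}$ is globally defined, and (ii) the treatment of the additive constant, which is what delivers both existence of a potential solving the normalized equation and the asserted uniqueness of $\varphi_{\beta}$ — this step is where the hypothesis $\beta\neq 0$ (implicit in the Kähler--Einstein setting) is used; in the borderline case $\beta=0$ the same computation applies but the potential is then unique only up to an additive constant.
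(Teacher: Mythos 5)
Your argument is correct and is essentially the paper's own: the lemma is stated there as following directly from the Ricci curvature formula \ref{eq:def of ricci curv}, which is exactly the computation you spell out, reducing \ref{eq:tw ke eq in text} to $dd^{c}\bigl(\log(\omega_{\varphi}^{n}/dV)-\beta\varphi\bigr)=0$ and using compactness plus the constant shift (with $\beta\neq0$) to normalize and get uniqueness of the potential. Your two points of care (global well-definedness of $dd^{c}\log$ of the densities and the handling of the additive constant, including the $\beta=0$ caveat) are the right ones and are handled correctly.
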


The Aubin-Yau theorem may now be formulated as follows:
\begin{thm}
\label{thm:(Aubin-Yau)--Assume}(Aubin-Yau) \cite{au,y} Assume given
a compact complex manifold $X,$ endowed with a Kähler form $\omega_{0}$
and a volume form $dV.$ Then the PDE \ref{eq:ma eq with beta in text}
admits, for any positive number $\beta\in]0,\infty[,$ a unique solution
$\varphi_{\beta}\in\mathcal{H}(X,\omega_{0}).$ Equivalently, given
a closed $(1,1)-$form $\theta$ such that the cohomology class $[\theta]+c_{1}(K_{X})$
is positive (i.e. contains a Kähler form) there exists a unique Kähler
metric $\omega_{\beta}$ in $\left([\theta]+c_{1}(K_{X})\right)/\beta$
solving the twisted Kähler-Einstein equation \ref{eq:tw ke eq in text}.
\end{thm}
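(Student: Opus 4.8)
The statement is the classical Aubin--Yau theorem, and the plan is to prove it by the \emph{continuity method}, first reducing the twisted K\"ahler--Einstein formulation to the complex Monge--Amp\`ere equation \ref{eq:ma eq with beta in text} via Lemma \ref{lem:twisted}. For the ``equivalently'' clause: given $\theta$ with $[\theta]+c_1(K_X)$ K\"ahler, choose a K\"ahler form $\omega_0$ with $[\omega_0]=([\theta]+c_1(K_X))/\beta$; then $\beta\omega_0+\mathrm{Ric}\,(dV_0)$ lies in $[\theta]$ for any volume form $dV_0$, so by the global $\partial\bar\partial$-lemma one may replace $dV_0$ by $dV=e^{u}dV_0$ (using \ref{eq:def of ricci curv}) to arrange $\theta=\beta\omega_0+\mathrm{Ric}\,dV$ on the nose, whence Lemma \ref{lem:twisted} makes the twisted K\"ahler--Einstein equation \ref{eq:tw ke eq in text} equivalent to \ref{eq:ma eq with beta in text}. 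Thus it suffices to solve \ref{eq:ma eq with beta in text}. Fix $H\in C^\infty(X)$ with $e^{H}=(\omega_0^n/n!)/dV$ and, for $t\in[0,1]$, consider the family $(\omega_0+dd^c\varphi)^n=e^{\beta\varphi}e^{(1-t)H}dV$; at $t=0$ the solution is $\varphi\equiv 0$, and a solution at $t=1$ is the sought $\varphi_\beta$. Let $S\subset[0,1]$ be the set of $t$ for which a solution $\varphi_t\in\mathcal{H}(X,\omega_0)$ exists (say in $C^{2,\alpha}$); the goal is to show $S$ is nonempty, open and closed, hence all of $[0,1]$.

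\textbf{Openness.} Linearizing $\varphi\mapsto \log(\omega_\varphi^n/dV)-\beta\varphi-(1-t)H$ at a solution $\varphi_t$ gives the operator $u\mapsto \Delta_{\omega_{\varphi_t}}u-\beta u$, which is self-adjoint and elliptic. Because $\beta>0$ its kernel is trivial (if $\Delta u=\beta u$ then $-\int|\nabla u|^2=\beta\int u^2$ forces $u=0$), so it is an isomorphism $C^{2,\alpha}\to C^{0,\alpha}$; the implicit function theorem then yields openness of $S$.

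\textbf{Closedness.} This requires a priori estimates uniform in $t$. First, the \emph{$C^0$ estimate}: at a maximum point of $\varphi_t$ one has $dd^c\varphi_t\le 0$, hence $\omega_{\varphi_t}\le\omega_0$, so $\omega_{\varphi_t}^n/dV\le \sup_X(\omega_0^n/dV)$ there, i.e.\ $\beta\varphi_t\le \mathrm{const}$ at that point; the analogous argument at a minimum gives a two-sided bound $\|\varphi_t\|_{C^0}\le C$. It is precisely the sign $\beta>0$ that makes both inequalities point the right way. Next, the \emph{second-order estimate} (Aubin--Yau/Calabi): bounding $\Delta_{\omega_{\varphi_t}}\bigl(\log \text{tr}_{\omega_0}\omega_{\varphi_t}-A\varphi_t\bigr)$ from below using a lower bound on the holomorphic bisectional curvature of $\omega_0$ and taking $A$ large, one obtains $\text{tr}_{\omega_0}\omega_{\varphi_t}\le C$; combined with the $C^0$ bound and the equation (which controls $\det$ from above and below) this makes the equation \emph{uniformly elliptic}, with the eigenvalues of $\omega_{\varphi_t}$ relative to $\omega_0$ pinched between two positive constants. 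Finally, the complex Evans--Krylov theorem (or Calabi's third-order estimate) upgrades uniform ellipticity plus the $C^2$ bound to a uniform $C^{2,\alpha}$ bound, and Schauder bootstrapping gives uniform $C^\infty$ bounds; positivity $\omega_{\varphi_t}>0$ persists along the path since $\text{tr}_{\omega_0}\omega_{\varphi_t}$ is bounded and $\det$ is bounded below. Hence $S$ is closed, so $S=[0,1]$ and a smooth solution $\varphi_\beta\in\mathcal{H}(X,\omega_0)$ exists.

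\textbf{Uniqueness.} If $\varphi,\psi\in\mathcal{H}(X,\omega_0)$ both solve \ref{eq:ma eq with beta in text}, then at a maximum of $\varphi-\psi$ we have $dd^c(\varphi-\psi)\le 0$, so $\omega_\varphi^n\le\omega_\psi^n$ there, i.e.\ $e^{\beta\varphi}dV\le e^{\beta\psi}dV$, giving $\varphi\le\psi$ at that point and hence $\max_X(\varphi-\psi)\le 0$; by symmetry $\varphi=\psi$. (Here $\beta>0$ is again essential, and note there is no free additive constant, since $\int_X\omega_\varphi^n$ is a topological constant while $\int_X e^{\beta\varphi}dV$ is not.) \textbf{Main obstacle:} since the favourable sign $\beta>0$ trivializes the $C^0$ estimate, the technical heart is the second-order estimate --- controlling $\text{tr}_{\omega_0}\omega_\varphi$ through the curvature of $\omega_0$ --- together with the passage to $C^{2,\alpha}$ via Evans--Krylov; these constitute the analytic core of the Aubin--Yau theory and are the steps I expect to require the most care.
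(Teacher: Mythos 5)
Your proof is correct and follows essentially the same route that the paper merely sketches: the continuity method, with openness from the invertibility of $\Delta_{\omega_{\varphi_t}}-\beta$ (using $\beta>0$), closedness from the maximum-principle $C^{0}$ estimate together with the Aubin--Yau Laplacian estimate and Evans--Krylov/bootstrap, and uniqueness from the maximum principle. The only quibble is the harmless normalization $e^{H}=(\omega_{0}^{n}/n!)/dV$, under which $\varphi\equiv0$ solves the $t=0$ equation only up to the constant $n!$; drop the $n!$ (or shift $\varphi$ by $\beta^{-1}\log n!$) and everything is as stated.
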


\begin{proof}
We recall that the uniqueness follows from the maximum principle,
which also yields a priori $C^{0}(X)-$estimates. As for the existence
it is shown using a method of continuity, based on the Aubin-Yau Laplacian
estimates. 
\end{proof}
\begin{example}
A complex manifold $X$ admits a Kähler-Einstein metric with negative
Ricci curvature iff $K_{X}$ is positive (and the metric is unique).
Indeed, if $K_{X}$ is positive then, by the very definition of positivity,
we can take $\omega_{0}:=-\ensuremath{\mbox{Ric}}\ensuremath{dV}$
for some volume form on $X,$ ensuring that $\theta=0$ above, with
$\beta=1$ (and the converse is trivial). 
\end{example}

\subsubsection{\label{subsec:The-Fano-setting}The Fano setting}

Let $X$ be a Fano manifold, i.e. $-K_{X}>0$ and fix a volume form
$dV$ on $X$ with the property that $\text{Ric}dV>0.$ The ``Fano
setting'' will refer to the special situation when the geometric
data is of the form $(dV,\omega_{0})$ with 
\[
\omega_{0}:=\text{Ric}\,dV
\]
For any given $\beta\neq0$ the corresponding twisted Kähler-Einstein
equation \ref{eq:tw ke eq in text} is then of the form 
\begin{equation}
\text{Ric \ensuremath{\omega_{\beta}=-\beta\omega_{\beta}+(1}}+\beta)\text{\ensuremath{\text{Ric}dV}},\label{eq:Aubins eq text}
\end{equation}
For $\beta=-1$ this is precisely the Kähler-Einstein equation \ref{eq:tw ke eq in text}
for a metric with positive Ricci curvature, while for a general $\beta\in[-1,0[$
this is Aubin's continuity equation with ``time-parameter'' 
\[
\gamma:=-\beta\in]0,1]
\]

The fixed volume form $dV$ corresponds to a metric $\left\Vert \cdot\right\Vert $
on $-K_{X},$ which is the dual of the metric induced by $dV$ on
$K_{X}$ (formula \ref{eq:local def of metric on KX}). In other words,
\[
\left\Vert \frac{\partial}{\partial z_{1}}\wedge\cdots\wedge\frac{\partial}{\partial z_{n}}\right\Vert ^{2}:=dV/c_{n}dz\wedge d\bar{z}
\]
Denoting by $\phi_{0}$ the local weight of the metric we can thus
locally express 
\[
dV=c_{n}e^{-\phi_{0}}dz\wedge d\bar{z},\,\,\,\,dd^{c}\phi_{0}=\text{Ric \ensuremath{dV}}
\]
Accordingly, the complex Monge-Ampère equation which is equivalent
to Aubin's equation (Lemma \ref{lem:twisted}) may be locally expressed
as 

\[
(dd^{c}\phi_{\beta})^{n}=c_{n}e^{-\left(\gamma\phi+(1-\gamma)\phi_{0}\right)}dz\wedge d\bar{z},
\]
 where $\phi_{\beta}:=\phi_{0}+\varphi_{\beta}.$ 
\begin{thm}
(Bando-Mabuchi \cite{b-m}) \label{thm:B-M}For $\beta>-1$ the equation
\ref{eq:Aubins eq text} admits at most one solution and for $\beta=-1$
a solution is uniquely determined modulo the action of the group $\text{Aut \ensuremath{(X)_{0}}}.$ 
\end{thm}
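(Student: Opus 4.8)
\emph{Reduction and the easy range.} By Lemma~\ref{lem:twisted}, in the Fano setting a solution $\omega_\beta\in[\omega_0]$ of Aubin's equation \eqref{eq:Aubins eq text} corresponds bijectively to a potential $\varphi_\beta\in\mathcal H(X,\omega_0)$ solving the Monge--Amp\`ere equation \eqref{eq:ma eq with beta in text}, $\omega_\varphi^n=e^{\beta\varphi}dV$, with $dV$ the fixed volume form such that $\mathrm{Ric}\,dV=\omega_0$; so I must show this PDE has at most one solution when $\beta>-1$ and at most one modulo $\operatorname{Aut}(X)_0$ when $\beta=-1$. For $\beta>0$ this is just Theorem~\ref{thm:(Aubin-Yau)--Assume}: at a point $p$ maximising $\varphi_1-\varphi_2$ one has $dd^c(\varphi_1-\varphi_2)(p)\le0$, hence $\omega_{\varphi_1}^n(p)\le\omega_{\varphi_2}^n(p)$ and $e^{\beta\varphi_1(p)}\le e^{\beta\varphi_2(p)}$, which for $\beta>0$ forces $\varphi_1\le\varphi_2$ everywhere, and symmetrically $\varphi_1\equiv\varphi_2$; the case $\beta=0$ is Yau's theorem. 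For $\beta<0$, however, the same computation yields only $\min(\varphi_1-\varphi_2)\le0\le\max(\varphi_1-\varphi_2)$: the maximum principle fails, and the remaining range genuinely needs a global argument. This is the Bando--Mabuchi phenomenon, which I would treat in two stages.

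\emph{Stage 1: infinitesimal rigidity.} A solution satisfies $\mathrm{Ric}\,\omega_\beta=-\beta\,\omega_\beta+(1+\beta)\omega_0$. When $\beta>-1$ the twisting term $(1+\beta)\omega_0$ is strictly positive, so by compactness of $X$ there is $\varepsilon>0$ with $\mathrm{Ric}\,\omega_\beta\ge(-\beta+\varepsilon)\,\omega_\beta$; the Bochner formula on a compact K\"ahler manifold then forces the first nonzero eigenvalue of the non-negative Laplacian $-\Delta_{\omega_\beta}$ to exceed $-\beta$. Linearising \eqref{eq:ma eq with beta in text} at a solution gives $\psi\mapsto\Delta_{\omega_\beta}\psi-\beta\psi$, whose kernel consists of the $\psi$ with $-\Delta_{\omega_\beta}\psi=-\beta\psi$; since $-\beta$ is positive and strictly below the first nonzero eigenvalue, this kernel vanishes, so the (self-adjoint, elliptic, index-zero) operator is invertible. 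Hence for $\beta>-1$ every solution is nondegenerate, and in particular isolated. When $\beta=-1$ the twisting vanishes and $\omega_{-1}$ is a genuine K\"ahler--Einstein metric, $\mathrm{Ric}\,\omega_{-1}=\omega_{-1}$; now the kernel of the linearisation $\Delta_{\omega_{-1}}+1$ (the eigenspace of $-\Delta_{\omega_{-1}}$ for the eigenvalue $1$) is, by the Matsushima--Lichnerowicz theorem, exactly the space of infinitesimal deformations induced by $\operatorname{Aut}(X)_0$ (equivalently, it is isomorphic to the holomorphic vector fields on $X$), so near any K\"ahler--Einstein metric the solution set is precisely its $\operatorname{Aut}(X)_0$-orbit --- a single point when $\operatorname{Aut}(X)_0=\{I\}$.

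\emph{Stage 2: globalisation, and the main obstacle.} It remains to upgrade local rigidity to a global statement, and this is where I expect essentially all the difficulty to lie. The Bando--Mabuchi route is to join two putative solutions by Aubin's continuity path, with its reference data anchored to the two solutions, and to use the invertibility of the linearisation \emph{along the whole open interval} $t=-\beta\in(0,1)$, together with Aubin--Yau-type a priori estimates (uniform for $t$ bounded away from $1$), to run a connectedness argument showing the path cannot bifurcate while $\beta>-1$; at $\beta=-1$ the only surviving freedom is the bifurcation along the $\operatorname{Aut}(X)_0$-orbit, giving $\omega_1=\sigma^*\omega_0$ with $\sigma\in\operatorname{Aut}(X)_0$. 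An equivalent but more conceptual route, which I would favour, runs via the geodesic convexity of the (twisted) Ding functional $\mathcal D_\beta$ on $\mathcal H(X,\omega_0)$: solutions are its critical points, $\mathcal D_\beta$ is convex along weak geodesics, hence affine along the geodesic joining two solutions, and the equality case of the convexity inequality forces that geodesic to be generated by a holomorphic vector field --- which for $\beta>-1$ is excluded, except for the constant geodesic, by the strictly convex contribution of the twisting term $(1+\beta)\omega_0$, yielding $\varphi_1=\varphi_2$. Either way the soft part is Stage 1; the real work is controlling the global structure of the solution set --- whether through the a priori estimates along the continuity path (the very estimates whose breakdown as $t\to1$ produces the existence obstructions in the Fano case) or through the deep geodesic-convexity theorem for $\mathcal D_\beta$ and the analysis of its equality case.
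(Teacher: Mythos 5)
Your proposal follows essentially the same route as the paper's own (sketched) proof: uniqueness of the linearized equation for $\beta>-1$ via a Bochner/Lichnerowicz eigenvalue estimate (the paper's ``Bochner-Kodaira-Nakano inequality'', which at $\beta=-1$ leaves exactly the kernel coming from holomorphic vector fields), globalized by Aubin's continuity method, with Berndtsson's convexity of the Ding functional named as the alternative -- precisely the two routes the paper cites. Your Stage 2 is, like the paper's one-line sketch, deferred to the cited work of Bando--Mabuchi and Berndtsson, which matches the expository level of detail intended here.
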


\begin{proof}
The proof is based on Aubin's method of continuity \cite{au2}, deforming
from $\beta=0$ to $\beta=-1,$using the uniqueness at $\beta=0$
of the Calabi equation and the uniqueness property for the linearized
equations for $\beta>-1,$ which follows from the Bochner-Kodaira-Nakano
inequality (which also holds for $\beta=-1$ when the group $\text{Aut \ensuremath{(X)_{0}}}$
is trivial). An alternative proof is given in \cite{bern2} which
generalizes to the log Fano setting, as discussed in Section \ref{subsec:The-log-Fano}.
\end{proof}

\subsubsection{\label{subsec:The-log-Fano}The log Fano setting}

Let $X$ be a complex manifold and $\Delta$ a $\Q-$divisor on $X,$
i.e. a formal sum
\[
\Delta=\sum_{i=1}^{m}w_{i}\Delta_{i}
\]
of irreducible subvarieties $\Delta_{i}$ of codimension one in $X,$
with coefficients $w_{i}\in\Q.$ The pair $(X,\Delta)$ is called
a log pair \cite{ko}  and $(X,\Delta)$ is called a \emph{log Fano
manifold} if the anti-canonical line bundle $L$ of the pair $(X,\Delta)$
is positive
\[
L:=-(K_{X}+\Delta)>0,
\]
 where we have identified $\Delta$ with the $\Q-$line bundle $L_{\Delta}$
defined by $\Delta,$ which admits a (multivalued) meromorphic section
$s_{\Delta}$ with multiplicities $w_{i}$ along $\Delta_{i}$ (this
means that $s_{\Delta}^{\otimes l}$ is a well-defined meromorphic
section of the line-bundle $lL_{\Delta}$ for any sufficiently divisible
positive integer $l).$ We fix such a section $s_{\Delta}$ (which
is uniquely determined up to a non-zero multiplicative constant).

Fix a metric $\left\Vert \cdot\right\Vert $ on $L$ with positive
curvature form $\omega_{0}.$ Using that $s_{\Delta}$ defines a canonical
(multi-valued) trivialization of the line bundle $L_{\Delta}$ on
$X-\Delta$ we then get a measure on $X$ defined as in the previous
section on $X-\Delta$ and then extended by zero to all of $X.$ In
other words, locally on $X$ we can express 
\begin{equation}
\mu_{0}=c_{n}e^{-\phi_{0}+\phi_{\Delta}}dz\wedge d\bar{z},\,\,\,,\label{eq:local expres for mu zero}
\end{equation}
 where $\phi_{0}$ is the weight of the metric $\left\Vert \cdot\right\Vert $
on $L$ and $\phi_{\Delta}$ is the weight of the singular metric
on $L_{\Delta}$ induced by $s_{\Delta}.$ This formula shows that,
in the log smooth case, $\mu_{0}$ has an $L_{loc}^{p}-$density for
some $p>1$ iff all coefficients of $\Delta$ are in $]-\infty,1[.$
In general, if $\mu_{0}$ has a $L_{loc}^{p}-$density for some $p>1$
then the log pair $(X,\Delta$) is said to be\emph{ (sub) klt} (which,
in algebraic terms means that the log canonical threshold of $(X,\Delta$)
is $>1$ \cite{ko,bbegz}). Moreover, formula \ref{eq:local expres for mu zero}
reveals that

\[
\text{Ric \ensuremath{\mu_{0}-[\Delta]=\omega_{0},} }
\]
where $[\Delta]$ denotes the current of integration defined by $\Delta.$
The ``log Fano setting'' will refer to the situation when the geometric
data is of the form $(\mu_{0},\omega_{0})$ as above. In this setting,
$\varphi_{\beta}\in\mathcal{E}^{1}(X)$ satisfies\emph{ }
\[
(\omega_{0}+dd^{c}\varphi_{\beta})^{n}=e^{\beta\varphi_{\beta}}\mu_{0}
\]
iff the $(1,1)$ current $\omega_{\beta}:=\omega_{0}+dd^{c}\varphi_{\beta}$
satisfies 
\[
\text{Ric \ensuremath{\omega_{\beta}-[\Delta]=-\beta\omega_{\beta}+(1}}+\beta)\text{\ensuremath{\text{Ric}dV}}
\]
in a weak sense \cite{bbegz}. In particular, for $\beta=-1$ this
is the log Kähler-Einstein equation \ref{eq:log KE eq intro} for
the log pair $(X,\Delta).$ 

\subsection{\label{subsec:Pluripotential-theory}Pluripotential theory }

Recall first that a function $\phi(z)$ in $\C^{n},$ taking values
in $[-\infty,\infty[$ is said to be \emph{plurisubharmonic} (\emph{psh,}
for short) if it is subharmonic along all complex lines. Equivalently,
this means that $\phi$ can be written as a decreasing limit of smooth
functions $\phi_{j}$ such that $dd^{c}\phi_{j}$ is strictly positive,
i.e defines a Kähler form. For $\phi$ psh $dd^{c}\phi$ defines a
positive $(1,1)-$current. Coming back to the global setting of a
compact complex manifold $X$ endowed with a holomorphic line bundle
$L$ a singular metric $\left\Vert \cdot\right\Vert $ om $L$ (taking
values in $[0,\infty[$) is said to be \emph{psh}, i.e. $\phi\in\text{PSH \ensuremath{(L),}}$
if the corresponding local functions $\phi$ (formula \ref{eq:def of phi as weight})
are psh. The curvature of a psh metric on $L$ thus defines a global
positive $(1,1)-$current on $X.$ When $L$ is positive, i.e. admits
a smooth reference metric $\left\Vert \cdot\right\Vert _{0}$ whose
curvature form $\omega_{0}$ defines a Kähler form on $X,$ we can
identify $PSH(X,L)$ with the space $PSH(X,\omega_{0})$ of all $\omega_{0}-$psh
functions $\varphi$ on $X,$ i.e. 
\[
PSH(X,L)\longleftrightarrow PSH(X,\omega_{0}):=\left\{ \varphi\in L^{1}(X):\,\omega_{\varphi}\geq0\right\} ,
\]
 where $\varphi$ is assumed to be a strongly usc function (in order
to make the representation in $L^{1}(X)$ unique). By Demailly's global
approximation result a function $\varphi$ is in $PSH(X,\omega_{0})$
iff it can be written as a decreasing limit $\varphi_{j}$ in $\mathcal{H}(X,\omega_{0}).$ 
\begin{example}
Any element $s_{k}\in H^{0}(X,kL)$ induces a singular psh metric
$\phi$ on $L$ corresponding to $\varphi\in H^{0}(X,kL)$ defined
by $\varphi(x):=k^{-1}\log\left\Vert s_{k}\right\Vert ^{2}(x).$ Thus
the curvature of $\phi$ is the current of integration along the subvariety
defined by the zero-locus of $s,$ including multiplicities. 
\end{example}

\subsubsection{The complex Monge-Ampère operator and the pluricomplex energy $E(\mu)$}

For $\varphi\in\mathcal{H}(X,\omega_{0})$ the complex Monge-Ampère
measure $MA(\varphi)$ is the probability measure on $X$ defined
by the normalized volume form of the Kähler form $\omega_{\varphi}:$
\begin{equation}
MA(\varphi):=\omega_{\varphi}^{n}/V\,\,\,\,V:=\int_{X}\omega_{0}^{n}\label{eq:def of ma meas text}
\end{equation}
using exterior products. The map $\varphi\mapsto MA(\varphi),$ viewed
as a one-form on the convex space $\mathcal{H}(X,\omega_{0})$ is
exact and thus admits a primitive, denoted by $\mathcal{E}.$ In other
words, $\mathcal{E}$ is the functional on $\mathcal{H}(X,\omega_{0})$
defined by the property that 
\begin{equation}
d\mathcal{E}_{|\varphi}=MA(\varphi),\label{eq:def of energyfunc as primitive}
\end{equation}
in the sense that 
\[
\frac{d\mathcal{E}\left(\varphi_{0}+t(\varphi_{1}-\varphi_{0})\right)}{dt}_{|t=0}=\int_{X}MA(\varphi)(\varphi_{1}-\varphi_{0})
\]
and the normalization condition $\mathcal{E}(0)=0.$ Occasionally,
we will write $\mathcal{E}_{\omega_{0}}$ to indicate the dependence
of $\mathcal{E}$ on the normalization. Integrating the defining relation
\ref{eq:def of energyfunc as primitive} along the affine line segment
in\textbf{ }$\mathcal{H}(X,\omega_{0})$ between $\varphi_{0}:=0$
and $\varphi_{1}:=\varphi$ reveals that 
\begin{equation}
\mathcal{E}(\varphi)=\frac{1}{(n+1)V}\int_{X}\varphi\sum_{j=0}^{n}\omega_{\varphi}^{j}\wedge\omega_{\varphi}^{n-j}\label{eq:beauti E as integral}
\end{equation}
We will also denote by $\mathcal{E}_{\omega_{0}}$ the smallest upper
semi-continuous extension of $\mathcal{E}_{\omega_{0}}$ to all of
$PSH(X,\omega_{0})$ and write 
\[
\mathcal{E}^{1}(X):=\left\{ \varphi\in PSH(X,\omega):\,\,\mathcal{E}_{\omega_{0}}(\varphi)>-\infty\right\} ,
\]
 which is called the space of all functions on $X$ with \emph{finite
energy. }The differential property \ref{eq:def of energyfunc as primitive}
still holds on the whole space $\mathcal{E}^{1}(X)$ if the Monge-Ampère
measure $MA(\varphi)$ is defined in terms of non-polar products of
positive currents (see \cite{bbgz}).

Now, following \cite{bbgz} the\emph{ pluricomplex energy} $E_{\omega_{0}}(\mu)$
of a probability measure $\mu$ is defined by
\begin{equation}
E_{\omega_{0}}(\mu):=\sup_{\varphi\in\mathcal{E}^{1}(X)}\mathcal{E}_{\omega_{0}}(\varphi)-\left\langle \varphi,\mu\right\rangle \in]-\infty,\infty]\label{eq:def of e as sup}
\end{equation}
 on $\mathcal{P}(X).$ The functional $E_{\omega_{0}}$ thus defined
is lsc on $\mathcal{P}(X)$ (since it is the the sup of lsc (affine)
functionals, using that $\varphi$ is usc). 
\begin{example}
\label{exa:E on Riemann}In the classical case $n=1,$ i.e. when $X$
is a Riemann surface, 
\[
E_{\omega_{0}}(\mu)=\frac{1}{2}\int_{X}G_{0}(x,y)\mu\otimes\mu,
\]
 where $G_{0}(x,y)$ is the corresponding Green function, i.e. the
symmetric lsc function in $L^{1}(X\times X)$ determined by $dd^{c}G_{0}(\cdot,y)=\omega_{0}-\delta_{y}$
and $\int_{X}G(x,y)\omega(x)=0.$ In electrostatic terms this means
that $E_{\omega_{0}}(\mu)$ is the classical Coulomb energy of a positive
charge distribution $\mu$ on $X$ in the ``neutralizing back-ground
charge $\omega_{0}"$ (compare \cite{berm 1 komma 5}). 
\end{example}

As shown in \cite{bbgz} the Monge-Ampère operator yields a bijection
\begin{equation}
\,\,\varphi\mapsto MA(\varphi)\label{eq:bijection finite energy}
\end{equation}
 between the space $\mathcal{E}^{1}(X)/\R$ and the space of all probability
measure of finite energy. The proof uses a direct variational approach
where the\emph{ potential} $\varphi_{\mu}$ of a measure $\mu$ of
finite energy, i.e. the solution,

\[
MA(\varphi_{\mu})=\mu
\]
is obtained as the element $\mathcal{E}^{1}(X)$ realizing the sup
defining $E_{\omega_{0}}(\mu)$ (formula \ref{eq:def of e as sup}).
In particular, 
\begin{equation}
E_{\omega_{0}}(\mu)=\mathcal{E}_{\omega_{0}}(\varphi_{\mu})-\left\langle \varphi_{\mu},\mu\right\rangle \label{eq:expres for E mu in terms of potential}
\end{equation}

\begin{rem}
In the case when $\mu$ is a volume form the existence of a smooth
solution $\varphi_{\mu}$ was first shown by Yau \cite{y} in the
solution of the Calabi conjecture (the uniqueness of such solutions
is due to Calabi).

Inverting the relation \ref{eq:def of energyfunc as primitive} reveals
that the differential of $dE$ at a volume form in $\mathcal{P}(X)$
is given by
\begin{equation}
dE_{|\mu}=-\varphi_{\mu}\in\mathcal{H}(X)/\R.\label{eq:differ of E}
\end{equation}
 More generally, by \cite[Prop 2.7]{berm6} this formula holds on
all of $\mathcal{P}(X)$ in the sense of sub-gradients. That is to
say that for any two elements $\mu_{0}$ and $\mu_{1}$ in $\mathcal{P}(X)$
of finite energy 
\end{rem}

\begin{equation}
E(\mu_{1})\geq E(\mu_{0})-\left\langle \varphi_{\mu},\mu_{1}-\mu_{0}\right\rangle \label{eq:sub-grad}
\end{equation}

\subsubsection{\label{subsec:The-psh-projection-}The psh-projection $P_{\theta}$
and the Legendre-Fenchel transform of $E$ }

The ``psh-projection'' is the operator $P_{\omega_{0}}$ from $C^{0}(X)$
to $PSH(X,\omega_{0})$ defined as the following envelope:

\begin{equation}
(P_{\omega_{0}}u)(x):=\sup_{\varphi\in PSH(X,\omega_{0})}\{\varphi(x):\,\,\,\varphi\leq u\}\label{eq:def of proj operator in khler case}
\end{equation}
Using the operator $P$ the pluricomplex energy $E_{\omega_{0}}$
may be realized as a Legendre-Fenchel transform (see the general definition
\ref{eq:def of Leg-F trans} below):
\begin{prop}
\label{prop:pluri energy as legendre}The pluricomplex energy $E_{\omega_{0}},$
extended by $\infty$ to the space $\mathcal{M}(X)$ of all signed
measures on $X$, is the \emph{Legendre-Fenchel transform} of the
convex functional 
\begin{equation}
f(u):=-\mathcal{E}_{\omega_{0}}(P_{\omega_{0}}-u)\label{eq:def of f u in Gateaux}
\end{equation}
 Equivalently, this means that
\begin{equation}
E_{\omega_{0}}(\mu)=\sup_{u\in C^{0}(X)}\mathcal{E}_{\omega_{0}}(P_{\omega_{0}}u)-\left\langle u,\mu\right\rangle .\label{eq:E as Leg}
\end{equation}
\end{prop}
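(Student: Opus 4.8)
The plan is to deduce Proposition \ref{prop:pluri energy as legendre} from the already-stated variational description of $E_{\omega_0}$ (formula \ref{eq:def of e as sup}) together with the defining property of the psh-projection $P_{\omega_0}$ (formula \ref{eq:def of proj operator in khler case}). The key observation is that for $u\in C^0(X)$ and $\varphi\in PSH(X,\omega_0)$ one has $\varphi\leq u$ if and only if $\varphi\leq P_{\omega_0}u$, and moreover $P_{\omega_0}u\leq u$ with equality $\mu$-a.e. for the relevant measures; combined with monotonicity of $\mathcal{E}_{\omega_0}$ this lets one replace $u$ by $P_{\omega_0}u$ freely inside the pairing.

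First I would prove the inequality $E_{\omega_0}(\mu)\leq \sup_{u\in C^0(X)}\mathcal{E}_{\omega_0}(P_{\omega_0}u)-\langle u,\mu\rangle$. Given any $\varphi\in\mathcal{E}^1(X)$, one approximates $\varphi$ from above by a decreasing sequence $u_j\in C^0(X)$ (possible since $\varphi$ is usc, e.g. $\varphi_j\in\mathcal{H}(X,\omega_0)$ decreasing to $\varphi$ by Demailly's approximation, or just sup-convolutions); then $P_{\omega_0}u_j\geq\varphi$, so by monotonicity of $\mathcal{E}_{\omega_0}$, $\mathcal{E}_{\omega_0}(P_{\omega_0}u_j)\geq\mathcal{E}_{\omega_0}(\varphi)$, while $\langle u_j,\mu\rangle\to\langle\varphi,\mu\rangle$ by monotone convergence. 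Hence $\mathcal{E}_{\omega_0}(P_{\omega_0}u_j)-\langle u_j,\mu\rangle \geq \mathcal{E}_{\omega_0}(\varphi)-\langle\varphi,\mu\rangle - o(1)$, and taking the sup over $\varphi$ gives the bound.

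For the reverse inequality $E_{\omega_0}(\mu)\geq \sup_{u}\mathcal{E}_{\omega_0}(P_{\omega_0}u)-\langle u,\mu\rangle$, I would fix $u\in C^0(X)$ and set $\varphi:=P_{\omega_0}u\in PSH(X,\omega_0)$, noting $\varphi\in\mathcal{E}^1(X)$ when the right-hand side is finite. Since $\varphi\leq u$ pointwise, $-\langle u,\mu\rangle\leq -\langle\varphi,\mu\rangle$, so $\mathcal{E}_{\omega_0}(P_{\omega_0}u)-\langle u,\mu\rangle\leq \mathcal{E}_{\omega_0}(\varphi)-\langle\varphi,\mu\rangle\leq E_{\omega_0}(\mu)$ by \ref{eq:def of e as sup}; take the sup over $u$. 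Together these two inequalities give \ref{eq:E as Leg}. Finally, to phrase this as the Legendre-Fenchel transform of $f(u)=-\mathcal{E}_{\omega_0}(P_{\omega_0}(-u))$ on $C^0(X)$ (with $E_{\omega_0}$ extended by $+\infty$ to signed measures), I substitute $u\mapsto -u$ and compare with the general definition \ref{eq:def of Leg-F trans}; the extension by $+\infty$ off $\mathcal{P}(X)$ is automatic because a signed measure of infinite mass or wrong sign makes $\langle u,\mu\rangle$ unbounded below over $u\in C^0(X)$. One should also record that $f$ is convex, which follows from concavity of $u\mapsto\mathcal{E}_{\omega_0}(P_{\omega_0}u)$; the latter is standard (e.g. from the orthogonality relation $\int (u-P_{\omega_0}u)MA(P_{\omega_0}u)=0$ and concavity of $\mathcal{E}_{\omega_0}$ along the relevant paths).

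The main obstacle is the approximation step in the first inequality: one needs a decreasing sequence of \emph{continuous} functions $u_j\downarrow\varphi$ for an arbitrary finite-energy $\varphi$, and one needs $\mathcal{E}_{\omega_0}(P_{\omega_0}u_j)\to\mathcal{E}_{\omega_0}(\varphi)$, i.e. continuity of $\mathcal{E}_{\omega_0}$ along decreasing sequences — this is where one invokes the known continuity properties of the extended energy functional $\mathcal{E}_{\omega_0}$ on $PSH(X,\omega_0)$ from \cite{bbgz}, together with $P_{\omega_0}u_j\downarrow \varphi$ (which holds because $\varphi\leq u_j$ forces $\varphi\leq P_{\omega_0}u_j$, while $P_{\omega_0}u_j\leq u_j\downarrow\varphi$). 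Modulo citing these pluripotential-theoretic facts, the rest is a formal manipulation of envelopes and Legendre duality.
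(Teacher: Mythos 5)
Your argument is correct and is essentially the paper's own proof: the paper disposes of the proposition by exactly the two ingredients you use, namely monotonicity of the (extended) energy $\mathcal{E}_{\omega_{0}}$ together with the fact that any $\varphi\in PSH(X,\omega_{0})$ is a decreasing limit of continuous $\omega_{0}$-psh functions (Demailly approximation), plus the elementary envelope inequalities $P_{\omega_{0}}u\leq u$ and $\varphi\leq u\Rightarrow\varphi\leq P_{\omega_{0}}u$. Note that the ``main obstacle'' you flag is not actually needed: your first inequality only uses $\mathcal{E}_{\omega_{0}}(P_{\omega_{0}}u_{j})\geq\mathcal{E}_{\omega_{0}}(\varphi)$, i.e. monotonicity, not continuity of $\mathcal{E}_{\omega_{0}}$ along decreasing sequences.
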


\begin{proof}
Since we have assumed that $L>0$ this follows readily from monotonicity
arguments and the fact that any $\varphi\in PSH(X,\omega_{0})$ is
the decreasing limit of functions in $PSH(X,\omega_{0})\Cap C^{0}(X).$
But as shown in \cite{berm 1 komma 5} the result, in fact, holds
more generally for any big cohomology class. 
\end{proof}
We also recall the following differentiability result from \cite{b-b},
which plays a key rule in the variational approach to complex Monge-Ampère
equations in \cite{bbgz}:
\begin{thm}
\label{thm:diff}The convex functional $f(u)$ on $C^{0}(X),$ defined
by formula \ref{eq:def of f u in Gateaux}, is Gateaux differentiable
and its differential at $u\in C^{0}(X)$ is given by
\[
(df)(u)=MA(P_{\omega_{0}}u)
\]
\end{thm}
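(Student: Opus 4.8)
The plan is to establish the differentiability of $f(u) = -\mathcal{E}_{\omega_{0}}(P_{\omega_{0}}u)$ by first proving the one-sided directional derivative formula $\frac{d}{dt}\big|_{t=0^+} \mathcal{E}_{\omega_{0}}(P_{\omega_0}(u+tv)) = \int_X v\, MA(P_{\omega_0}u)$ for every $v \in C^0(X)$, and then upgrading to Gateaux differentiability by noting this expression is linear in $v$. The key structural fact I would exploit is the ``orthogonality relation'' for the envelope: $MA(P_{\omega_0}u)$ is supported on the contact set $\{P_{\omega_0}u = u\}$, where $P_{\omega_0}u$ is, by its definition as the largest $\omega_0$-psh minorant of $u$, equal to $u$. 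Granting this, one can replace $u$ by $P_{\omega_0}u$ inside the pairing against $MA$.

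First I would record the concavity of $t \mapsto \mathcal{E}_{\omega_0}(P_{\omega_0}(u+tv))$: this follows from the concavity of $\mathcal{E}_{\omega_0}$ along affine segments in $\mathcal{H}(X,\omega_0)$ (indeed, by formula \ref{eq:beauti E as integral} its second derivative along a segment is $-\tfrac{n}{V}\int_X (\varphi_1-\varphi_0)\, dd^c(\varphi_1-\varphi_0)\wedge(\cdots)\le 0$) together with the fact that $P_{\omega_0}$ is concave as a map $C^0(X)\to PSH(X,\omega_0)$ and $\mathcal{E}_{\omega_0}$ is monotone. Concavity guarantees that the one-sided derivatives exist, so the task reduces to computing them and checking they agree. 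For the upper bound on the right derivative, use $P_{\omega_0}(u+tv) \le P_{\omega_0}u + t\|v\|_\infty$ and the monotonicity plus the already-known differential $d\mathcal{E}_{|\varphi} = MA(\varphi)$ on $\mathcal{H}(X,\omega_0)$, extended to $\mathcal{E}^1(X)$; a regularization argument (Demailly approximation, decreasing sequences $\varphi_j \downarrow P_{\omega_0}u$ in $\mathcal{H}$) is needed to make sense of this for the generally non-smooth envelope. For the lower bound, use $P_{\omega_0}(u+tv)\ge P_{\omega_0}u + t\inf v$ in the regions where this is efficient, but the sharp estimate requires the contact-set argument: since $\mathcal{E}_{\omega_0}(P_{\omega_0}(u+tv)) - \mathcal{E}_{\omega_0}(P_{\omega_0}u) \ge \int (P_{\omega_0}(u+tv) - P_{\omega_0}u)\, MA(P_{\omega_0}u)$ by sub-gradient convexity (cf. \ref{eq:sub-grad}), and on $\mathrm{supp}\, MA(P_{\omega_0}u) \subseteq \{P_{\omega_0}u = u\}$ one has $P_{\omega_0}(u+tv) \ge P_{\omega_0}u$ with equality forced in the limit up to $tv$, dividing by $t$ and letting $t\to 0^+$ yields the lower bound $\int v\, MA(P_{\omega_0}u)$.

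The hard part, and the step I expect to be the main obstacle, is rigorously establishing and using the orthogonality/contact-set relation for the envelope $P_{\omega_0}u$ when $u$ is merely continuous — i.e.\ that $MA(P_{\omega_0}u)$ puts no mass on $\{P_{\omega_0}u < u\}$, and the associated fine regularity ($P_{\omega_0}u$ has bounded Laplacian, so $MA$ is well-defined and does not charge pluripolar sets). This is exactly the content that makes the variational machinery of \cite{b-b, bbgz} work, and it rests on a capacity/balayage argument: on the open set where the envelope lies strictly below $u$, the function $P_{\omega_0}u$ is maximal ($\omega_0$-harmonic in the pluripotential sense), so $(\omega_0 + dd^c P_{\omega_0}u)^n = 0$ there. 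Once that is in hand, combining the matching upper and lower bounds gives $\frac{d}{dt}\big|_{t=0^+}\mathcal{E}_{\omega_0}(P_{\omega_0}(u+tv)) = \int_X v\, MA(P_{\omega_0}u)$, hence $(df)(u) = MA(P_{\omega_0}u)$, which is linear and weak-$*$ continuous in the perturbation, giving Gateaux differentiability and completing the proof.
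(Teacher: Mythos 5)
The paper offers no proof of Theorem \ref{thm:diff} -- it is recalled from \cite{b-b} -- and your outline does follow the strategy of that original proof: the orthogonality relation for the envelope plus the concavity of $\mathcal{E}_{\omega_{0}}$ and continuity of the Monge--Amp\`ere operator. However, two steps in your execution are incorrect as written. First, the inequality $\mathcal{E}_{\omega_{0}}(P_{\omega_{0}}(u+tv))-\mathcal{E}_{\omega_{0}}(P_{\omega_{0}}u)\geq\int_{X}\bigl(P_{\omega_{0}}(u+tv)-P_{\omega_{0}}u\bigr)MA(P_{\omega_{0}}u)$ goes the wrong way: $\mathcal{E}_{\omega_{0}}$ is \emph{concave} along affine segments of potentials, so the gradient inequality at the base point gives $\leq$, not $\geq$; the sub-gradient relation \ref{eq:sub-grad} you invoke concerns the \emph{convex} functional $E_{\omega_{0}}$ on measures and does not translate into your claim. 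Second, the assertion that on $\mathrm{supp}\,MA(P_{\omega_{0}}u)\subseteq\{P_{\omega_{0}}u=u\}$ one has $P_{\omega_{0}}(u+tv)\geq P_{\omega_{0}}u$ ``with equality forced in the limit up to $tv$'' is false for sign-changing $v$ (only $P_{\omega_{0}}(u+tv)\geq P_{\omega_{0}}u+t\inf v$ holds) and is in any case unproved: the contact set moves with $t$ and no pointwise expansion of the envelope is available. Likewise, your proposed upper bound via $P_{\omega_{0}}(u+tv)\leq P_{\omega_{0}}u+t\left\Vert v\right\Vert _{\infty}$ only yields $t\left\Vert v\right\Vert _{\infty}$, which is not sharp.

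The repair uses exactly the ingredients you list, but distributed correctly (this is the argument of \cite{b-b}). Writing $P:=P_{\omega_{0}}$, the sharp upper bound comes from concavity at $Pu$, $P(u+tv)\leq u+tv$, and orthogonality at $u$: $\mathcal{E}(P(u+tv))-\mathcal{E}(Pu)\leq\int_{X}(u+tv-Pu)\,MA(Pu)=t\int_{X}v\,MA(Pu)$. The lower bound comes from concavity at the \emph{perturbed} point and orthogonality at $u+tv$: $\mathcal{E}(P(u+tv))-\mathcal{E}(Pu)\geq\int_{X}(P(u+tv)-u)\,MA(P(u+tv))=t\int_{X}v\,MA(P(u+tv))$, using $Pu\leq u$. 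Since $\left\Vert P(u+tv)-Pu\right\Vert _{\infty}\leq|t|\left\Vert v\right\Vert _{\infty}$, Bedford--Taylor continuity of $MA$ under uniform convergence of bounded $\omega_{0}$-psh functions lets $t\rightarrow0^{+}$, so both one-sided derivatives equal $\int_{X}v\,MA(Pu)$, which is linear in $v$; this is the Gateaux differentiability. Note finally that the $C^{1,1}$/bounded-Laplacian regularity of the envelope that you single out as the main obstacle is not needed: continuity of $Pu$ (from $\left\Vert Pu-Pu'\right\Vert _{\infty}\leq\left\Vert u-u'\right\Vert _{\infty}$ and Richberg--Demailly regularization) together with Bedford--Taylor theory suffices, and the orthogonality relation $\int_{X}(u-Pu)\,MA(Pu)=0$ follows, as you indicate, from maximality of the envelope on the open set $\{Pu<u\}$.
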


\subsection{\label{subsec:Probabilistic-setup}Probability}

We recall some basic notions of probability theory (covered by any
standard textbook; see in particular \cite{d-z} for an introduction
to large deviation techniques). A \emph{probability space }is a space
$\Omega$ equipped with a probability measure $p$ and a collection
$\mathcal{F}$ of $p-$measurable subsets $\mathcal{B}\subset\Omega.$
For our purposes it will be enough to consider the case when $\Omega$
is a compact topological space and then we will always take $\mathcal{F}$
to be the collection of all Borel subsets of $\Omega.$ In general,
the space $\Omega$ is called the\emph{ sample space} and a measurable
subset $\mathcal{B}\subset\Omega$ is called an \emph{event} with
\[
\mbox{Prob}\mathcal{B}:=p(\mathcal{B}),
\]
interpreted as the probability of observing the event $\mathcal{B}$
when sampling from $(\Omega,p).$ A measurable function $Y:\,\Omega\rightarrow\mathcal{Y}$
on a probability space $(\Omega,p)$ is called a\emph{ random element
with values in $\mathcal{Y}$ }and its\emph{ law} $\Gamma$ is the
probability measure on $\mathcal{Y}$ defined by the push-forward
measure
\[
\Gamma:=Y_{*}p
\]
 (the law of $Y$ is often also called the distribution of $Y$).
A sequence of random elements $Y_{N}:\,\Omega_{N}\rightarrow\mathcal{Y}$
of probability spaces $(\Omega_{N},p_{N}),$ taking values in the
same topological space $\mathcal{Y}$ are said to\emph{ convergence
in law towards a deterministic element $y$ in $\mathcal{Y}$ }if
the corresponding laws $\Gamma_{N}$ on $\mathcal{Y}$ converge  to
a Dirac mass at $y:$ 
\[
\lim_{N\rightarrow\infty}\Gamma_{N}=\delta_{y}
\]
 in the weak topology. In the present setting $\mathcal{Y}$ will
always be a separable metric space with metric $d$ and then $Y_{N}$
converge in law towards the deterministic element $y$ iff $Y_{N}$
\emph{converge in probability} towards $y,$ i.e. for any fixed $\epsilon>0$
\[
\lim_{N\rightarrow\infty}p_{N}\{d(Y_{N},y)>\epsilon\}=0.
\]

\begin{rem}
The\emph{ expectation} of a random variable $Y$ it defined by 
\[
\E(Y):=\int_{\Omega}Yp
\]
 (aka the\emph{ sample mean} of $Y)$ which defines an element in
$\mathcal{Y}.$ The statement that $Y_{N}$ converges in law towards
a deterministic element $y$ equivalently means that $\E(Y_{N})\rightarrow y$
and that $Y_{N}$ satisfies the (weak) law of large numbers, i.e.
the probability that $Y_{N}$ deviates from its mean tends to zero,
as $N\rightarrow\infty.$
\end{rem}

A\emph{ random point process} \emph{with $N$ particles }on a space
$X$ is, by definition, a probability measure $\mu^{(N)}$ on the
$N-$fold product $X^{N}$ (the $N-$particle space) which is symmetric,
i.e. invariant under action of the group $S_{N}$ of permutations
of the factors of $X^{N}.$The\emph{ empirical measure} of a given
random point process is the following random measure 
\begin{equation}
\delta_{N}:\,\,X^{N}/S_{N}\rightarrow\mathcal{P}(X),\,\,\,(x_{1},\ldots,x_{N})\mapsto\delta_{N}(x_{1},\ldots,x_{N}):=\frac{1}{N}\sum_{i=1}^{N}\delta_{x_{i}}\label{eq:empirical measure text}
\end{equation}
on $(X^{N},\mu^{(N)}).$ The law of $\delta_{N}$ thus defines a probability
measure on the space $\mathcal{P}(X),$ that we shall denote by $\Gamma_{N}:$
\begin{equation}
\Gamma_{N}:=(\delta_{N})_{*}\mu^{(N)}\label{eq:def of Gamma N}
\end{equation}

\begin{rem}
\label{rem:The-point-correlation}\emph{The $j-$point correlation
measure $(\mu^{(N)})_{j}$ }of the $N-$particle random point process
is the probability measure on $X^{j}$ defined as the push-forward
to $X^{j}$ of $\mu^{(N)}$ under projection $X^{N}\rightarrow X^{j},$
where $(x_{1},...,x_{N})\mapsto(x_{i_{1}},...,x_{i_{j}})$ for any
choice of $j$ different indices $i_{1},...,i_{j}.$ In particular,
by symmetry,
\begin{equation}
\E(\delta_{N})=(\mu^{(N)})_{1}\label{eq:expect as one-point correl measure}
\end{equation}
\end{rem}

Note that the exchangeable random variables $x_{1},..,x_{N}$ are
independent iff $\mu^{(N)}$ is a tensor product measure, $\mu^{(N)}=\mu^{\otimes N},$
where $\mu$ is the law of any $x_{i}.$

\subsubsection{\label{subsec:LDP}Large Deviation Principles (LDP)}

The notion of a \emph{Large Deviation Principle (LDP)}, introduced
by Varadhan, allows one to give a notion of exponential convergence
in probability. The general definition of a Large Deviation Principle
(LDP) for a general sequence of measures \cite{d-z} is modeled on
the classical Laplace method of ``saddle point approximation'' of
integrals:
\begin{defn}
\label{def:large dev}Let $\mathcal{Y}$ be a Polish space, i.e. a
complete separable metric space. A sequence $\Gamma_{k}$ of measures
on $\mathcal{Y}$ satisfies a \emph{large deviation principle} with
\emph{speed} $r_{k}$ and \emph{rate function} $I:\mathcal{\,Y}\rightarrow]-\infty,\infty]$
if

\begin{equation}
\limsup_{k\rightarrow\infty}\frac{1}{r_{k}}\log\Gamma_{k}(\mathcal{F})\leq-\inf_{\mu\in\mathcal{F}}I\label{eq:limsup}
\end{equation}
 for any closed subset $\mathcal{F}$ of $\mathcal{Y}$ and 
\[
\liminf_{k\rightarrow\infty}\frac{1}{r_{k}}\log\Gamma_{k}(\mathcal{G})\geq-\inf_{\mu\in G}I(\mu)
\]
 for any open subset $\mathcal{G}$ of $\mathcal{Y}.$ 
\end{defn}

In the present setting $\Gamma_{N}$ will arise as the sequence of
probability measures on $\mathcal{P}(X)$ defined as laws of the empirical
measures $\delta_{N}$ (formula \ref{eq:empirical measure text}).
Once the LDP has been established we can apply the following basic
\begin{lem}
\label{lem:LDP plus unique implies conv in law}Let $Y_{N}$ be a
sequence of random variables taking values in space $\mathcal{Y}$
which is a compact Polish space. If the laws $\Gamma_{N}\in\mathcal{P}(\mathcal{Y})$
of $Y_{N}$ satisfy a LDP at speed $N$ with a good rate functional
$I$ which admits a\emph{ unique }minimizer $y_{*},$ then $Y_{N}$
converge in law towards $y_{*}.$ More precisely, 
\[
\mbox{Prob}\{d(y_{N},y_{*})\geq\epsilon\}\leq C_{\epsilon}e^{-N/C_{\epsilon}}
\]
\end{lem}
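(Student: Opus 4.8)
The plan is to derive the estimate directly from the large deviation upper bound applied to a well-chosen closed set, together with the uniqueness and goodness of the rate functional. Fix $\epsilon>0$ and consider the closed set $\mathcal{F}_{\epsilon}:=\{y\in\mathcal{Y}:\,d(y,y_{*})\geq\epsilon\}$. Since $\mathcal{Y}$ is compact, $\mathcal{F}_{\epsilon}$ is compact, and since $I$ is a good rate function it attains its infimum on $\mathcal{F}_{\epsilon}$; moreover $y_{*}\notin\mathcal{F}_{\epsilon}$, so because $y_{*}$ is the \emph{unique} minimizer of $I$ (with $I(y_{*})=0$ after the normalization built into the rate functionals appearing here, or in any case $I(y_{*})<\inf_{\mathcal{F}_{\epsilon}}I$ by uniqueness), we obtain
\[
c_{\epsilon}:=\inf_{y\in\mathcal{F}_{\epsilon}}I(y)-I(y_{*})>0.
\]
Here one must check that $c_{\epsilon}$ is strictly positive and not merely non-negative; this is exactly where goodness (compact sublevel sets) is used, since it rules out the infimum over the closed set being approached only ``at infinity'' by a sequence with $I\to I(y_{*})$ but $d(\cdot,y_{*})\geq\epsilon$.

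Next I would apply the LDP upper bound \eqref{eq:limsup} with speed $r_{N}=N$ to the closed set $\mathcal{F}_{\epsilon}$:
\[
\limsup_{N\to\infty}\frac{1}{N}\log\Gamma_{N}(\mathcal{F}_{\epsilon})\leq-\inf_{y\in\mathcal{F}_{\epsilon}}I(y)=-\bigl(c_{\epsilon}+I(y_{*})\bigr).
\]
Writing $\mathrm{Prob}\{d(Y_{N},y_{*})\geq\epsilon\}=\Gamma_{N}(\mathcal{F}_{\epsilon})$ by the definition of the law of $Y_{N}$, this gives, for every $\delta>0$ and all $N$ sufficiently large,
\[
\mathrm{Prob}\{d(Y_{N},y_{*})\geq\epsilon\}\leq e^{-N(c_{\epsilon}+I(y_{*})-\delta)}.
\]
Choosing $\delta=c_{\epsilon}/2$ and absorbing the finitely many small $N$ into a constant, one arrives at a bound of the form $\mathrm{Prob}\{d(Y_{N},y_{*})\geq\epsilon\}\leq C_{\epsilon}e^{-N/C_{\epsilon}}$ after renaming constants (taking $C_{\epsilon}$ large enough to dominate both $2/c_{\epsilon}$, the value $e^{-NI(y_{*})}$ contribution, and the prefactor from small $N$). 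Since this tends to $0$ as $N\to\infty$, $Y_{N}$ converges in probability, hence in law, to the deterministic element $y_{*}$.

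The main obstacle — really the only subtle point — is the strict positivity of $c_{\epsilon}$, i.e. passing from ``$y_{*}$ is the unique minimizer'' to ``$I$ is bounded below by a strictly larger value off any $\epsilon$-ball.'' Without goodness this can fail: the infimum over the closed complement of the ball could equal $\inf I$ even though it is never attained there. With goodness, $\{I\leq \inf I + 1\}$ is compact, so its intersection with $\mathcal{F}_{\epsilon}$ is compact, $I$ attains its minimum over that intersection at some $y_{0}$, and $I(y_{0})>\inf I$ by uniqueness of the minimizer; one then sets $c_{\epsilon}=I(y_{0})-\inf I>0$. Everything else is a direct unwinding of Definition \ref{def:large dev} and the definition of the law $\Gamma_{N}$, with no genuine analysis involved.
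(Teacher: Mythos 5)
Your proposal is correct and takes essentially the same route as the paper: apply the LDP upper bound to the closed set $\mathcal{F}_{\epsilon}=\{d(\cdot,y_{*})\geq\epsilon\}$ and use uniqueness of the minimizer together with goodness/compactness to see that $\inf_{\mathcal{F}_{\epsilon}}I$ is strictly larger than $I(y_{*})$, which yields the exponential deviation bound. The only cosmetic difference is that the paper first normalizes $I(y_{*})=0$ by applying the LDP to $\mathcal{F}=\mathcal{G}=\mathcal{Y}$, while you carry $I(y_{*})$ along, which changes nothing.
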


\begin{proof}
We recall the simple proof. First applying the LDP to $\mathcal{F}=\mathcal{G}=\mathcal{Y}$
gives $I(y_{*})=0.$ Since $y_{*}$ is the unique minimizer of $I$
it follows that $\inf I>0$ on the closed subset $\mathcal{F}_{\epsilon}$
of $\mathcal{Y}$ where $d(\cdot,y_{*})\geq\epsilon.$ Applying the
upper bound \ref{eq:limsup} in the LDP to $\mathcal{F}_{\epsilon}$
thus concludes the proof of the desired deviation inequality.
\end{proof}
In other words, the lemma says that risk that $Y_{N}$ deviates from
$y_{*}$ is exponentially small. In order to establish the LDP we
will have great use for the following alternative formulation of a
LDP (see Theorems 4.1.11 and 4.1.18 in \cite{d-z}):
\begin{prop}
\label{prop:d-z}Let $\mathcal{Y}$ be a compact  metric space and
denote by $B_{\epsilon}(y)$ the ball of radius $\epsilon$ centered
at $y\in\mathcal{Y}.$ Then a sequence $\Gamma_{N}$ of probability
measures on $\mathcal{P}$ satisfies a LDP with speed $r_{N}$ and
a rate functional $I$ iff 
\begin{equation}
\lim_{\epsilon\rightarrow0}\liminf_{N\rightarrow\infty}\frac{1}{r_{N}}\log\Gamma_{N}(B_{\epsilon}(y))=-I(y)=\lim_{\epsilon\rightarrow0}\limsup_{N\rightarrow\infty}\frac{1}{r_{N}}\log\Gamma_{N}(B_{\epsilon}(y))\label{eq:ldp in terms of balls in prop}
\end{equation}
\end{prop}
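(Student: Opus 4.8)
The plan is to quote the two standard results from Dembo--Zeitouni, namely Theorem 4.1.11 (the equivalence, on a metric space, of the abstract LDP with the pointwise behaviour of $\Gamma_N$ on balls) and Theorem 4.1.18 (the ``inverse contraction'' / local form), and to check that the hypothesis of compactness of $\mathcal{Y}$ removes the usual exponential-tightness caveats, so that the local statement \eqref{eq:ldp in terms of balls in prop} is genuinely equivalent to Definition \ref{def:large dev}. Concretely, I would proceed in two directions.

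First, assume the LDP holds in the sense of Definition \ref{def:large dev}, with rate functional $I$. For the upper estimate at a point $y$, apply the upper bound \eqref{eq:limsup} to the closed ball $\overline{B_\epsilon(y)}$ to get $\limsup_N \frac{1}{r_N}\log\Gamma_N(\overline{B_\epsilon(y)}) \le -\inf_{\overline{B_\epsilon(y)}} I$; since $\Gamma_N(B_\epsilon(y))\le \Gamma_N(\overline{B_\epsilon(y)})$ and $\inf_{\overline{B_\epsilon(y)}} I \to I(y)$ as $\epsilon\to 0$ (here I use lower semicontinuity of $I$, which is automatic for a rate functional, together with $\overline{B_\epsilon(y)}\downarrow\{y\}$), we obtain $\lim_{\epsilon\to0}\limsup_N \frac{1}{r_N}\log\Gamma_N(B_\epsilon(y)) \le -I(y)$. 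For the lower estimate, apply the open-set lower bound to the open ball $B_\epsilon(y)$: $\liminf_N \frac{1}{r_N}\log\Gamma_N(B_\epsilon(y)) \ge -\inf_{B_\epsilon(y)} I \ge -I(y)$, and this already holds for every fixed $\epsilon$, hence after $\epsilon\to0$. Combining the two gives \eqref{eq:ldp in terms of balls in prop}.

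Conversely, assume \eqref{eq:ldp in terms of balls in prop}. Define $I(y)$ by either (equal) side of \eqref{eq:ldp in terms of balls in prop}; one first checks $I$ is lower semicontinuous and, because $\mathcal{Y}$ is compact, automatically a good rate function (its sublevel sets are closed subsets of a compact space). The open-set lower bound follows by a routine covering argument: given an open $\mathcal{G}$ and $y\in\mathcal{G}$, some $B_\epsilon(y)\subset\mathcal{G}$, so $\liminf_N \frac{1}{r_N}\log\Gamma_N(\mathcal{G}) \ge \liminf_N \frac{1}{r_N}\log\Gamma_N(B_\epsilon(y)) \ge -I(y)-o_\epsilon(1)$; taking the sup over $y\in\mathcal{G}$ and then $\epsilon\to0$ yields $\ge -\inf_{\mathcal{G}} I$. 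The closed-set upper bound is the step requiring a genuine compactness argument: cover a closed (hence compact) set $\mathcal{F}$ by finitely many balls $B_{\epsilon}(y_1),\dots,B_{\epsilon}(y_m)$ with each $y_j\in\mathcal{F}$, use subadditivity of $\Gamma_N$ together with $\frac{1}{r_N}\log\sum_{j} a_j^{(N)} \to \max_j \bigl(\lim \frac{1}{r_N}\log a_j^{(N)}\bigr)$, and let $\epsilon\to0$; finiteness of the subcover is exactly what makes the ``$\log$ of a sum equals max of $\log$s'' manipulation legitimate, and it is where compactness of $\mathcal{Y}$ is essential.

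The main obstacle, such as it is, is purely bookkeeping: matching the normalization (the paper states the local form with the double limit $\lim_{\epsilon\to0}$ on both sides, whereas Dembo--Zeitouni sometimes phrase the lower local bound without the outer $\epsilon$-limit) and making sure that no exponential-tightness hypothesis is silently needed — but since $\mathcal{Y}$ is assumed compact here, exponential tightness is trivial and the equivalence is clean. So the ``proof'' is really a citation of \cite[Theorems 4.1.11 and 4.1.18]{d-z} together with the remark that compactness of $\mathcal{Y}$ makes all the auxiliary hypotheses vacuous; I would write it at that level of detail rather than reproving the Dembo--Zeitouni theorems.
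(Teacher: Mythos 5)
Your proposal is correct and matches the paper's treatment, which simply cites Theorems 4.1.11 and 4.1.18 of \cite{d-z} without further argument; your sketch of the two directions (closed/open ball estimates from the LDP, and the finite-cover argument with compactness replacing exponential tightness for the converse) is the standard content behind that citation. The only minor bookkeeping point is in the converse upper bound, where the radius of the ball around each point of $\mathcal{F}$ should be chosen pointwise (depending on a tolerance $\delta$) before extracting the finite subcover, rather than taking a single $\epsilon\rightarrow0$ after fixing the cover.
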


In the present setting of a sequence of random point process with
$N$ particles the previous proposition may be symbolically summarized
as follows: 
\[
\text{Prob }\left(\frac{1}{N}\sum_{i=1}^{N}\delta_{x_{i}}\in B_{\epsilon}(\mu)\right)\sim e^{-r_{N}I(\mu)}
\]
when first $N\rightarrow\infty$ and then $\epsilon\rightarrow0.$

We also recall the following classical result of Sanov, which is the
standard example of a LDP for random point processes (describing the
case when the particles $x_{1},...,x_{N}$ define independent variables
with identical distribution $\mu_{0}$):
\begin{thm}
(Sanov) \label{thm:sanov}Let $X$ be a topological space and $\mu_{0}$
a finite measure on $X.$ Then the laws $\Gamma_{N}$ of the empirical
measures $\delta_{N}$ defined with respect to the product measure
$\mu_{0}^{\otimes N}$ on $X^{N}$ satisfy an LDP with speed $N$
and rate functional the relative entropy $D_{\mu_{0}}.$ 
\end{thm}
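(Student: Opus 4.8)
The plan is to verify, for every $\mu\in\mathcal{P}(X)$, the ball characterization of a LDP provided by Proposition~\ref{prop:d-z}, with speed $N$ and rate function $D_{\mu_0}$. I would work for concreteness in the case where $X$ is a compact metric space --- which is all that is needed in the sequel, and in which $\mathcal{Y}:=\mathcal{P}(X)$ is again a compact metric space, so that Proposition~\ref{prop:d-z} is available; the general case is classical and is deduced from this one via the Dawson--G\"artner projective limit theorem applied to finite measurable partitions of $X$. One may also assume $\mu_0\in\mathcal{P}(X)$, since rescaling $\mu_0$ by $\mu_0(X)^{-1}$ shifts $\frac1N\log\Gamma_N$ and $D_{\mu_0}$ by the same constant. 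The one external fact I would use is the Gibbs variational formula for relative entropy,
\begin{equation}
D_{\mu_0}(\mu)=\sup_{f\in C^{0}(X)}\Big(\int_X f\,d\mu-\log\int_X e^{f}\,d\mu_0\Big),\label{eq:DV-sanov}
\end{equation}
which in particular exhibits $D_{\mu_0}$ as a supremum of weakly continuous affine functionals, hence as a lower semicontinuous and, on the compact space $\mathcal{P}(X)$, good rate function.

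For the upper bound I would fix $f\in C^{0}(X)$ and observe that on a ball $B_\epsilon(\mu)$ one has $\langle\nu,f\rangle\ge\langle\mu,f\rangle-\omega_f(\epsilon)$ for some modulus $\omega_f(\epsilon)\to0$ as $\epsilon\to0$ (the metric on $\mathcal{P}(X)$ being compatible with the weak topology), so that $\mathbf{1}_{B_\epsilon(\mu)}(\nu)\le e^{N(\langle\nu,f\rangle-\langle\mu,f\rangle+\omega_f(\epsilon))}$. Integrating the empirical measure $\delta_N$ against $\mu_0^{\otimes N}$ and using $N\langle\delta_N,f\rangle=\sum_{i=1}^N f(x_i)$ gives the exponential Chebyshev bound
\[
\Gamma_N(B_\epsilon(\mu))=\int_{X^N}\mathbf{1}_{B_\epsilon(\mu)}(\delta_N)\,d\mu_0^{\otimes N}\le e^{N(\omega_f(\epsilon)-\langle\mu,f\rangle)}\Big(\int_X e^{f}\,d\mu_0\Big)^{\!N},
\]
hence $\frac1N\log\Gamma_N(B_\epsilon(\mu))\le\omega_f(\epsilon)-\langle\mu,f\rangle+\log\int_X e^{f}\,d\mu_0$. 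Letting $N\to\infty$, then $\epsilon\to0$, and finally taking the infimum over $f$, formula~\ref{eq:DV-sanov} yields $\lim_{\epsilon\to0}\limsup_N\frac1N\log\Gamma_N(B_\epsilon(\mu))\le-D_{\mu_0}(\mu)$; this covers also the case $D_{\mu_0}(\mu)=+\infty$, where the supremum in \ref{eq:DV-sanov} equals $+\infty$.

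For the lower bound I may assume $D_{\mu_0}(\mu)<\infty$, so $\mu\ll\mu_0$ with density $g:=d\mu/d\mu_0$; the inequality $t\log t\ge-e^{-1}$ then forces $\log g\in L^{1}(\mu)$ with $\int_X\log g\,d\mu=\int_X g\log g\,d\mu_0=:D$. Tilting to the i.i.d.\ product $\mu^{\otimes N}$ and writing $L_N:=\langle\delta_N,\log g\rangle=\frac1N\sum_{i=1}^N\log g(x_i)$, I would restrict the integral to the event $A_{N,\epsilon}:=\{\delta_N\in B_\epsilon(\mu)\}\cap\{L_N\le D+\epsilon\}$, which gives
\[
\Gamma_N(B_\epsilon(\mu))=\int_{\{\delta_N\in B_\epsilon(\mu)\}}e^{-N L_N}\,d\mu^{\otimes N}\ge e^{-N(D+\epsilon)}\,\mu^{\otimes N}(A_{N,\epsilon}).
\]
Under $\mu^{\otimes N}$ the points $x_i$ are i.i.d.\ with law $\mu$, so the weak law of large numbers --- applied both to the $\mathcal{P}(X)$-valued empirical measure and to the integrable real observable $\log g$ --- gives $\mu^{\otimes N}(A_{N,\epsilon})\to1$, whence $\liminf_N\frac1N\log\Gamma_N(B_\epsilon(\mu))\ge-D-\epsilon$. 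Letting $\epsilon\to0$ and combining with the upper bound verifies the two equalities in Proposition~\ref{prop:d-z}, establishing the LDP with rate function $D_{\mu_0}$.

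The delicate point is the tilting step in the lower bound: since $\log g$ is in general unbounded both above (where $g\to\infty$) and below (where $g\to0$), weak closeness of $\delta_N$ to $\mu$ gives no control whatsoever on $L_N$, so the auxiliary constraint $\{L_N\le D+\epsilon\}$ together with the $L^{1}(\mu)$-law of large numbers for $\log g$ are genuinely needed, and it is precisely the hypothesis $D_{\mu_0}(\mu)<\infty$ that makes $\log g$ integrable. A technically cleaner but longer alternative would replace $g$ by its two-sided truncation $g_M:=\min(\max(g,M^{-1}),M)$, carry out the argument for the associated normalized probability density, and then let $M\to\infty$, using monotone and dominated convergence to recover $\int_X g\log g\,d\mu_0=D_{\mu_0}(\mu)$.
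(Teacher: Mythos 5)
Your argument is correct, but it takes a different route from the paper. The paper disposes of Sanov's theorem in one line: it is quoted from \cite{d-z} as an instance of the abstract G\"artner--Ellis theorem (the point of view developed in Section \ref{subsec:A-general-LDP} and Theorem \ref{thm:LDP subharmon}), with the rate functional arising as the Legendre--Fenchel transform of $f(u)=\log\int_X e^{u}\,d\mu_{0}$, which by Jensen's inequality equals $D_{\mu_{0}}$. You instead verify the ball criterion of Proposition \ref{prop:d-z} directly: exponential Chebyshev against continuous test functions plus the Donsker--Varadhan variational formula for the upper bound, and a change of measure to $\mu^{\otimes N}$ plus the law of large numbers (for the empirical measure and for the $\mu$-integrable observable $\log g$) for the lower bound. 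The duality used is the same in both proofs, but your version is self-contained and elementary, makes explicit where the finiteness of $D_{\mu_{0}}(\mu)$ and the lower semicontinuity of the rate enter, and your tilting step is in spirit exactly the recovery-sequence construction $\Gamma_{N}=(\delta_{N})_{*}\mu^{\otimes N}$ used in Step 1 of Section \ref{sec:Towards-the-case of neg}; what the paper's route buys is brevity and coherence with its general free-energy/Legendre-transform formalism (compare Proposition \ref{prop:crit for gamma conv}), at the cost of invoking the G\"artner--Ellis machinery, including Gateaux differentiability of $f$. Two small points: your tilting identity should be stated as the inequality $\Gamma_{N}(B_{\epsilon}(\mu))\geq\int_{\{\delta_{N}\in B_{\epsilon}(\mu)\}}e^{-NL_{N}}\,d\mu^{\otimes N}$ (equality can fail on the set where some $g(x_{i})=0$, but only the inequality is used); and the reduction of the general topological-space statement to the compact metric case is delegated to the Dawson--G\"artner theorem without proof, which is acceptable here since only the compact case is used in the sequel.
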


\begin{proof}
As explained in \cite{d-z} this is a consequence of the general Gärtner-Ellis
theorem (recalled in Section \ref{subsec:A-general-LDP}). From this
point of view the rate functional $I$ arises as the Legendre-Fenchel
transform of the functional $f(u)$ on $C(X)$ defined by $f(u):=\log\int e^{u}\mu_{0},$
which, by Jensen's inequality is given by $D_{\mu_{0}}.$
\end{proof}
Recall that the \emph{relative entropy} $D_{\mu_{0}}$ (also called
the \emph{Kullback--Leibler divergence }or the\emph{ information
divergence} in probability and information theory) is the functional
on $\mathcal{P}(X)$ defined by 
\begin{equation}
D_{\mu_{0}}(\mu):=\int_{X}\log\frac{\mu}{\mu_{0}}\mu,\label{eq:def of rel entropy}
\end{equation}
 when $\mu$ has a density $\frac{\mu}{\mu_{0}}$ with respect to
$\mu_{0}$ and otherwise $D_{\mu_{0}}(\mu):=\infty.$ If $\mu_{0}$
is a probability measure, then $D_{\mu_{0}}(\mu)\geq0$ and $D_{\mu_{0}}(\mu)=0$
iff $\mu=\mu_{0}$ (by Jensen's inequality). 
\begin{rem}
\label{rem:The-entropy-is}The ``physical entropy'' is usually defined
as 
\[
S(\mu):=-D_{\mu_{0}}(\mu)
\]
In fact, Sanov's theorem can be seen as a mathematical justification
of Boltzmann's original formula expressing the physical entropy $S$
as the logarithm of the number of microscopic states consistent with
a given macroscopic state (using the characterization of a LDP in
Prop \ref{prop:d-z}). 
\end{rem}

\subsection{\label{subsec:Variational-analysis}Variational analysis}

The notion of Gamma-convergence was introduced by de Georgi (see the
book \cite{bra} for background on Gamma-convergence).
\begin{defn}
\label{def:gamma}A sequence of functions $E_{N}$ on a topological
space \emph{$\mathcal{M}$ is said to Gamma$-$converge }to a function
$E$ on $\mathcal{M}$ if 
\begin{equation}
\begin{array}{ccc}
\mu_{N}\rightarrow\mu\,\mbox{in\,}\mathcal{M} & \implies & \liminf_{N\rightarrow\infty}E_{N}(\mu_{N})\geq E(\mu)\\
\forall\mu & \exists\mu_{N}\rightarrow\mu\,\mbox{in\,}\mathcal{M}: & \lim_{N\rightarrow\infty}E_{N}(\mu_{N})=E(\mu)
\end{array}\label{eq:def of gamma conv}
\end{equation}
Given $\mu,$ a sequence $\mu_{N}$ as in the last point above is
called a\emph{ recovery sequence for $\mu.$} The limiting functional
$E$ is automatically lower semi-continuous on $\mathcal{M}.$ 
\end{defn}

\begin{example}
In our complex-geometric setting we will first embed $X^{N}/S_{N}$
into $\mathcal{P}(X)$ using the empirical measure $\delta_{N}$ and
define $E_{N}$ by formula \ref{eq:def of E N intro}, extended by
$\infty$ to all of the space $\mathcal{M}(X)$ of signed measures
on $X.$ Then, as explained in Section \ref{sec:The-case-of pos beta},
$E_{N}$ Gamma-converges towards the pluricomplex energy $E(\mu).$
This example illustrates that, in general, it is not the case that
$\limsup_{N}E_{N}(\mu_{N})\leq E(\mu).$ Indeed, for any sequence
where two points, say $x_{1}$ and $x_{2},$ coincide, $E_{N}(\mu_{N})=\infty!$
For this reason Gamma-convergence is not preserved under multiplication
by negative numbers.
\end{example}

Following \cite{berm10} we will also have use for a weaker notion
of convergence. Given a subset $\mathcal{S}\Subset\mathcal{X}$ we
will say that $f_{j}$ \emph{Gamma-converge to $f$ relative to $\mathcal{S}$}
if the existence of a recovery sequence in $\mathcal{X}$ is only
demanded when $x\in\mathcal{S}.$ The definition is made so that the
following basic property holds:
\begin{lem}
\label{lem:conv of inf}Let $\mathcal{X}$ be a compact topological
space and assume that $f_{j}$ \emph{Gamma-converges }to $f$ relative
to a set $\mathcal{S}$ containing all minima of $f.$ Then 
\[
\lim_{j\rightarrow\infty}\inf_{\mathcal{X}}f_{j}=\inf_{\mathcal{X}}f
\]
Moreover, if $f$ admits a unique minimizer $x,$ then any sequence
$x_{i}$ of minimizers of $f_{j}$ converges towards $x,$ as $j\rightarrow\infty.$
\end{lem}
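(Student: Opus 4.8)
The plan is to mimic the standard $\Gamma$-convergence argument (as in \cite{bra}) but to track carefully where the liminf-inequality and where the existence of a recovery sequence are actually used, so that the weakened hypothesis (recovery sequences only exist for points of $\mathcal{S}$) still suffices. Write $m:=\inf_{\mathcal{X}}f$ and $m_j:=\inf_{\mathcal{X}}f_j$. First I would prove $\limsup_{j}m_j\le m$. Pick any minimizer $x_*$ of $f$; by hypothesis $x_*\in\mathcal{S}$, so there is a recovery sequence $x_j\to x_*$ with $f_j(x_j)\to f(x_*)=m$. Since $m_j\le f_j(x_j)$, passing to the limsup gives $\limsup_j m_j\le m$. (Here compactness of $\mathcal{X}$ is not yet needed; only the recovery sequence at the one point $x_*\in\mathcal{S}$ is.)

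Next I would prove $\liminf_j m_j\ge m$. This is where compactness enters. For each $j$ choose a near-minimizer $x_j\in\mathcal{X}$ with $f_j(x_j)\le m_j+1/j$; by compactness of $\mathcal{X}$ pass to a subsequence along which $m_j\to\liminf_j m_j$ and simultaneously $x_j\to x_\infty$ for some $x_\infty\in\mathcal{X}$. The liminf-inequality in Definition \ref{def:gamma} (which is assumed for \emph{all} convergent sequences, with no restriction to $\mathcal{S}$) gives $f(x_\infty)\le\liminf_j f_j(x_j)=\liminf_j m_j$. Since $f(x_\infty)\ge m$, we conclude $\liminf_j m_j\ge m$. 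Combining the two inequalities yields $\lim_j m_j=m$, which is the first assertion.

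For the second assertion, suppose $f$ has a unique minimizer $x$ and let $x_j$ be any sequence of minimizers of $f_j$, so $f_j(x_j)=m_j\to m$. By compactness, any subsequence of $(x_j)$ has a further subsequence converging to some point $y\in\mathcal{X}$; along it the liminf-inequality gives $f(y)\le\liminf f_j(x_j)=m$, hence $f(y)=m$, hence $y=x$ by uniqueness. Since every subsequence of $(x_j)$ has a sub-subsequence converging to $x$, the full sequence $x_j$ converges to $x$. \qedhere

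The only subtle point — and the reason the \emph{relative} notion of $\Gamma$-convergence is strong enough here — is that the recovery-sequence (limsup) half of the argument is invoked solely at minimizers of $f$, which are assumed to lie in $\mathcal{S}$; the liminf half, used for the lower bound and for the convergence of minimizers, never requires recovery sequences at all. So there is no real obstacle: the proof is a bookkeeping refinement of the classical one, and the hypothesis "$\mathcal{S}$ contains all minima of $f$" is exactly what makes it go through.
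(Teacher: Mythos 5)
Your proof is correct, and it is precisely the standard argument that the paper leaves implicit (the lemma is stated there without proof): recovery sequences are invoked only at minimizers of $f$, which by hypothesis lie in $\mathcal{S}$, while the liminf half of Gamma-convergence alone gives the lower bound on $\inf f_j$ and the convergence of minimizers via the subsequence argument. The only caveats are cosmetic and inherent in the paper's own formulation rather than gaps in your argument: you use sequential compactness, which is legitimate since in the intended applications $\mathcal{X}=\mathcal{P}(\mathcal{P}(X))$ is compact metrizable, and you implicitly assume $f$ attains its infimum, which is the intended reading of the hypothesis that $\mathcal{S}$ contains all minima of $f$ (and holds in the applications, where $f$ is lsc on a compact space).
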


A general criterion for Gamma-convergence on $\mathcal{P}(X)$ can
be obtained using duality in topological vector spaces, as next explained.
Let $f$ be a function on a topological vector space $V.$ The \emph{Legendre-Fenchel
transform} $f^{*}$ of $f$ is the following convex lower semi-continuous
function $f^{*}$ on the topological dual $V^{*}$ 
\begin{equation}
f^{*}(w):=\sup_{v\in V}\left\langle v,w\right\rangle -f(v)\label{eq:def of Leg-F trans}
\end{equation}
in terms of the canonical pairing between $V$ and $V^{*}.$ In the
present setting we will take $V=C^{0}(X)$ and $V^{*}=\mathcal{M}(X),$
the space of all signed Borel measures on a compact topological space
$X.$ 
\begin{prop}
\label{prop:crit for gamma conv}Let $E_{N}$ be a sequence of functions
on the space $\mathcal{P}(X)$ of probability measures on a compact
space $X$ and assume that
\[
\lim_{N\rightarrow\infty}E_{N}^{*}(w)=f(w)
\]
for any $w\in C(X)$ and that $f$ defines a Gateaux differentiable
function on $C(X).$ Then $E_{N}$ converges to $E:=f^{*}$ in the
sense of Gamma-convergence on the space $\mathcal{P}(X),$ equipped
with the weak topology. 
\end{prop}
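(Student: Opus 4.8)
The plan is to deduce Gamma-convergence of $E_N$ on $\mathcal{P}(X)$ from the pointwise convergence of the Legendre-Fenchel transforms $E_N^*$ together with the Gateaux differentiability of the limit $f$, by exploiting the general duality between lower semicontinuous convex functions and their conjugates. The key structural fact to keep in mind is that $E:=f^*$ is, by construction, convex and lower semicontinuous on $\mathcal{M}(X)$ in the weak-$*$ topology (it is a sup of weak-$*$ continuous affine functionals), so lower semicontinuity of the limit is automatic and the issue is genuinely to produce recovery sequences and to verify the liminf inequality.

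First I would establish the liminf inequality. For any $\mu_N\to\mu$ in $\mathcal{P}(X)$ and any $w\in C^0(X)$, the definition of the Legendre-Fenchel transform gives $E_N(\mu_N)\geq \langle w,\mu_N\rangle - E_N^*(w)$. Letting $N\to\infty$, using $\langle w,\mu_N\rangle\to\langle w,\mu\rangle$ (weak convergence) and $E_N^*(w)\to f(w)$ by hypothesis, we get $\liminf_N E_N(\mu_N)\geq \langle w,\mu\rangle - f(w)$. Taking the supremum over $w\in C^0(X)$ yields $\liminf_N E_N(\mu_N)\geq f^*(\mu)=E(\mu)$, which is the first half of Definition \ref{def:gamma}.

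The harder part is the construction of a recovery sequence, and this is where the Gateaux differentiability of $f$ is essential. Given $\mu\in\mathcal{P}(X)$, I would first reduce to the case $E(\mu)=f^*(\mu)<\infty$ (otherwise the liminf inequality already forces any sequence to be a recovery sequence). In that case the subdifferential $\partial f^*(\mu)$ is non-empty; but since $f$ is convex and Gateaux differentiable on $C^0(X)$, the conjugate $f^*$ is "essentially strictly convex enough" that one expects the subgradient $w_\mu\in\partial f^*(\mu)$ to satisfy $df_{|w_\mu}=\mu$, i.e. $\mu$ is exactly the derivative of $f$ at $w_\mu$. The natural candidate for the recovery sequence is then $\mu_N:=dE_N^*{}_{|w_\mu}$ (the derivative of $E_N^*$ at $w_\mu$, which lies in $\mathcal{P}(X)$ when $E_N^*$ is the conjugate of a function on probability measures), or more robustly an approximate maximizer $\mu_N$ of $w\mapsto\langle w_\mu,\mu\rangle-E_N(\mu)$. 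One then checks, using the pointwise convergence $E_N^*(w_\mu)\to f(w_\mu)$ and the equality case in the Young-Fenchel inequality $E_N(\mu_N)+E_N^*(w_\mu)=\langle w_\mu,\mu_N\rangle$, that $\mu_N\to\mu$ weakly (here differentiability of $f$ at $w_\mu$ pins down the limit uniquely, ruling out oscillation) and that $E_N(\mu_N)=\langle w_\mu,\mu_N\rangle - E_N^*(w_\mu)\to \langle w_\mu,\mu\rangle - f(w_\mu) = f^*(\mu)=E(\mu)$, using the characterization of $\mu$ as $df_{|w_\mu}$ to identify $\langle w_\mu,\mu\rangle - f(w_\mu)$ with the Fenchel value.

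The main obstacle I anticipate is precisely the convergence $\mu_N\to\mu$ of the approximate maximizers: pointwise convergence of convex functions does not in general force convergence of their (approximate) subgradients, and the argument must lean on the differentiability of the \emph{limit} $f$ at the relevant point $w_\mu$, combined with a compactness argument on $\mathcal{P}(X)$ (which is weak-$*$ compact since $X$ is compact) to extract a convergent subsequence and then identify every subsequential limit with $\mu$ via the envelope theorem / Fenchel equality. A secondary technical point is ensuring $w_\mu$ can be taken in $C^0(X)$ rather than merely in $\mathcal{M}(X)^*$, which is where the hypothesis that $f$ is a genuine Gateaux differentiable function on $C^0(X)$ — as opposed to merely lower semicontinuous — does the work; in the intended application this is exactly Theorem \ref{thm:diff}, with $f$ given by $-\mathcal{E}_{\omega_0}(P_{\omega_0}\cdot)$ and $df_{|u}=MA(P_{\omega_0}u)$.
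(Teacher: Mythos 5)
Your liminf inequality is correct, and so is the core of your recovery construction \emph{for measures in the image of the differential of $f$}: if $\mu=df_{|w}$ for some $w\in C^{0}(X)$, then taking near-maximizers $\mu_{N}$ of $\nu\mapsto\left\langle w,\nu\right\rangle -E_{N}(\nu)$, compactness of $\mathcal{P}(X)$ together with your liminf bound forces every limit point $\nu$ to satisfy equality in the Young--Fenchel inequality, hence $\nu\in\partial f(w)=\{df_{|w}\}=\{\mu\}$ by Gateaux differentiability, and then $E_{N}(\mu_{N})=\left\langle w,\mu_{N}\right\rangle -E_{N}^{*}(w)+o(1)\rightarrow\left\langle w,\mu\right\rangle -f(w)=E(\mu)$. (For the record, the paper itself does not prove the proposition but refers to \cite[Prop 4.4]{berm8}.)

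The gap is your claim that $\partial f^{*}(\mu)$ is non-empty whenever $E(\mu)=f^{*}(\mu)<\infty$. Here subgradients must be found in the \emph{predual} $C^{0}(X)$ of $\mathcal{M}(X)$, and a weak-$*$ lsc convex function can be finite at a point without admitting any continuous subgradient there; Gateaux differentiability of $f$ yields \emph{uniqueness} of subgradients of $f$ (which is exactly what identifies the limit of your near-maximizers), not \emph{existence} of subgradients of $f^{*}$. In the paper's intended application this is not a technicality: $E=E_{\omega_{0}}$ is the pluricomplex energy, the only candidate subgradient at $\mu$ is (minus) the potential $\varphi_{\mu}\in\mathcal{E}^{1}(X)$, which is in general unbounded, so finite-energy measures whose potentials are not continuous lie outside the image of $df$ --- yet Gamma-convergence requires a recovery sequence for them too. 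What your argument is missing is an approximation step: show that every $\mu$ with $E(\mu)<\infty$ is a weak limit of measures $\mu_{j}=df_{|w_{j}}$, $w_{j}\in C^{0}(X)$, with $E(\mu_{j})\rightarrow E(\mu)$, and then diagonalize the recovery sequences of the $\mu_{j}$ (legitimate since $\mathcal{P}(X)$ is compact metrizable). This density-in-energy can be obtained abstractly, e.g. by applying Ekeland's variational principle to $w\mapsto f(w)-\left\langle w,\mu\right\rangle$ (which is continuous and bounded below, since $f$ is convex and Lipschitz for the sup-norm, being a pointwise limit of the $1$-Lipschitz functions $E_{N}^{*}$): this produces $w_{j}$ with $df_{|w_{j}}\rightarrow\mu$ in total variation and $E(df_{|w_{j}})\leq E(\mu)+o(1)$, the reverse bound following from lower semicontinuity of $E=f^{*}$. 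Without some such step, your argument only proves Gamma-convergence relative to the image of $df$, not on all of $\mathcal{P}(X)$ as the proposition asserts.
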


See \cite[Prop 4.4]{berm8} for the proof.

\section{\label{sec:The-thermodynamical-formalism}The thermodynamical formalism}

In this section we recall the thermodynamical formalism introduced
in \cite{berm6} and further developed in \cite{bbegz}. The main
character is the free energy functional $F_{\beta}$ on $\mathcal{P}(X),$
which, from the probabilistic point of view, appears as the rate functional
of the LDP for the random point processes. In particular, the lower
semi-continuiuty properties of $F_{\beta}$ will play a crucial role
in Section \ref{sec:Towards-the-case of neg}. 
\begin{rem}
Regardless of the probabilistic motivations the thermodynamical formalism
(in particular, the use of Legendre-Fenchel transforms) also sheds
some new light on the standard functionals in Kähler geometry. This
was, in particular, exploited in the singular setting of log Fano
varieties in \cite{bbegz} and in the variational approach to the
Yau-Tian-Donaldson conjecture in \cite{bbj}. 
\end{rem}

\subsection{The case $\beta>0$}

Let $X$ be a compact complex manifold. To the data $(dV,\omega_{0})$
consisting of a volume form $dV$ and a Kähler form $\omega_{0}$
on $X$ and a parameter $\beta>0$ we attach the following \emph{free
energy functional} on $\mathcal{P}(X)$
\begin{equation}
F_{\beta}(\mu):=\beta E_{\omega_{0}}(\mu)+D_{dV}(\mu)\in]-\infty,\infty]\label{eq:def of F beta by form}
\end{equation}
which is lsc and convex on $\mathcal{P}(X)$ (since both terms are). 
\begin{rem}
In thermodynamics the free energy is usually defined as $E+\beta^{-1}D_{dV}$
(since it is the energy that is free to perform work after the ``useless''
thermal energy has been subtracted; compare Remark \ref{rem:The-entropy-is}).
This definition was also used in \cite{berm6}. But here it will be
convenient to use the rescaled version of the free energy above, since
it facilitates the transition from positive to negative $\beta.$ 
\end{rem}

\begin{lem}
\label{lem:diff of F}Assume that $\mu_{\beta}$ is a volume form.
Then the differential of $F_{\beta}$ at $\mu$ is given by 
\[
dF_{\beta}(\mu)=-\beta\varphi_{\mu}+\log\frac{\mu}{dV},
\]
i.e. if $\mu_{t}$ is an affine curve of volume forms in $\mathcal{P}(X),$
then 
\[
\frac{dF_{\beta}(\mu_{t})}{dt}=\left\langle -\beta\varphi_{\mu_{t}}+\log\frac{\mu_{_{t}}}{dV},\frac{d\mu_{t}}{dt}\right\rangle 
\]
\end{lem}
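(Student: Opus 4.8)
The plan is to compute the differential of each of the two summands in $F_\beta = \beta E_{\omega_0} + D_{dV}$ separately and add. First, for the entropy term, recall that along an affine curve $\mu_t = \mu_0 + t(\mu_1 - \mu_0)$ of volume forms, writing $\mu_t = \rho_t\, dV$ with $\rho_t$ a smooth positive density, one has $D_{dV}(\mu_t) = \int_X \rho_t \log \rho_t \, dV$, and differentiating under the integral sign gives $\frac{d}{dt} D_{dV}(\mu_t) = \int_X (\log \rho_t + 1)\, \dot\rho_t\, dV = \left\langle \log\frac{\mu_t}{dV}, \frac{d\mu_t}{dt}\right\rangle$, where the $+1$ drops out because $\int_X \dot\rho_t\, dV = \frac{d}{dt}\int_X \mu_t = 0$ since each $\mu_t$ is a probability measure. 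This is the routine part.

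Second, for the energy term, I would invoke the differentiability formula \eqref{eq:differ of E} from the excerpt: the differential of $E_{\omega_0}$ at a volume form $\mu$ is $dE_{|\mu} = -\varphi_\mu$, where $\varphi_\mu \in \mathcal{H}(X)/\R$ is the potential solving $MA(\varphi_\mu) = \mu$. Hence $\frac{d}{dt} E_{\omega_0}(\mu_t) = \left\langle -\varphi_{\mu_t}, \frac{d\mu_t}{dt}\right\rangle$ (note the pairing is well-defined modulo constants precisely because $\frac{d\mu_t}{dt}$ has total mass zero). Multiplying by $\beta$ and adding the entropy contribution yields $\frac{d}{dt} F_\beta(\mu_t) = \left\langle -\beta\varphi_{\mu_t} + \log\frac{\mu_t}{dV}, \frac{d\mu_t}{dt}\right\rangle$, which is the claimed formula, and reading off the integrand identifies $dF_\beta(\mu) = -\beta\varphi_\mu + \log\frac{\mu}{dV}$.

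The one point requiring a little care — and the main (minor) obstacle — is justifying that $t \mapsto E_{\omega_0}(\mu_t)$ is genuinely differentiable along the affine segment and that formula \eqref{eq:differ of E} applies pointwise in $t$, i.e. that $\mu_t$ stays a volume form of finite energy for $t$ in an open interval containing $[0,1]$. Since $\mu_0, \mu_1$ are volume forms, $\mu_t = \mu_0 + t(\mu_1 - \mu_0)$ is a smooth (signed) density that is strictly positive for $t$ in a neighborhood of $[0,1]$ when $\mu_0,\mu_1$ are strictly positive, and such measures have finite pluricomplex energy by the Aubin--Yau solvability of $MA(\varphi_{\mu_t}) = \mu_t$ recalled after \eqref{eq:differ of E}; one then also needs continuity of $t \mapsto \varphi_{\mu_t}$ to pass the derivative through, which follows from the stability of solutions of the complex Monge--Ampère equation. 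For the statement as phrased (with $\mu_t$ already assumed to be an affine curve of volume forms), these regularity inputs are exactly the content of the results cited in Section~\ref{subsec:Pluripotential-theory}, so the proof reduces to the two elementary computations above together with the additivity of the differential.
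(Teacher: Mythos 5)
Your argument is correct and is essentially the paper's own proof: the paper simply notes that $dD_{dV}(\mu)=\log\frac{\mu}{dV}$ and combines this with the energy differential formula \ref{eq:differ of E}, $dE_{|\mu}=-\varphi_{\mu}$, exactly as you do (your explicit entropy computation and the regularity remarks along the affine segment just flesh out what the paper leaves implicit).
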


\begin{proof}
Since $dD(\mu)=\log\frac{\mu}{dV}$ this follows directly from formula
\ref{eq:differ of E}.
\end{proof}
As a consequence, if $\mu_{\beta}$ is a volume form which is a critical
point of $F_{\beta}$ on $\mathcal{P}(X),$ then $\mu_{\beta}$ satisfies
the mean field type equation
\[
-\beta\varphi_{\mu_{\beta}}+\log\frac{\mu_{\beta}}{dV}+\log Z=0
\]
 for some positive constant $Z,$ i.e. 
\begin{equation}
\mu_{\beta}=\frac{e^{\beta\varphi_{\mu_{\beta}}}dV}{Z}\label{eq:crit point eq for F}
\end{equation}
Equivalently, this means that the potential $\varphi_{\beta}$ of
$\mu_{\beta}$ satisfies, after perhaps shifting $\varphi_{\mu}$
by a constant to ensure $Z=1,$ the complex Monge-Ampère equation
\ref{eq:ma eq with beta in text}: 
\begin{equation}
MA(\varphi)=e^{\beta\varphi_{\mu_{\beta}}}dV\label{eq:MA eq for beta in section thermo}
\end{equation}
 To a volume form $\mu_{\beta}$ minimizing $F_{\beta}$ on $\mathcal{P}(X)$
we attach the Kähler potential 
\[
\varphi_{\beta}:=\frac{1}{\beta}\log\frac{\mu_{\beta}}{dV}
\]
and the Kähler form on $X$
\[
\omega_{\beta}:=\omega_{0}+dd^{c}\varphi_{\beta},
\]
 which satisfies the twisted Kähler-Einstein equation \ref{eq:tw ke eq in text}. 

The following result was shown in \cite{berm6} (see also \cite{berm8 comma 5}
for singular generalizations):
\begin{thm}
\label{thm:free energy beta pos}Assume given $(dV,\omega_{0})$ as
above and parameter $\beta>0.$ Then the free energy functional $F_{\beta}(\mu)$
admits a unique minimizer $\mu_{\beta}$ on $\mathcal{P}(X).$ Moreover,
$\mu_{\beta}$ is a volume form and the corresponding Kähler potential
$\varphi_{\beta}$ and Kähler form $\omega_{\beta}$ satisfy the complex
Monge-Ampère equation \ref{eq:ma eq with beta in text} and twisted
twisted Kähler-Einstein equation \ref{eq:tw ke eq in text}, respectively. 
\end{thm}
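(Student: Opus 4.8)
The plan is to prove the theorem by the direct method of the calculus of variations, using the functional-analytic properties of $F_\beta$ recorded in Section \ref{subsec:Pluripotential-theory}. First I would establish existence of a minimizer. Since $X$ is compact, $\mathcal{P}(X)$ is compact in the weak topology, and $F_\beta$ is lower semi-continuous on $\mathcal{P}(X)$ (both $\beta E_{\omega_0}$ and $D_{dV}$ are lsc, the former as a sup of weak-continuous affine functionals by \ref{eq:def of e as sup}, the latter by Jensen/classical entropy lower semi-continuity). A lsc function on a compact space attains its infimum, so a minimizer $\mu_\beta$ exists; moreover $F_\beta$ is not identically $+\infty$ (e.g. on a smooth volume form both terms are finite), so the minimum value is finite. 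Convexity of $F_\beta$, together with strict convexity coming from the entropy term $D_{dV}$ (strictly convex on measures with $L^1$ density), will give uniqueness of the minimizer among measures of finite energy and finite entropy; I would have to check that the affine structure of $\mathcal{P}(X)$ interacts correctly with the convexity of $E_{\omega_0}$, but $E_{\omega_0}$ is convex along affine segments, so a convex-plus-strictly-convex argument closes this.

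Next I would show the minimizer is a volume form and extract the PDE. The idea is to perturb: for $\mu_\beta$ the minimizer and any competitor direction, use the subgradient inequality \ref{eq:sub-grad} for $E_{\omega_0}$, namely $E(\mu_1)\ge E(\mu_0)-\langle\varphi_{\mu_0},\mu_1-\mu_0\rangle$, combined with the variation formula $dD_{dV}(\mu)=\log(\mu/dV)$ for the entropy. Setting the first variation of $F_\beta$ to zero along affine curves of volume forms (as in Lemma \ref{lem:diff of F}) forces $-\beta\varphi_{\mu_\beta}+\log(\mu_\beta/dV)$ to be constant, hence the mean-field equation \ref{eq:crit point eq for F}, $\mu_\beta=e^{\beta\varphi_{\mu_\beta}}dV/Z$. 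In particular $\mu_\beta$ has a strictly positive density relative to $dV$ — so it is automatically a volume form of the claimed exponential type — and after shifting $\varphi_{\mu_\beta}$ by a constant to normalize $Z=1$ we get $MA(\varphi_\beta)=e^{\beta\varphi_\beta}dV$, which is exactly \ref{eq:ma eq with beta in text}. By Lemma \ref{lem:twisted} the associated K\"ahler form $\omega_\beta=\omega_0+dd^c\varphi_\beta$ then solves the twisted K\"ahler--Einstein equation \ref{eq:tw ke eq in text}. A clean alternative for this step, avoiding delicate perturbation arguments inside $\mathcal{P}(X)$, is to invoke the Legendre-duality picture: by Proposition \ref{prop:pluri energy as legendre}, $E_{\omega_0}$ is the Legendre--Fenchel transform of $f(u)=-\mathcal{E}_{\omega_0}(P_{\omega_0}u)$, which by Theorem \ref{thm:diff} is Gateaux differentiable with differential $MA(P_{\omega_0}u)$; this lets one identify the minimizer of $F_\beta=\beta E_{\omega_0}+D_{dV}$ directly with the solution furnished by the Aubin--Yau Theorem \ref{thm:(Aubin-Yau)--Assume}.

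The main obstacle is the bookkeeping needed to make the first-variation argument rigorous on all of $\mathcal{P}(X)$ rather than merely on smooth volume forms: a priori the minimizer could be a singular measure where $\varphi_{\mu_\beta}$ is only a finite-energy potential and the pairings $\langle\varphi_{\mu_\beta},\cdot\rangle$ require care, and one must rule out that the entropy forces concentration (it does not, precisely because $D_{dV}(\mu)=\infty$ unless $\mu\ll dV$, which already confines the minimizer to the absolutely continuous measures). The slickest route around this is again the duality one: one shows that $F_\beta$ and the convex functional $u\mapsto \beta f(u)+(\text{Legendre transform of }D_{dV})$-type expression are in Legendre duality, so that minimizing $F_\beta$ is equivalent to a differentiable convex optimization problem on $C^0(X)$ whose critical-point equation is \ref{eq:ma eq with beta in text}, and then existence/uniqueness of the solution $\varphi_\beta\in\mathcal{H}(X,\omega_0)$ is handed over to Aubin--Yau (Theorem \ref{thm:(Aubin-Yau)--Assume}), with the maximum principle there supplying uniqueness. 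I would present the argument in this order: (1) lsc + compactness $\Rightarrow$ existence and finiteness; (2) convexity $\Rightarrow$ uniqueness; (3) first-variation / duality $\Rightarrow$ the minimizer is $e^{\beta\varphi_\beta}dV$; (4) Lemma \ref{lem:twisted} $\Rightarrow$ the twisted K\"ahler--Einstein equation; (5) cross-check that this matches the Aubin--Yau solution, which also re-proves uniqueness and regularity.
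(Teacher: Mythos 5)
Your overall strategy can be made to work, but the step you label ``bookkeeping'' is in fact the only hard point of the route you put first. In the direct-method order (existence by compactness and lower semi-continuity, uniqueness by strict convexity of $D_{dV}$, then first variation), the Euler--Lagrange step is not available at an arbitrary minimizer: Lemma \ref{lem:diff of F} is stated only at volume forms, and finiteness of the entropy merely tells you the minimizer has an $L^{1}$ density, not a smooth positive one, so you cannot differentiate $F_{\beta}$ along affine curves through the minimizer, nor control the pairing with the finite-energy potential $\varphi_{\mu_{\beta}}$, without additional input. In the paper this missing input is precisely the regularity Theorem \ref{thm:(Regularity).-Any-miniminizer} (from \cite{bbegz}), whose proof uses the duality Lemma \ref{lem: inf f g etc}, the differentiability Theorem \ref{thm:diff} and Aubin--Yau type Laplacian estimates; the paper only needs it for $\beta<0$, where no shortcut exists. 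So your primary route silently imports a nontrivial theorem rather than a routine verification.

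The paper's own proof of this statement reverses the logic and thereby avoids both the compactness argument and the regularity question: it takes the Aubin--Yau solution $\varphi_{\beta}$ of \ref{eq:ma eq with beta in text} (Theorem \ref{thm:(Aubin-Yau)--Assume}), sets $\mu_{\beta}:=MA(\varphi_{\beta})=e^{\beta\varphi_{\beta}}dV$, and verifies that this explicit candidate is a global minimizer: the sub-gradient relation \ref{eq:sub-grad} for $E_{\omega_{0}}$ together with convexity of $D_{dV}$ (whose sub-gradient at $\mu_{\beta}$ is $\log(\mu_{\beta}/dV)=\beta\varphi_{\beta}$ up to an additive constant, a continuous function, so the pairing is unproblematic) gives $F_{\beta}(\mu)\geq F_{\beta}(\mu_{\beta})$ for every $\mu\in\mathcal{P}(X)$, and strict convexity of the entropy yields uniqueness. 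Your fallback (Legendre duality via Proposition \ref{prop:pluri energy as legendre} and Theorem \ref{thm:diff}, with existence handed to Aubin--Yau) is in substance this same reversal and does close the argument; note that once you adopt it, your compactness/lsc existence step becomes redundant, since minimality of $MA(\varphi_{\beta})$ is established directly. The direct variational order you propose is the one that becomes necessary in singular or big-class settings (as the paper's remark after the theorem indicates), but there it genuinely requires the machinery of \cite{bbgz}; for the statement at hand, write the proof in the paper's order: Aubin--Yau candidate, sub-gradient inequality, strict convexity.
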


\begin{proof}
By the Aubin-Yau Theorem \ref{thm:(Aubin-Yau)--Assume} there exists
a volume form $\mu_{\beta}$ solving the critical point equation \ref{eq:crit point eq for F}.
Indeed, we can take $\mu_{\beta}:=MA(\varphi_{\beta}),$ where $\varphi_{\beta}$
solves the complex MA-equation in Theorem \ref{thm:(Aubin-Yau)--Assume}.
But $E$ is convex on $\mathcal{P}(X)$ (by its very definition as
a sup of affine functions) and $D$ is strictly convex (by Jensen's
inequality). Hence, it is enough to show that $\mu_{\beta}$ is a
sub-gradient for $F_{\beta}.$ But this follows from the sub-gradient
relation\ref{eq:sub-grad} . 
\end{proof}
\begin{rem}
Instead of relying on the Aubin-Yau theorem the minimizer $\mu_{\beta}$
can be obtained by directly maximizing the functional $\mathcal{G}_{\beta},$
appearing in the following section, following the variational approach
introduced in \cite{bbgz}. This alternative approach is important
in the more general case of a big cohomology class, as well as in
singular settings (see \cite{berm8 comma 5}).
\end{rem}

\subsection{The case $\beta<0$}

In the case when $\beta<0$ we define $F_{\beta}(\mu)$ by the same
expression \ref{eq:def of F beta by form}, when $E_{\omega_{0}}(\mu)<\infty$
and otherwise we set $F_{\beta}(\mu)=\infty.$ The definition is made
so that we still have $F_{\mu}(\mu)\in]-\infty,\infty]$ with $F_{\mu}(\mu)<\infty$
iff both $E(\mu)<\infty$ and $D(\mu)<\infty$ . Set 
\[
\gamma=-\beta>0.
\]
In order to the study the functional $F_{-\gamma}$ the following
auxiliary ``dual'' functional on $C^{0}(X)$ turns out to be very
useful:

\[
\mathcal{G}_{-\gamma}(u):=\mathcal{E}(P(u))-\frac{1}{\gamma}\log\int_{X}e^{-\gamma u}dV:\,\,C^{0}(X)\rightarrow\R
\]
Moreover, for $\varphi\in\mathcal{E}^{1}(X)$ we set 
\[
\mathcal{G}_{-\gamma}(\varphi):=\mathcal{E}(\varphi)-\frac{1}{\gamma}\log\int_{X}e^{-\gamma\varphi}dV:\,\,\mathcal{E}^{1}(X)\rightarrow\R,
\]
 which is consistent with the previous notation since both functional
coincide on the intersection of their domains. Note that the critical
point equation for the functional $\mathcal{G}_{-\gamma}(\varphi)$
is precisely the Monge-Ampère equation \ref{eq:MA eq for beta in section thermo}.
Moreover, as observed in \cite{berm6} the functional $\mathcal{G}_{-\gamma}$
on $C^{0}(X)$ may be expressed in terms of termwise Legendre-Fenchel
transforms of $F_{\beta},$ as exploited in the proof of the following
result from \cite{berm6}:
\begin{lem}
\label{lem: inf f g etc}The following holds: 
\[
\inf_{\mathcal{P}(X)}\gamma F_{-\gamma}=-\sup_{C^{0}(X)}\mathcal{G}_{-\gamma}=-\sup_{\mathcal{E}^{1}(X)}\mathcal{G}_{-\gamma}
\]
Moreover, for any $\varphi\in\mathcal{E}^{1}(X)$
\[
\gamma F_{-\gamma}(MA(\varphi))\geq-\mathcal{G}_{-\gamma}(\varphi)
\]
\end{lem}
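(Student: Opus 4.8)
The plan is to prove the identities by combining two elementary dualities: the Legendre–Fenchel realization of the pluricomplex energy $E_{\omega_0}$ from Proposition \ref{prop:pluri energy as legendre}, and Sanov's variational identity for the relative entropy $D_{dV}$ from the proof of Theorem \ref{thm:sanov}, namely that $D_{dV}$ is the Legendre–Fenchel transform of $u \mapsto \log\int_X e^{u}\,dV$. First I would write, for $\mu \in \mathcal{P}(X)$ and any $u \in C^{0}(X)$, the two inequalities $E_{\omega_0}(\mu) \geq \mathcal{E}_{\omega_0}(P_{\omega_0}u) - \langle u,\mu\rangle$ and $D_{dV}(\mu) \geq \langle -\gamma u, \mu\rangle - \log\int_X e^{-\gamma u}\,dV$ (the second obtained by feeding the test function $-\gamma u$ into the entropy duality). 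Adding $\gamma$ times the first to the second, the $\langle u,\mu\rangle$ terms cancel and one gets, for every $u$,
\[
\gamma F_{-\gamma}(\mu) = \gamma E_{\omega_0}(\mu) + D_{dV}(\mu) \geq \gamma\,\mathcal{E}_{\omega_0}(P_{\omega_0}u) - \log\int_X e^{-\gamma u}\,dV = \gamma\,\mathcal{G}_{-\gamma}(u).
\]
Taking the sup over $u \in C^{0}(X)$ gives $\inf_{\mathcal{P}(X)} \gamma F_{-\gamma} \geq \sup_{C^{0}(X)} \mathcal{G}_{-\gamma}$. The second displayed inequality of the lemma, $\gamma F_{-\gamma}(MA(\varphi)) \geq -\mathcal{G}_{-\gamma}(\varphi)$, will follow the same way but using $\varphi$ itself as the test object: by \eqref{eq:expres for E mu in terms of potential}, $E_{\omega_0}(MA(\varphi)) = \mathcal{E}_{\omega_0}(\varphi) - \langle \varphi, MA(\varphi)\rangle$, and $D_{dV}(MA(\varphi)) \geq -\gamma\langle \varphi, MA(\varphi)\rangle - \log\int e^{-\gamma\varphi}\,dV$ from entropy duality with test function $-\gamma\varphi$; adding $\gamma$ times the first to the second and cancelling gives exactly $\gamma F_{-\gamma}(MA(\varphi)) \geq \gamma\mathcal{E}_{\omega_0}(\varphi) - \log\int e^{-\gamma\varphi}\,dV = \gamma\mathcal{G}_{-\gamma}(\varphi)$, i.e. the claimed bound (up to the harmless sign: note $\inf \gamma F_{-\gamma} = -\sup \mathcal{G}_{-\gamma}$ means the two displays are consistent once one checks signs carefully).

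The remaining work is the reverse inequality $\inf_{\mathcal{P}(X)} \gamma F_{-\gamma} \leq \sup \mathcal{G}_{-\gamma}$, together with the claim that the sup over $C^{0}(X)$ equals the sup over $\mathcal{E}^{1}(X)$. For the latter, I would argue that $\mathcal{G}_{-\gamma}$ on $C^0(X)$ is invariant, in the relevant sup sense, under replacing $u$ by $P_{\omega_0}u$: since $P_{\omega_0}u \leq u$ one has $e^{-\gamma P_{\omega_0}u} \geq e^{-\gamma u}$, hence $-\frac1\gamma\log\int e^{-\gamma P_{\omega_0}u}\,dV \geq -\frac1\gamma\log\int e^{-\gamma u}\,dV$, while $\mathcal{E}(P(P_{\omega_0}u)) = \mathcal{E}(P_{\omega_0}u)$; so $\mathcal{G}_{-\gamma}(P_{\omega_0}u) \geq \mathcal{G}_{-\gamma}(u)$, which shows the supremum is attained along (bounded, hence finite-energy) $\omega_0$-psh functions, giving $\sup_{C^0}\mathcal{G}_{-\gamma} = \sup_{\mathcal{E}^1}\mathcal{G}_{-\gamma}$ after a routine approximation of an arbitrary $\varphi \in \mathcal{E}^1(X)$ by a decreasing sequence in $\mathcal{H}(X,\omega_0) \cap C^0(X)$ and monotone/dominated convergence on both terms (using continuity of $\mathcal{E}_{\omega_0}$ along decreasing sequences in $\mathcal{E}^1$).

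For the reverse inequality itself, the cleanest route is to exhibit a matching pair: by the Aubin–Yau / variational theory (Theorem \ref{thm:(Aubin-Yau)--Assume} reformulated, or directly the variational construction of \cite{bbgz}), the functional $\mathcal{G}_{-\gamma}$ on $\mathcal{E}^1(X)$ admits a maximizer $\varphi_{-\gamma}$ whose critical point equation is the Monge–Ampère equation \eqref{eq:MA eq for beta in section thermo}, i.e. $MA(\varphi_{-\gamma}) = e^{-\gamma\varphi_{-\gamma}}dV / Z$ with $Z = \int e^{-\gamma\varphi_{-\gamma}}dV$. Then I set $\mu_{-\gamma} := MA(\varphi_{-\gamma})$ and compute $\gamma F_{-\gamma}(\mu_{-\gamma})$ directly: $E_{\omega_0}(\mu_{-\gamma}) = \mathcal{E}_{\omega_0}(\varphi_{-\gamma}) - \langle \varphi_{-\gamma}, \mu_{-\gamma}\rangle$ by \eqref{eq:expres for E mu in terms of potential}, and $D_{dV}(\mu_{-\gamma}) = \int \log(\mu_{-\gamma}/dV)\,\mu_{-\gamma} = \int(-\gamma\varphi_{-\gamma} - \log Z)\,\mu_{-\gamma} = -\gamma\langle\varphi_{-\gamma},\mu_{-\gamma}\rangle - \log Z$; summing, the $\langle\varphi_{-\gamma},\mu_{-\gamma}\rangle$ terms cancel and $\gamma F_{-\gamma}(\mu_{-\gamma}) = \gamma\mathcal{E}_{\omega_0}(\varphi_{-\gamma}) - \log Z = \gamma\mathcal{G}_{-\gamma}(\varphi_{-\gamma}) = \gamma\sup\mathcal{G}_{-\gamma}$. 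This pins down both the infimum and its attainment. I expect the main obstacle to be the analytic bookkeeping in this last step when $\mu$ ranges over measures that are \emph{not} volume forms: one must know a priori that the minimizer of $F_{-\gamma}$ has finite entropy (so that $D_{dV}$ is genuinely given by the integral formula and the chain of equalities is legitimate), and that the maximizer $\varphi_{-\gamma}$ of $\mathcal{G}_{-\gamma}$ exists and is regular enough for \eqref{eq:expres for E mu in terms of potential} and the entropy computation to apply — this is exactly where one leans on the existence/regularity theory of \cite{bbgz} (and, in the case $0 < \gamma$ small enough that the problem is subcritical, on the boundedness of $\varphi_{-\gamma}$). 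Everything else is the formal Legendre duality sketched above.
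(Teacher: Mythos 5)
Your plan is close in spirit to the paper's proof, which also runs on the two dualities you name: the paper applies the abstract identity $\inf_{\mathcal{X}}(g^{*}-f^{*})=-\sup_{\mathcal{X}^{*}}(g-f)$ (formula \ref{eq:in f g}, a consequence of the Legendre--Fenchel transform being decreasing and involutive) with $g$ the logarithmic moment functional (so $g^{*}$ is the relative entropy, as in Theorem \ref{thm:sanov}) and $f(u)=-\mathcal{E}(P(-u))$ (so $f^{*}=E_{\omega_{0}}$ by Proposition \ref{prop:pluri energy as legendre}), and it gets the pointwise inequality from \ref{eq:ineq f g} together with Theorem \ref{thm:diff}. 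But your execution breaks down at the decisive point, which is exactly where negative temperature differs from positive. You set $\gamma F_{-\gamma}(\mu)=\gamma E_{\omega_{0}}(\mu)+D_{dV}(\mu)$, whereas $F_{-\gamma}=-\gamma E_{\omega_{0}}+D_{dV}$; the functional $\gamma E_{\omega_{0}}+D_{dV}$ is the free energy at \emph{positive} $\beta=\gamma$. Moreover the claimed cancellation in your first display is arithmetically false: $\gamma$ times $E_{\omega_{0}}(\mu)\geq\mathcal{E}(P_{\omega_{0}}u)-\langle u,\mu\rangle$ contributes $-\gamma\langle u,\mu\rangle$, and the entropy bound tested at $-\gamma u$ contributes another $-\gamma\langle u,\mu\rangle$; the two terms add rather than cancel. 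The relation you land on, $\inf\gamma F_{-\gamma}\geq\sup\mathcal{G}_{-\gamma}$, is also not the lemma's $\inf=-\sup$, and this is not a ``harmless sign''. The structural obstruction is that for $\beta=-\gamma<0$ the energy enters $F_{-\gamma}$ with a negative coefficient, so the sup-representation \ref{eq:E as Leg} (a lower bound on $E_{\omega_{0}}$) only gives an \emph{upper} bound on the energy term and cannot yield a lower bound on $F_{-\gamma}$ in terms of an arbitrary test function $u$; such a lower bound is available only at $\mu=MA(\varphi)$ tested against its own potential, where \ref{eq:expres for E mu in terms of potential} is an equality. Your derivation of the second display is indeed that argument and is fine once the energy coefficient is $-\gamma$ rather than $+\gamma$ (one must also justify the entropy duality for the unbounded test function $-\gamma\varphi$ with $\varphi\in\mathcal{E}^{1}(X)$; the paper proves the inequality for $\varphi\in\mathcal{H}(X,\omega_{0})$ via \ref{eq:ineq f g} and Theorem \ref{thm:diff} and then approximates).

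The second genuine gap is your reverse inequality, which you obtain by producing a maximizer $\varphi_{-\gamma}$ of $\mathcal{G}_{-\gamma}$, i.e. a solution of $MA(\varphi)=e^{-\gamma\varphi}dV/Z$. For $\beta=-\gamma<0$ no such existence theory is available in general: Theorem \ref{thm:(Aubin-Yau)--Assume} covers only $\beta>0$, $F_{-\gamma}$ may be unbounded from below, and existence at $\gamma=1$ is precisely the K\"ahler--Einstein problem this lemma is meant to feed into; the lemma, by contrast, is unconditional and must also hold when both sides are $-\infty$. An elementary fix needing no extremizers is to test $F_{-\gamma}$ at the explicit measure $\mu_{u}:=e^{-\gamma u}dV/\int_{X}e^{-\gamma u}dV$ for $u\in C^{0}(X)$: its entropy is computed directly, its energy is bounded below by \ref{eq:E as Leg}, and the $\langle u,\mu_{u}\rangle$ terms then really do cancel, giving the upper bound of $\inf_{\mathcal{P}(X)}F_{-\gamma}$ by the dual supremum; combined with the pointwise inequality, the bijection \ref{eq:bijection finite energy} and the convention that $F_{-\gamma}=+\infty$ off finite-energy measures, this recovers the full identity --- or one simply follows the paper's soft convex-duality route, which is what makes its proof work without any existence statement. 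Finally, a caution on conventions: with the sign of the term $\frac{1}{\gamma}\log\int_{X}e^{-\gamma u}dV$ that makes the critical point equation of $\mathcal{G}_{-\gamma}$ the Monge--Amp\`ere equation \ref{eq:MA eq for beta in section thermo} (and that matches $\mathcal{D}=-\mathcal{G}_{-1}$ for the Ding functional), your monotonicity step $\mathcal{G}_{-\gamma}(P_{\omega_{0}}u)\geq\mathcal{G}_{-\gamma}(u)$ is the right idea; but as you wrote it the inequality between the logarithmic terms goes the wrong way, since $P_{\omega_{0}}u\leq u$ gives $\int_{X}e^{-\gamma P_{\omega_{0}}u}dV\geq\int_{X}e^{-\gamma u}dV$.
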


\begin{proof}
Let $f$ and $g$ be two lsc convex function on the dual $\mathcal{X}^{*}$
of a locally convex topological vector space $\mathcal{X}.$ Then
\begin{equation}
\inf_{\mathcal{X}}\left(g^{*}-f^{*}\right)=-\sup_{\mathcal{X}^{*}}\left(g-f\right)\label{eq:in f g}
\end{equation}
Indeed, this follows directly from the fact that the Legendre-Fenchel
transform is decreasing and involutive on $\mathcal{X}$ (see Section
4.5.2 in \cite{d-z}). Moreover, if $f$ is Gateaux differentiable,
then, for any $u\in\mathcal{X^{*}},$
\begin{equation}
(g^{*}-f^{*})(df[u])\geq-\left(g-f\right)(u),\label{eq:ineq f g}
\end{equation}
just using that $g^{*}(df[u])\geq\left\langle df[u],u\right\rangle -g(u)$
and $f(u)=\left\langle df[u],u\right\rangle -f(u).$ In particular,
taking $\mathcal{X}$ to be the space $\mathcal{M}(X)$ of all signed
measures on $X$ and setting $g(u):=\frac{1}{\gamma}\log\int_{X}e^{\gamma u}dV$
on $C^{0}(X)=\mathcal{M}(X)^{*}$ gives $g^{*}=\gamma D$ (compare
the proof of Theorem \ref{thm:sanov}). Moreover, by Prop \ref{prop:pluri energy as legendre},
$E=f^{*}$ for $f(u):=-\mathcal{E}(P(-u)$ and hence applying formula
\ref{eq:in f g} yields
\[
\inf_{\mathcal{P}(X)}\left(\gamma D-E\right)=-\sup_{C^{0}(X)}\left(\frac{1}{\gamma}\log\int_{X}e^{\gamma u}\mu+\mathcal{E}(P(-u))\right),
\]
 which proves the first equality in the lemma (since the sup in the
rhs above is invariant under $u\mapsto-u$). The second equality then
follows from combining $P(u)\leq u$ with monotonicity and a simple
approximation argument. Finally, the inequality \ref{eq:ineq f g},
combined with the differentiability Theorem \ref{thm:diff}r, yields
the inequality in the lemma when $\varphi\in\mathcal{H}(X)$ and the
general case then follows by approximation.

The previous lemma is used in the proof of the following regularity
result from \cite{bbegz}:
\end{proof}
\begin{thm}
\label{thm:(Regularity).-Any-miniminizer}(Regularity). Any minimizer
$\mu_{\beta}$ of $F_{\beta}$ is a volume form and hence the corresponding
Kähler form $\omega_{\beta}$ satisfies the twisted Kähler-Einstein
equation \ref{eq:tw ke eq in text}.
\end{thm}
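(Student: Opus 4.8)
The plan is to show that any minimizer $\mu_{\beta}$ of $F_{\beta}$ (with $\beta = -\gamma < 0$) has finite pluricomplex energy and finite relative entropy — these are forced by $F_{\beta}(\mu_\beta) < \infty$, which holds since $F_\beta$ takes a value $< \infty$ somewhere (e.g.\ at $dV$, where $E_{\omega_0}(dV) < \infty$ because $dV$ is a volume form with bounded potential, and $D_{dV}(dV) = 0$) — and then to identify $\mu_\beta$ with $MA(\varphi)$ for the potential $\varphi := \varphi_{\mu_\beta} \in \mathcal{E}^1(X)$ supplied by the bijection \ref{eq:bijection finite energy}. The key will be to upgrade the critical-point/variational information into the statement that $\varphi$ solves the Monge-Amp\`ere equation \ref{eq:MA eq for beta in section thermo}, i.e.\ $MA(\varphi) = e^{-\gamma\varphi}dV/Z$, and then invoke the regularity theory for complex Monge-Amp\`ere equations with exponential right-hand side.

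First I would combine Lemma \ref{lem: inf f g etc} with the fact that $\mu_\beta$ is a minimizer. Writing $\varphi := \varphi_{\mu_\beta}$ (so $MA(\varphi) = \mu_\beta$ by \ref{eq:bijection finite energy}), the inequality in Lemma \ref{lem: inf f g etc} gives
\[
\gamma F_{-\gamma}(\mu_\beta) = \gamma F_{-\gamma}(MA(\varphi)) \geq -\mathcal{G}_{-\gamma}(\varphi) \geq -\sup_{\mathcal{E}^1(X)} \mathcal{G}_{-\gamma} = \inf_{\mathcal{P}(X)} \gamma F_{-\gamma} = \gamma F_{-\gamma}(\mu_\beta),
\]
so all inequalities are equalities. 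In particular $\mathcal{G}_{-\gamma}(\varphi) = \sup_{\mathcal{E}^1(X)} \mathcal{G}_{-\gamma}$, i.e.\ $\varphi$ maximizes the functional $\mathcal{G}_{-\gamma}$ over $\mathcal{E}^1(X)$. The next step is to extract the Euler--Lagrange equation from this maximality: perturbing $\varphi$ by $t\psi$ for $\psi$ smooth (or, more carefully within $\mathcal{E}^1$, using that $\mathcal{E}$ has differential $MA(\cdot)$ along affine segments by \ref{eq:def of energyfunc as primitive} and that $t \mapsto \log\int e^{-\gamma(\varphi + t\psi)}dV$ is differentiable), the first variation yields
\[
\int_X \psi \, MA(\varphi) = \int_X \psi \, \frac{e^{-\gamma\varphi}dV}{\int_X e^{-\gamma\varphi}dV},
\]
hence $MA(\varphi) = e^{-\gamma\varphi}dV / Z$ with $Z = \int_X e^{-\gamma\varphi}dV$. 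After shifting $\varphi$ by a constant we may take $Z = 1$, so $\varphi$ solves \ref{eq:ma eq with beta in text} with $\beta = -\gamma$ in the weak (pluripotential) sense.

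Finally I would invoke the regularity theory for the complex Monge-Amp\`ere equation $MA(\varphi) = e^{-\gamma\varphi}dV$: since $dV$ is a smooth (in particular $L^p$ for all $p$) volume form, the results of \cite{bbegz} (building on \cite{egz}) show that any $\mathcal{E}^1$-solution $\varphi$ is in fact continuous, and then bootstrapping (elliptic regularity applied to the now-H\"older, then smooth, right-hand side $e^{-\gamma\varphi}$) gives $\varphi \in \mathcal{H}(X,\omega_0)$ smooth. Consequently $\mu_\beta = MA(\varphi) = e^{-\gamma\varphi}dV$ is a genuine volume form, and by Lemma \ref{lem:twisted} the K\"ahler form $\omega_\beta := \omega_0 + dd^c\varphi_\beta$, with $\varphi_\beta := \frac{1}{\beta}\log(\mu_\beta/dV) = \varphi$, satisfies the twisted K\"ahler-Einstein equation \ref{eq:tw ke eq in text}. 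The main obstacle is the first step of this last stage — the a priori $L^\infty$ (and then continuity) estimate for an $\mathcal{E}^1$-solution of a Monge-Amp\`ere equation whose right-hand side depends on the unknown through $e^{-\gamma\varphi}$ with $\gamma > 0$: here the exponential has the \emph{unfavorable} sign (it is large where $\varphi$ is very negative), so one cannot simply quote the Ko{\l}odziej-type estimate for fixed $L^p$ densities but must use the Bedford--Taylor comparison principle together with the finiteness of the energy to rule out that $\varphi$ is unbounded below — this is precisely the content imported from \cite{bbegz}, and I would cite it rather than reprove it.
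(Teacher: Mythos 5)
Your proposal follows essentially the same route as the paper's proof: finiteness of the energy of the minimizer gives a potential $\varphi\in\mathcal{E}^{1}(X)$ with $MA(\varphi)=\mu_{\beta}$, Lemma \ref{lem: inf f g etc} shows $\varphi$ maximizes $\mathcal{G}_{-\gamma}$ on $\mathcal{E}^{1}(X)$, the first variation yields the Monge--Amp\`ere equation \ref{eq:MA eq for beta in section thermo}, and smoothness of finite-energy solutions is imported from \cite{bbegz}. The only point where the paper is more careful is the Euler--Lagrange step: since $\varphi+t\psi$ need not be $\omega_{0}$-psh, one differentiates the functional extended through the projection $P$ (using the Gateaux differentiability of $\mathcal{E}\circ P$ from Theorem \ref{thm:diff}, as in \cite{bbgz}) rather than a direct affine perturbation, which is the mechanism your parenthetical remark should invoke.
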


\begin{proof}
If $\mu_{\beta}$ minimizes $F_{\beta}$ then, in particular, $E(\mu_{\beta})<\infty.$
Let $\varphi_{\beta}\in$$\mathcal{E}^{1}(X)$ be a potential for
$\mu_{\beta},$ i.e. $MA(\varphi_{\beta})=\mu.$ It follows from the
previous lemma that $\varphi_{\beta}$ maximizes the functional $G_{\beta}$
on $\mathcal{E}^{1}(X).$ But then it follows, as shown in \cite{bbgz},
that $\phi_{\beta}$ satisfies the MA-equation \ref{eq:MA eq for beta in section thermo}
(using the differentiability of the functional $\mathcal{E}\circ P$
on $\mathcal{E}^{1}(X)+C^{0}(X),$ which follows from Theorem \ref{thm:diff}).
Finally, as shown in the appendix of \cite{bbegz} (using Aubin-Yau
type Laplacian estimates) any solution in $\mathcal{E}^{1}(X)$ is
smooth, as desired. 
\end{proof}
It can be shown that $F_{\beta}$ is not bounded from below for $\beta$
sufficiently negative. In particular, it does not have a minimizer
then. But we have the following 
\begin{thm}
\label{thm:F lsc for beta bigger}Assume that the free energy functional
$F_{\beta_{0}}$ is bounded from below for some $\beta_{0}<0.$ Then
for any $\beta>\beta_{0}$ the functional $F_{\beta}$ is lsc on $\mathcal{P}(X).$
In particular, it then admits a minimizer $\mu_{\beta}\in\mathcal{P}(X).$
\end{thm}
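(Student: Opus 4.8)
The plan is to deduce the lower semicontinuity of $F_\beta$ for $\beta > \beta_0$ from the hypothesis that $F_{\beta_0}$ is bounded below, by writing $F_\beta$ as a convex combination of $F_{\beta_0}$ and the relative entropy $D_{dV}$. Indeed, for $\beta \in (\beta_0,0)$ choose $t \in (0,1)$ with $\beta = t\beta_0 + (1-t)\cdot 0$; then, since $E_{\omega_0}$ is affine-free but the decomposition only involves the coefficient in front of $E_{\omega_0}$, one has the pointwise identity
\[
F_\beta(\mu) = t\big(\beta_0 E_{\omega_0}(\mu) + D_{dV}(\mu)\big) + (1-t)D_{dV}(\mu) = t\,F_{\beta_0}(\mu) + (1-t)\,D_{dV}(\mu)
\]
on the set where $E_{\omega_0}(\mu) < \infty$, and both sides are $+\infty$ otherwise (using $D_{dV} \ge $ const and the conventions fixing $F_\beta = \infty$ when $E_{\omega_0}(\mu) = \infty$). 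Now $F_{\beta_0}$ is bounded below by hypothesis, hence $F_{\beta_0} + C$ is a nonnegative lsc-candidate; but lower semicontinuity of $F_{\beta_0}$ itself is not assumed, so one cannot simply add two lsc functions.

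The key step, therefore, is to establish lsc of $F_{\beta_0}$ from its boundedness below, and here I would invoke the duality in Lemma \ref{lem: inf f g etc} together with the structure of the functional $\mathcal{G}_{-\gamma_0}$, $\gamma_0 = -\beta_0$. The point is that $F_{\beta_0}$, extended by $\infty$ off $\{E_{\omega_0} < \infty\}$, agrees with the Legendre–Fenchel transform of the functional $u \mapsto -\mathcal{G}_{-\gamma_0}(-u)$ on $C^0(X)$ (modulo the scaling factor $\gamma_0$): this is exactly the content of the termwise Legendre computation underlying Lemma \ref{lem: inf f g etc}, where $E_{\omega_0} = f^*$ for $f(u) = -\mathcal{E}(P(-u))$ and $\gamma_0 D_{dV} = g^*$ for $g(u) = \frac1{\gamma_0}\log\int e^{\gamma_0 u}dV$. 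A Legendre–Fenchel transform is automatically convex and lsc wherever it is finite, and the boundedness-below hypothesis is what guarantees the biconjugate $F_{\beta_0}^{**}$ coincides with $F_{\beta_0}$ rather than dropping to $-\infty$; more precisely, boundedness below of $\gamma_0 F_{\beta_0}$ means $\sup_{C^0}\mathcal{G}_{-\gamma_0} < \infty$, so the sup defining the conjugate is a genuine sup of continuous affine functionals and the resulting function is lsc. Thus $F_{\beta_0}$ is lsc, and the convex-combination identity above then gives $F_\beta$ lsc for each $\beta \in (\beta_0,0)$, while the cases $\beta \ge 0$ are already covered by the $\beta > 0$ discussion (where $F_\beta = \beta E_{\omega_0} + D_{dV}$ is a sum of lsc functions).

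Once lsc is in hand, the existence of a minimizer follows from a standard direct-method argument: $\mathcal{P}(X)$ is compact in the weak topology (as $X$ is compact), $F_\beta$ is lsc and, being bounded below, its sublevel sets are closed subsets of a compact space hence compact; a minimizing sequence then has a convergent subsequence whose limit, by lower semicontinuity, attains the infimum. The main obstacle I anticipate is the careful bookkeeping in the duality step — verifying that the $+\infty$ conventions off $\{E_{\omega_0} < \infty\}$ are exactly those produced by the Legendre–Fenchel transform, and that boundedness below (rather than, say, coercivity) genuinely suffices to prevent the biconjugate from collapsing. An alternative that sidesteps the duality: prove lsc of $F_{\beta_0}$ directly by observing that along any weakly convergent sequence $\mu_j \to \mu$, lower semicontinuity of $E_{\omega_0}$ (it is a sup of weakly continuous affine functionals) and of $D_{dV}$ each hold, and the only danger is the indeterminate form $(+\infty) + (-\infty)$; but since $\beta_0 E_{\omega_0}(\mu_j) + D_{dV}(\mu_j) = F_{\beta_0}(\mu_j)$ is bounded below by hypothesis while $D_{dV}(\mu_j) \ge -C$ as well, one can extract the needed inequality by a truncation/Fatou argument. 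Either route closes the proof.
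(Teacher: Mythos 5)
There is a genuine gap, and it sits exactly at the step you yourself flag as "the key step": deducing lower semicontinuity of $F_{\beta_0}$ from its boundedness below via Legendre--Fenchel duality. For $\beta_0<0$ the functional $F_{\beta_0}=D_{dV}-\gamma_0 E_{\omega_0}$ ($\gamma_0=-\beta_0$) is a \emph{difference} of two convex lsc functionals, and Lemma \ref{lem: inf f g etc} only expresses its \emph{infimum} through the Toland-type identity $\inf(g^*-f^*)=-\sup(g-f)$; it does not identify $F_{\beta_0}$ itself with the conjugate of $u\mapsto-\mathcal{G}_{-\gamma_0}(-u)$. A Legendre--Fenchel transform is always convex and lsc, whereas $F_{\beta_0}$ need be neither: boundedness below only prevents the biconjugate $F_{\beta_0}^{**}$ from being $-\infty$, it does not force $F_{\beta_0}^{**}=F_{\beta_0}$ (the biconjugate is the convex lsc envelope, which can lie strictly below a non-convex or non-lsc function). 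Indeed the theorem deliberately claims lsc only for $\beta$ \emph{strictly} greater than $\beta_0$, and Theorem \ref{thm:free energy beta minus one Fano} shows that lsc of $F_{-1}$ is equivalent to existence of a unique K\"ahler--Einstein metric --- so lsc of $F_{\beta_0}$ cannot be a formal consequence of boundedness below. Your "alternative" Fatou/truncation remark has the same problem in a different guise: since the energy carries a negative coefficient, what is needed along a sequence $\mu_j\to\mu_\infty$ with $F_\beta(\mu_j)\leq C$ is an \emph{upper} bound $E(\mu_\infty)\geq\limsup_j E(\mu_j)$ (in fact convergence of the energies), and $E_{\omega_0}$ is only lsc in general --- it can jump up in the limit (e.g. when mass concentrates near a pluripolar set), which no soft truncation argument rules out.

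The paper's proof supplies precisely the missing analytic input. Along a sequence with $F_{\beta}(\mu_j)\leq C$, subtracting the hypothesis $F_{\beta_0}(\mu_j)\geq C_0$ and using $\beta>\beta_0$ together with $E_{\omega_0}\geq 0$ gives uniform bounds on $E_{\omega_0}(\mu_j)$ and hence on $D_{dV}(\mu_j)$; the energy/entropy compactness theorem of \cite[Thm 2.17]{bbegz} then yields $E_{\omega_0}(\mu_j)\rightarrow E_{\omega_0}(\mu_\infty)<\infty$ (genuine continuity of the energy along entropy-bounded sequences, a pluripotential-theoretic result, not an abstract convexity fact), and lsc of $D_{dV}$ concludes. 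Your convex-combination decomposition $F_\beta=tF_{\beta_0}+(1-t)D_{dV}$ is algebraically fine but merely shifts the burden onto the unproved (and in general false) lsc of $F_{\beta_0}$. Your direct-method argument for the existence of a minimizer, given lsc and compactness of $\mathcal{P}(X)$, is correct and matches the standard conclusion.
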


\begin{proof}
The lower semi-continuity follows from the results in \cite{bbegz},
as we next recall. Take a sequence $\mu_{j}\rightarrow\mu_{\infty}$
in $\mathcal{P}(X).$ We may as well assume that $F_{\beta}(\mu_{j})\leq C<\infty$
(otherwise there is nothing to prove). Since $F_{\beta_{0}}\geq C_{0}$
we get $D(\mu_{j})\leq C_{1}.$ But then it follows the energy-entropy
compactness theorem in \cite[Thm 2.17]{bbegz} that $\mu_{j}\rightarrow\mu_{\infty}$
in $\mathcal{P}(X)$ and $E(\mu_{j})\rightarrow E(\mu_{\infty})<\infty.$
Since $D$ is lsc we deduce that $F_{\beta}(\mu_{\infty})\leq\liminf_{j}F_{\beta}(\mu_{j}),$
as desired. 
\end{proof}
In general, a minimizer of $F_{\beta}$ need not be unique when $\beta<0$
and moreover there may be critical points which are not minimizers.
However, the situation simplifies in the ``Fano setting'' (Section
\ref{subsec:The-Fano-setting}).
\begin{thm}
\label{thm:free energy Fano}Consider the ``Fano setting'' and fix
$\beta\in[-1,0[$. In the case $\beta=-1$ we assume that the group
$\text{Aut \ensuremath{(X)_{0}}}$ is trivial. Then the functional
$F_{\beta}$ admits at most one minimizer. Moreover, the following
is equivalent
\begin{enumerate}
\item There exists a minimizer $F_{\beta}$ on $\mathcal{P}(X)$
\item There exists $\epsilon>0$ such that the functional $F_{\beta-\epsilon}$
is bounded below on $\mathcal{P}(X),$ i.e. the following \emph{connectivity
inequality} holds for some constant $C_{\epsilon}$ 
\begin{equation}
F_{\beta}(\mu)\geq\epsilon E_{\omega_{0}}-C_{\epsilon}\label{eq:coerc ineq for F}
\end{equation}
\item $F_{\beta}$ is lsc on $\mathcal{P}(X)$ 
\end{enumerate}
\end{thm}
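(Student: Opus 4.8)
The plan is to establish the equivalence (1) $\Leftrightarrow$ (2) $\Leftrightarrow$ (3) by routing through the functionals $\mathcal{G}_{\beta}$ and the regularity/uniqueness machinery specific to the Fano setting. First I would dispose of uniqueness: in the Fano setting the twisting form is $\theta=(1+\beta)\mathrm{Ric}\,dV$, so a minimizer $\mu_\beta$ is (by the Regularity Theorem \ref{thm:(Regularity).-Any-miniminizer}) a volume form whose potential solves Aubin's equation \ref{eq:Aubins eq text}; the Bando--Mabuchi Theorem \ref{thm:B-M} gives uniqueness of that solution for $\beta>-1$, and for $\beta=-1$ under the assumption that $\mathrm{Aut}(X)_0$ is trivial. (Strictly, one must also check that distinct minimizers would give distinct solutions; this follows since $\mu\mapsto\varphi_\mu$ with the normalization $\sup\varphi_\mu$ fixed is injective on finite-energy measures by the bijectivity in \ref{eq:bijection finite energy}.) So at most one minimizer exists.

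Next, the implication (3) $\Rightarrow$ (1) is the easy direction already contained in Theorem \ref{thm:F lsc for beta bigger}: if $F_\beta$ is lsc on the compact space $\mathcal{P}(X)$ then it attains its infimum, provided the infimum is finite — and finiteness holds because $F_\beta$ takes the value $D_{dV}(dV)=0$ is wrong in general, so instead one notes $F_\beta(\mu_0)<\infty$ for $\mu_0$ a smooth positive volume form (both $E_{\omega_0}(\mu_0)$ and $D_{dV}(\mu_0)$ are finite there), hence $\inf F_\beta<\infty$; combined with lower semicontinuity on a compact space this yields a minimizer. The implication (2) $\Rightarrow$ (3) is exactly Theorem \ref{thm:F lsc for beta bigger} applied with $\beta_0=\beta-\epsilon$: boundedness below of $F_{\beta-\epsilon}$ forces, along any convergent sequence with $F_\beta(\mu_j)\le C$, a uniform entropy bound, and then the energy-entropy compactness of \cite[Thm 2.17]{bbegz} gives continuity of $E_{\omega_0}$ along the sequence and lower semicontinuity of $F_\beta$.

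The substantive direction is (1) $\Rightarrow$ (2): existence of a minimizer must be promoted to the strictly stronger coercivity \ref{eq:coerc ineq for F}. Here I would use the dual functional $\mathcal{G}_{\beta}$ and Lemma \ref{lem: inf f g etc}. If $\mu_\beta$ minimizes $F_\beta$, then by the Regularity Theorem its potential $\varphi_\beta$ maximizes $\mathcal{G}_\beta$ over $\mathcal{E}^1(X)$ and solves the Monge--Amp\`ere equation \ref{eq:MA eq for beta in section thermo}; the key point is then a \emph{strict} maximum, i.e. a coercivity estimate for $\mathcal{G}_\beta$ itself. In the Fano setting, since $\theta=(1+\beta)\mathrm{Ric}\,dV$ is a genuine K\"ahler form for $\beta\in(-1,0)$ (and nonnegative for $\beta=-1$), one has positivity of the twisting, and the openness of the coercivity condition — the fact that $\mathcal{G}_{\beta-\epsilon}$ remains bounded above once $\mathcal{G}_\beta$ is — follows from an interpolation/convexity argument in $\beta$ for the concave functionals $\mathcal{G}_\beta(\varphi)=\mathcal{E}(\varphi)-\tfrac{1}{\gamma}\log\int e^{-\gamma\varphi}dV$, together with the Aubin--Yau solvability at, say, $\beta$ close to $0$ providing an a priori anchor. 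Translating back via Lemma \ref{lem: inf f g etc}, $\sup\mathcal{G}_{\beta-\epsilon}<\infty$ is equivalent to $\inf F_{\beta-\epsilon}>-\infty$, which is \ref{eq:coerc ineq for F}. I expect this step — extracting the uniform-in-$\epsilon$ quantitative estimate from mere existence, essentially a "gap" or partial-Legendre-transform argument of the type underlying the equivalence between properness and existence in \cite{bbegz,bbj} — to be the main obstacle, and the place where the hypothesis $\beta\in[-1,0)$ (as opposed to more negative $\beta$, where $F_\beta$ is genuinely unbounded below) is essential, since it is precisely the regime where the Bando--Mabuchi/Aubin continuity path from $\beta=0$ stays nondegenerate.
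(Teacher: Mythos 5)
Your treatment of uniqueness (regularity theorem plus Bando--Mabuchi), of $(2)\Rightarrow(3)$ via Theorem \ref{thm:F lsc for beta bigger}, and of $(3)\Rightarrow(1)$ (lsc on the compact space $\mathcal{P}(X)$ with finite infimum) matches the paper's route. The genuine gap is in $(1)\Rightarrow(2)$, which you yourself flag as ``the main obstacle'' but then only gesture at. The mechanism you propose --- an interpolation/convexity argument in $\beta$ for $\mathcal{G}_\beta$, anchored by Aubin--Yau solvability near $\beta=0$ --- cannot work as stated: for each fixed $\mu$ the map $\beta\mapsto F_\beta(\mu)=\beta E_{\omega_0}(\mu)+D_{dV}(\mu)$ is affine with nonnegative slope, so $\beta\mapsto\inf_{\mathcal{P}(X)}F_\beta$ is concave and non-decreasing; finiteness (or even attainment of the infimum) at $\beta$ together with finiteness at $0$ gives no control whatsoever at $\beta-\epsilon<\beta$, which is exactly what the connectivity inequality \ref{eq:coerc ineq for F} demands. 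Promoting ``existence of a minimizer at $\beta$'' to ``boundedness below at $\beta-\epsilon$'' is precisely the hard content of Tian's properness theorem, and no soft duality or convexity-in-$\beta$ argument substitutes for it.

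The paper closes this gap differently: it identifies the restriction of $F_\beta$ to volume forms with the twisted Mabuchi functional (Section \ref{subsec:Relation-to-the standard K}) and invokes the coercivity of that functional on $\mathcal{H}(X,\omega_0)$, established by Tian in \cite{ti} via Aubin's continuity method, given the existence of the (twisted) K\"ahler--Einstein metric supplied by the regularity Theorem \ref{thm:(Regularity).-Any-miniminizer}; it then extends the inequality from volume forms to all of $\mathcal{P}(X)$ by approximating any $\mu$ with $F_\beta(\mu)<\infty$ by volume forms $\mu_j$ with $D(\mu_j)\rightarrow D(\mu)$ and using the energy--entropy compactness theorem of \cite{bbegz} to get $E(\mu_j)\rightarrow E(\mu)$. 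If you want to stay on your dual $\mathcal{G}_\beta$ route, you would need to import an external input of the same strength (Tian's theorem, or the existence/properness principle of \cite{d-r} combined with Berndtsson-type convexity of the Ding functional along geodesics) rather than derive coercivity from interpolation; also note that your last translation step via Lemma \ref{lem: inf f g etc} then does the job that the paper's approximation argument performs, so that part of your plan is sound, but the central quantitative step remains unproved in your proposal.
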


\begin{proof}
Combining the regularity theorem with the uniqueness Theorem \ref{thm:B-M}
for solutions to the twisted Kähler-Einstein-equation for $\omega_{\beta}$
shows that $F_{\beta}$ admits at most one minimizer. By the previous
theorem all that remains is the implication ``$1\implies2".$ We
thus assume that there exists a minimizer $\mu_{\beta}.$ By the regularity
Theorem \ref{thm:(Regularity).-Any-miniminizer} this means that there
exists a Kähler metric $\omega_{\beta}$ solving the twisted Kähler-Einstein-equation
\ref{eq:tw ke eq in text}. Since the restriction of $F_{\beta}$
to the space of volume forms may be identified with the twisted Mabuchi
functional (see Section \ref{subsec:Relation-to-the standard K})
the connectivity inequality \ref{eq:coerc ineq for F} restricted
to the space of volume forms then follows from the corresponding coercivity
inequality for the twisted Mabuchi functional on $\mathcal{H}(X,\omega_{0}),$
established in \cite{ti} Tian (using Aubin's method of continuity).
Finally, if $\mu$ satisfies $F_{\beta}(\mu)<\infty$ we take a sequence
of volume forms $\mu_{j}$ converging weakly towards $\mu$ such that
$D(\mu_{j})\rightarrow D(\mu).$ By the energy-entropy compactness
theorem, as used in the proof of Theorem \ref{thm:F lsc for beta bigger},
$E(\mu_{j})\rightarrow E(\mu)$ and hence the coercivity inequality
holds on all of $\mathcal{P}(X)$.
\end{proof}
\begin{rem}
In the case when $\beta=-11$ and the group $\text{Aut \ensuremath{(X)_{0}}}$
it follows from \cite{d-r} that the equivalence between the first
two items still holds if the lower bound $E_{\omega_{0}}(\mu)$ is
replaced by $\inf_{f}E_{\omega_{0}}(f_{*}\mu),$ where $f$ ranges
over all element in $\text{Aut \ensuremath{(X)_{0}}. }$
\end{rem}

We next observe that the lower-semi continuity of $F_{-1},$ in fact,
forces the group $\text{Aut \ensuremath{(X)_{0}}}$ to be trivial,
leading to the following equivalence:
\begin{thm}
\label{thm:free energy beta minus one Fano}Let $X$ be a Fano manifold.
Then the following is equivalent:
\begin{itemize}
\item $X$ admits a unique Kähler-Einstein metric
\item The free energy functional $F_{-1}$ is lsc on $\mathcal{P}(X)$
\end{itemize}
\end{thm}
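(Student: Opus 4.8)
The plan is to establish the two implications separately; the bulk of the work — and the only genuinely new point — is to show that lower semi-continuity of $F_{-1}$ forces the group $\mathrm{Aut}(X)_{0}$ to be trivial. Recall that ``$X$ admits a \emph{unique} K\"ahler--Einstein metric'' means exactly that a K\"ahler--Einstein metric exists \emph{and} $\mathrm{Aut}(X)_{0}=\{I\}$ (by Bando--Mabuchi, Theorem~\ref{thm:B-M}), so once the triviality of $\mathrm{Aut}(X)_{0}$ is available both directions close by appealing to Theorems~\ref{thm:(Regularity).-Any-miniminizer}, \ref{thm:B-M}, \ref{thm:F lsc for beta bigger} and~\ref{thm:free energy Fano}.

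For the direction ``$F_{-1}$ lsc $\Rightarrow$ unique K\"ahler--Einstein metric'', I would first use that $\mathcal{P}(X)$ is compact in the weak topology and that $F_{-1}\not\equiv+\infty$ (e.g.\ $F_{-1}(MA(0))<\infty$), so a lower semi-continuous $F_{-1}$ is bounded below and attains a finite minimum at some $\mu_{\ast}$; in particular $E_{\omega_{0}}(\mu_{\ast})<\infty$. The regularity Theorem~\ref{thm:(Regularity).-Any-miniminizer} then shows $\mu_{\ast}$ is a volume form whose potential solves the twisted K\"ahler--Einstein equation~\ref{eq:tw ke eq in text} at $\beta=-1$, which in the Fano setting is the genuine K\"ahler--Einstein equation (equation~\ref{eq:Aubins eq text} with $\beta=-1$); hence $X$ carries a K\"ahler--Einstein metric $\omega_{KE}$. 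What remains is to deduce $\mathrm{Aut}(X)_{0}=\{I\}$, after which Theorem~\ref{thm:B-M} upgrades existence to uniqueness.

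This last deduction is \emph{the key step}, and the one I expect to be the main obstacle. Suppose $\mathrm{Aut}(X)_{0}\neq\{I\}$. Since $X$ admits a K\"ahler--Einstein metric, $\mathrm{Aut}(X)_{0}$ is reductive, hence a non-trivial one contains a $\C^{\ast}$-subgroup $\{g_{\lambda}\}$; the limit $\lim_{\lambda\to 0}g_{\lambda}(x)$ exists for each $x\in X$ (a standard property of $\C^{\ast}$-actions on a projective variety) and defines a holomorphic retraction $\pi$ of $X$ onto the fixed-point set $Z:=X^{\C^{\ast}}$, which is a \emph{proper} subvariety since the action is non-trivial. For each $\lambda$ the metric $\omega_{\lambda}:=g_{\lambda}^{\ast}\omega_{KE}$ is again K\"ahler--Einstein, and lies in $[\omega_{0}]=c_{1}(-K_{X})$ because $g_{\lambda}$ acts trivially on cohomology; thus $\lambda\mapsto\omega_{\lambda}$ is a path of cscK metrics, and since the first variation of the Mabuchi functional $\mathcal{M}$ vanishes at cscK metrics, $\mathcal{M}$ is constant along it. Using the identification of $F_{-1}$ on volume forms with $\mathcal{M}$, the normalized volume forms $\mu_{\lambda}:=\omega_{\lambda}^{n}/V$ (with $V:=\int_{X}\omega_{0}^{n}$) therefore satisfy $F_{-1}(\mu_{\lambda})\equiv C<\infty$. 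On the other hand $\mu_{\lambda}=(g_{\lambda})_{\ast}\mu_{KE}$, so $\mu_{\lambda}\to\pi_{\ast}\mu_{KE}=:\mu_{\infty}$ weakly as $\lambda\to0$ by dominated convergence; $\mu_{\infty}$ is a probability measure supported on $Z$, hence singular with respect to $dV$, so $D_{dV}(\mu_{\infty})=+\infty$ and, by the convention defining $F_{\beta}$ for $\beta<0$, $F_{-1}(\mu_{\infty})=+\infty$. Lower semi-continuity of $F_{-1}$ along a sequence $\lambda_{j}\to0$ then gives $+\infty=F_{-1}(\mu_{\infty})\le\liminf_{j}F_{-1}(\mu_{\lambda_{j}})=C<\infty$, a contradiction. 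Hence $\mathrm{Aut}(X)_{0}=\{I\}$, completing this direction.

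For the converse, assume $X$ admits a unique K\"ahler--Einstein metric, i.e.\ $\omega_{KE}$ exists and $\mathrm{Aut}(X)_{0}=\{I\}$. By Tian's properness theorem (\cite{ti}, exactly as used in the proof of Theorem~\ref{thm:free energy Fano}), the Mabuchi functional on $\mathcal{H}(X,\omega_{0})$ is then coercive, which translates into the connectivity inequality~\ref{eq:coerc ineq for F} at $\beta=-1$ for some $\epsilon>0$ — first on volume forms, and then on all of $\mathcal{P}(X)$ via the energy--entropy compactness of~\cite{bbegz}; equivalently $F_{-1-\epsilon}$ is bounded below on $\mathcal{P}(X)$. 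Theorem~\ref{thm:F lsc for beta bigger}, applied with $\beta_{0}=-1-\epsilon<-1$, then yields that $F_{-1}$ is lower semi-continuous, as wanted. Thus the only delicate ingredients are confined to the key step: the pointwise degeneration of a $\C^{\ast}$-orbit onto its proper fixed locus and the constancy of $\mathcal{M}$ along an orbit of automorphisms through a K\"ahler--Einstein metric; everything else is a direct citation of Theorems~\ref{thm:(Regularity).-Any-miniminizer}, \ref{thm:B-M}, \ref{thm:F lsc for beta bigger} and~\ref{thm:free energy Fano} together with \cite{ti,bbegz}.
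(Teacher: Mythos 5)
Your proof is correct and follows essentially the same route as the paper: the converse direction via Tian's coercivity combined with Theorem \ref{thm:F lsc for beta bigger}, and the key direction by degenerating volume forms along a $\C^{*}$-orbit (available since $\mathrm{Aut}(X)_{0}$ is reductive once a K\"ahler--Einstein metric exists) to a limit measure concentrated on the fixed-point locus, where $F_{-1}=+\infty$, contradicting lower semi-continuity. The only, harmless, deviations are that you transport the K\"ahler--Einstein volume form and deduce constancy of $F_{-1}$ from the vanishing first variation of the Mabuchi functional (the paper pushes forward an arbitrary volume form and invokes the invariance of $\mathcal{M}$ under the $\C^{*}$-action), and that you get $F_{-1}(\mu_{\infty})=+\infty$ from the entropy term together with the convention for $\beta<0$, whereas the paper uses that measures charging pluripolar sets have $E_{\omega_{0}}=+\infty$.
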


\begin{proof}
By the previous theorem we just have to show that the second point
implies the first point.We thus assume that $F_{-1}$ is lsc on $\mathcal{P}(X).$
By the previous theorem this implies that $X$ admits a Kähler-Einstein
metric. Hence, by the uniqueness Theorem \ref{thm:B-M} we just have
to show that the group $\text{Aut \ensuremath{(X)_{0}}}$ is non-trivial.
Assume, to get a contradiction, that this is not the case. Then $X$
admits a non-trivial $\C^{*}-$action (since $\text{Aut \ensuremath{(X)_{0}}}$
is reductive when $X$ admits a Kähler-Einstein-metric). Denote by
$\rho_{\tau}$ the corresponding family in $\text{Aut \ensuremath{(X)_{0}}},$
parameterized by $\tau\in\C^{*}.$ Fix any volume form $dV_{1}$ on
$X$ and set $dV_{\tau}:=(\rho_{\tau})_{*}dV.$ Note that $F(dV_{\tau})$
is independent of $\tau.$ Indeed, as recalled below $F(\mu)$ can
be identified with the Mabuchi functional $\mathcal{M}$ and it is
well-known that $\mathcal{M}$ is invariant under any $\C^{*}-$action,
if there exists a Kähler-Einstein-metric. It follows from standard
results that when $\tau\rightarrow0$ the volume forms $dV_{\tau}$
converge weakly to a measure $\mu_{0}\in\mathcal{P}(X)$ supported
on the fixed-point locus $Z$ in $X$ of the $\C^{*}-$action. In
particular, $\mu_{0}$ is supported on a proper analytic subvariety
of $X.$ But, for any measure $\mu$ charging a pluripolar subset
$E_{\omega_{0}}(\mu)=\infty$ \cite{bbgz}. In particular, $F(\mu_{0})=\infty$
and hence, $F$ cannot be lsc along the family $dV_{\tau}.$ For completeness,
we note that the converge of $dV_{t}$ towards $\mu_{0}$ used above,
can be shown as follows. First note that $\rho_{0}(x):=\lim_{\tau\rightarrow0}\rho_{\tau}(x)$
yields a well-defined continuous map $\rho_{0}$ from $X-Z$ into
$Z.$ Indeed, using that the $\C^{*}$ action lifts to $-K_{X}$ and
that the Kodaira embedding \ref{eq:Kod emb} is $\C^{*}-$equivariant
we can identify $X$ with a submanifold of $\P^{m}$ and the $\C^{*}-$action
with the restriction to $X$ of a linear $\C^{*}-$action on $\P^{m}.$
The existence of $\rho_{\infty}(x)$ then follows from the projective
case, where the map in question is simply a rational projection from
$\P^{m}$ onto a projective linear subspace. Hence, the limit $\mu_{0}$
is equal to the push-forward of $dV$ under the map $\rho_{\infty}$
(which is well-defined since $dV$ does not charge the locus $Z$
where $f_{\infty}$ is undetermined).
\end{proof}

\subsubsection{\label{subsec:The-log-Fano-setting thermo}The log Fano setting}

Now assume given a log Fano manifold $(X,\Delta),$ as defined in
Section \ref{subsec:The-log-Fano}. Assume that $(X,\Delta)$ is (sub-)klt.
We then consider the data $(\omega_{0},\mu_{0})$ consisting of a
Kähler metric $\omega_{0}\in c_{1}(K_{X}+\Delta)$ and the induced
measure $\mu_{0}$ on $X$ with singularities along $\Delta,$ defined
as in Section \ref{subsec:Log-Fano-manifolds}. In this setting we
define the corresponding functional $F_{\beta}$ as in the Fano setting,
but replacing the volume form $dV$ in formula \ref{eq:def of F beta by form}
by the measure $\mu_{0}.$ The following results are shown in \cite{bbegz}.
\begin{thm}
\label{thm:log Fano free energy}Consider a log Fano manifold $(X,\Delta).$
If the functional $F_{\beta}$ is bounded from below for some $\beta<-1,$
then $F_{-1}$ admits a minimizer $\mu_{-1}.$ Moreover, any minimizer
$\mu_{-1}$ is given by a volume form on $X-\Delta$ and 
\[
\omega_{-1}:=\omega_{0}-dd^{c}\log\frac{\mu_{-1}}{\mu_{0}}
\]
defines a Kähler metric on $X-\Delta$ which extends to a current
in $-c_{1}(K_{(X,\Delta)})$ solving the Kähler-Einstein-equation
for $(X,\Delta).$ In the case when $\Delta$ is klt the minimizer
is uniquely determined.
\end{thm}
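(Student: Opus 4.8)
The plan is to reduce the log Fano statement to the Fano case machinery already developed in Theorems~\ref{thm:(Regularity).-Any-miniminizer}, \ref{thm:F lsc for beta bigger}, \ref{thm:free energy Fano} and \ref{thm:B-M}, replacing the smooth reference measure $dV$ by the klt measure $\mu_{0}$ of formula~\ref{eq:local expres for mu zero} throughout. The hypothesis that $F_{\beta}$ is bounded below for some $\beta<-1$ plays exactly the role of the ``connectivity inequality'' \ref{eq:coerc ineq for F}: since $F_{-1}\geq F_{\beta}+(\,\beta+1\,)E_{\omega_{0}}$ on the set where $E_{\omega_{0}}<\infty$ and $\beta+1<0$, a uniform lower bound on $F_{\beta}$ gives $F_{-1}(\mu)\geq \epsilon E_{\omega_{0}}(\mu)-C_{\epsilon}$ with $\epsilon=-(\beta+1)>0$, after replacing $\beta$ by some intermediate value. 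First I would invoke the energy--entropy compactness theorem of \cite[Thm 2.17]{bbegz} (valid for klt $\mu_{0}$): along a minimizing sequence $\mu_{j}$ for $F_{-1}$, the bound $F_{-1}(\mu_{j})\leq C$ together with the coercivity just derived forces $E_{\omega_{0}}(\mu_{j})$ bounded, hence $D_{\mu_{0}}(\mu_{j})$ bounded, hence (up to subsequence) $\mu_{j}\to\mu_{-1}$ weakly with $E_{\omega_{0}}(\mu_{j})\to E_{\omega_{0}}(\mu_{-1})<\infty$; lower semicontinuity of $D_{\mu_{0}}$ then yields $F_{-1}(\mu_{-1})\leq\liminf F_{-1}(\mu_{j})=\inf F_{-1}$, so $\mu_{-1}$ is a minimizer.

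Next I would establish regularity of the minimizer. This is Theorem~\ref{thm:(Regularity).-Any-miniminizer} in disguise: writing $\mu_{-1}=MA(\varphi_{-1})$ for $\varphi_{-1}\in\mathcal{E}^{1}(X)$, Lemma~\ref{lem: inf f g etc} (applied with $dV$ replaced by $\mu_{0}$, which is legitimate since the Legendre duality arguments only use that $\mu_{0}$ is a finite measure with $L^{p}_{loc}$ density, $p>1$) shows that $\varphi_{-1}$ maximizes $\mathcal{G}_{-1}$ on $\mathcal{E}^{1}(X)$, so by the differentiability of $\mathcal{E}\circ P$ (Theorem~\ref{thm:diff}) it solves $MA(\varphi_{-1})=e^{-\varphi_{-1}}\mu_{0}$. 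By the local expression \ref{eq:local expres for mu zero}, away from $\Delta$ this is a bona fide complex Monge--Amp\`ere equation with smooth right-hand side, so Aubin--Yau Laplacian estimates (the ``appendix of \cite{bbegz}'' argument) give that $\varphi_{-1}$ is smooth on $X-\Delta$ and $\mu_{-1}$ is a volume form there. Translating back via the computation $\text{Ric}\,\mu_{0}-[\Delta]=\omega_{0}$ from Section~\ref{subsec:The-log-Fano}, the form $\omega_{-1}:=\omega_{0}-dd^{c}\log(\mu_{-1}/\mu_{0})$ is K\"ahler on $X-\Delta$ and, as a positive current in $-c_{1}(K_{(X,\Delta)})$, solves $\text{Ric}\,\omega_{-1}-[\Delta]=\omega_{-1}$, i.e. the log K\"ahler--Einstein equation \ref{eq:log KE eq intro}.

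For uniqueness in the klt case I would appeal to the Bando--Mabuchi-type theorem in the log setting: the alternative proof of Theorem~\ref{thm:B-M} given in \cite{bern2}, which the excerpt explicitly notes ``generalizes to the log Fano setting.'' Concretely, any two minimizers of $F_{-1}$ both give, by the regularity step, solutions of the log K\"ahler--Einstein equation with finite energy, and the Berndtsson convexity/positivity-of-direct-images argument rules out a positive-dimensional family of such solutions when $\Delta$ is klt (the klt hypothesis being what forces triviality of the relevant automorphism group and strict convexity of the Ding/Mabuchi functional along geodesics); hence the minimizer is unique. The main obstacle I anticipate is precisely this last point: verifying that the klt condition on $\Delta$ is enough to run the Berndtsson rigidity argument and conclude both uniqueness of the minimizer of $F_{-1}$ and uniqueness of the log K\"ahler--Einstein current — one must check that no continuous symmetry can act, i.e. that klt pairs behave like the trivial-$\text{Aut}(X)_{0}$ case of Theorem~\ref{thm:free energy Fano}. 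Everything else is a routine transcription of the Fano-case proofs with $dV\rightsquigarrow\mu_{0}$, using that all the pluripotential-theoretic inputs (energy--entropy compactness, differentiability of $\mathcal{E}\circ P$, the Legendre duality of Lemma~\ref{lem: inf f g etc}) are already stated in \cite{bbegz} for klt measures.
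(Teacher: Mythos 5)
Your reconstruction follows precisely the route the paper points to: the paper does not prove Theorem \ref{thm:log Fano free energy} itself but attributes it to \cite{bbegz}, and your steps -- coercivity of $F_{-1}$ extracted from the hypothesis that $F_{\beta}$ is bounded below for some $\beta<-1$, existence via the energy--entropy compactness theorem of \cite{bbegz} for the klt reference measure $\mu_{0}$, regularity of the minimizer by the variational argument of Theorem \ref{thm:(Regularity).-Any-miniminizer} plus Aubin--Yau estimates away from $\Delta$, and uniqueness via the log Bando--Mabuchi theorem resting on Berndtsson's convexity of the Ding functional along bounded geodesics \cite{bern2} -- are exactly the ingredients the paper cites. (Your displayed inequality $F_{-1}\geq F_{\beta}+(\beta+1)E_{\omega_{0}}$ is a harmless slip; the exact identity is $F_{-1}=F_{\beta}-(\beta+1)E_{\omega_{0}}$ on $\{E_{\omega_{0}}<\infty\}$, which is what yields the coercivity $F_{-1}\geq\epsilon E_{\omega_{0}}-C$ with $\epsilon=-(1+\beta)$, as you then correctly state.)

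The one step that is wrong as you justify it is the uniqueness. The klt condition does \emph{not} force triviality of $\mathrm{Aut}(X,\Delta)_{0}$: klt log Fano pairs with continuous automorphisms abound (e.g. $X=\P^{1}$ with $\Delta=w(\{0\}+\{\infty\})$, $0<w<1$, carries a $\C^{*}$-action), so "no continuous symmetry can act because $\Delta$ is klt" is not available. What klt (as opposed to sub-klt) actually buys is that $\mu_{0}$ is a finite measure not charging pluripolar sets and that the convexity argument of \cite{bern2}, as adapted in \cite{bbegz}, applies; this gives uniqueness of the log K\"ahler--Einstein current only \emph{modulo} $\mathrm{Aut}(X,\Delta)_{0}$. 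Absolute uniqueness of the minimizer then comes from the standing hypothesis of the theorem: boundedness of $F_{\beta}$ for some $\beta<-1$ gives the coercivity $F_{-1}\geq\epsilon E_{\omega_{0}}-C$, and a nontrivial one-parameter orbit of minimizers is incompatible with it, since along such an orbit $F_{-1}$ stays constant while the measures degenerate to a limit charging a proper analytic (hence pluripolar) subset, on which $E_{\omega_{0}}=\infty$ -- exactly the mechanism used in the proof of Theorem \ref{thm:free energy beta minus one Fano}. With this replacement for your parenthetical, the argument matches the proof in \cite{bbegz} that the paper refers to; the rest of your transcription $dV\rightsquigarrow\mu_{0}$ is sound.
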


The uniqueness statement in the previous theorem follows from the
generalization of the Bando-Mabuchi Theorem \ref{thm:B-M} to general
singular log Fano varieties in \cite{bbegz}, deduced from a variant
of the convexity properties of the Ding functional along bounded geodesics
in $PSH(X,\omega_{0})\cap L^{\infty}(X)$ established in \cite{bern2}
(see also \cite[the appendix of III]{c-d-s}).

\subsection{\label{subsec:Relation-to-the standard K}Relation to the standard
functionals in Kähler geometry }

First observe that it follows directly from formula \ref{eq:expres for E mu in terms of potential},
that if $\varphi)\in\mathcal{H}(X,\omega_{0}),$ then we can express
\[
E_{\omega_{0}}(MA(\varphi))=(I_{\omega_{0}}-J_{\omega_{0}})(\varphi),
\]
 where $I_{\omega_{0}}$ and $J_{\omega_{0}}$ are the standard energy
type functionals in Kähler geometry defined by
\[
I_{\omega_{0}}(\varphi):=-\frac{1}{(n+1)}\int\varphi MA(\varphi)
\]
 and 
\[
I_{\omega_{0}}(\varphi):=\frac{1}{V}\int\varphi\omega_{0}^{n}-\mathcal{E}_{\omega_{0}}(\varphi)
\]
As is well-known all functionals $I_{\omega_{0}},$ $J_{\omega_{0}}$
and $I_{\omega_{0}}-J_{\omega_{0}}$ are non-negative (vanishing only
for $\varphi=0)$ and mutually compatible, with constants only depending
on $n$ (see, for example, \cite{bbgz}). 

In the ``Fano setting'' with $\beta=-1$ the functional 
\[
\mathcal{D}(\varphi):=-\mathcal{G}_{-1}(\varphi):=-\mathcal{E}_{\omega_{0}}(\varphi)-\log\int_{X}e^{-\varphi}dV
\]
 is the \emph{Ding functional} introduced in \cite{ding}. In the
case of a general $(dV,\omega_{0})$ the corresponding functional
$-\mathcal{G}_{-1}(\varphi)$ can be viewed as a generalization of
the Ding functional to the twisted setting.

Next, consider the functional $\mathcal{M}_{\beta}$ on $\mathcal{H}(X,\omega_{0})$
defined by
\[
F_{\beta}\left(MA(\varphi)\right)=:\mathcal{M}_{\beta}(\varphi)
\]

\begin{lem}
Given the data $(dV,\omega_{0})$ the differential of $\mathcal{M}_{\beta}$
at $\varphi\in\mathcal{H}(X,\omega_{0})$ is given by
\[
d\mathcal{M}_{\beta}(\varphi)=-\frac{1}{V}n\left(\beta\omega_{\varphi}+\text{Ric }\omega_{\varphi}-\theta\right)\wedge\omega_{\varphi}^{n-1}=:-\frac{n}{V}\left(\beta+n(R_{\omega_{\varphi}}-\text{tr}_{\omega_{\varphi}}\theta)\right)\omega_{\varphi}^{n}
\]
 where $R_{u}$ is the (normalized) scalar curvature of the Kähler
metric $\omega_{u}$ and $\text{tr}_{\omega_{u}}\theta$ is the trace
of the twisting form $\theta$ with respect to $\omega_{\varphi}.$
\end{lem}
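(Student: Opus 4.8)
The plan is to compute the first variation of $\mathcal{M}_{\beta}$ directly from its definition $\mathcal{M}_{\beta}(\varphi)=F_{\beta}(MA(\varphi))=\beta E_{\omega_0}(MA(\varphi))+D_{dV}(MA(\varphi))$, using the chain rule and the differentiability facts already established in the excerpt. First I would differentiate along an affine path $\varphi_t=\varphi+t\psi$ in $\mathcal{H}(X,\omega_0)$. For the energy term, combining $E_{\omega_0}(MA(\varphi))=(I_{\omega_0}-J_{\omega_0})(\varphi)$ with the standard first-variation formulas for $I_{\omega_0}$ and $J_{\omega_0}$ (equivalently, using $dE_{|\mu}=-\varphi_\mu$ from \eqref{eq:differ of E} together with $d(MA)_{|\varphi}=\frac{1}{V}dd^c(\cdot)\wedge(\text{stuff})$) gives $\frac{d}{dt}\big|_0 E_{\omega_0}(MA(\varphi_t))=-\frac1V\int_X\psi\, dd^c\varphi\wedge\text{(lower order)}$; more cleanly, one knows $dE_{|MA(\varphi)}=-\varphi$ and $\frac{d}{dt}\big|_0 MA(\varphi_t)=\frac{n}{V}dd^c\psi\wedge\omega_\varphi^{n-1}$, so the contribution is $-\frac{n}{V}\int_X\varphi\, dd^c\psi\wedge\omega_\varphi^{n-1}=-\frac{n}{V}\int_X\psi\, dd^c\varphi\wedge\omega_\varphi^{n-1}$ after integrating by parts, which I would rewrite intrinsically as $-\frac{n}{V}\int_X\psi\,(\omega_\varphi-\omega_0)\wedge\omega_\varphi^{n-1}$.

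Next I would handle the entropy term $D_{dV}(MA(\varphi))=\int_X\log\frac{\omega_\varphi^n/V}{dV}\,\omega_\varphi^n/V$. Writing $\log\frac{\omega_\varphi^n}{V\,dV}$ and using that $dd^c$ of this density equals $-\mathrm{Ric}\,\omega_\varphi-\mathrm{Ric}\,dV\cdot(-1)$... precisely, by \eqref{eq:def of ricci curv}, $\mathrm{Ric}\,\omega_\varphi=-dd^c\log(\omega_\varphi^n/(c_n dz\wedge d\bar z))$ and $\mathrm{Ric}\,dV=-dd^c\log(dV/(c_n dz\wedge d\bar z))$, so $dd^c\log\frac{\omega_\varphi^n}{V\,dV}=-\mathrm{Ric}\,\omega_\varphi+\mathrm{Ric}\,dV$. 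The variation of a relative entropy of the form $\int h\,\mu_\varphi$ with $h=\log(\mu_\varphi/dV)$ and $\mu_\varphi=\omega_\varphi^n/V$ is computed by the usual trick: $\frac{d}{dt}\int\log\frac{\mu_{\varphi_t}}{dV}\mu_{\varphi_t}=\int\log\frac{\mu_\varphi}{dV}\,\dot\mu+\int\dot\mu=\int\log\frac{\mu_\varphi}{dV}\,\dot\mu$ since $\int\dot\mu=0$ (total mass is fixed), and then $\dot\mu=\frac{n}{V}dd^c\psi\wedge\omega_\varphi^{n-1}$ gives, after integrating $dd^c$ onto the log term, $\frac{n}{V}\int_X\psi\,\big(-\mathrm{Ric}\,\omega_\varphi+\mathrm{Ric}\,dV\big)\wedge\omega_\varphi^{n-1}$. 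Recalling $\theta=\beta\omega_0+\mathrm{Ric}\,dV$ from \eqref{eq:def of eta}, I would assemble $\beta\cdot(\text{energy variation})+(\text{entropy variation})$ and collect the $\omega_0$ terms: the $-\frac{n\beta}{V}\int\psi\,(\omega_\varphi-\omega_0)\wedge\omega_\varphi^{n-1}$ piece contributes $-\frac{n\beta}{V}\int\psi\,\omega_\varphi^{n}+\frac{n\beta}{V}\int\psi\,\omega_0\wedge\omega_\varphi^{n-1}$, and combining the $\omega_0$-term with $-\mathrm{Ric}\,dV=-(\theta-\beta\omega_0)$ from the entropy part, the $\beta\omega_0$ contributions cancel, leaving exactly $-\frac{n}{V}\int_X\psi\,\big(\beta\omega_\varphi+\mathrm{Ric}\,\omega_\varphi-\theta\big)\wedge\omega_\varphi^{n-1}$, which is the claimed formula. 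The rewriting as $-\frac{n}{V}\big(\beta+n(R_{\omega_\varphi}-\mathrm{tr}_{\omega_\varphi}\theta)\big)\omega_\varphi^n$ is then just the pointwise identity $n\,\alpha\wedge\omega_\varphi^{n-1}=(\mathrm{tr}_{\omega_\varphi}\alpha)\,\omega_\varphi^n$ applied to $\alpha=\mathrm{Ric}\,\omega_\varphi$ (whose trace is $nR_{\omega_\varphi}$ in the normalization used) and to $\alpha=\theta$, together with $n\,\omega_\varphi\wedge\omega_\varphi^{n-1}=n\,\omega_\varphi^n$.

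The one point requiring mild care — the main obstacle, such as it is — is the integration-by-parts and interchange-of-derivative-and-integral steps: one must justify moving $dd^c$ off $\psi$ onto $\log(\omega_\varphi^n/dV)$ (legitimate since $\varphi\in\mathcal{H}(X,\omega_0)$ is smooth and $\omega_\varphi>0$, so all densities are smooth and positive and Stokes applies on the compact manifold $X$), and one must be careful that the entropy variation really drops the term $\int\dot\mu$ because $MA(\varphi_t)$ is a probability measure for all $t$ near $0$. Both are routine in the smooth Kähler setting; no pluripotential subtleties enter since we are working on $\mathcal{H}(X,\omega_0)$. I would therefore present the computation as a short direct verification, citing \eqref{eq:differ of E}, \eqref{eq:def of ricci curv}, \eqref{eq:def of eta}, and the definition \eqref{eq:def of F beta by form} of $F_\beta$, and noting that the normalization conventions for $R_u$ and $\mathrm{tr}_{\omega_u}$ are fixed precisely so that the two displayed expressions agree.
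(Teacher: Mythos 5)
Your computation is correct and follows essentially the same route as the paper: the paper's proof combines Lemma \ref{lem:diff of F} (which packages exactly your two pieces $dE_{|\mu}=-\varphi_\mu$ and $dD_{|\mu}=\log\frac{\mu}{dV}$) with the chain rule, the formula $\frac{d}{dt}MA(\varphi_t)=V^{-1}n\,\omega_{\varphi_t}^{n-1}\wedge dd^c\dot\varphi_t$, and an integration by parts, just as you do. The only difference is that you unpack the differential of $F_\beta$ into its energy and entropy terms by hand rather than citing the lemma, which is the same computation.
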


\begin{proof}
Combining Lemma \ref{lem:diff of F} with the chain rule and using
that $d(MA(u_{t}))/dt=V^{-1}n\omega_{u_{t}}^{n-1}\wedge dd^{c}(du_{t}/dt)$
gives 
\[
d\mathcal{M}_{\beta}(\varphi_{t})/dt=V^{-1}n\left\langle -\beta\varphi_{t}+\log\frac{MA(\varphi_{t})}{dV},\omega_{\varphi_{t}}^{n-1}\wedge dd^{c}\varphi_{t}\right\rangle .
\]
Integrating by parts to move $dd^{c}$to the other side then concludes
the proof, using that $R_{\omega}:=\text{\ensuremath{\text{tr}_{\omega}}Ric }\omega:=n^{-1}\text{Ric}\omega\wedge\omega^{n-1}/\omega^{n}.$ 
\end{proof}
For $\beta=\pm1$ and $\theta=0$ we thus have that $\omega_{0}\in c_{1}(K_{X})$
\begin{equation}
d\mathcal{M}_{\pm}(u)=-\frac{n}{V}\left(\pm\omega_{u}+\text{Ric }\omega_{u}-\theta\right)\wedge\omega_{u}^{n-1}=:-\frac{n}{V}\left((\pm1+nR_{\omega_{u}})\right)\omega_{u}^{n},\label{eq:d M plus min}
\end{equation}
 which is the defining property of the \emph{Mabuchi functional }on
the space of Kähler potentials for $\pm c_{1}(K_{X}),$ introduced
in \cite{ma} (which is thus only defined up to an additive constant).
More generally, when $\beta=\pm1$ and $\theta$ is a general closed
one-form satisfying the cohomological equation \ref{eq:cohom eq}
the formula \ref{eq:d M plus min} is the defining property of the
corresponding twisted Mabuchi functional on the space of Kähler potentials
for $\pm\left(c_{1}(K_{X})+[\theta]\right).$ 

\subsubsection{Digression on constant scalar curvature }

We make a brief digression to recall that the Mabuchi functional $\mathcal{M}_{\omega_{0}}$
is, in fact, defined for\emph{ any }Kähler class $[\omega_{0}]$ by
the property 
\[
d\mathcal{M}_{\omega_{0}}(u)=-\frac{1}{V}n\left(C_{0}\omega_{u}+\text{Ric }\omega_{u}\right)\wedge\omega_{u}^{n-1},
\]
 where $C_{0}$ is the cohomological constant ensuring that the right
hand side above integrates to zero over $X.$ This means that the
critical points of $d\mathcal{M}_{\omega_{0}}$ in in $\mathcal{H}(X,\omega_{0})$
are the Kähler potentials defining Kähler metrics with constant scalar
curvature. Similarly, for any closed $(1,1)-$form $\theta$ there
is a twisted Mabuchi functional on $\mathcal{H}(X,\omega_{0})$ associated
to the twisting form $\theta$ (obtained by replacing $\text{Ric }\omega_{u}$
with $\text{Ric }\omega_{u}-\theta$ and adjusting the cohomological
constant $C_{0}$ accordingly). In this general setup there is variant
of Theorem \ref{thm:free energy Fano} saying that there exists a
unique Kähler metric in $[\omega_{0}]\in H^{2}(X,\R)$ iff the Mabuchi
functional $\mathcal{M}$ is coercive on $\mathcal{H}(X,\omega_{0}).$
This is the content of Tian's properness conjecture, which was recently
settled in \cite{c-c} using a generalization of Aubin's continuity
method. The ``only if'' direction was previously shown in \cite{b-d-l},
building on the general existence/properness principle in metric spaces
established in \cite{d-r}. Its application to Kähler geometry is
based on the metric space realization of $\mathcal{E}^{1}(X)$ introduced
in \cite{da} together with the energy/entropy compactness theorem
in \cite{bbegz} and the geodesic convexity of $\mathcal{M}$ in \cite{b-b}.

\section{\label{sec:The-case-of pos beta}The large $N-$limit in the case
of positive $\beta$}

In this section we will explain the key ideas in the proof in \cite{berm8}
of Theorem \ref{thm:LDP intro} (which implies Theorem \ref{thm:ke intro}
and also shows that Conjecture \ref{conj:Fano beta greater than minus 1}
is valid when $\beta>0).$ 

Let $(X,L)$ be a polarized manifold. Since $L$ is positive the sequence
\[
N_{k}:=\dim H^{0}(X,kL)
\]
 tends to infinity, as $k\rightarrow\infty.$ To the geometric data
$(dV,\left\Vert \cdot\right\Vert )$ consisting of a volume form $dV$
on and a metric $\left\Vert \cdot\right\Vert $ on $L$ we attach,
for any parameter $\beta\in]0,\infty[$ the following sequence of
probability measures $\mu_{\beta}^{(N_{k})}$ on $X^{N_{k}}:$ 
\begin{equation}
\mu_{\beta}^{(N_{k})}:=\frac{\left\Vert \det S^{(k)}(x_{1},...,x_{N_{k}})\right\Vert ^{2\beta/k}}{Z_{N_{k},\beta}}dV^{\otimes N_{k}},\label{eq:temper deformed text}
\end{equation}
 where $\det S^{(k)}$ is the holomorphic section of $(kL)^{\boxtimes N_{k}}\rightarrow X^{N_{k}}$
defined as a Slater determinant for $H^{0}(X,kL):$
\[
(\det S^{(k)})(x_{1},x_{2},...,x_{N}):=\det(s_{i}^{(k)}(x_{j})),
\]
 in terms of a given basis $s_{i}^{(k)}$ in $H^{0}(X,kL).$ A change
of basis only has the effect of multiplying the section $\det S^{(k)}$
by a complex constant $c$ (the determinant of the change of bases
matrix) and hence the probability measure $\mu^{(N_{k})}$ is independent
of the choice of basis. We will fix a basis which is orthonormal with
respect to the scalar product on $H^{0}(X,kL)$ induced by the data
$(dV,\left\Vert \cdot\right\Vert )$. As in Section \ref{subsec:Complex-geometry}
we denote by $\omega_{0}$ the curvature form of the fixed metric
$\left\Vert \cdot\right\Vert $ on $L.$ 
\begin{example}
(``canonical case''). When $L=K_{X}$ and $\left\Vert \cdot\right\Vert $
is taken to be the metric on $K_{X}$ induced by the fixed volume
form $dV,$ the contributions from $\left\Vert \cdot\right\Vert $
and the volume form $dV$ cancel and then $\mu_{\beta}^{(N_{k})}$
coincides with the canonical probability measure defined in Section
\ref{eq:canon prob measure intro}. 

The following result from \cite[Thm 5.7]{berm8}, was stated as Theorem
\ref{thm:LDP intro} in Section \ref{sec:A-bird's-eye-view}.
\end{example}

\begin{thm}
\label{thm:LDP beta pos text}Let $(X,L)$ be a polarized manifold
and fix the geometric data $(dV,\left\Vert \cdot\right\Vert ).$ Then,
for any $\beta>0,$ the the laws of the corresponding random measures
$\delta_{N}$ on $(X^{N},\mu_{\beta}^{(N)})$ satisfy a Large Deviation
Principle (LDP) with speed $N$ and rate functional $F_{\beta}(\mu)-C_{\beta},$
where 
\[
F_{\beta}(\mu)=\beta E_{\omega_{0}}(\mu)+D_{dV}(\mu),\,\,\,C_{\beta}=\inf_{\mathcal{P}(X)}F_{\beta}.
\]
In particular, 
\[
-\lim_{N\rightarrow\infty}N^{-1}\log Z_{N,\beta}=\inf_{\mathcal{P}(X)}F_{\beta}
\]
 
\end{thm}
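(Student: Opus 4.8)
The goal is to establish the Large Deviation Principle for the laws $\Gamma_N := (\delta_N)_*\mu_\beta^{(N)}$ on $\mathcal{P}(X)$ at speed $N$ with rate functional $F_\beta(\mu)-C_\beta$. By Proposition \ref{prop:d-z} it suffices to verify the two-sided estimate on the measure of small balls $B_\epsilon(\mu)$, and the statement about $Z_{N,\beta}$ then follows by applying the LDP to $\mathcal{F}=\mathcal{G}=\mathcal{P}(X)$ (a compact space, so $\Gamma_N$ is a probability measure and $\inf I = 0$) and unwinding the definitions: $N^{-1}\log Z_{N,\beta} = N^{-1}\log\int_{X^N} e^{-\beta N E^{(N)}} dV^{\otimes N}$, whose limit equals $-\inf_{\mathcal{P}(X)} F_\beta$ once the LDP is in hand. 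So the entire content is the LDP itself.

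**Key steps.** First I would rewrite $\mu_\beta^{(N_k)}$ as the Gibbs measure $e^{-\beta N E^{(N)}}dV^{\otimes N}/Z_{N,\beta}$ with energy per particle $E^{(N)} = -\frac{1}{kN}\log\|\det S^{(k)}\|^2$, and observe that for $\beta>0$ this is quasi-superharmonic. The heart of the proof is a \emph{Gamma-convergence} statement: extending $E^{(N)}$ to a functional on $\mathcal{P}(X)$ via the empirical measure (and setting it to $+\infty$ off the image of $\delta_N$), one shows $E^{(N)}$ Gamma-converges to the pluricomplex energy $E_{\omega_0}$ on $\mathcal{P}(X)$. The most efficient route to this is the duality criterion of Proposition \ref{prop:crit for gamma conv}: compute the limit of the Legendre-Fenchel transforms $(E^{(N)})^*(u) = \frac{1}{N}\log\int_{X^N} e^{Nu(\delta_N)} e^{-\beta N E^{(N)}}\cdots$ — more precisely the relevant Laplace-type functional — and identify the limit with the Gateaux-differentiable functional $f(u) = -\mathcal{E}_{\omega_0}(P_{\omega_0}(-u))$ from Theorem \ref{thm:diff}, whose transform is $E_{\omega_0}$ by Proposition \ref{prop:pluri energy as legendre}. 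The asymptotics of $\frac{1}{N}\log\int_{X^N}\|\det S^{(k)}\|^{2/k} e^{k^{-1}\sum u}$-type integrals are controlled by Bergman-kernel / submean-value estimates for holomorphic sections together with the Tian-Yau-Zelditch expansion; this is where the positivity of $L$ and the choice of an orthonormal basis enter. Next, combine the Gamma-convergence of the energy with Sanov's theorem (Theorem \ref{thm:sanov}), which supplies the entropy term $D_{dV}$: the Gibbs-measure tilt $e^{-\beta N E^{(N)}}$ against the product reference $dV^{\otimes N}$ is exactly a Varadhan-type exponential tilt of the Sanov LDP, so a tilted-LDP lemma (Varadhan's lemma in the Gamma-convergence formulation) yields the LDP for $\Gamma_N$ with rate $\beta E_{\omega_0}(\mu) + D_{dV}(\mu) - C_\beta$. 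Finally, verify goodness and lower semicontinuity of $F_\beta$ (immediate since $E_{\omega_0}$ is lsc as a sup of affine functionals and $D_{dV}$ is lsc) and normalize by $C_\beta = \inf F_\beta$.

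**Main obstacle.** The delicate point is the upper-bound half of the Gamma-convergence / Laplace asymptotics: showing $\liminf_N E^{(N)}(\delta_N(x_1,\dots,x_N)) \ge E_{\omega_0}(\mu)$ whenever $\delta_N \to \mu$. One has no pointwise control on $\|\det S^{(k)}\|$ — the Slater determinant vanishes on the big incidence divisor — so the estimate must be obtained in an integrated, probabilistic sense rather than pointwise, and the interaction energy $-\frac{1}{kN}\log\|\det S^{(k)}\|^2$ must be compared to the (continuum) pluricomplex energy uniformly. The standard device is a determinantal identity expressing $\|\det S^{(k)}\|^2$ via the Bergman kernel together with a Gram-matrix / Hadamard-type inequality, reducing matters to the convergence of $\frac{1}{k}\log(\text{Bergman density})$ to the equilibrium potential; controlling the error terms uniformly over configurations, and handling the subsequence $N=N_k$ versus general $N$, is the technical crux. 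I would also need the ``no-gap'' fact that volume forms are dense in finite-energy measures so that the recovery sequence exists for every $\mu$ with $F_\beta(\mu)<\infty$; this is where Theorem \ref{thm:free energy beta pos} and the bijection \ref{eq:bijection finite energy} are used to produce explicit recovery sequences via $MA(\varphi)$.
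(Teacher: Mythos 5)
There is a genuine gap at the step where you pass from Gamma-convergence of the energies to the LDP by invoking ``a tilted-LDP lemma (Varadhan's lemma in the Gamma-convergence formulation)'' on top of Sanov's theorem. Varadhan-type tilting of Sanov's LDP requires the tilt to be a (bounded) continuous functional of the empirical measure; here the tilt is $-\beta N E^{(N)}$, where $E^{(N)}$ is an $N$-dependent, singular interaction that equals $+\infty$ on the incidence divisor and whose Gamma-limit $E_{\omega_0}$ is only lower semi-continuous. Gamma-convergence of $E^{(N)}$ towards $E_{\omega_0}$ does give you the LDP \emph{upper} bound (the liminf inequality yields a uniform lower bound on $E^{(N)}$ over $\delta_N^{-1}(B_\epsilon(\mu))$, which combined with Sanov's upper bound suffices, since $\beta>0$), and it gives you a recovery \emph{configuration} $\boldsymbol{x}^{(N)}$ with $E^{(N)}(\boldsymbol{x}^{(N)})\to E_{\omega_0}(\mu)$; but a single configuration has $dV^{\otimes N}$-measure zero, so it does not produce the LDP \emph{lower} bound for $\Gamma_N(B_\epsilon(\mu))$. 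One must relate the pointwise value of $e^{-\beta NE^{(N)}}$ at a good configuration to its average over a Wasserstein ball, i.e.\ justify the ``taking out'' step \ref{eq:take out} from below. This is exactly what the paper's second technical ingredient does: the asymptotic submean inequality \ref{eq:as submean ineq}, proved by embedding $X^N/S_N$ isometrically (with the rescaled product metric) into $(\mathcal{P}(X),d_{W_2})$ and applying a Li--Schoen type submean inequality with Cheng--Yau gradient estimates, the point being that the constants grow only sub-exponentially in the dimension $nN$. This is where the uniform quasi-superharmonicity of $NE^{(N)}$ — which you mention once but never use — is indispensable, and it is precisely the ingredient whose failure for $\beta<0$ makes the Fano case conjectural (the paper states explicitly, after Theorem \ref{thm:LDP subharmon}, that the G\"artner--Ellis/Varadhan machinery alone does not reach $\beta\in]0,\infty[$).

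A secondary point: your ``main obstacle'' paragraph locates the crux in the Gamma-liminf inequality and in pointwise control of $\|\det S^{(k)}\|$. In the paper that part is comparatively soft: by the dual criterion (Prop \ref{prop:crit for gamma conv}), Prop \ref{prop:pluri energy as legendre} and Theorem \ref{thm:diff}, the Gamma-convergence reduces to the $L^\infty$-asymptotics \ref{eq:asympt of log det L infty} of weighted transfinite diameters from \cite{b-b} (where the $L^\infty$ norm is replaced by the $L^2$ Gram determinant and Bergman kernel asymptotics are used) — no pointwise control of the determinant on configurations is needed. Also, the dual object is the sup (an $L^\infty$-norm), not the Laplace integral you wrote, though the two have the same logarithmic asymptotics. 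The final statement about $Z_{N,\beta}$ does follow from the LDP as you say, once the LDP itself is fully established.
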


Since, by Theorem \ref{thm:free energy beta pos}, there exists a
unique minimizer $\mu_{\beta}$ of $F_{\beta}$ it follows (see Lemma
\ref{lem:LDP plus unique implies conv in law}) that $\delta_{N}$
converges in law towards the unique minimizer $\mu_{\beta}.$ We recall
that $\mu_{\beta}$ is a volume form and the Kähler metric 
\[
\omega_{\beta}:=\omega_{0}+\frac{1}{\beta}dd^{c}\log\frac{\mu_{\beta}}{dV}
\]
is the unique solution to the twisted Kähler-Einstein equation 
\[
\mbox{\ensuremath{\mbox{Ric}}\ensuremath{\omega}}=-\beta\omega+\theta,\,\,\,\theta:=\beta\omega_{0}+\ensuremath{\mbox{Ric}}\ensuremath{dV}
\]
 In particular, specializing the previous theorem to the ``canonical
case'' in the previous example yields Theorem \ref{thm:ke intro}.

\subsection{\label{subsec:The-proof-of Thm LDP text}The proof of Theorem \ref{thm:LDP beta pos text}}

As mentioned in Section \ref{subsec:The-proof-of LDP intro} the starting
point of the proof of Theorem \ref{thm:LDP intro} is to rewrite $\mu^{(N_{k})}$
as a Gibbs measure, at inverse temperature $\beta,$
\[
\mu_{\beta}^{(N_{k})}=\frac{e^{-\beta NE^{(N)}}}{Z_{N,\beta}}dV^{\otimes N},\,\,\,E^{(N)}:=-\frac{1}{kN}\log\left\Vert \det S^{(k)}(x_{1},...,x_{N_{k}})\right\Vert ^{2}
\]
 where, $E^{(N)}$ is called the \emph{energy per particle} and the
normalization constant $Z_{N,\beta}$ is called the \emph{partition
function}. To explain the idea of the proof first assume that that
the following ``Mean Field Approximation'' holds in an appropriate
sense
\begin{equation}
E^{(N)}(x_{1},...x_{N})\approx E(\frac{1}{N}\sum_{i=1}^{N}\delta_{x_{i}}),\,\,\,\,N>>1\label{eq:mean field apprx heurstic form}
\end{equation}
for some functional $E$ on $\mathcal{P}(X).$ We are going to use
the characterization \ref{prop:d-z} of a LDP. By definition, given
$\mu\in\mathcal{P}(X)$ and $\epsilon>0$
\[
\text{Prob }\left(\frac{1}{N}\sum_{i=1}^{N}\delta_{x_{i}}\in B_{\epsilon}(\mu)\right):=Z_{N,\beta}^{-1}\int_{\delta_{N}^{-1}\left(B_{\epsilon}(\mu)\right)}e^{-\beta NE^{(N)}}dV^{\otimes N}
\]
Hence, formally, as $N\rightarrow\infty$ and $\epsilon\rightarrow0,$
we can take out the factor $e^{-\beta NE^{(N)}}$ to get
\begin{equation}
\int_{\delta_{N}^{-1}\left(B_{\epsilon}(\mu)\right)}e^{-\beta NE^{(N)}}dV^{\otimes N}\sim e^{-\beta NE(\mu)}\int_{\delta_{N}^{-1}\left(B_{\epsilon}(\mu)\right)}dV^{\otimes N}\label{eq:take out}
\end{equation}
Applying the Sanov's LDP result \ref{thm:sanov} to the integral thus
suggests that the non-normalized measures 
\[
(\delta_{N})_{*}\left(e^{-\beta H^{(N)}}dV^{\otimes N}\right)
\]
on $\mathcal{P}(X)$ satisfy a LDP with speed $N$ and rate functional
\[
F_{\beta}(\mu):=E(\mu)+\beta^{-1}D_{dV}(\mu).
\]
Once this LDP has been established the asymptotics for $Z_{N}$ follow
from the very definition of a LDP.

\subsubsection{The two technical ingredients}

In order to make this argument rigorous two issues need to be confronted.
First, the nature of the convergence in the ``Mean Field Approximation''
\ref{eq:mean field apprx heurstic form} has to be specified. Secondly,
appropriate conditions on $E^{(N)}$ need to be introduced, ensuring
that the ``taking out'' argument \ref{eq:take out} is justified.
As for the first issue it is shown in \cite{berm8}, that, the approximation
\ref{eq:mean field apprx heurstic form} holds in the sense of\emph{
}Gamma-convergence, with $E$ given by the pluricomplex energy $E_{\omega_{0}}$
(defined by formula \ref{eq:def of e as sup}). More precisely, using
the embedding 
\begin{equation}
\delta_{N}:\,X^{N}/S_{N}\rightarrow\mathcal{P}(X)\label{eq:embedding delta in proof LDP}
\end{equation}
 we can identify $E^{(N)}$ with a function on $\mathcal{P}(X),$
defined to be equal to $\infty,$ on the complement of the image of
$\delta_{N}$ in $\mathcal{P}(X).$ Under this identification it is
shown in \cite{berm8} that the following Gamma-convergence on $\mathcal{P}(X)$
holds:
\begin{equation}
E^{(N)}\rightarrow E_{\omega_{0}},\,\,\,N\rightarrow\infty\label{eq:Gamma conv towards E text}
\end{equation}
 In fact, using the dual criterion in Prop \ref{prop:crit for gamma conv},
this follows directly from the Legendre-Fenchel formula for $E_{\omega_{0}}$
in Prop \ref{prop:pluri energy as legendre} and the differentiability
Theorem\ref{thm:diff}, combined with the following convergence result
for the weighted transfinite diameters of $X$ in \cite{b-b}: given
$u\in C^{0}(X)$
\begin{equation}
\frac{1}{kN_{k}}\log\left\Vert \left(\det S^{(k)}\right)e^{-u/2}\right\Vert _{L^{\infty}(X^{N_{k})})}^{2}\rightarrow\mathcal{E}_{\omega_{0}}(Pu),\,\,\,k\rightarrow\infty,\label{eq:asympt of log det L infty}
\end{equation}
 where we are using the same notation $u$ for the induced function
$\sum_{i}u(x_{i})$ on $X^{N_{k}}.$ As for the ``taking out'' issue
in formula \ref{eq:take out} it is handled using the following key
asymptotic sub-mean inequality in high dimensions established in \cite{berm8}.
There exist positive constants $C$ and $A_{\epsilon}$ such that
for any $\boldsymbol{x}^{(N)}\in X^{N}$ and $\epsilon>0$
\begin{equation}
e^{-\beta NE^{(N)}(\boldsymbol{x}^{(N)})}\leq A_{\epsilon}e^{CN\epsilon}\frac{\int_{\delta_{N}^{-1}\left(B_{\epsilon}(\delta_{N}(\boldsymbol{x}^{(N)})\right)}e^{-\beta NE^{(N)}}dV^{\otimes N}}{\int_{\delta_{N}^{-1}\left(B_{\epsilon^{2}}(\delta_{N}(\boldsymbol{x}^{(N)})\right)}dV^{\otimes N}}\label{eq:as submean ineq}
\end{equation}
 when the metric $d$ on $\mathcal{P}(X),$ defining the weak topology,
is taken to be the Wasserstein $L^{2}-$metric $d_{W^{2}}$. Combining
the previous inequality with the Gamma-convergence \ref{eq:Gamma conv towards E text}
it is straightforward to conclude the proof of the LDP in Theorem
\ref{thm:LDP beta pos text} (see \cite{berm8} and the exposition
in \cite{berm10b}).

\subsubsection{The proofs of the technical ingredients}

We briefly recall the proofs of the two technical ingredients discussed
above. First, an important ingredient in the proof of the asymptotics
\ref{eq:asympt of log det L infty} in \cite{b-b} is the simple observation
that the $L^{\infty}(X^{N})$ norm in question has the same logarithmic
asymptotics as the corresponding $L^{2}(X^{N})-$norm. In turn, the
$L^{2}(X^{N})-$norm may, by expanding $\det S^{(k)}(x_{1},...x_{N})$
as an alternating sum over the $N!$ elements of $S_{N},$ be expressed
as 
\begin{equation}
\left\Vert \det S^{(k)}e^{-ku}\right\Vert _{L^{2}(X^{N_{k}})}^{2}=N_{k}\det_{i,j\leq N_{k}}A_{ij}[u],\label{eq:L two norm of det as G-S}
\end{equation}
 where $A_{ij}[u]$ is the $N\times N$ Gram matrix defined by the
scalar products of the base elements $s_{i}^{(k)}$ in $H^{0}(X,kL)$
with respect to the scalar product induced by the volume form $dV$
and the metric $\left\Vert \cdot\right\Vert e^{-u/2}$ on $L.$ As
shown in \cite{b-b} the corresponding convergence towards $\mathcal{E}_{\omega_{0}}(Pu)$
then follows from Bergman kernel asymptotics on $X,$ using that the
differential of the functional
\begin{equation}
u\mapsto-\frac{1}{kN_{k}}\log\left\Vert \det S^{(k)}e^{-ku}\right\Vert _{L^{2}(X^{N_{k}})}^{2}\label{eq:function log of L two norm}
\end{equation}
 on $C(X)$ is represented by the probability measure on $X$ defined
by the point-wise norm of the Bergman kernel on the diagonal, with
respect to the metric $\left\Vert \cdot\right\Vert e^{-u/2}$ on $L.$
\begin{rem}
Formula \ref{eq:L two norm of det as G-S} allows one to view the
functional in formula \ref{eq:function log of L two norm} as an analogue
of Donaldson's $\mathcal{L}-$functional \cite{do1b}. However, in
contrast to the setting in \cite{do1b}, it is crucial that $u$ is
allowed to be any continuous (or smooth) function and not only a function
in $\mathcal{H}(X,\omega_{0}).$ 
\end{rem}

Finally, we recall the the starting point of the proof of the asymptotic
submean inequality \ref{eq:as submean ineq} in \cite{berm8} is the
well-known fact that the embedding $\delta_{N}$ of $X^{N}/S_{N}$
into the $L^{2}-$Wasserstein space $(\mathcal{P}(X),d_{W_{2}})$
is an isometry when $X^{N}/S_{N}$ is endowed with the quotient space
(orbifold) metric induced from the Riemannian metric $g_{N}$ on $X^{N},$
defined as $N^{-1}$ times the product Riemannian metric. The quasi-subharmonic
property of $NE^{(N)}$ is equivalent to 
\[
\Delta_{g_{N}}E^{(N)}\geq-\lambda
\]
 on $X^{N}.$ Moreover, the scaling of $g_{N}$ also ensures that
the Ricci curvature of $g^{(N)}$ is bounded from below by a uniform
constant times the dimension of $X^{N}.$\emph{ }The inequality \ref{eq:as submean ineq}
now follows from the general sub-mean inequality in \cite[Thm 2.1]{berm8}
for Riemannian quotients (orbifolds) $Y:=M/G$ (which yields a distortion
factor with sub-exponential growth in the dimension). We recall that
the latter inequality is proved using geometric analysis on the orbifold
$Y,$ by generalizing an inequality of Li-Schoen in Riemannian geometry
\cite{li-sc}. A key ingredient in the proof is the Cheng-Yau gradient
estimate \cite{c-y} for harmonic functions on a Riemannian manifold
(or more generally orbifold) and the observation that the dependence
on the dimension in the estimate is sub-linear. 

\subsection{\label{subsec:A-general-LDP}A general LDP and the Gärtner-Ellis
theorem}

The same method of proof, in fact, yields the following general LDP:
\begin{thm}
\label{thm:LDP subharmon}Let $E^{(N)}$ be a sequence of lower semi-continuous
symmetric functions on $X^{N},$ where $X$ is a compact Riemannian
manifold.\textup{ Assume that }

\begin{itemize}
\item \textup{The corresponding functions $E^{(N)}$ on $\mathcal{P}(X)$
converge to a functional $E,$ in the sense of Gamma$-$convergence
on $\mathcal{P}(X).$}
\item $NE^{(N)}$ is uniformly quasi-superharmonic, i.e. \emph{$\Delta_{x_{1}}NE^{(N)}(x_{1},x_{2},...x_{N})\leq C$
on $X^{N}$}
\end{itemize}
Then, for any sequence of positive numbers $\beta_{N}\rightarrow\beta\in]0,\infty]$
the measures $\Gamma_{N}:=(\delta_{N})_{*}e^{-\beta_{N}NE^{(N)}}$
on $\mathcal{P}(X)$ satisfy, as $N\rightarrow\infty,$ a LDP with
\emph{speed} $\beta_{N}N$ and \emph{rate functional} 
\begin{equation}
F_{\beta}(\mu)=E(\mu)+\frac{1}{\beta}D_{dV}(\mu)\label{eq:free energy func theorem gibbs intro-1}
\end{equation}
Moreover, assuming that the second point above holds, the first point
may be replaced by the following assumption: there exists a sequence
$\beta_{N}\rightarrow\infty$ such that for any $u\in C^{0}(X)$ \textup{
\begin{equation}
\mathcal{F}_{\beta_{N}}(u):=-\frac{1}{N\beta_{N}}\log Z_{N,\beta}[u],\,\,Z_{N,\beta}[u]:=\int_{X^{N}}e^{-\beta_{N}\left(NE^{(N)}+u\right)}dV^{\otimes N},\label{eq:conv in g-e}
\end{equation}
 converges, as $N\rightarrow\infty,$ towards $\mathcal{F}(u)$ for
some Gateaux differentiable functional $\mathcal{F}$ on $C^{0}(X)$.
}Then the Gamma-convergence in the first point above holds with $E$
defined as the Legendre-Fenchel transform of $f(u):=-\mathcal{F}(-u).$
\end{thm}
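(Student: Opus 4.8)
The plan is to follow the scheme already used to prove Theorem \ref{thm:LDP beta pos text}, of which the present statement is the abstract version with the explicit energy \ref{eq:def of E N intro} replaced by an arbitrary uniformly quasi-superharmonic $E^{(N)}$. I would work through the ball characterization of a LDP in Proposition \ref{prop:d-z}: writing $\Gamma_N(B_\epsilon(\mu))=\int_{\delta_N^{-1}(B_\epsilon(\mu))}e^{-\beta_N NE^{(N)}}\,dV^{\otimes N}$, with $B_\epsilon$ the $\epsilon$-ball for the $L^2$-Wasserstein metric on $\mathcal{P}(X)$, the whole LDP reduces to
\[
\lim_{\epsilon\to0}\limsup_{N\to\infty}\frac{1}{\beta_N N}\log\Gamma_N(B_\epsilon(\mu))\le-F_\beta(\mu)\le\lim_{\epsilon\to0}\liminf_{N\to\infty}\frac{1}{\beta_N N}\log\Gamma_N(B_\epsilon(\mu)),
\]
with the convention that the entropy term in $F_\beta$ drops when $\beta=\infty$. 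Both inequalities come from combining Sanov's theorem (Theorem \ref{thm:sanov}) for the background measure $dV^{\otimes N}$ with the comparison between $E^{(N)}$ and $E$.

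For the upper inequality I would first deduce, from the liminf half of Gamma-convergence together with the weak compactness of $\mathcal{P}(X)$ and the lower semicontinuity of $E$, that for every $\eta>0$ there is $\epsilon_0>0$ with $E^{(N)}\ge E(\mu)-\eta$ on $\delta_N^{-1}(\overline{B_\epsilon(\mu)})$ for all $\epsilon\le\epsilon_0$ and all large $N$; a failure would produce, after extracting a weak limit, a measure $\nu\in\overline{B_\epsilon(\mu)}$ with $E(\nu)\le E(\mu)-\eta$, contradicting lower semicontinuity once $\epsilon$ is small. Pulling this bound out of the integral as in \ref{eq:take out} gives $\Gamma_N(B_\epsilon(\mu))\le e^{-\beta_N N(E(\mu)-\eta)}\,dV^{\otimes N}\big(\delta_N^{-1}(B_\epsilon(\mu))\big)$, and Sanov's upper bound for the closed set $\overline{B_\epsilon(\mu)}$, the lower semicontinuity of $D_{dV}$, and $\beta_N\to\beta$ then yield
\[
\limsup_{N\to\infty}\frac{1}{\beta_N N}\log\Gamma_N(B_\epsilon(\mu))\le-(E(\mu)-\eta)-\frac{1}{\beta}\inf_{\overline{B_\epsilon(\mu)}}D_{dV},
\]
so that letting $\epsilon\to0$ and then $\eta\to0$ produces $-F_\beta(\mu)$.

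For the lower inequality I would take a recovery sequence for $\mu$, i.e. points $\boldsymbol x^{(N)}\in X^N$ with $\delta_N(\boldsymbol x^{(N)})\to\mu$ and $E^{(N)}(\boldsymbol x^{(N)})\to E(\mu)$, and feed them into the asymptotic sub-mean inequality \ref{eq:as submean ineq}, whose only inputs are the uniform quasi-superharmonicity of $\beta_N NE^{(N)}$ and the Li--Schoen type estimate of \cite[Thm 2.1]{berm8} on the Riemannian orbifold $X^N/S_N$. Rearranging \ref{eq:as submean ineq} and using $B_\epsilon(\delta_N(\boldsymbol x^{(N)}))\subset B_{2\epsilon}(\mu)$ for large $N$, this bounds $\Gamma_N(B_{2\epsilon}(\mu))$ below by $A_\epsilon^{-1}e^{-CN\epsilon}e^{-\beta_N NE^{(N)}(\boldsymbol x^{(N)})}\,dV^{\otimes N}\big(\delta_N^{-1}(B_{\epsilon^2}(\delta_N(\boldsymbol x^{(N)})))\big)$; since $\mu\in B_{\epsilon^2}(\delta_N(\boldsymbol x^{(N)}))$ for large $N$, Sanov's lower bound gives $\liminf_N\frac{1}{\beta_N N}\log\Gamma_N(B_{2\epsilon}(\mu))\ge-\frac{C\epsilon}{\beta}-E(\mu)-\frac{1}{\beta}D_{dV}(\mu)$, and $\epsilon\to0$ closes the gap. (When $\beta<\infty$ and $D_{dV}(\mu)=\infty$ there is nothing to prove since then $F_\beta(\mu)=\infty$; when $\beta=\infty$ the factor $\beta_N^{-1}\to0$ kills every entropic contribution, each $\inf_{B_{\epsilon^2}}D_{dV}$ being finite for fixed $\epsilon>0$.) Applying the resulting LDP to $\mathcal{F}=\mathcal{G}=\mathcal{P}(X)$ also gives $-\lim_N(\beta_N N)^{-1}\log\int_{X^N}e^{-\beta_N NE^{(N)}}\,dV^{\otimes N}=\inf_{\mathcal{P}(X)}F_\beta$.

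For the last assertion I would apply the Gamma-convergence criterion of Proposition \ref{prop:crit for gamma conv}: it suffices to check that the Legendre--Fenchel transforms $(E^{(N)})^*$ on $C^0(X)$ (of $E^{(N)}$ extended by $+\infty$ off $\mathcal{P}(X)\subset\mathcal{M}(X)$) converge pointwise to the Gateaux differentiable function $f:=-\mathcal{F}(-\,\cdot\,)$, whereupon $E=f^*$ is automatically the claimed Gamma-limit. Here $(E^{(N)})^*(w)=\sup_{\boldsymbol x\in X^N}\big(\frac1N\sum_i w(x_i)-E^{(N)}(\boldsymbol x)\big)$, while $\mathcal{F}_{\beta_N}(-w)=-\frac{1}{N\beta_N}\log\int_{X^N}e^{\beta_N(\sum_i w(x_i)-NE^{(N)})}\,dV^{\otimes N}$. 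The elementary bound $\int e^{\beta_N g}\,dV^{\otimes N}\le e^{\beta_N\sup g}$ gives $-\mathcal{F}_{\beta_N}(-w)\le(E^{(N)})^*(w)$, and the reverse inequality $-\mathcal{F}_{\beta_N}(-w)\ge(E^{(N)})^*(w)-o(1)$ is exactly the content of \ref{eq:as submean ineq} applied to the tilted energy $E^{(N)}+\frac1N\sum_i w(x_i)$ (still uniformly quasi-superharmonic after approximating $w$ by a smooth function): it exhibits a near-maximizer of $\sum_i w(x_i)-NE^{(N)}$ surrounded by a set of $dV^{\otimes N}$-volume at least $e^{-o(\beta_N N)}$ on which this function is almost maximal. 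Hence $\lim_N(E^{(N)})^*(w)=-\lim_N\mathcal{F}_{\beta_N}(-w)=f(w)$, as needed. The main obstacle throughout is the asymptotic sub-mean inequality \ref{eq:as submean ineq}: one needs a reverse mean-value estimate for $e^{-\beta_N NE^{(N)}}$ on Wasserstein balls whose distortion $A_\epsilon e^{CN\epsilon}$ has exponential rate $C\epsilon$ controlled \emph{uniformly in the dimension} of $X^N$, which is precisely what the $N^{-1}$-rescaling of the product Riemannian metric and the dimension-stability of the Cheng--Yau gradient estimate are designed to supply; everything else is a repackaging of Sanov's theorem and the Legendre duality already recorded in Propositions \ref{prop:d-z}, \ref{prop:pluri energy as legendre}, \ref{prop:crit for gamma conv} and Theorem \ref{thm:diff}.
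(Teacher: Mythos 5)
Your proposal is correct and follows essentially the same route as the paper (which sketches the argument, deferring to \cite{berm8}): the ball characterization of Proposition \ref{prop:d-z}, the Gamma-liminf bound plus Sanov's theorem for the upper estimate, the recovery sequence combined with the asymptotic sub-mean inequality \ref{eq:as submean ineq} (whose proof indeed only uses uniform quasi-superharmonicity and the dimension-robust Li--Schoen/Cheng--Yau estimate on $X^{N}/S_{N}$) for the lower estimate, and the dual criterion of Proposition \ref{prop:crit for gamma conv} together with the sup-versus-$\frac{1}{\beta_{N}N}\log\int$ comparison for the G\"artner--Ellis-type last statement. Your explicit allocation of the sub-mean inequality to the lower bound only, and of the Gamma-liminf to the upper bound, is a faithful reconstruction of the proof in \cite{berm8}, with only harmless imprecisions (e.g.\ replacing ``$\mu\in B_{\epsilon^{2}}(\delta_{N}(\boldsymbol{x}^{(N)}))$'' by the inclusion $B_{\epsilon^{2}/2}(\mu)\subset B_{\epsilon^{2}}(\delta_{N}(\boldsymbol{x}^{(N)}))$ before invoking Sanov).
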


To see the connection between the last statement in the previous theorem
and the present complex-geometric setup note that the functional in
formula \ref{eq:function log of L two norm} can be expressed as $\mathcal{F}_{\beta_{N}}(u)$
for $\beta_{N_{k}}=k.$ In this particular case, the corresponding
random point processes is a \emph{determinantal point process} (see
\cite{berm10b,du} for background on such processes and the relations
to Fekete points and interpolation nodes).

As explained in \cite{berm8}, the previous theorem can be viewed
as generalization of the Gärtner-Ellis theorem to $\beta\in]0,\infty[.$
The Gärtner-Ellis theorem (which is a generalization of Cramér's classical
LDP theorem for independent random vectors) says, when applied to
the laws of Gibbs measures, that the last assumption in Theorem \ref{thm:LDP subharmon}
implies that LDP holds for $\beta=\infty.$ However, extending the
LDP to $\beta\in]0,\infty[$ appears to require assumptions on the
nature of $E^{(N)},$ such as the superharmonicity assumption in Theorem
\ref{thm:LDP subharmon}. 

\section{\label{sec:Towards-the-case of neg}Towards the case of negative
$\beta$ }

Now consider the ``Fano setting'' with $\beta\in[-1,0[$ (see Section
\ref{subsec:The-Fano-setting}). In order to extend the method of
proof discussed in section to the case when $\beta<0$ it seems natural
to expect that one would need to exploit that $\beta E^{(N}$ is uniformly\emph{
}quasi-plurisubharmonic. One small step in this direction is taken
in the following
\begin{lem}
There exists $\beta_{0}<0$ such that for any $\beta>\beta_{0}$ the
following bound holds for a positive constant $C_{\beta}:$ 
\[
N^{-1}\log Z_{N,\beta}\leq C_{\beta}
\]
\end{lem}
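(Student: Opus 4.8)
The plan is to bound the partition function $Z_{N,\beta} = \int_{X^N} \|\det S^{(k)}\|^{2\beta/k}\, dV^{\otimes N}$ from above by a quantity that stays controlled as $N\to\infty$, for all $\beta$ in a fixed interval $]\beta_0,0[$. Since $\beta<0$, writing $\gamma=-\beta>0$, we must bound $\int_{X^N}\|\det S^{(k)}\|^{-2\gamma/k}\,dV^{\otimes N}$, where the integrand blows up along the zero-divisor $\mathcal D_k$ of $\det S^{(k)}$. The key point is that the singularity is of a controlled nature: $\det S^{(k)}$ is a section of a fixed line bundle, so its zero set has only normal-crossing-type (more precisely, analytic hypersurface) singularities, and the local exponent is bounded. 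I would first reduce to a local estimate: cover $X^N$ by finitely many coordinate charts, trivialize the line bundle $(kK_X)^{\boxtimes N}$, and on each chart the integrand becomes $|f|^{-2\gamma/k}$ times a bounded smooth factor, where $f$ is a holomorphic function. The integrability threshold of such a local integrand is governed by the log canonical threshold $\mathrm{lct}(X^{N},\mathcal D_k/k)$, and one knows $\int_{X^N}\|\det S^{(k)}\|^{-2\gamma/k}\,dV^{\otimes N}<\infty$ precisely when $\gamma < \gamma_{N_k}(X) = \mathrm{lct}(X^{N_k},\mathcal D_k/k)$, as recalled in the excerpt.

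The heart of the matter is therefore to show that $\gamma_{N_k}(X)$ is bounded below by a positive constant \emph{uniformly in }$k$, so that one may take $\beta_0 := -\liminf_k \gamma_{N_k}(X) = -\gamma(X) < 0$; and then, more quantitatively, to show that the resulting integral does not grow faster than $e^{CN}$ in the relevant range. For the uniform lower bound on $\gamma(X)$ I would invoke the Fujita--Odaka type comparison (cited in the excerpt as \cite[Thm 2.5]{f-o}), $\gamma_k(X)\le\delta_k(X)$, together with the fact that $\delta(X)$ is always strictly positive for a Fano manifold — indeed $\delta_k(X)$ is uniformly bounded below by a positive constant depending only on $X$, since $-K_X$ is ample (this is standard; the basis-type divisors $D_k$ have bounded singularities). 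Actually it is cleaner to argue directly: the sections of $-kK_X$ generate, so locally $\det S^{(k)}$ vanishes to order at most a fixed multiple of $N$ along any coordinate hyperplane, giving a crude but $k$-uniform lower bound $\gamma_{N_k}(X)\ge c_0>0$. Fix such a $\beta_0\in]-c_0,0[$.

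For the quantitative bound in the range $\beta\in]\beta_0,0[$, the plan is: (i) use Hölder's inequality in the form $Z_{N,\beta}\le Z_{N,\beta_0}^{\beta/\beta_0}\cdot (\mathrm{vol}\,X^N)^{1-\beta/\beta_0}$ — wait, this requires $Z_{N,\beta_0}<\infty$ which is exactly the edge case; instead interpolate between $\beta_0+\epsilon$ and $0$, so it suffices to bound $N^{-1}\log Z_{N,\beta_0+\epsilon}$ for a single $\epsilon>0$. (ii) For that single value, bound the local integrals $\int_{\text{chart}} |f|^{-2\gamma/k}\,dz\wedge d\bar z$ using a Skoda-type $L^p$ estimate: since $\gamma/\gamma_{N_k}(X)$ is bounded away from $1$, the $L^{2\gamma/k}$-norm of $(\det S^{(k)})^{-1}$ is controlled, and Bergman-kernel/dimension bookkeeping gives the integral $\le e^{CN}$ for some $C$ independent of $N$. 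Concretely, one expands or estimates as in the proof of the asymptotics \eqref{eq:asympt of log det L infty}, but now with a negative exponent, using that the number of charts and the local vanishing orders grow at most linearly in $N$.

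The main obstacle I anticipate is step (ii): the passage from finite integrability (a pointwise/local statement) to an \emph{exponential} upper bound $N^{-1}\log Z_{N,\beta}\le C_\beta$ uniform in $N$. Pointwise finiteness of each $Z_{N,\beta}$ is comparatively easy once $\gamma(X)>0$ is known, but controlling the growth rate requires a genuinely $N$-uniform integrability estimate — essentially a quantitative (effective) log canonical threshold bound for the incidence divisors $\mathcal D_k\subset X^{N_k}$, or equivalently a uniform Skoda estimate. I would handle this by combining the Ohsawa--Takegoshi/Skoda machinery with the sub-exponential distortion estimates for Riemannian quotients already used in the proof of \eqref{eq:as submean ineq}, applied now to the quasi-plurisubharmonic (rather than quasi-subharmonic) function $\beta E^{(N)}$; the sign change from repulsive to attractive is exactly what forces $\beta_0<0$ rather than $\beta_0=-\infty$, and the mild attractivity is quantified by $\gamma(X)>1$ heuristically, but $\gamma(X)>0$ suffices for the stated lemma.
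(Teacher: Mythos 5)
There is a genuine gap, at the two places where your plan needs uniformity in $N$. First, the uniform lower bound $\gamma_{N_k}(X)\ge c_0>0$ is not established by what you propose: the Fujita--Odaka comparison $\gamma_k(X)\le\delta_k(X)$ goes in the wrong direction (it bounds $\gamma_k$ from \emph{above}, so positivity of $\delta_k$ gives nothing), and the ``direct'' argument via vanishing orders along coordinate hyperplanes does not control $\mbox{lct}(X^{N_k},\mathcal{D}_{k}/k)$. The log canonical threshold lives on a space of dimension $nN_k\rightarrow\infty$ and is governed by \emph{all} divisorial valuations there; the dangerous singularities of $\mathcal{D}_{k}$ sit along diagonals and higher incidence strata (configurations with many coinciding points), where the multiplicity of $\mathcal{D}_{k}$ grows without bound, so crude local multiplicity bounds degenerate as $N\rightarrow\infty$ rather than giving a $k$-uniform $c_0$. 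Second, you yourself identify the passage from finiteness to the exponential bound $N^{-1}\log Z_{N,\beta}\le C_\beta$ as the main obstacle, and the proposed remedy (Ohsawa--Takegoshi/Skoda machinery combined with the Riemannian distortion estimates behind \ref{eq:as submean ineq}, applied to the quasi-psh $\beta E^{(N)}$) is only a hope: the Li--Schoen/Cheng--Yau submean machinery concerns volume distortion and sub/superharmonic functions on quotients and does not produce $N$-uniform exponential integrability for negative exponents. The Hölder interpolation step is harmless but does not address this, since it presupposes the bound at one value of $\beta$.

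Both gaps are closed simultaneously by a much simpler device, which is what the paper does: integrate one variable at a time. For fixed $x_2,\dots,x_N$ the function $x_1\mapsto\frac{1}{k}\log\left\Vert \det S^{(k)}(x_{1},x_{2},\dots,x_{N})\right\Vert ^{2}$ is $\omega_0$-psh on $X$ (it is the normalized log-norm of a holomorphic section of $kL$ in that variable), and by \ref{eq:asympt of log det L infty} (applied with $u=0$) its sup over $X^{N}$, i.e. $N\sup_{X^{N}}\varphi^{(N)}$ with $\varphi^{(N)}=-E^{(N)}$, stays of size $O(N)$ with a constant independent of $k.$ Tian's $\alpha$-invariant of $c_{1}(-K_{X})$ — the uniform Skoda-type estimate you allude to, but formulated on $X$ itself and hence independent of $N$ — gives $\int_{X}e^{-\gamma\varphi}dV\le A_{\gamma}e^{-\gamma\sup_{X}\varphi}$ for all $\varphi\in PSH(X,\omega_{0})$ and $\gamma<\alpha.$ Writing $Z_{N,-\gamma}$ as an iterated integral over the $N$ factors and applying this bound in each factor (the partial sup over a block of variables is again separately $\omega_0$-psh in the remaining ones) yields $Z_{N,-\gamma}\le A_{\gamma}^{N}e^{\gamma C_{0}N},$ i.e. the lemma with $\beta_{0}=-\alpha$ and $C_{\beta}=\log A_{\gamma}+\gamma C_{0}.$ In particular this argument \emph{proves}, rather than assumes, the uniform bound $\gamma_{N_k}(X)\ge\alpha>0$ that your first step required; the uniformity in $N$ comes from the product/Fubini structure together with separate plurisubharmonicity, not from any lct analysis on $X^{N_k}$ itself.
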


\begin{proof}
Setting $\varphi^{(N)}:=-E^{(N)}$ on $X^{N},$ the functions on $X$
obtained by fixing all but one arguments in $\varphi^{(N)}$ are $\omega_{0}-$psh
on $X.$ By \ref{eq:asympt of log det L infty} there exists a uniform
constant $C$ such that $\sup_{X^{N}}\varphi^{(N)}\leq C_{0}.$ Moreover,
as is well-known, there exists a positive number $\alpha$ such that
for any $\gamma<\alpha$ there exists a constant $A_{\gamma}$ such
that $\int_{X}e^{-\gamma\varphi}\leq A_{\gamma}e^{-\gamma\sup_{X}\varphi}$
for any $\varphi\in PSH(X,\omega_{0}).$ Indeed, the optimal such
$\alpha$ is Tian's \emph{$\alpha-$invariant} of $c_{1}(-K_{X})$
aka as the \emph{global log canonical threshold} of $X$ (see, for
example, the appendix in \cite{berm6}, which applies to more general
reference measure $\mu_{0}$). Hence, the lemma follows with $\beta_{0}=-\alpha$
and $C_{\beta}=\log(A_{\gamma})+\gamma C_{0},$ by writing $Z_{N,-\gamma}$
as an iterated integral over the $N$ factors of $X^{N}.$
\end{proof}
Here we will, however, propose a different \emph{variational }route,
based on Gibbs variational principle. As explained in \cite{berm11}
this approach is successful in the one-dimensional setting, but, in
general, it hinges on a missing energy bound.

It will be convenient to consider the setting of a general $\beta\in]0,1].$
In order to get started we will make the following assumptions: 
\begin{itemize}
\item The normalizing constant $Z_{N,\beta}$ is finite, i.e. the corresponding
Gibbs measure $\mu_{\beta}^{(N)}$ is well-defined (formula \ref{eq:def of Gibbs measure Fano setting intro}).
\item The free energy functional $F_{\beta}$ on $\mathcal{P}(X)$ has a
unique minimizer $\mu_{\beta}$ 
\end{itemize}
The goal is then to prove that the random measure $\delta_{N}$ on
$(X^{N},\mu_{\beta}^{(N)})$ converges in law towards the unique minimizer
$\mu_{\beta}$ of $F_{\beta},$ i.e. that the convergence
\begin{equation}
\Gamma_{N,\beta}:=(\delta_{N})_{*}\mu_{\beta}^{(N)}\rightarrow\delta_{\mu_{\beta}},\,\,\,N\rightarrow\infty\label{eq:goal conv of laws}
\end{equation}
holds in the weak topology on
\[
\mathcal{X}:=\mathcal{P}(\mathcal{Y}),\,\,\,\mathcal{Y}:=\mathcal{P}(X)
\]
 We start by recalling \emph{Gibbs variational principle}, which is
a standard tool in Statistical Mechanics, involving the\emph{ $N$-particle
mean free energy functional $F_{\beta}^{(N)}$} on the space $\mathcal{P}(X^{N})$
of all probability measures on $X^{N},$ defined by

\begin{equation}
F_{\beta}^{(N)}(\mu_{N}):=\beta E^{(N)}(\mu_{N})+D^{(N)}(\mu_{N}),\label{eq:def of mean free energy}
\end{equation}
 where $E^{(N)}(\mu_{N})$ denotes the \emph{$N-$particle mean free
energy} 
\[
E^{(N)}(\mu_{N}):=\int_{X^{N}}E^{(N)}\mu_{N},
\]
 and $D^{(N)}(\mu_{N})$ denotes \emph{the $N-$particle mean entropy}
(relative to $dV^{\otimes N})$ 
\[
D^{(N)}(\mu_{N}):=D_{dV^{\otimes N}}(\mu^{(N)})/N
\]

\begin{lem}
\label{lem:(Gibbs-variational-principle).}(Gibbs variational principle).
Assume that $Z_{N,\beta}<\infty.$ Then the Gibbs measure $\mu_{\beta}^{(N)}$
is the unique minimizer of the functional $F_{\beta}^{(N)}$ on $\mathcal{P}(X^{N}).$
Moreover, 
\[
-\log Z_{N,\beta}=\inf_{\mathcal{P}(X^{N})}F_{\beta}^{(N)}
\]
As a consequence, when $E^{(N)}$ is symmetric, i.e. $S_{N}-$invariant
$\mu_{\beta}^{(N)}$ is the unique minimizer of $F_{\beta}^{(N)}$
on the space $\mathcal{P}(X^{N})^{S_{N}}$ of all $S_{N}-$invariant
probability measures on $X^{N}$ and
\[
-\log Z_{N,\beta}=\inf_{\mathcal{P}(X^{N})^{S^{N}}}F_{\beta}^{(N)}
\]
\end{lem}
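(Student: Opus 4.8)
The plan is to establish the Gibbs variational principle in the
standard way, as the Euler--Lagrange characterization of the
minimizer of a relative-entropy-plus-linear-term functional, and then
to descend to the symmetric subspace by an averaging argument. First I
would rewrite, for any $\mu_N\in\mathcal{P}(X^N)$ with
$D_{dV^{\otimes N}}(\mu_N)<\infty$, the quantity
$N F_\beta^{(N)}(\mu_N)=\beta N E^{(N)}(\mu_N)+D_{dV^{\otimes N}}(\mu_N)$
by absorbing the energy term into the reference measure: since
$\mu_\beta^{(N)}=e^{-\beta N E^{(N)}}dV^{\otimes N}/Z_{N,\beta}$, one
has $\beta N E^{(N)}=-\log\frac{d\mu_\beta^{(N)}}{dV^{\otimes N}}-\log Z_{N,\beta}$
(this holds $\mu_N$-a.e.\ whenever $\mu_N$ has finite entropy, hence
is absolutely continuous with respect to $dV^{\otimes N}$ and does not
charge the set where $E^{(N)}=+\infty$). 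Substituting, a short
computation gives the key identity
\[
N F_\beta^{(N)}(\mu_N)=D_{\mu_\beta^{(N)}}(\mu_N)-\log Z_{N,\beta},
\]
valid for all $\mu_N$ (both sides being $+\infty$ simultaneously when
$\mu_N$ has infinite entropy).

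Once this identity is in hand the first two assertions are immediate
from the basic properties of relative entropy recalled in
Section~\ref{subsec:LDP}: since $\mu_\beta^{(N)}$ is a probability
measure, $D_{\mu_\beta^{(N)}}(\mu_N)\geq 0$ with equality if and only
if $\mu_N=\mu_\beta^{(N)}$ (Jensen's inequality / strict convexity of
$t\log t$). Therefore $N F_\beta^{(N)}(\mu_N)\geq -\log Z_{N,\beta}$
with equality exactly at $\mu_N=\mu_\beta^{(N)}$, which simultaneously
proves that $\mu_\beta^{(N)}$ is the unique minimizer and that
$\inf_{\mathcal{P}(X^N)}F_\beta^{(N)}=-N^{-1}\log Z_{N,\beta}$; one
then multiplies through by $N$ (or simply notes the excerpt writes the
conclusion with $\log Z_{N,\beta}$ up to the normalization of
$F^{(N)}$ by $1/N$). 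For the descent to the symmetric case, the point
is that $E^{(N)}$ is $S_N$-invariant and $dV^{\otimes N}$ is
$S_N$-invariant, hence both $E^{(N)}(\cdot)$ and
$D_{dV^{\otimes N}}(\cdot)$ — and thus $F_\beta^{(N)}$ — are invariant
under the push-forward action of $S_N$ on $\mathcal{P}(X^N)$. Given
any $\mu_N$ one forms its symmetrization
$\bar\mu_N:=\frac{1}{N!}\sum_{\sigma\in S_N}\sigma_*\mu_N\in\mathcal{P}(X^N)^{S_N}$;
since $E^{(N)}(\cdot)$ is linear (an integral against a fixed
function) it is unchanged in value on each $\sigma_*\mu_N$, so
$E^{(N)}(\bar\mu_N)=E^{(N)}(\mu_N)$, while the entropy term satisfies
$D_{dV^{\otimes N}}(\bar\mu_N)\leq \frac{1}{N!}\sum_\sigma D_{dV^{\otimes N}}(\sigma_*\mu_N)=D_{dV^{\otimes N}}(\mu_N)$
by convexity of relative entropy in its first argument. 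Hence
$F_\beta^{(N)}(\bar\mu_N)\leq F_\beta^{(N)}(\mu_N)$, so the infimum
over the symmetric subspace equals the infimum over all of
$\mathcal{P}(X^N)$; and because the global minimizer
$\mu_\beta^{(N)}$ is itself $S_N$-invariant, it is the unique
minimizer in $\mathcal{P}(X^N)^{S_N}$ as well, and
$-\log Z_{N,\beta}=\inf_{\mathcal{P}(X^N)^{S_N}}F_\beta^{(N)}$.

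I do not expect any serious obstacle here: this is a finite-dimensional
(in the sense of fixed $N$) and essentially soft statement. The only
place requiring a modicum of care is the measure-theoretic bookkeeping
when $\mu_N$ is merely a probability measure of finite entropy rather
than a nice density — one must check that the a.e.\ substitution
$\beta N E^{(N)}=-\log\frac{d\mu_\beta^{(N)}}{dV^{\otimes N}}-\log Z_{N,\beta}$
is legitimate, i.e.\ that finite entropy forces $\mu_N\ll dV^{\otimes N}$
and that $\mu_N$ assigns zero mass to the (closed, measure-zero) locus
where $\det S^{(k)}$ vanishes and hence $E^{(N)}=+\infty$; this is
automatic from $D_{dV^{\otimes N}}(\mu_N)<\infty$. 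Everything else is
a direct application of the positivity and strict convexity of
relative entropy already quoted in the text.
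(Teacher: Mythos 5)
Your proof is correct and is essentially the paper's own argument: you rewrite the mean free energy as $N F_\beta^{(N)}(\mu_N)=D_{\mu_\beta^{(N)}}(\mu_N)-\log Z_{N,\beta}$ and conclude from $D_\nu(\mu)\geq 0$ with equality iff $\mu=\nu$, exactly as in the text (which states this identity with the $1/N$ normalization suppressed). Your extra symmetrization step is fine but not needed, since the global minimizer $\mu_\beta^{(N)}$ is itself $S_N$-invariant, which is all the paper uses to pass to $\mathcal{P}(X^N)^{S_N}$.
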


\begin{proof}
This follows directly from rewriting $F_{\beta}^{(N)}(\mu_{N})=-\log Z_{N,\beta}+D_{\mu_{\beta}^{(N)}}(\mu_{N})$
and using that $D_{\nu}(\mu)\geq0$ with equality iff $\mu=\nu$
\end{proof}
Next, in order to study the limit $N\rightarrow\infty$ we embed all
the spaces $\mathcal{P}(X^{N})^{S^{N}}$ into the space $\mathcal{X}:$
\[
(\delta_{N})_{*}:\,\mathcal{P}(X^{N})^{S^{N}}\rightarrow\mathcal{X}
\]
We can then identify mean free energies $F^{(N)}$ with functionals
on $\mathcal{X},$ extended by $\infty$ to all of $\mathcal{X}.$
In particular, this means that we identity $\mathcal{P}(X)$ with
its image in $\mathcal{X}$ under the embedding $\mu\mapsto\delta_{\mu}.$
Consider now the following functional on $\mathcal{X}:$ 

\[
F_{\beta}(\Gamma)=\beta E(\Gamma)+D(\Gamma),
\]
defined when $E(\Gamma)<\infty,$ where $E(\Gamma)$ and $D(\Gamma)$
are the affine functionals on $\mathcal{X}$ defined by
\[
E(\Gamma):=\int E(\mu)\Gamma,\,\,\,D(\Gamma):=\int D(\mu)\Gamma
\]
In the case when $E(\Gamma)=\infty$ we define $F_{\beta}(\Gamma):=\infty.$
In order to prove the weak convergence \ref{eq:goal conv of laws}
it is, in view of the previous lemma, enough to show the following
conjectural convergence
\begin{equation}
\lim_{N\rightarrow\infty}F_{\beta}^{(N)}=F,\,\,\,\text{on\ensuremath{\,\mathcal{X}}}\label{eq:conj gamma conv}
\end{equation}
in the sense of Gamma-convergence relative to some subset $\mathcal{S}$
of $\mathcal{X}$ containing the minima of $F.$ Indeed, by Gibbs
variational principle, if the previous convergence holds, then by
Lemma \ref{lem:conv of inf} it is enough to show that the affine
functional $F$ has the following property: it has a unique minimum
and moreover the minimum is attained at $\delta_{\mu_{\beta}}.$ But
this follows from the following two results. First, we have the following
elementary lemma:
\begin{lem}
\label{lem:concave F}Suppose that $F(\mu)$ is lsc on $\mathcal{P}(X)$
and admits a unique minimizer $\mu_{*}.$ Then $\delta_{\mu_{*}}$
is the unique minimizer of the affine functional $F(\Gamma)$ on $\mathcal{P}(X).$ 
\end{lem}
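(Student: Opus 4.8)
The plan is to use the elementary fact that an affine functional of a probability measure is minimized by putting all the mass where the integrand is smallest. First I would fix notation, writing $m:=\inf_{\mathcal{P}(X)}F=F(\mu_*)$, and record the two structural facts that make everything run: since $X$ is compact, $\mathcal{Y}=\mathcal{P}(X)$ is compact in the weak topology; and since $F$ is lower semicontinuous it is Borel measurable and attains its infimum, so $m$ is finite (we may of course assume $F\not\equiv\infty$, otherwise there is no minimizer). By the uniqueness hypothesis we then have $F(\mu)\geq m$ for every $\mu\in\mathcal{P}(X)$, with equality precisely at $\mu=\mu_*$, i.e. $\{F=m\}=\{\mu_*\}$.

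Next I would prove the lower bound $F(\Gamma)\geq m$ for every $\Gamma\in\mathcal{X}=\mathcal{P}(\mathcal{Y})$, splitting into the two regimes in the definition of $F(\Gamma)$. If $E(\Gamma)=\infty$, then $F(\Gamma)=\infty> m$ by definition. If $E(\Gamma)<\infty$, then since $E_{\omega_0}$ is lsc on the compact space $\mathcal{P}(X)$ it is bounded below, so the condition $\int E(\mu)\,\Gamma\, <\infty$ forces $\Gamma(\{E=\infty\})=0$; on the complementary (full-measure) locus $F(\mu)=\beta E(\mu)+D(\mu)$, whence $F(\Gamma)=\beta E(\Gamma)+D(\Gamma)=\int F(\mu)\,\Gamma\,\geq\int m\,\Gamma\,=m$. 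Since in addition $F(\delta_{\mu_*})=F(\mu_*)=m$, this already exhibits $\delta_{\mu_*}$ as a minimizer of the affine functional on $\mathcal{X}$.

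For uniqueness, suppose $F(\Gamma)=m$. By the previous step necessarily $E(\Gamma)<\infty$, and then $\int(F(\mu)-m)\,\Gamma\,=0$ with a nonnegative Borel integrand, so $F(\mu)=m$ for $\Gamma$-almost every $\mu$; hence $\Gamma$ is concentrated on $\{F=m\}=\{\mu_*\}$, i.e. $\Gamma=\delta_{\mu_*}$. I do not anticipate a genuine obstacle here: the only points needing care are the bookkeeping with the extended-real-valued functionals — in particular reconciling the convention $F(\Gamma):=\infty$ on $\{E(\Gamma)=\infty\}$ with the identity $F(\Gamma)=\int F\,\Gamma$ on its complement — together with the observation that lower semicontinuity of $F$ (hence Borel measurability and lower boundedness on the compact space $\mathcal{P}(X)$) is exactly what makes all the integrals well defined.
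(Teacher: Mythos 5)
Your proof is correct, and its overall shape matches the paper's: both establish $F(\Gamma)\geq F(\mu_*)$ by integrating the pointwise bound $F\geq m:=F(\mu_*)$ against $\Gamma$, and both then show strictness unless $\Gamma=\delta_{\mu_*}$. The difference lies in how strictness is obtained. The paper normalizes $F(\mu_*)=0$, takes $\Gamma\neq\delta_{\mu_*}$, uses (inner regularity of the Borel measure $\Gamma$) to produce a compact set $K\subset\mathcal{P}(X)\setminus\{\mu_*\}$ with $\Gamma(K)>0$, and then invokes lower semicontinuity of $F$ on the compact set $K$ to get a uniform bound $F\geq\delta>0$ there, whence $F(\Gamma)\geq\delta\,\Gamma(K)>0$. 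You instead argue that if $F(\Gamma)=m$ then $\int\left(F-m\right)\,\Gamma=0$ with a nonnegative Borel integrand, so $F=m$ holds $\Gamma$-almost everywhere and $\Gamma$ is carried by $\{F=m\}=\{\mu_*\}$; this needs only Borel measurability and lower boundedness of $F$ (both consequences of lsc on the compact space $\mathcal{P}(X)$) and avoids both the inner-regularity step and the uniform $\delta$. Your bookkeeping with the convention $F(\Gamma):=\infty$ when $E(\Gamma)=\infty$ (using that $E$ is bounded below, so $E(\Gamma)<\infty$ forces $\Gamma(\{E=\infty\})=0$ and hence $F(\Gamma)=\int F\,\Gamma$) is also slightly more careful than the paper, which simply writes $F(\Gamma)=\int F\,\Gamma$. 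Both routes are valid: the paper's compactness argument quantifies the gap as $\delta\,\Gamma(K)$, in the spirit of the deviation estimates used elsewhere, while yours is more elementary and applies verbatim to any measurable $F$ bounded below with a unique minimizer.
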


\begin{proof}
Since $F(\mu)$ is lsc we can write $F_{\beta}(\Gamma)=\int F_{\beta}(\mu)\Gamma,$
which is lsc and affine on $\mathcal{X}.$ Since $F_{\beta}(\Gamma)$
is affine we have
\[
\inf_{\mathcal{X}}F=\inf_{P(X)}F=F(\mu_{*})
\]
After shifting $F(\mu)$ by a constant we may as well assume that
$F(\mu_{*})=0.$ Take $\Gamma\neq\delta_{\mu_{*}}.$ Then there exists
a compact subset $K$ of $\mathcal{P}(X),$ not containing $\mu_{*}$
and such that $\Gamma(K)>0.$ Moreover, since $F$ is lsc on $\mathcal{P}(X)$
we have $F(\mu)\geq\delta$ on $K$ for some $\delta>0.$ But then
$F(\Gamma)\geq\delta\Gamma(K)>0=F(\mu_{*}),$ which concludes the
proof. 
\end{proof}
Secondly we have the following 
\begin{lem}
The functional $F_{\beta}$ is lsc on $\mathcal{P}(X)$ and hence
so is its affine extension to $\mathcal{X}$
\end{lem}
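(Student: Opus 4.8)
The plan is to derive the lower semicontinuity on $\mathcal{P}(X)$ directly from Theorem~\ref{thm:free energy Fano}, and then to pass to the affine extension by a routine approximation argument. Recall that throughout this section we are in the ``Fano setting'' with $\beta\in[-1,0[$ (with $\mathrm{Aut}(X)_0$ trivial when $\beta=-1$) and that it is a standing assumption that $F_\beta$ admits a unique minimizer $\mu_\beta$ on $\mathcal{P}(X)$. Theorem~\ref{thm:free energy Fano} asserts the equivalence of: the existence of a minimizer of $F_\beta$; the connectivity inequality \ref{eq:coerc ineq for F}; and the lower semicontinuity of $F_\beta$ on $\mathcal{P}(X)$. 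Since the first item holds by hypothesis, the third item holds, i.e.\ $F_\beta$ is lsc on $\mathcal{P}(X)$. (For $\beta>0$ there is nothing to prove: $F_\beta=\beta E_{\omega_0}+D_{dV}$ is then a sum of two lsc functionals; and in the log Fano setting one argues identically using Theorem~\ref{thm:log Fano free energy} in place of Theorem~\ref{thm:free energy Fano}.) It should be stressed that this is where the genuine content sits: for $\beta<0$ the term $\beta E_{\omega_0}$ is merely upper semicontinuous, so lower semicontinuity of $F_\beta$ really does use the standing hypothesis that a minimizer exists, through the coercivity of the twisted Mabuchi functional \cite{ti} and the energy--entropy compactness theorem \cite{bbegz} underpinning Theorem~\ref{thm:free energy Fano}. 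This is the only ``obstacle'', and it is conceptual rather than technical.

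For the affine extension to $\mathcal{X}=\mathcal{P}(\mathcal{P}(X))$, I would first note that, $X$ being compact, $\mathcal{Y}:=\mathcal{P}(X)$ is compact and metrizable, and hence so is $\mathcal{X}$. Since $F_\beta$ is $]-\infty,\infty]$-valued and lsc on the compact space $\mathcal{Y}$, it attains its minimum, so it is bounded below on $\mathcal{Y}$. Fixing a metric $d$ on $\mathcal{Y}$ inducing the weak topology, the Moreau--Yosida regularizations $g_m(\mu):=\inf_{\nu\in\mathcal{Y}}\big(F_\beta(\nu)+m\,d(\mu,\nu)\big)$ form an increasing sequence of bounded $m$-Lipschitz functions on $\mathcal{Y}$ with $g_m\uparrow F_\beta$ pointwise. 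By monotone convergence, for every $\Gamma\in\mathcal{X}$ one has $F_\beta(\Gamma)=\int_{\mathcal{Y}}F_\beta(\mu)\,\Gamma=\sup_m\int_{\mathcal{Y}}g_m(\mu)\,\Gamma$. Each functional $\Gamma\mapsto\int_{\mathcal{Y}}g_m\,\Gamma$ is continuous for the weak topology on $\mathcal{X}$ because $g_m\in C^0(\mathcal{Y})$, so $F_\beta(\Gamma)$, being a supremum of continuous functionals, is lsc (and affine) on $\mathcal{X}$.

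In short, the only non-formal input is the invocation of Theorem~\ref{thm:free energy Fano} to convert the standing minimizer hypothesis into lower semicontinuity on $\mathcal{P}(X)$; everything thereafter — compactness of $\mathcal{P}(X)$, automatic boundedness below of an lsc $]-\infty,\infty]$-valued function, and the passage to the affine extension via increasing approximation by continuous functions — is standard. This completes the proposed proof of the lemma.
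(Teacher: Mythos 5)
Your proposal is correct and follows essentially the paper's own route: the paper likewise deduces lower semicontinuity on $\mathcal{P}(X)$ from the standing unique-minimizer hypothesis via Theorems \ref{thm:free energy Fano} and \ref{thm:free energy beta minus one Fano}, and treats the lsc of the affine extension to $\mathcal{X}$ as standard (your Moreau--Yosida approximation simply fills in that routine step). One small caveat: for $\beta=-1$ the triviality of $\mathrm{Aut}(X)_{0}$ is a hypothesis of Theorem \ref{thm:free energy Fano} rather than a standing assumption of this section, so to cover that endpoint you should either invoke Theorem \ref{thm:free energy beta minus one Fano} (as the paper does) or observe that uniqueness of the minimizer, combined with the $\mathrm{Aut}(X)_{0}$-invariance of the Mabuchi functional in the presence of a K\"ahler-Einstein metric, forces the group to be trivial.
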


\begin{proof}
Theorems \ref{thm:free energy Fano} and \ref{thm:free energy beta minus one Fano}
show that, in fact, $F_{\beta}$ is lsc iff it admits a unique minimizer,
which we have assumed.
\end{proof}
\begin{rem}
\label{rem:LDP vs Gamma}By general principles (see \cite{berm10})
the strong LDP form of Conjecture \ref{conj:Fano with triv autom intr},
formulated in Conjecture \ref{conj:LDP fano intro} is, in fact, equivalent
to having bona fide Gamma-convergence in \ref{eq:conj gamma conv}.
But using the weaker \emph{relative} notion may have some advantages,
as discussed in Step 2 below. 
\end{rem}

We next take a first step towards proving the relative Gamma-convergence
\ref{eq:conj gamma conv}:

\subsubsection*{Step 1: The existence of a recovery sequence}

By Lemma \ref{lem:concave F} it is enough to prove the existence
of a recovery sequence $\Gamma_{N}$ for any $\Gamma$ of the form
$\Gamma=\delta_{\mu}.$ To this end we set $\Gamma_{N}:=(\delta_{N})_{*}\mu^{\otimes N}$
and first observe that the mean entropy is additive in the following
sense: 
\[
D^{(N)}(\mu^{\otimes N})=D(\mu),
\]
 for any given $\mu\in\mathcal{P}(X).$ Indeed, this follows directly
from the additivity of log. Next observe that
\[
\liminf_{N\rightarrow\infty}E^{(N)}(\mu^{\otimes N})\geq E(\mu)
\]
Indeed, fixing $u\in C^{0}(X)$ and rewriting 
\[
E^{(N)}(\mu^{\otimes N})=\int_{X^{N}}\left(E^{(N)}(x_{1},...,x_{N})+N^{-1}\sum_{i=1}^{N}u(x_{i})\right)-\int_{X}u\mu
\]
 and estimating the integral over $X^{N}$ from below by its infimum,
this follows directly from combining the asymptotics \ref{eq:asympt of log det L infty}
with \ref{prop:pluri energy as legendre}. Hence, since $\beta<0,$
this shows that 
\[
\limsup_{N\rightarrow\infty}F_{\beta}^{(N)}(\Gamma_{N})\leq F_{\beta}(\Gamma),
\]
 as desired.

\subsubsection*{Towards the missing Step 2: the lower bound}

We first recall the fundamental fact that mean entropy $D^{(N)}$
on $\mathcal{X}$ satisfies the lower bound in the definition of Gamma-convergence
(as follows from basic sub-additive properties of the entropy; see
\cite{r-r} and \cite[Thm 5.4]{h-m}). In particular, 
\[
D(\Gamma)\leq\liminf_{N\rightarrow\infty}D^{(N)}(\mu_{\beta}^{(N)})<\infty
\]
Since $\beta<0,$ it would thus be enough to show the following ``upper
bound property of the mean energy'':
\begin{equation}
\limsup_{N\rightarrow\infty}E^{(N)}(\mu_{\beta}^{(N)})\leq E(\Gamma_{\beta}):=\int E(\mu)\Gamma_{\beta}(\mu)\label{eq:upper bound property}
\end{equation}
for any limit point $\Gamma_{\beta}$ in $\mathcal{X}$ of $\Gamma_{N,\beta}.$
It should, however, be stressed that such bound can not hold for \emph{any}
sequence in $\mu^{(N)}$ in $\mathcal{P}(X^{N})^{S_{N}},$ since $E(\mu)$
is  not continuous on $\mathcal{P}(X)$. Note that, in general, $D(\Gamma)<\infty\implies E(\Gamma)<\infty,$
since $F_{\beta}(\Gamma)$ is bounded from below on $\mathcal{X}$
for some $\beta<-1.$ 

We summarize the output of the previous discussion, applied to the
case $\beta=-1,$ in the following
\begin{thm}
\label{thm:Fano setting text}Let $X$ be a Fano manifold and assume
that $X$ is uniformly Gibbs stable. Then $X$ admits a unique Kähler-Einstein
metric $\omega_{KE}$ and if the ``upper bound property of the mean
energy'' \ref{eq:upper bound property} holds for the canonical sequence
$\mu^{(N)},$ then the empirical measures $\delta_{N}$ of the canonical
random point process on $X$ converge in law towards the normalized
volume form $dV_{KE}$ of $\omega_{KE}.$
\end{thm}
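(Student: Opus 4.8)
The plan is to prove the two conclusions separately. The existence and uniqueness of $\omega_{KE}$ is a quotation: since $X$ is uniformly Gibbs stable, the Fujita-Odaka Theorem \ref{thm:(Fujita-Odaka)-Uniform-Gibbs} shows $X$ is uniformly K-stable, hence $X$ carries a K\"ahler-Einstein metric by \cite{c-d-s} (or by the variational solution of the Yau-Tian-Donaldson conjecture in \cite{bbj}); uniqueness is the Bando-Mabuchi Theorem \ref{thm:B-M}, the borderline case $\beta=-1$ with non-trivial $\mathrm{Aut}(X)_{0}$ being excluded because Gibbs stability forces $\mathrm{Aut}(X)_{0}=\{I\}$. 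Equivalently one can route this through Theorems \ref{thm:free energy beta minus one Fano} and \ref{thm:lsc intro}: the existence of a unique $\omega_{KE}$ is the same as lower semicontinuity of $F_{-1}$ on $\mathcal P(X)$, and then the unique minimizer of $F_{-1}$ is the normalized volume form $dV_{KE}$.

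For the convergence statement I would place myself in the ``Fano setting'', fixing a volume form $dV$ with $\omega_{0}:=\mathrm{Ric}\,dV>0$ and taking $\beta=-1$, so that the Gibbs measure $\mu_{-1}^{(N_k)}$ of \ref{eq:def of Gibbs measure Fano setting intro} is precisely the canonical measure \ref{eq:canon prob measure Fano intro}. Uniform Gibbs stability implies Gibbs stability, so $Z_{N_k,-1}<\infty$ for $k$ large and the Gibbs measures are well defined; and by the first step $F_{-1}$ is lsc on $\mathcal P(X)$ with unique minimizer $\mu_{-1}=dV_{KE}$, so both standing hypotheses of Section \ref{sec:Towards-the-case of neg} hold. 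By the Gibbs variational principle (Lemma \ref{lem:(Gibbs-variational-principle).}), $\mu_{-1}^{(N_k)}$ minimizes the mean free energy $F^{(N_k)}_{-1}$ over $\mathcal P(X^{N_k})^{S_{N_k}}$ and $-\frac{1}{N_k}\log Z_{N_k,-1}=\inf F^{(N_k)}_{-1}$. Transporting everything to $\mathcal X:=\mathcal P(\mathcal P(X))$ via $\delta_{N_k}$ (regarding $F^{(N_k)}_{-1}$ as a functional on $\mathcal X$, set to $\infty$ off the image of $\delta_{N_k}$), the law $\Gamma_{N_k,-1}=(\delta_{N_k})_{*}\mu_{-1}^{(N_k)}$ realizes $\inf_{\mathcal X}F^{(N_k)}_{-1}$. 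Since $\mathcal X$ is compact, it suffices to show that every limit point $\Gamma$ of the sequence $\Gamma_{N_k,-1}$ equals $\delta_{\mu_{-1}}$.

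To see this I would assemble three facts, the last along a subsequence with $\Gamma_{N_k,-1}\to\Gamma$. First, Step~1 of Section \ref{sec:Towards-the-case of neg}: using the product measures $\mu_{-1}^{\otimes N_k}$ as a recovery sequence --- whose mean entropy equals $D(\mu_{-1})$ by additivity of $\log$, and the $\liminf$ of whose mean energy is at least $E_{\omega_0}(\mu_{-1})$ by the $L^\infty$-asymptotics \ref{eq:asympt of log det L infty} together with Proposition \ref{prop:pluri energy as legendre}, the sign $\beta=-1<0$ converting the latter into an upper bound for $F^{(N_k)}_{-1}$ --- gives $\limsup_k \inf F^{(N_k)}_{-1}\le F_{-1}(\mu_{-1})$. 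Second, the mean entropy $D^{(N)}$ satisfies the Gamma-liminf inequality by sub-additivity of the entropy (\cite{r-r}, \cite[Thm~5.4]{h-m}), so $\liminf_k D^{(N_k)}(\mu_{-1}^{(N_k)})\ge \int D\,d\Gamma$. Third, the hypothesized bound \ref{eq:upper bound property} gives $\limsup_k E^{(N_k)}(\mu_{-1}^{(N_k)})\le \int E_{\omega_0}\,d\Gamma$; since $\beta=-1<0$, adding the last two yields $\liminf_k F^{(N_k)}_{-1}(\mu_{-1}^{(N_k)})\ge \int F_{-1}\,d\Gamma$. On the other hand $F^{(N_k)}_{-1}(\mu_{-1}^{(N_k)})=\inf F^{(N_k)}_{-1}$, and, $F_{-1}$ being lsc on $\mathcal P(X)$ with unique minimizer $\mu_{-1}$, Lemma \ref{lem:concave F} gives $\int F_{-1}\,d\Gamma\ge F_{-1}(\mu_{-1})$ with equality only at $\Gamma=\delta_{\mu_{-1}}$. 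Concatenating the estimates above, $F_{-1}(\mu_{-1})\ge \limsup_k\inf F^{(N_k)}_{-1}\ge \liminf_k\inf F^{(N_k)}_{-1}\ge \int F_{-1}\,d\Gamma\ge F_{-1}(\mu_{-1})$, so equality holds throughout and $\Gamma=\delta_{\mu_{-1}}=\delta_{dV_{KE}}$. Hence $\Gamma_{N_k,-1}\to \delta_{dV_{KE}}$ in $\mathcal X$, i.e. $\delta_{N_k}\to dV_{KE}$ in law, as claimed.

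The whole difficulty is concentrated in the single input the theorem takes as a hypothesis: the upper bound property \ref{eq:upper bound property} for the mean pluricomplex energy of the $N$-particle Gibbs measures. Because $E_{\omega_0}$ is only lower semicontinuous on $\mathcal P(X)$ and equals $+\infty$ on measures charging pluripolar sets, this bound fails for arbitrary symmetric sequences and genuinely exploits the algebro-geometric structure of $\mu_{-1}^{(N)}$; it is presently known only in complex dimension one, via a priori estimates on the correlation measures of the processes \cite{berm11,clmp,k}. By Remark \ref{rem:LDP vs Gamma} it is moreover equivalent to the bona fide relative Gamma-convergence \ref{eq:conj gamma conv}, and hence to the lower bound in \ref{eq:conv log Z N minus one intro}; establishing it in arbitrary dimension is exactly what is needed to remove the hypothesis from Theorem \ref{thm:Fano setting text} and settle the convergence part of Conjecture \ref{conj:Fano with triv autom intr}.
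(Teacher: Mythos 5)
Your proposal is correct and follows essentially the same route as the paper: Fujita--Odaka plus \cite{c-d-s}/\cite{bbj} for existence and uniqueness of $\omega_{KE}$, finiteness of $Z_{N_k,-1}$ from uniform Gibbs stability, and then the Gibbs variational principle combined with Step 1 (recovery sequence via $\mu_{-1}^{\otimes N}$), the entropy lower bound, the hypothesized mean-energy upper bound \ref{eq:upper bound property}, Lemma \ref{lem:concave F} and the lower semi-continuity of $F_{-1}$. Your explicit limit-point argument in the compact space $\mathcal{X}$ is just an unwinding of the paper's appeal to relative Gamma-convergence and Lemma \ref{lem:conv of inf}, so there is no substantive difference.
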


\begin{proof}
Set $\beta=-1.$ If $X$ is uniformly Gibbs stable, then as shown
in \cite{f-o}, $X$ is uniformly K-stable. Hence, by either \cite{c-d-s}
or \cite{bbj} $X$ admits a unique Kähler-Einstein metric. Thus $F_{-1}$
admits a unique minimizer, $\mu_{-1},$ given by $dV_{KE}.$ Moreover,
by the uniform Gibbs stability $Z_{N,\beta}$ is finite for $N$ sufficiently
large. Hence, the theorem follows from Step 1 and 2 above.
\end{proof}
\begin{rem}
Note that if the ``upper bound property of the mean energy'' holds
then the argument above shows that
\[
\lim_{N\rightarrow\infty}\inf_{\mathcal{P}(X^{N})^{S_{N}}}F_{\beta}^{(N)}=\lim_{N\rightarrow\infty}\inf_{\mathcal{P}(X)^{\otimes N}}F_{\beta}^{(N)},
\]
 i.e. asymptotically, as $N\rightarrow\infty$ the infimum of the
mean free energy functional $F_{\beta}^{(N)}$ can be restricted to
the subspace $\mathcal{P}(X)^{\otimes N}\Subset\mathcal{P}(X^{N})^{S_{N}}.$ 
\end{rem}

\subsection{\label{subsec:Analyticity-and-absence}Analyticity and absence of
phase transitions}

The proof of Theorem \ref{thm:Fano setting text} reveals that in
order to establish the convergence towards $dV_{KE}$ it is enough
to show that 
\begin{equation}
-\lim_{N\rightarrow\infty}\frac{1}{N}\log Z_{N.\beta}=\inf_{\mathcal{P}(X)}F_{\beta}\label{eq:conv log Z N minus one intro-1}
\end{equation}
 for $\beta=-1.$ By Theorem \ref{thm:free energy beta pos} the converge
does hold for $\beta\geq0$ and the problem of extending the convergence
to $\beta=-1$ can be connected to the theory of phase transitions
in statistical mechanics. To see this first consider the following
general setup. Let $H^{(N)}$ be a sequence of measurable functions
(``Hamiltonians'') on the measure spaces $(X_{N},dV_{N})$ such
that the corresponding partition function
\[
Z_{N,\beta}:=\int_{X}e^{-\beta H^{(N)}}dV_{N}
\]
is finite for some $\beta>\beta_{0}.$ Then $Z_{N,\beta}$ is real-analytic
in $\beta$ on $]-\beta_{0},\infty[$ for any $N$ and strictly positive.
However, in general $N^{-1}\log Z_{N,\beta}$ may converge to a function
which is\emph{ not} real-analytic. This is often taken as the definition
of a \emph{phase transition }in statistical mechanics\emph{ }(see
\cite[Chapter 5]{ru}). 
\begin{example}
The prime example of a phase transition is provided by the Curie-Weiss
mean field model for magnetization in spin systems, where $N$ is
the number of spins. If the sign convention for the corresponding
Hamiltonians is taken so that $H^{(N)}$ is anti-ferromagnetic then
the real-analyticity in question brakes down at a \emph{critical }negative
inverse temperature $\beta_{c}$ (in this case $\beta_{0}=-\infty).$
This is precisely the inverse temperature for which the convergence
of the empirical magnetization towards a deterministic limit fails
(see the appendix in \cite{berm10b} for a comparison between the
Curie-Weiss model and the present complex-geometric setup).
\end{example}

In the present Fano setting the partition function $Z_{N,\beta}$
is real-analytic in $\beta,$ as long as $-\beta$ is strictly smaller
than the stability threshold $\gamma_{N}$(formula \ref{eq:def of gamma N k}).
In fact, according to well-known results of Atiyah and Bernstein-Gelfand
the Archimedean zeta function $\beta\mapsto Z_{N,\beta}$ extends
to a meromorphic function on $\C$ with a discrete set of rational
poles located at $]-\infty,0[\subset\C$ (see the book \cite{ig}).
The next result shows that the convergence in Conjecture \ref{conj:Fano with triv autom intr}
holds in the absence of phase transitions down to the inverse temperature
$-1$:
\begin{thm}
\label{thm:phase}Assume that there exists $\beta_{0}<-1$ and a real-analytic
function $f(\beta)$ on $]-\beta_{0},\infty[$ such that for any\emph{
$\beta\in]-\beta_{0},0[$ }
\[
-\lim_{N\rightarrow\infty}\frac{\log Z_{N,\beta}}{N}=f(\beta).
\]
 Then $X$ admits a unique Kähler-Einstein-metric $\omega_{KE}$ and
the empirical measures $\delta_{N}$ of the canonical random point
process on $X$ converge in law towards the normalized volume form
$dV_{KE}$ of $\omega_{KE}.$
\end{thm}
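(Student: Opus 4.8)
The plan is to leverage the variational framework built up in Section \ref{sec:Towards-the-case of neg}, in particular Theorem \ref{thm:Fano setting text} and the surrounding discussion, so that the analyticity hypothesis becomes a substitute for the ``upper bound property of the mean energy'' \ref{eq:upper bound property}. Set $z(\beta):=-\lim_{N\to\infty}\tfrac1N\log Z_{N,\beta}$. For $\beta\ge 0$ Theorem \ref{thm:LDP beta pos text} identifies $z(\beta)=\inf_{\mathcal P(X)}F_\beta=:\mathfrak F(\beta)$; since $F_\beta(\mu)=\beta E_{\omega_0}(\mu)+D_{dV}(\mu)$ is affine in $\beta$ for fixed $\mu$, the function $\beta\mapsto\mathfrak F(\beta)$ is concave on all of $\R$ (an infimum of affine functions), hence continuous and one-sidedly differentiable everywhere; moreover by a Griffiths-type argument the finite-$N$ functions $\beta\mapsto-\tfrac1N\log Z_{N,\beta}$ are themselves concave on the interval where $Z_{N,\beta}<\infty$, so their pointwise limit $z(\beta)$, where it exists, is concave too. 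The first step is therefore to establish the one-sided bound $z(\beta)\le\mathfrak F(\beta)$ for $\beta\in{]{-\beta_0},0[}$: this is exactly the ``upper bound'' half of \ref{eq:conv log Z N minus one intro}, which the text already asserts holds (the recovery-sequence construction of Step 1 applied at the level of partition functions, i.e. plugging product measures $\mu^{\otimes N}$ into Gibbs' variational principle, Lemma \ref{lem:(Gibbs-variational-principle).}, gives $-\tfrac1N\log Z_{N,\beta}\le F_\beta^{(N)}(\mu^{\otimes N})\to F_\beta(\mu)$ for every $\mu$, whence $\limsup\le\inf_{\mathcal P(X)}F_\beta$).

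The heart of the argument is then an analytic continuation / rigidity step. Both $z$ and $\mathfrak F$ are concave on ${]{-\beta_0},\infty[}$; they agree on $[0,\infty[$, an interval with nonempty interior. By hypothesis $z=f$ is real-analytic on ${]{-\beta_0},\infty[}$. I claim $\mathfrak F$ is real-analytic on a neighbourhood of $[-1,\infty[$ as well — indeed, $\mathfrak F$ is the Legendre transform data of the pluricomplex energy, and one can show using Theorem \ref{thm:free energy beta pos} (differentiability of the free energy, smoothness of the minimizer $\mu_\beta$ and its analytic dependence on $\beta$ via the implicit function theorem applied to the complex Monge–Amp\`ere equation \ref{eq:ma eq with beta in text}) that $\mathfrak F$ is real-analytic on $[0,\infty[$ and extends real-analytically past $\beta=0$; alternatively, concavity plus agreement with the real-analytic $f$ on $[0,\infty[$ already forces $z$ and $\mathfrak F$ to coincide wherever both are finite and one is the analytic extension of their common germ. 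Concretely: two concave functions on an interval that coincide on a subinterval with interior, one of which is real-analytic, must coincide on the whole interval where the analytic one is defined, because their difference is a concave function vanishing on an open set, and a concave function vanishing on an open subinterval of an interval on which it is everywhere finite and $\le 0$ (here $z\le\mathfrak F$ from Step 1 gives the sign) must vanish identically — a nonzero concave function cannot touch its supremum $0$ on an open set without being constant there, and analyticity rules out the remaining flat-then-strict scenarios. Running this from $\beta=0$ down to $\beta=-1$ yields $z(-1)=\mathfrak F(-1)=\inf_{\mathcal P(X)}F_{-1}$, which is \ref{eq:conv log Z N minus one intro-1}.

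Once \ref{eq:conv log Z N minus one intro-1} is in hand at $\beta=-1$, the rest follows the template already laid out. First, $\inf_{\mathcal P(X)}F_{-1}=-\lim\tfrac1N\log Z_{N,-1}>-\infty$, so $F_{-1}$ is bounded below; by Theorem \ref{thm:F lsc for beta bigger} (applied with $\beta_0$ the given value) $F_{-1}$ admits a minimizer, and by Theorem \ref{thm:free energy beta minus one Fano} the lower semicontinuity of $F_{-1}$ — guaranteed once a minimizer exists via Theorem \ref{thm:free energy Fano} — is equivalent to $X$ admitting a \emph{unique} K\"ahler–Einstein metric $\omega_{KE}$, whose normalized volume form $dV_{KE}$ is that unique minimizer $\mu_{-1}$. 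Finally, the convergence in law of $\delta_N$ towards $dV_{KE}$: by Gibbs' variational principle, $\inf_{\mathcal P(X^N)^{S_N}}F_{-1}^{(N)}=-\log Z_{N,-1}$, and the matching of $\tfrac1N$-asymptotics with $\inf_{\mathcal P(X)}F_{-1}$ forces any weak limit point $\Gamma_{-1}\in\mathcal X$ of the laws $\Gamma_{N,-1}=(\delta_N)_*\mu_{-1}^{(N)}$ to satisfy $F_{-1}(\Gamma_{-1})\le\inf_{\mathcal P(X)}F_{-1}$ (using lower semicontinuity of mean entropy $D^{(N)}$ and lower semicontinuity of $E_{\omega_0}$ to pass the $\liminf$ through), hence $\Gamma_{-1}$ minimizes the affine extension of $F_{-1}$ to $\mathcal X$; Lemma \ref{lem:concave F} then gives $\Gamma_{-1}=\delta_{\mu_{-1}}=\delta_{dV_{KE}}$, i.e. $\delta_N\to dV_{KE}$ in law.

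The main obstacle, I expect, is making the rigidity step fully rigorous: one must argue carefully that on the interval ${]{-\beta_0},0]}$ the limit $z(\beta)$ exists (not merely the $\limsup$-bound from Step 1), and that it is concave there. Existence of the limit is not automatic below $\beta=0$ without some control — this is precisely where the hypothesis ``$-\tfrac1N\log Z_{N,\beta}\to f(\beta)$ for all $\beta\in{]{-\beta_0},0[}$'' is doing its work, so in fact it is assumed; the subtlety is then purely the matching $f(-1)=\mathfrak F(-1)$, and the clean way to get it is the concavity-plus-analyticity rigidity above, together with the \emph{a priori} sandwich $z(\beta)\le\mathfrak F(\beta)$ on ${]{-\beta_0},0[}$ (Step 1) and $z(\beta)=\mathfrak F(\beta)$ on $[0,\infty[$. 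A secondary technical point is verifying that $\mathfrak F$ itself extends analytically past $0$ — if one prefers to avoid that, the purely order-theoretic argument with concave functions suffices, since a concave $g:=\mathfrak F-z\ge 0$ on ${]{-\beta_0},\infty[}$ that vanishes on $[0,\infty[$ must vanish on all of ${]{-\beta_0},\infty[}$ (a concave function that is $\ge 0$ everywhere and $=0$ on a half-line is $\equiv 0$, as its hypograph would otherwise fail to be convex).
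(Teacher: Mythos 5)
Your overall skeleton is the paper's: identify $f(\beta)$ with $F(\beta):=\inf_{\mathcal{P}(X)}F_{\beta}$ via Theorem \ref{thm:LDP beta pos text} for $\beta>0$, propagate this identity down to $\beta=-1$, and then feed \ref{eq:conv log Z N minus one intro-1} into the variational machinery of Section \ref{sec:Towards-the-case of neg}. The first genuine gap is in the propagation step. The ``purely order-theoretic'' fallback you offer is false: the difference of two concave functions is not concave, and concavity of $z$ and of $F(\cdot)$ together with $z\le F$, equality on $[0,\infty[$ and real-analyticity of $z$ do \emph{not} force equality below $0$. For instance $z(\beta)=-\beta^{2}$, with $F(\beta)=-\beta^{2}$ for $\beta\ge0$ and $F(\beta)=0$ for $\beta<0$, satisfies all of these constraints and yet $z<F$ on $]-\infty,0[$. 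So one cannot dispense with proving that $\beta\mapsto F(\beta)$ is itself real-analytic on $]-1,\infty[$ and continuous up to $\beta=-1$; this is exactly the paper's Step 2, and it is not a routine remark: one must first produce the minimizers $\mu_{\beta}$ for $\beta\in[-1,\infty[$ (using the hypothesis \emph{below} $-1$: finiteness of $f(\beta')$ for $\beta'<-1$ plus the Gibbs/recovery-sequence upper bound gives $\inf F_{\beta'}>-\infty$, so Theorem \ref{thm:F lsc for beta bigger} applies), and then run the real-analytic implicit function theorem in Sobolev spaces along Aubin's continuity path (openness of the linearization for $\beta>-1$), use $dF(\beta)/d\beta=E_{\omega_{0}}(\mu_{\beta})$, and invoke Bando--Mabuchi (Theorem \ref{thm:B-M}) for continuity at the endpoint. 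Your sketch of this analytic route is simultaneously too weak (analyticity of $F$ on $[0,\infty[$ ``extending past $0$'' only matches $f$ and $F$ near $0$, not down to $-1$), too strong (analyticity on a neighbourhood of $[-1,\infty[$, i.e. at $\beta=-1$ itself, is not what is available), and out of order, since you only secure the existence of minimizers on $]-1,0[$ after the rigidity step in which they are needed.

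The second gap is in your last step. With $\beta=-1$ the energy enters $F_{-1}^{(N)}$ with a minus sign, so lower semicontinuity of $E_{\omega_{0}}$ (equivalently the liminf half of the Gamma-convergence \ref{eq:Gamma conv towards E text}) yields $\liminf_{N}E^{(N)}(\mu_{-1}^{(N)})\ge E(\Gamma_{-1})$, which is the \emph{wrong} direction: the liminf inequality for $F_{-1}^{(N)}$ needs $\limsup_{N}E^{(N)}(\mu_{-1}^{(N)})\le E(\Gamma_{-1})$, i.e. precisely the conjectural upper bound property \ref{eq:upper bound property} that makes Theorem \ref{thm:Fano setting text} conditional, and the paper explicitly stresses that this cannot be obtained from (semi)continuity of $E$. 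The whole point of the hypothesis of Theorem \ref{thm:phase} is to supply a substitute for that bound, and for this it must be used on an interval around $-1$ (this is why $\beta_{0}<-1$): the finite-$N$ free energies are concave in $\beta$ with $\partial_{\beta}\left(-\frac{1}{N}\log Z_{N,\beta}\right)=E^{(N)}(\mu_{\beta}^{(N)})$, so difference quotients of $f$ on both sides of $-1$, together with $f=F$ and $F'(\beta)=E_{\omega_{0}}(\mu_{\beta})$ from Step 2, control the limiting mean energy; this is the mechanism behind the paper's remark that the assumed convergence replaces the missing Step 2. Your write-up instead invokes the hypothesis only at the single value $\beta=-1$ and passes the liminf through with lsc of the energy, and that is exactly where the argument breaks.
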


\begin{proof}
\emph{Step 1: For $\beta\in[-1,\infty[$ the free energy functional
$F_{\beta}$ admits a unique minimizer $\mu_{\beta}$ on $\mathcal{P}(X)$ }

To see this first recall that the argument in Step 1 of the proof
of Theorem \ref{thm:Fano setting text} shows that \emph{for $\beta\in[-1,\infty[$
we have} $F_{\beta-\epsilon}\geq-C$ for some positive constants $C$
and $\epsilon$ (depending on $\beta).$ Hence, by Theorems \ref{thm:F lsc for beta bigger}
$F_{\beta}$ admits a unique minimizer $\mu_{\beta}.$ In particular,
there exists a unique Kähler-Einstein metric on $X.$ 

\emph{Step 2: The function $F(\beta):=F(\mu_{\beta})$ is real-analytic
on $]-1,\infty[$ and continuous up to the boundary at $\beta=-$1}

The continuity up to $\beta=-1$ was shown in\cite{b-m}. Now fix
positive integers $p,l$ and consider the Banach (Hilbert) spaces
defined by the Sobolev spaces $\mathcal{B}_{1}:=L^{p,l+2}(X)$ and
$\mathcal{B}_{2}:=L^{p,l}(X).$ Take $p$ and $l$ sufficiently large
so that $\mathcal{B}_{2}\subset C^{2}(X)$ (as ensured by the Sobolev
inequality). Consider the map 
\[
g:\,\mathcal{B}_{1}\times]-\beta_{0},\infty[\rightarrow\mathcal{B}_{2}\times\{0\},\,\,\,(u,\beta)\mapsto\left((dd^{c}u)^{n}/dV-e^{\beta u},\int_{X}\beta^{-1}(e^{\beta u}-1)dV\right)
\]
(where $\beta^{-1}(e^{\beta t}-1)$ is defined to be equal to $t$
for $\beta=0).$ The definition of $g$ is made so that $\varphi_{\beta}$
solves the MA-equation \ref{eq:asympt of log det L infty} iff $g(\varphi_{\beta},\beta)=(0,0)$
and when $\beta=0$ the solution $\varphi_{\beta}$ is normalized
so that $\int\varphi_{\beta}dV=0.$ The map $g$ is a real-analytic
map between Banach spaces in the sense of \cite{to}. Indeed, $u\mapsto(dd^{c}u)^{n}/dV$
is continuous and multilinear (of order $2n)$ and the functions $e^{\beta t}$
and $\beta^{-1}(e^{\beta t}-1)$ are both real-analytic on $\R\times\R.$
Next, note that the directional derivative $D_{u}(g,u)$ is surjective
for $\beta\in[-1,\infty[$ and $u\in\mathcal{H}(X).$ Indeed, this
is shown in the course of the proof of the openness property in Aubin's
continuity path \cite{au} (see also \cite{b-m}). Hence, it follows
from the real-analytic implicit function theorem in Banach spaces
\cite{to}, that the curve $\varphi_{\beta}$ is real-analytic in
$\mathcal{B}_{1}.$ But then it follows from the explicit expression
\ref{eq:beauti E as integral} for $\mathcal{E}(\varphi),$ which
is a sum of multilinear terms in $\varphi$ that the function
\[
E(\beta):=E_{\omega_{0}}(\mu_{\beta})=\mathcal{E}_{\omega_{0}}(\varphi_{\beta})-\left\langle \varphi_{\beta},MA(\varphi_{\beta}\right\rangle 
\]
 is real-analytic on $]-1,\infty[.$ The proof of Step 2 is now concluded
by observing that $dF(\beta)/d\beta=E(\beta).$ Indeed, this follows
from the chain rule using that $\mu_{\beta}$ is the unique minimizer
of $F_{\beta}$ on $\mathcal{P}(X).$ 

\emph{Step 3: Conclusion of proof}

By Theorem \ref{thm:LDP beta pos text} $f(\beta)=F(\beta)$ for $\beta>0$
(note that the relation $f(0)=F(0)$ is trivial). Since $f$ and $F$
are both real-analytic on\emph{ $]-1,\infty[$ }and continuous as
$\beta\rightarrow-1$ it follows that $f(\beta)=F(\beta)$ on all
of \emph{$[-1,\infty[.$ }Hence, it follows from the proof of Theorem
\ref{thm:Fano setting text} that for any given $\beta\in[-1,\infty[$
the empirical measures $\delta_{N}$ on $(X^{N},\mu_{\beta}^{(N)})$
converge in law towards $\mu_{\beta}.$ Indeed, since $f(\beta)=F(\mu_{\beta})$
the assumed convergence can be used as a replacement for Step 2 in
the proof of Theorem \ref{thm:Fano setting text}. Specializing to
$\beta=-1$ thus concludes the proof. 
\end{proof}
Assuming the existence of a limiting function $f(\beta),$ one way
of establishing the real-analyticity of $f$ is to show that the meromorphic
extension of $Z_{N,\beta}$ to $\C$ is holomorphic and has no zeroes
on some $N-$independent neighborhood of $[-1,0]$ in $\C$ (using
Montel's convergence theorem of complex analysis). This is an approach
that was pioneered by Lee-Yang for some statistical mechanical models,
including spin models and lattice gases \cite{y-l}. Interestingly,
in the present complex-geometric setup such a non-vanishing result
holds in the setting of log Fano curves (in a slightly different setup),
as shown in \cite{berm11}. However, the general case is completely
open.

\end{document}